\tikzstyle{basic}=[fill=black, draw=black, shape=circle, scale=0.5]
\tikzstyle{new style 0}=[fill={rgb,255: red,16; green,112; blue,255}, draw={rgb,255: red,16; green,112; blue,255}, shape=circle]
\tikzstyle{new edge style 0}=[-, draw={rgb,255: red,191; green,0; blue,64}]
\tikzstyle{new edge style 1}=[-, draw={rgb,255: red,17; green,0; blue,255}]
\tikzstyle{new edge style 2}=[-, draw={rgb,255: red,20; green,153; blue,0}]
\tikzstyle{new edge style 3}=[-, dashed]
\tikzstyle{new edge style 4}=[-, fill={rgb,255: red,198; green,255; blue,255}]
\tikzstyle{new edge style 5}=[-, fill=white]
\tikzstyle{new edge style 6}=[-, fill={rgb,255: red,255; green,191; blue,191}]
\tikzstyle{new edge style 7}=[-, fill={rgb,255: red,182; green,227; blue,160}, draw=black]
\tikzstyle{new edge style 8}=[-, draw=none]
\tikzstyle{new edge style 9}=[-, fill={rgb,255: red,255; green,191; blue,191}]
\tikzstyle{new edge style 10}=[-, draw={rgb,255: red,16; green,112; blue,255}, fill={rgb,255: red,198; green,255; blue,255}]
\tikzstyle{new edge style 11}=[-, fill={rgb,255: red,191; green,191; blue,191}]
\tikzstyle{new edge style 12}=[-, draw=white, fill=white]
\tikzstyle{new edge style 13}=[-, fill={rgb,255: red,255; green,191; blue,191}, draw=white]
\tikzstyle{new edge style 14}=[-, fill={rgb,255: red,198; green,255; blue,255}, draw=white]
\tikzstyle{new edge style 15}=[-, fill={rgb,255: red,182; green,227; blue,160}, draw={rgb,255: red,16; green,112; blue,255}]
\newtheorem*{rep@theorem}{\rep@title}
\newcommand{\newreptheorem}[2]{%
\newenvironment{rep#1}[1]{%
 \def\rep@title{#2 \ref{##1}}%
 \begin{rep@theorem}}%
 {\end{rep@theorem}}}
\theoremstyle{plain}
\newtheorem{theo}{Theorem}[section]
\newtheorem{lem}[theo]{Lemma}
\newtheorem{sublem}{Lemma}[theo]
\newtheorem{prop}[theo]{Proposition}
\newtheorem{cor}[theo]{Corollary}
\newtheorem*{prob}{Open problem}
\newtheorem{claim}{Claim}[theo]
\newtheorem{subclaim}{Claim}[sublem]
\theoremstyle{definition}
\newtheorem{defi}{Definition}[section]
\newenvironment{proof_claim}{\noindent\textit{Proof of the claim.}}{\hfill$\blacksquare$}
\definecolor{Tolpurple}{HTML}{AA3377}
\title{A quasi-polynomial bound for the minimal excluded minors for a surface}
\author{Sarah Houdaigoui\\
\href{mailto:shoudaigoui@nii.ac.jp}{shoudaigoui@nii.ac.jp}\\ 
National Institute of Informatics, Tokyo, Japan\\
Graduate university for advanced studies (SOKENDAI), Hayama, Japan
\and Ken-ichi Kawarabayashi\footnote{Research supported by JSPS Kakenhi 22H05001 and by JST ASPIRE JPMJAP2302.}\\
\href{mailto:k_keniti@nii.ac.jp}{k\_keniti@nii.ac.jp}\\
National Institute of Informatics, Tokyo, Japan\\
The University of Tokyo, Japan
}
\date{}
\begin{document}

\maketitle

%\section*{Abstract}
\begin{abstract}
As part of their graph minor project, Robertson and Seymour showed in 1990 that the class of graphs that can be embedded in a given surface can be characterized by a finite set of minimal excluded minors \cite{GM8}. However, their proof, because existential, does not provide any information on these excluded minors. Seymour proved in 1993 the first and, until now, only known upper bound on the order of the minimal excluded minors for a given surface \cite{seymour}. This bound is double exponential in the Euler genus $g$ of the surface and, therefore, very far from the $\Omega(g)$ lower bound on the maximal order of minimal excluded minors for a surface and most likely far from the best possible bound. More than thirty years later, this paper finally makes progress in lowering this bound to a quasi-polynomial in the Euler genus of the surface. 

The main catalyzer to reach a quasi-polynomial bound is a breakthrough on the characteristic size of a forbidden structure for a minimal excluded minor $G$ for a surface of Euler genus $g$: although it is not hard to show that $G$ does not contain $O(g)$ disjoint cycles that are contractible and nested in some embedding of $G$ as demonstrated by Seymour in \cite{seymour}, this bound can be lowered to $O(\log g)$ which is essential to obtain the quasi-polynomial bound in this paper.
Moreover, we find an upper bound on the maximum degree of $G$ and the maximum size of a face in an embedding of $G$ in a surface of Euler genus $g+1$ or $g+2$, which is, to our understanding, the first such bound. Finally, we develop a new method to bound the height of the tree in a tree decomposition of $G$.

As subsidiary results, we also improve the current bound on the treewidth of a minimal excluded minor $G$ for a surface by improving the first and, until now, only known bound provided by Seymour in \cite{seymour}. Moreover, we show a better upper bound on the order of a grid minor in $G$, improving the result by Thomassen \cite{Thomassen} from 1997.
\end{abstract}

\newpage
\section{Introduction}

The graph minor theorem \cite{GM20}, proved by Robertson and Seymour in 2004, states that every class of graphs closed under minors can be defined by a finite list of minimal excluded minors  (which we will denote by \textit{excluded minors} from now on). Understanding the nature of the excluded minors for various classes of graphs closed under minor operations is a very active area of research. Especially, exploring the link between graph parameters that are minor-monotone and the presence of specific graphs as minor has been a very fruitful research area.
The most iconic example of such work is the grid minor theorem \cite{GM5} which states that there exists a function $f$ such that, for every grid $H$, every graph with treewidth at least $f(|V(H)|)$ contains $H$ as a minor. There has been extensive work on lower and upper bounds for the function $f$, see \cite{Chekuri_chuzhoy, Chuzhoy_Tan} for the current state of the art. There are also results analogous to the grid minor theorem for pathwidth \cite{Kinnersley, TUK} and treedepth \cite{Kawarabayashi_Rossman}.
Another minor monotone graph parameter that has always aroused major interest is the genus. It was studied well before the graph minor project, but this project sparked a renewal in this field of study and a shift in the angle of approach.

In 1930, Kuratowski \cite{Kuratowski1930} showed that the planar graphs can be described as graphs that do not contain a subdivision of $K_5$ nor $K_{3,3}$ as a subgraph. Subsequently, in 1937, Wagner showed that the planar graphs can be described as graphs that do not contain $K_5$ nor $K_{3,3}$ as a minor. These two results can easily be shown to be equivalent.
Finding an analog to Kuratowski's or Wagner's theorems for higher genus surfaces was for a long time a central open problem: in 1978, Glover and Huneke showed that there is a finite number of minimal subgraphs excluded as subdivision (thereafter minimal excluded subgraphs) for the projective plane \cite{Glover_Huneke}; then Glover, Huneke and Wang presented in 1979 a list of 103 minimal excluded subgraphs \cite{Glover_Huneke_Wang}; finally, Archdeacon proved in 1980-81 that this list is complete \cite{Archdeacon_1980, Archdeacon_1981}. 
It is easy to show that a class of graphs is characterized by a finite set of excluded minors if and only if it is characterized by a finite set of minimal excluded subgraphs (see, e.g., \cite[Proposition 6.1.1]{graphs_on_surfaces}).
In 1989, Archdeacon and Huneke extended the result from the projective plane to every non orientable surface by showing that every non orientable surface admits only a finite number of minimal excluded subgraphs. Robertson and Seymour finally settled the question by showing that the number of excluded minors for a surface is always finite in a preliminary result of the celebrated graph minor theorem \cite{GM8}. 
Simpler proofs of the graph minor theorem for surfaces have been then found by Mohar \cite{Mohar_2001} and Thomassen \cite{Thomassen}.

However, the graph minor theorem's proof is purely existential and hence gives no clue on how to find the excluded minors for a surface. Therefore, except for the plane and the projective plane, the exact lists of excluded minors for each surface are unknown. Considerable efforts toward such a result for the torus have been made, but even though several classes of minimal excluded subgraphs for the torus are known (see \cite{Decker_1978, Juvan_1995, Duke_Haggard, Hlavacek_1997, Bodendiek_Wagner}), no exhaustive list was eventually produced. Among all of these results, there are at least 2200 excluded minors for the torus. It is, therefore, unrealistic in the foreseeable future to hope that a characterization of excluded minors will be explicitly found for surfaces of the Euler genus higher than the torus'. 

Consequently, the research shifted from finding an explicit list to bounding parameters (order, degree, treewidth...) of these excluded minors for a surface, usually by a function of the Euler genus of the surface.

In 1993, Seymour gave the first and, until now, the only known upper bound on the order of an excluded minor for a surface \cite{seymour}.

\begin{theo}[\cite{seymour}]
    Let $S$ be a given surface of Euler genus $g$. Every excluded minor for $S$ has at most $2^{2^k}$ vertices where $k = (3g+9)^9$. 
\end{theo}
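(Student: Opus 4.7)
\noindent\textit{Proof proposal.} The plan is two-stage: first bound the treewidth of $G$ by a polynomial in $g$, then use minimality to bound $|V(G)|$ double-exponentially in the treewidth.

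As a preliminary reduction I would assume $G$ is $3$-connected, since a separation of order at most $2$ lets one paste together embeddings of $G-e$ and $G/e$ on the two sides to contradict the non-embeddability of $G$. To bound the treewidth, the strategy is to forbid a huge grid minor in $G$: if $H \subseteq G$ were a grid minor of order much larger than $g$, pick an edge $e$ of $H$ and embed $G/e$ in $S$ by minimality; the concentric squares of $H$ then give a long sequence of nested cycles, and a topological argument shows that a surface of Euler genus $g$ cannot host too many disjoint nested contractible cycles without allowing one to certify an embedding of $G$ itself by local rerouting through the planar grid. Quantitatively one bounds the number of such nested cycles by $O(g)$, and via the grid minor theorem this yields $\mathrm{tw}(G) \le p(g)$ for an explicit polynomial $p$, which one can arrange to equal $(3g+9)^9$.

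Next I would fix a tree decomposition $(T,\{X_t\})$ of $G$ of width $w := p(g)$; each bag has at most $w+1$ vertices, so it suffices to prove $|V(T)| \le 2^{2^w}$. Root $T$, and to each node $t$ assign a \emph{type} consisting of the isomorphism class of the subgraph of $G$ induced on the descendant bags, marked by the labelled adhesion $A_t = X_t \cap X_{\mathrm{parent}(t)}$ and decorated by how this subtree could participate in near-embeddings of $G-e$ or $G/e$ in $S$. The number of such types is at most $2^{O(w^2)}$, i.e.\ simply exponential in $w$. If two nodes $t_1 \prec t_2$ on a root-to-leaf path share a type, substituting the subtree at $t_2$ for the one at $t_1$ produces a proper minor $G'$ of $G$; equality of types guarantees that any hypothetical embedding of $G'$ in $S$ can be transported back to an embedding of $G$, so $G'$ is still non-embeddable and minimality is violated. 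Hence every root-to-leaf path has length at most the number of types, and a symmetric argument on branching yields $|V(T)| \le 2^{2^{O(w^2)}} \le 2^{2^k}$ with $k = (3g+9)^9$.

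The hardest part is the type-substitution step: the type must be refined enough that equality really does guarantee two subtrees are interchangeable across all the relevant near-embeddings of $G$, yet coarse enough to admit only simply-exponentially-many values in $w$. A secondary obstacle is tightening the topological lemma converting many nested contractible cycles into an explicit embedding of $G$, which is what fixes the polynomial $p$ in the treewidth bound and hence the constant $k$ in the final statement.
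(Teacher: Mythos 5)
This theorem is a cited result from Seymour's 1993 paper \cite{seymour} and is not proved in the present manuscript, so there is no proof of record here to compare against. The manuscript does reference the structure of Seymour's argument (a polynomial treewidth bound, a bound on the degree of the decomposition tree, and a single-exponential bound on its height), and your overall plan of first bounding treewidth and then bounding the size of the decomposition tree matches that outline. The treewidth step via forbidding many nested contractible cycles is indeed the role of \cite[(2.2) and (3.3)]{seymour} (Proposition \ref{seymour_nested_cycles} and Theorem \ref{seymour_treewidth} here) and is sound in spirit, although your 3-connectivity reduction by ``pasting embeddings of $G-e$ and $G/e$'' is not a valid argument as stated; the reduction used in the paper (Lemma \ref{G_blocks_excluded_minors} and Corollary \ref{G_2_connected}) goes through the block decomposition and only yields 2-connectivity.

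The type-substitution step, however, contains a genuine gap, and it is the crux of the entire bound. You define the type of $t$ to include ``the isomorphism class of the subgraph of $G$ induced on the descendant bags.'' For $t_1\prec t_2$ on a root-to-leaf path, $t_2$'s descendant bags form a strict subset of $t_1$'s, and in a minimal tree decomposition each bag introduces at least one new vertex (cf.\ Lemma \ref{property_minimal_tree_decomposition}); hence the two induced subgraphs have different orders and can never share that type, so the pigeonhole step never fires. For the same reason, the stated $2^{O(w^2)}$ count of types is false of this definition. You flag this tension yourself, but it is not a detail to be tightened afterward: a working version must replace the full isomorphism class by a \emph{bounded} ``boundary behavior'' at the adhesion $A_t$, simultaneously rich enough that equal behaviors permit subtree exchange without destroying non-embeddability (so it has to encode every way the subtree can attach to the rest of the graph across the relevant near-embeddings in surfaces of Euler genus close to $g$) yet coarse enough to take only exponentially many values. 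Constructing and verifying such a type is the substance of the height bound, and it is not supplied here; absent it, the step from treewidth $p(g)$ to $|V(G)|\le 2^{2^{(3g+9)^9}}$ is unproved.
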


This bound is a double-exponential in the Euler genus $g$ of the surface. Moreover, the lower bound on the maximal order of excluded minors for a surface is easily shown to be $\Omega(g)$ (by considering $g+1$ copies of $K_{3,3}$).
Therefore, the gap between known lower and upper bounds is huge, and Seymour's upper bound is most likely far from the best possible bound.

In the first improvement in thirty years, we drastically lower the bound on the order of an excluded minor for a surface to a quasi-polynomial in the Euler genus $g$ of the surface.
The main result of this paper is indeed the following:

\begin{theo}
    \label{main_result}
    Let $S$ be a given surface of Euler genus $g$. Every excluded minor for $S$ is of order at most $U(g) = g^{O(\log^3 g)}$.
\end{theo}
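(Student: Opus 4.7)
The plan is to exploit the fact that a minimal excluded minor $G$ for a surface $S$ of Euler genus $g$ is only barely non-embeddable, and therefore admits embeddings into slightly larger surfaces from which substantial structure can be extracted. Concretely, for any edge $e\in E(G)$ the graph $G/e$ embeds in $S$, and re-attaching $e$ can be absorbed by a single handle or crosscap, so $G$ embeds in some surface $S'$ of Euler genus at most $g+2$. Fix such an embedding $\Pi$. The proof then reduces to controlling three structural parameters of $G$ with respect to $\Pi$ and assembling them through a tree decomposition whose height is also bounded.

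The first and most delicate step is to show that $G$ contains at most $O(\log g)$ pairwise disjoint nested contractible cycles in $\Pi$. Seymour's original bound is $O(g)$, obtained by arguing that a long nested sequence must contain a reducible annulus across which $G$ can be shortened without destroying the obstruction. To drop this to $O(\log g)$, I plan to work with a geometrically thinning sequence $C_1\supset C_2\supset\cdots\supset C_k$: pass to a dyadic subsequence $C_1,C_2,C_4,C_8,\dots$ and argue that each doubling of depth contributes an essentially independent obstruction which, by the minor-minimality of $G$, is measured by a quantity bounded linearly in $g$. An exponential amplification of this form is what should permit $k=O(\log g)$.

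The second ingredient is a polynomial-in-$g$ upper bound on the maximum degree of $G$ and on the maximum face length of $\Pi$. A vertex of very high degree, or a face bounded by a very long facial walk, would provide many parallel ``radial'' curves in a localized region of $S'$; combined with even a short nested-cycle chain this would produce a large grid-like minor in $G$. This in turn feeds a sharpened version of Thomassen's grid-minor theorem adapted to excluded minors, which yields the improved treewidth bound on $G$ announced as a subsidiary result.

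The last step is to build a tree decomposition $(T,\mathcal{B})$ of $G$ of width $\operatorname{tw}(G)+1$ and to show that $T$ can be chosen of height polylogarithmic in $g$. The total order of $G$ is then at most $|V(T)|\cdot(\operatorname{tw}(G)+1)$ up to a factor controlled by the branching of $T$, and multiplying the three bounds through gives $|V(G)|\le g^{O(\log^3 g)}$. I expect the height bound to be the main obstacle: width-minimal tree decompositions can be arbitrarily unbalanced, so one needs a balancing argument driven by $\Pi$ itself, using the $O(\log g)$ nested-cycle bound together with the face-length and degree bounds to iteratively split $G$ along short embedded separators whose removal genuinely halves, rather than peels off, the remaining pieces. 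Designing those separators so that the width does not blow up while the recursion depth stays polylogarithmic in $g$ is the technical heart of the argument, and is where the new tree-balancing method advertised in the abstract will be deployed.
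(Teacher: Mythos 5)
Your outline correctly identifies the three parameters (nested-cycle depth, max degree / face length, tree-decomposition height) and the overall strategy of bounding each and multiplying through. But there are two concrete misreadings of what is actually attainable, and the second one leaves a genuine hole.

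First, you claim a \emph{polynomial} bound on $\Delta(G)$ and on the face length. The paper only proves a quasi-polynomial bound $\Delta(g)=g^{O(\log^2 g)}$ (Theorem~\ref{max_degree}), and whether a polynomial bound exists is explicitly stated as an open problem in Section~\ref{sec:conclusion}. Your sketch (high degree $\Rightarrow$ large grid minor via radial curves) is not enough: the obstruction being exploited in the paper is the $O(\log g)$ nested-cycle bound, not a grid minor, and the iterated ``fan with an arch'' argument that yields the degree bound pays a multiplicative factor at each of the $O(\log g)$ levels of recursion, producing the extra $\log$ in the exponent. Fortunately your final estimate $g^{O(\log^3 g)}$ tolerates the weaker quasi-polynomial degree bound, so this misstatement would not by itself sink the argument.

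The second gap is the serious one. You plan to bound the \emph{height} of the tree decomposition by a polylog in $g$ and then multiply height, width, and branching degree. The paper does not establish a polylogarithmic height; the bound actually proved (Theorem~\ref{no_long_planar_path_T_extended}) is $P'(g,w)=O(w)\cdot g^{O(\log^3 g)}$, i.e.\ itself quasi-polynomial. With that height, the naive count $|V(T)|\le \Delta(T)^{\text{height}}$ gives only a doubly-quasi-polynomial bound $2^{g^{O(\log^3 g)}}$, not $g^{O(\log^3 g)}$. The paper rescues the argument with a bootstrap you have not reproduced: from the single-exponential size bound, it derives $pw(G)=O(tw(G)\log|V(G)|)=g^{O(\log^3 g)}$, then applies the \emph{same} height bound to a minimal \emph{path} decomposition, where there is no branching, so $|V(G)|\le P'(g,pw(G))\cdot(pw(G)+1)$ and the exponential blow-up disappears. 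Without either (a) a genuinely polylogarithmic height bound, which you do not justify and which the paper does not achieve, or (b) this pathwidth self-improvement step, your final multiplication does not produce $g^{O(\log^3 g)}$. You should be explicit that a tree-height bound comparable to the tree-width bound cannot just be multiplied into the answer when the tree can branch, and supply the path-decomposition trick (or an equivalent balancing argument) to close the gap.
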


\vspace{0.5cm}

Let us highlight the technical aspects of our result. 
Let $S, S'$ be surfaces with $S'$ of Euler genus $g$ and $S$ of Euler genus $g+1$ or $g+2$. Let $G$ be an excluded minor for the surface $S'$ and suppose that $G$ can be embedded in surface $S$ with embedding $\Pi$.

The main catalyzer to reach a quasi-polynomial bound is a breakthrough on the characteristic size of a forbidden structure for $G$: it is not hard to show that $G$ does not contain $O(g)$ disjoint cycles that are $\Pi$-contractible and nested in $\Pi$, as demonstrated by Seymour in \cite[(2.2)]{seymour}. We show that this bound can be lowered to $O(\log g)$ (Section \ref{sec:structural_tools}, Proposition \ref{good_square}). A related result was obtained in \cite{KS19}. Moreover, we extend this result to $\Pi$-noncontractible homotopic cycles of $G$. 
These structural results are utterly essential to obtain the quasi-polynomial bound in this paper.

Another important ingredient of our proof is a $g^{O(\log^2 g)}$ upper bound on the maximum degree of $G$ and on the maximum size of a face in $(G, \Pi)$ (Subsection \ref{subsec:max_degree} Theorem \ref{max_degree}). It is, to our understanding, the first such bound. Finally, we develop a new method to bound the height of the tree in a tree decomposition of $G$ and we obtain a bound that is drastically lower than the one found by Seymour \cite[claim (6) in (4.1)]{seymour}. Indeed, we lower it from a single exponential in $g$ to a quasi-polynomial $g^{O(\log^3 g)}$ in $g$ (Subsection \ref{height_tree_decomposition}, Theorem \ref{no_long_planar_path_T_extended}).

%Our proof bears similarity with Seymour's on two main points: 
%\begin{itemize}
%    \item It uses tree decomposition and a polynomial bound of the treewidth of $G$ as a central tool. Both proofs aim at bounding the order of $G$ by bounding the treewidth, height and degree of a tree decomposition of $G$.
%    \item Moreover, the main structural results used in this paper are improvements and extensions of a structural result from Seymour \cite[2.2]{seymour} that is central to his proof.
%\end{itemize}
%The rest of the proof does not bear much similarities with the proof in \cite{seymour} and we develop our own method to reach a quasi-polynomial bound (see Section \ref{proof_strategy} for an overview of our proof strategy).

\vspace{0.3cm}

Another very important result from Seymour's paper \cite{seymour} is the first and, until now, only known bound on the treewidth of an excluded minor for a surface:

\begin{theo}[{\cite[(3.3)]{seymour}}]

    \label{seymour_treewidth}
    The treewidth of $G$ is bounded by a polynomial in $g$:
    \[ tw(G) \leq T_{S}(g)\]

    with $T_{S}(g) = 3(g+3)^2(3g+16)-3 = O(g^3)$.
\end{theo}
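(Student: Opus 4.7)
The plan is to argue by contradiction: assume $tw(G) > T_S(g)$. Since $G$ is an excluded minor for $S'$, every edge-deletion $G - e$ embeds in $S'$, and adding a single edge can raise the Euler genus by at most $2$; hence $G$ itself admits an embedding $\Pi$ in some surface $S$ of Euler genus $g+1$ or $g+2$, exactly the setup used throughout the paper.

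The first step is to translate the treewidth lower bound into topological data inside $(G, \Pi)$. Using the standard interaction between treewidth, wall minors, and bounded-genus embeddings, I would extract a nested family $C_1 \supset C_2 \supset \cdots \supset C_N$ of pairwise disjoint $\Pi$-contractible cycles of $G$, with $N = 3(g+3)^2(3g+16)$: large treewidth forces the presence of a large wall subdivision, and a wall embedded in a surface of Euler genus at most $g+2$ must contain a long nested sequence of contractible cycles after sacrificing only $O(g)$ of them to accommodate the genus. Each $C_i$ then bounds a disk $D_i$ in $S$, and the consecutive annuli $A_i = D_i \setminus \mathrm{int}(D_{i+1})$ give an ordered ``pipe'' of planar slices of $G$ stacked concentrically in $S$.

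The second step is a pigeonhole argument on these annuli followed by a surface surgery. Each $A_i$ receives a combinatorial type encoding how edges of $G$ meet $C_i$ and $C_{i+1}$ together with the pairing of their endpoints through $A_i$; since the slice inside $A_i$ is planar, the number of possible types is bounded by roughly $(g+3)^2$. Among the $3(g+3)^2(3g+16)$ annuli, pigeonhole then produces $3(3g+16)$ annuli sharing a common type, forming a uniform sub-pipe. Cutting $S$ along two of these identically-typed cycles and reglueing in the unique way consistent with the common type yields an embedding of $G$ in a surface of strictly smaller Euler genus. Iterating this operation at most twice consumes the genus surplus of $S$ over $S'$ and produces an embedding of $G$ in $S'$, contradicting the excluded-minor hypothesis.

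The hardest part will be the surgery, because the excluded-minor property is sensitive to exactly which graph is embedded rather than just to its minor-closure. The ``same type'' condition delivered by the pigeonhole is precisely what guarantees that the two boundary cycles identified by the surgery are isomorphic as labelled subgraphs of $G$, so that reglueing reproduces $G$ itself and not a proper minor; calibrating this bookkeeping against the two units of surplus Euler genus, together with the $O(g)$ contractible cycles that may be lost when extracting nested cycles from a wall of genus $\leq g+2$, is what pins down the precise constants $3$, $(g+3)^2$, and $3g+16$ appearing in $T_S(g)$.
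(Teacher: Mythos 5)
The paper gives no proof of this theorem; it is quoted verbatim from Seymour's 1993 paper, whose argument runs through his key lemma (2.2) (Proposition~\ref{seymour_nested_cycles} here): if $G-e$ embeds in $S$ and $G-e$ contains $g+2$ disjoint cycles bounding disks around $e$, then $G$ embeds in $S$. Large treewidth is shown to force such a family of nested disk-bounding cycles around some fixed edge, and (2.2) then yields the contradiction directly, with no modification of the embedding or of the graph.

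Your proposal departs from this route and contains two genuine gaps. The pigeonhole on ``combinatorial types'' of the annuli $A_i$ has no finite type space behind it: the planar slice of $G$ inside an annulus may carry arbitrarily many vertices, edges, and attachment points on its two boundary cycles, so the number of isomorphism types is unbounded and the claimed $O((g+3)^2)$ bound has no source. (The quantity of roughly that order which does appear in arguments of this flavour counts the non-planar defects of a wall embedded in a surface of genus $\leq g+2$, not annulus types.) The surgery step is also flawed: the cycles $C_i$ you extract are $\Pi$-contractible, and cutting along a contractible (hence separating) cycle detaches a planar disk whose regluing restores the original surface, so no genus is lost; to lower genus by cutting and regluing one needs noncontractible cycles. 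If instead you excise the annulus between two ``same-type'' cycles and identify their boundaries, the result is a proper minor of $G$ -- the annulus interior has been deleted -- and such a minor embeds in $S'$ by the excluded-minor hypothesis, giving no contradiction at all. Seymour's (2.2) route avoids both difficulties because it argues about re-embedding all of $G-e$ and then reinserting $e$, rather than performing surgery on the surface or the graph.
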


As a corollary of the structural results we obtain in this paper, we improve the bound on the treewidth:

\begin{theo}
    \label{treewidth}
    The treewidth of $G$ is bounded by the following function of $g$:
    \[ tw(G) \leq T(g)\]

    with $T(g) = 264(g+2)(m+1)-1 = O(g \log g)$, where $m = 2(\lfloor\log_{q}(3g+4)\rfloor + 2)$ and $q = \frac{1153}{1152}$.
\end{theo}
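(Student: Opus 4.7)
The plan is to follow the overall strategy of Seymour's proof of Theorem \ref{seymour_treewidth} from \cite[(3.3)]{seymour}, but replace the central structural ingredient with our improved Proposition \ref{good_square}. Seymour's argument, rephrased in contrapositive form, establishes that if a graph embedded in a surface of Euler genus at most $g+2$ has treewidth strictly larger than a certain threshold $T_S(g,k)$, then its embedding must contain $k+1$ pairwise disjoint, $\Pi$-contractible, nested cycles. He then invokes his bound \cite[(2.2)]{seymour}, which forbids $O(g)$ such cycles, to conclude that the treewidth is $O(g^3)$.

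My first task is to extract from Seymour's proof the \emph{exact} dependence of the threshold $T_S(g,k)$ on both the genus of the host surface and the number $k$ of nested contractible cycles. I expect to obtain $T_S(g,k) = C(g+2)(k+1)-1$ for an explicit absolute constant $C$: the factor $(g+2)$ records that the embedding surface $S$ has Euler genus $g+1$ or $g+2$, while the linear dependence in $k$ is implicit in Seymour's tangle/bramble argument and merely needs to be isolated. Tracking constants through that argument, I anticipate that $C = 264$ suffices.

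Once this linear threshold is in hand, Theorem \ref{treewidth} follows by immediate substitution: Proposition \ref{good_square} guarantees that $(G,\Pi)$ has at most $m = 2(\lfloor\log_q(3g+4)\rfloor + 2)$ disjoint $\Pi$-contractible nested cycles (with $q = \frac{1153}{1152}$), so if $tw(G)$ exceeded $264(g+2)(m+1)-1$ we would obtain $m+1$ such cycles, contradicting Proposition \ref{good_square}. Hence $tw(G) \le T(g) = O(g\log g)$, as claimed.

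The hard part will be the quantitative bookkeeping in the first step: the linear-in-$k$ behaviour of Seymour's threshold is not stated as such in \cite{seymour}, and re-deriving it with the explicit constant $264$ requires revisiting each stage of his argument — the tangle-to-bramble conversion and the extraction of $\Pi$-contractible nested cycles from a large bramble on a surface — while keeping careful track of how the genus of $S$ enters at each step. In contrast, the second step is mechanical, and the whole improvement over Seymour's $O(g^3)$ bound is concentrated in the replacement of his $O(g)$ nested-cycles bound by the $O(\log g)$ bound of Proposition \ref{good_square}, which is precisely the structural breakthrough of the paper.
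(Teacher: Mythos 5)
Your overall plan has the correct logical shape, and in fact coincides with the paper's argument at the top level: combine a ``large treewidth $\Rightarrow$ many well-nested cycles'' lemma with Proposition \ref{good_square}'s $O(\log g)$ upper bound on the number of $\Pi$-well-nested cycles, and conclude $tw(G)=O(g\log g)$. However, you propose to obtain the first ingredient by re-deriving a linear-in-$k$ threshold $T_S(g,k)=C(g+2)(k+1)-1$ from inside Seymour's proof of \cite[(3.3)]{seymour}, and you ``anticipate'' $C=264$ without having carried out that accounting. This is not what the paper does, and the anticipated constant is not something you can expect to fall out of Seymour's tangle/bramble argument. The paper instead invokes a separately established result (Proposition \ref{KS_result}, from Kawarabayashi--Sidiropoulos \cite{KS19}): if a graph embedded in a surface of Euler genus $g_H$ has $tw(H)\geq 264\, g_H\,\ell$, then the embedding contains $\ell$ well-nested cycles. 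Applying this with $g_H = g(\Pi)\leq g+2$ and $\ell = m+1$, and appealing to Proposition \ref{good_square} to forbid $m+1$ well-nested cycles, gives $tw(G)\leq 264(g+2)(m+1)-1 = T(g)$ immediately. The constant $264$ is specific to the Kawarabayashi--Sidiropoulos statement, not to Seymour's proof, so the ``quantitative bookkeeping'' step you flag as the hard part is in fact unnecessary and, as you set it up, unjustified: there is no reason to believe Seymour's own argument yields a linear-in-$k$ threshold with that particular constant. If you replace that speculative step by a direct citation of Proposition \ref{KS_result}, your argument collapses to the paper's one-line proof.
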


Moreover, as a subsidiary result, we obtain an improvement on the following important result of Thomassen \cite{Thomassen} (see the end of Subsection \ref{homotopic_nested_squares}):

\begin{theo}[\cite{Thomassen}]
    \label{thomassen_cor}
    Let $H$ be a $2$-connected graph such that $g(H) = g$ and $g(H -e) < g$ for every edge $e$ in $H$. Then $H$ contains no subdivision of the $4k \times 2k$ grid, with $k = \lceil 800 g^{3/2} \rceil$.
\end{theo}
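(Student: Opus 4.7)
The plan is to extract from a large grid subdivision a family of nested disjoint cycles, and then apply the paper's refined structural result (Proposition \ref{good_square}) after grouping them by free homotopy type to derive a contradiction.

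First, I would observe that since $H$ is $2$-connected with $g(H-e)<g(H)=g$ for every edge $e$, $H$ is edge-critical for Euler genus $g-1$, and a short preliminary argument places $H$ in the regime covered by the paper's structural analysis: either one shows that $2$-connected edge-critical graphs coincide with excluded minors for the genus-$(g-1)$ surface (modulo a small contraction-criticality reduction), or one adapts the proof of Proposition \ref{good_square} to the edge-critical setting, since that proof fundamentally only exploits edge-minimality. Fix an embedding $\Pi$ of $H$ in the surface of Euler genus $g$.

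Next, suppose for contradiction that $H$ contains a subdivision of the $4k\times 2k$ grid with $k=\lceil 800\, g^{3/2}\rceil$. The $k$ concentric rectangles of the grid yield $k$ pairwise vertex-disjoint, pairwise nested cycles $C_1, \ldots, C_k$ in $H$, which are also nested with respect to $\Pi$. I would then classify these cycles by their free homotopy class in $\Pi$. Using that each non-trivial homotopy transition inside a nested sequence of disjoint cycles in a surface of Euler genus $g$ must consume part of its handle/crosscap structure, at most $O(g)$ distinct homotopy classes can appear along $C_1, \ldots, C_k$. By pigeonhole, there is a sub-family of at least $k/O(g)$ cycles that are pairwise freely homotopic (either all $\Pi$-contractible or all in a common $\Pi$-noncontractible homotopy class).

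Finally, I would apply Proposition \ref{good_square} in the contractible case, and its noncontractible-homotopic extension announced in the introduction in the other case, to cap this homotopic nested sub-family at $O(\log g)$ members. Combined with the pigeonhole bound, this yields $k \le O(g \log g)$, asymptotically far below $800\, g^{3/2}$, giving the contradiction. The main obstacle I anticipate is the topological step of bounding the number of homotopy classes in a nested disjoint family with enough control over the implicit constants to reach the numerical threshold $800\, g^{3/2}$: the interactions between the annular regions cut out by consecutive $C_i$ and the handle decomposition of the surface demand careful surgery, in the spirit of Thomassen's original argument but streamlined by the embedding-level hypotheses we already have. A secondary technical point is the reduction from edge-criticality to the excluded-minor setting assumed by Proposition \ref{good_square}, which I expect to handle by unpacking the proof of Proposition \ref{good_square} rather than by invoking it as a black box.
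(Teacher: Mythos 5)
The paper proves this statement in a single line: the $4k \times 2k$ grid contains the hexagonal grid $J_k$ as a subgraph, so the conclusion is immediate from Thomassen's Proposition \ref{thomassen} (the same statement for $J_k$). Your proposal instead attempts a from-scratch proof via Proposition \ref{good_square}, and it has two genuine gaps. The first, and the more serious, is the bald assertion that the $k$ concentric rectangles of the grid subdivision ``are also nested with respect to $\Pi$.'' This is not automatic: in a surface of positive Euler genus, the concentric cycles of an embedded grid subdivision can be $\Pi$-noncontractible, non-nested, or nested in an inconsistent order, because the combinatorial adjacency of the grid does not by itself control the topology of the embedding. Producing a sub-grid whose concentric cycles are actually well-nested in $\Pi$ is precisely the content of Thomassen's Lemmas \ref{thomassen_lemma1} and \ref{thomassen_lemma2} (and is the source of the $\sqrt{g}$ factor); without some version of that step, the homotopy-class pigeonhole and the application of Proposition \ref{good_square} never get started.

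The second gap is the passage from the hypothesis ``$g(H-e) < g(H)$ for all $e$'' to the minimal-excluded-minor hypothesis under which Proposition \ref{good_square} and its supporting lemmas are proved. Edge-criticality is strictly weaker: a minimal excluded minor is also contraction-minimal, and the paper's structural machinery exploits contraction-minimality in essential places (e.g.\ the proof of Lemma \ref{2_separated_subgraph_is_edge} contracts a piece of $G$ and invokes minimality). Your plan to ``unpack the proof of Proposition \ref{good_square}'' and re-derive it under the weaker hypothesis is exactly the nontrivial work that would need to be done; it is not a clean black-box invocation. Note, finally, that the strategy you describe --- combining a Thomassen-style ``good sub-grid'' extraction with Proposition \ref{good_square} --- is in fact what the paper does, but for the \emph{improved} Theorem \ref{thomassen_improvement} (about minimal excluded minors, with bound $O(\sqrt{g}\log g)$), not for Theorem \ref{thomassen_cor}, which it treats simply as prior work of Thomassen.
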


We improve the bound on the size of the forbidden grid:

\begin{theo}
    \label{thomassen_improvement}
    Let $G$ be a $2$-connected excluded minor for a surface of Euler genus $g$. Then $G$ contains no subdivision of the $4k \times 2k$ grid, with $k = O(\sqrt{g} \log g)$.
\end{theo}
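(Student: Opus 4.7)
The plan is to argue by contradiction, adapting Thomassen's strategy from Theorem~\ref{thomassen_cor} but replacing his ``find a redundant edge'' step with a direct appeal to Proposition~\ref{good_square}. Suppose $G$ contains a subdivision $L$ of the $4k \times 2k$ grid with $k \geq C\sqrt{g}\log g$ for a sufficiently large constant $C$. Since $G$ is an excluded minor for the surface of Euler genus $g$, it embeds in some surface $S$ of Euler genus $g' \in \{g+1, g+2\}$ (the basic fact invoked throughout the introduction); fix such an embedding $\Pi$, and view each of the $(4k-1)(2k-1) = \Theta(k^2)$ internal $4$-face-cycles of $L$ (the cycles of $G$ corresponding to the internal faces of the grid in its canonical planar embedding) as a simple closed curve in $S$.

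First I would show, by an Euler-characteristic computation on $L \subseteq G$ embedded in $S$ via $\Pi$, that the number of these face-cycles that are $\Pi$-noncontractible is at most $O(g')$. The intuition is that each $\Pi$-noncontractible face-cycle corresponds to a topological defect (an inner face of the planar grid whose image in $\Pi$ is not an open disk in $S$), and the total defect between the planar Euler characteristic $\chi(L_{\mathrm{planar}}) = 2$ and $\chi(S) = 2 - g'$ is exactly $g'$. Combining this bound with a standard pigeonhole / averaging argument---partition $L$ into roughly $g'$ axis-aligned rectangular sub-grids and pick one avoiding all $O(g')$ bad face-cycles---yields a rectangular sub-grid $L'$ of $L$ of dimensions $\Omega(k/\sqrt{g'}) \times \Omega(k/\sqrt{g'})$ whose internal $4$-face-cycles are all $\Pi$-contractible.

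Next I would argue that, in such an $L'$, the concentric cycles of $L'$ form a nested family of $\Pi$-contractible cycles in $G$: since every face-cycle of $L'$ bounds a disk in $S$, one can glue these disks along shared edges to show that each larger concentric cycle also bounds a disk, and that the planar nesting of these cycles in $L'$ is preserved by $\Pi$. This produces $\Omega(k/\sqrt{g'})$ disjoint, nested, $\Pi$-contractible cycles of $G$. Proposition~\ref{good_square} then forces $\Omega(k/\sqrt{g'}) = O(\log g')$, whence $k = O(\sqrt{g'}\log g') = O(\sqrt{g}\log g)$, contradicting the choice of $k$.

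The main obstacle is the first step: bounding by $O(g')$ the number of face-cycles of $L$ that are $\Pi$-noncontractible. This is essentially the topological core of Thomassen's original argument and should adapt by combining Euler's formula applied to the embedding of $L$ in $S$ with the observation that a $\Pi$-contractible face-cycle bounds an open disk in $S$. A secondary care point is the nested structure in $L'$: disjoint $\Pi$-contractible cycles need not a priori be nested in $\Pi$, so the argument must exploit the contractibility of \emph{all} intermediate face-cycles---not only of the concentric ones---to transfer the planar nesting to the embedding.
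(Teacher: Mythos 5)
Your high-level strategy matches the paper's -- use Proposition~\ref{good_square} to forbid $\Omega(\log g)$ nested contractible cycles and trade the bound from the grid-minor side -- but the route you take to get from a rectangular grid subdivision $L$ to a nested family of \emph{$\Pi$-contractible cycles of $G$} differs from the paper's, and it is exactly in this transition that your proposal has a real gap. The paper does not reprove the grid-to-nested-cycles reduction: it invokes Thomassen's Lemma~\ref{thomassen_lemma1}, which extracts a subdivided hexagonal grid $J_m$ that is \emph{good} in $G$, i.e.\ the union of $J_m$ with all the $G$-bridges attaching to its interior is planar. This goodness is a statement about the \emph{bridges of $G$ on the grid}, not merely about the induced embedding $\Pi(L)$ of the grid itself, and it is precisely what lets one conclude that the concentric cycles of $J_m$ are $\Pi$-contractible in the sense the paper uses (one side of the cycle, \emph{including all $G$-bridges on that side}, induces a genus-$0$ embedding). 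Your step~3 works only with $\Pi(L)$: an Euler-characteristic count on the embedded subgraph $L$ can at best control which $4$-cycles are (disk) faces of $\Pi(L)$, and says nothing about the bridges of $G$ hanging off those cycles. A $4$-cycle can be a face of $\Pi(L)$ and still be $\Pi$-noncontractible in $G$ because a $G$-bridge inside it has positive induced genus; conversely, $\Pi$-contractibility of a cycle of $G$ is not read off from $\Pi(L)$ alone.

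There is also a quantitative problem with the Euler-characteristic step as you phrase it. Writing $F_{\mathrm{plan}}$ for the number of faces of the planar grid and $F_{\Pi}$ for the number of $\Pi(L)$-faces, Euler's formula gives $F_{\mathrm{plan}} - F_{\Pi} = g'$ (for a $2$-cell $\Pi(L)$; otherwise $F_{\Pi}$ is smaller still). This shows that \emph{at least} $g'-1$ of the internal $4$-cycles fail to be $\Pi(L)$-faces, i.e.\ it is a lower bound on the number of bad cells, not an upper bound; nothing prevents a single ``defect'' from merging many $4$-cycles into one large face of $\Pi(L)$ without a commensurate increase in genus. Thomassen's Lemma~\ref{thomassen_lemma1} sidesteps this by counting how many of $\Theta((n/k)^2)$ pairwise disjoint copies of $J_k$ inside $J_n$ can \emph{fail} to be good -- using additivity of genus over vertex-disjoint pieces, not a face count of $\Pi(L)$ -- so the right quantity being charged to $g$ is the genus carried by the attached bridges, not the number of non-facial cells. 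Your proposal would need to be reworked along these lines (essentially re-deriving Lemma~\ref{thomassen_lemma1} for the rectangular grid) before the appeal to Proposition~\ref{good_square} is justified. Your ``secondary care point'' about transferring planar nesting to $\Pi$-nesting is legitimate, but it is the bridge issue, not the nesting issue, that is the primary missing ingredient relative to the paper.
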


In an appendix, we give a tight lower bound on the order of the excluded minors for each surface, together with a quick proof:

\begin{theo}
    \label{lower_bound}
    Let $S$ be a surface of Euler genus $g$ and let $L(g)  = \left\lfloor \frac{7 + \sqrt{1+24g}}{2} \right \rfloor +1$. There exists a excluded minor for $S$ of order 

    \[ |V(G)| = L(g) \]

    and no excluded minor for $S$ is of order $< L(g)$.
\end{theo}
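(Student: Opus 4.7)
The plan is to derive both parts---nonexistence of an excluded minor of order strictly below $L(g)$, and existence of one of order exactly $L(g)$---from the classical value of the Euler genus of the complete graph, $g(K_n)=\lceil (n-3)(n-4)/6\rceil$ for $n\geq 3$. This is the Ringel--Youngs theorem for the non-orientable genus combined with, at the single exceptional value $n=7$, the observation that the orientable genus $\gamma(K_7)=1$ gives $2\gamma(K_7)=2$ and hence restores the uniform formula at the level of Euler genus. Everything else is elementary algebra around the definition of $L(g)$.

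For the lower bound, I would take any graph $G$ with $n=|V(G)|<L(g)$ and use that $G\subseteq K_n$. Since $n\leq L(g)-1\leq (7+\sqrt{1+24g})/2$, squaring $2n-7\leq \sqrt{1+24g}$ (valid for $n\geq 4$; the cases $n\leq 3$ are trivial as $K_n$ is planar) yields $(n-3)(n-4)\leq 6g$, whence $g(K_n)\leq g$. Thus $G$ embeds in $S$ and cannot be an excluded minor.

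For the existence direction, I would use $K_{L(g)}$ as a witness. By definition of $L(g)$ one has $L(g)>(7+\sqrt{1+24g})/2$, so the symmetric calculation gives $(L(g)-3)(L(g)-4)>6g$; integrality of the left-hand side sharpens this to $(L(g)-3)(L(g)-4)\geq 6g+1$, whence $g(K_{L(g)})\geq g+1>g$. Thus $K_{L(g)}$ does not embed in $S$. Iteratively passing to a proper minor that still does not embed---a process that must terminate because the vertex count is a positive integer---one extracts an excluded minor $H$ for $S$ which is a minor of $K_{L(g)}$; hence $|V(H)|\leq L(g)$, and combined with the lower bound $|V(H)|=L(g)$, as required.

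There is no serious obstacle: the whole argument is an algebraic consequence of the Ringel--Youngs formula and the observation that $L(g)$ is by construction the threshold at which $\lceil (n-3)(n-4)/6\rceil$ first exceeds $g$. The only wrinkle is the exceptional case $n=7$ of the Euler genus formula, which is immediate from the orientable genus of $K_7$, together with the trivial verification of the small-$n$ regime $n\leq 3$ where $K_n$ is planar.
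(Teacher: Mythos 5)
Your argument follows the same route as the paper's: reduce the statement to the Euler-genus formula $g(K_n)=\lceil(n-3)(n-4)/6\rceil$ and to the fact, recorded as Lemma~\ref{lower_bound_calculus}, that $L(g)$ is precisely the smallest $n\geq 3$ with $g(K_n)>g$. The existence direction (take a minor-minimal nonembeddable minor of $K_{L(g)}$) and the order bound are handled exactly as in the paper; you merely carry out the algebra explicitly rather than deferring to the lemma.

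There is, however, a gap in the lower-bound step that the paper's proof shares. You pass from $g(K_n)\leq g$ to ``$K_n$ (hence any $G\subseteq K_n$) embeds in $S$.'' But $g(K_n)$ is a minimum over all surfaces, whereas $S$ is a fixed surface of Euler genus $g$ whose orientability type is prescribed, and a graph whose minimal Euler genus is attained only on a surface of the other type need not embed in $S$. Concretely, $g(K_7)=2$ because the orientable genus of $K_7$ is $1$, yet its non-orientable genus is $3$, so $K_7$ does not embed in the Klein bottle $N_2$. Since $L(2)=8>7$, some excluded minor for $N_2$ has order at most $7$, and the stated lower bound fails for $S=N_2$. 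A rigorous version of the lower bound should argue separately by orientability type, using $\lceil(n-3)(n-4)/12\rceil$ for the orientable genus of $K_n$ when $S$ is orientable and the non-orientable genus when $S$ is non-orientable, and should treat the Klein bottle as the one genuine exception. Your remark about the $n=7$ wrinkle addresses the correctness of the Euler-genus formula for $K_7$, which is not where the difficulty actually lies.
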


\vspace{0.3cm}

Let us highlight thereafter the algorithmic implications of our results.
As a consequence of the graph minor theorem, Robertson and Seymour \cite{GM13} showed in 1995 that, for every minor-closed class of graphs $\mathcal{G}$, there exists a cubic time algorithm that decides whether a graph belongs to $\mathcal{G}$ (membership test). The running time was later improved to quadratic by Kawarabayashi et al. \cite{KKR} in 2012 and to quasi-linear by Korhonen et al. \cite{KPS} in 2024.
However, although the algorithm in itself is explicit and therefore constructive, it relies on the (finite) set of excluded minors that characterizes the class of graphs $\mathcal{G}$ which is in general unknown. 
Furthermore, Fellows and Langston \cite{Fellows_Langston} observed in 1989 that there is no algorithm that, given a Turing machine that is a membership test for a minor-closed class of graphs $\mathcal{C}$, computes the excluded minors that characterize $\mathcal{C}$. Nevertheless, Fellows and Langston showed in the same paper that an explicit upper bound on the treewidth of the excluded minors for a minor-closed class of graphs leads to an algorithm to find them. Adler et al. \cite{AGK} also gave such an algorithm. Therefore, our improvement on the upper bound on the treewidth of the excluded minors for a given surface induces a speedup for these algorithms.

%Even though the Graph Minor Theorem is a very powerful result, it is purely existential and therefore gives no information on the excluded minors for a surface. A natural approach to computing the Euler genus and embedding of graphs in a surface is to take advantage of the characterization of the graphs embeddable in a surface by a (finite) list of excluded minors. Therefore, computing these excluded minors efficiently is a crucial problem for algorithms on graphs on surfaces. 
%This approach has been shown to have interesting algorithmic implications.

Moreover, we take two examples to illustrate the importance of a bound for the order of the excluded minors for a surface. First, Kawarabayashi et al. \cite{KMR} gave in 2008 a linear time FPT-algorithm for embedding graphs into an arbitrary surface $S$, with parameter the genus of $S$, which relies on the bound on the order of the excluded minors for $S$. Second, Grohe et al. \cite{GKR} gave in 2013 a quadratic time algorithm for computing graph minor decompositions, which also makes use of the bound on the order of the excluded minors for a surface.

Therefore, by improving in this paper the bounds for various parameters (especially order and treewidth, but also degree, height, and degree of the tree of a tree decomposition) of the excluded minors for a surface, we contribute significantly to the effort to solve various problems more efficiently.

\section{Overview of the proof and the paper} \label{proof_strategy}

Let $S, S'$ be surfaces with $S'$ of Euler genus $g$ and $S$ of Euler genus $g+1$ or $g+2$. Let $G$ be an excluded minor for the surface $S'$ and suppose that $G$ can be embedded in surface $S$ with embedding $\Pi$. We define a \textit{piece} as a vertex or a face of $(G, \Pi)$.

To prove that $G$ is of order bounded by a quasi-polynomial in $g$, we proceed in broad outline as follows:

\begin{itemize}
    \item \textbf{We first prove that the degree of $G$ and the size of faces in $(G, \Pi)$ are bounded by a quasi-polynomial in $g$ (Subsection \ref{subsec:max_degree}).} 
    
    The proof proceeds by contradiction. We make the assumption that there is a piece $p$ that is either a vertex of degree $> \Delta(g)$ or a face of size $> \Delta(g)$ for some function $\Delta$ (defined in Theorem \ref{max_degree}), and deduce that there is a $\Pi$-contractible fan on $p$ (a fan $F$ on a piece $p$ is roughly a subgraph consisting of internally disjoint paths from $p$ to a path, see Definition \ref{fan}) whose size is smaller than $\Delta(g)$ by a polynomial factor in $g$. Then, we show that this fan extends to a $\Pi$-contractible subgraph of $G$ that contains a forbidden structure consisting of $O(\log g)$ nested cycles that intersect only in $p$ and that all are $\Pi$-contractible (see Subsection \ref{subsec:nested_squares} for the proper statement).
    
    \item \textbf{We then show that the height of the tree of a tree decomposition of $G$ of width $w$ is bounded by a quasi-polynomial in $g$ and $w$ (Subsection \ref{height_tree_decomposition}).} 
    
    We assume that there is a path of length $> P(g,w)$ (for some function $P$ defined in Proposition \ref{no_long_planar_path}) in the tree of a tree decomposition of $G$ of width $w$ that induces a $\Pi$-contractible subgraph $G'$ of $G$. On one hand, we show that it implies that $|V(G')| > P(g,w)$. On the other hand, we take advantage of the natural separations given by the tree decomposition to show that $G'$ is separated from the rest of $G$ by a small (polynomial in $g$) separation, and then to deduce that $|V(G')|$ must be bounded by $P(g,w)$. This leads to a contradiction. Finally, there is no such path in the tree of a tree decomposition of $G$ and we deduce a bound on the height of the tree of a tree decomposition of $G$.
    
    \item \textbf{We finally put everything together to show our main result (Subsections \ref{previously_known_results} and \ref{size_quasi_polynomial}).} 
    
    Seymour (Theorems \ref{seymour_treewidth} and \ref{seymour_tree_degree}, {\cite[(3.3) and claim (5) in (4.1)]{seymour}}) shows that both the treewidth of $G$ and the maximum degree of the tree in an optimal tree decomposition of $G$ (with additional conditions) are bounded by $O(g^3)$. As subsidiary results of the structural results, we improve both of these results to $O(g \log g)$. These two bounds, together with the bound on the height of the tree, give a first bound, single exponential in $g$, on the order of $G$. Then, by looking at the pathwidth and a path decomposition of $G$, and by taking advantage of the bound on the height of the tree of a tree decomposition of $G$ (that still holds when applied to a path decomposition), we finally reach a $g^{O(\log^3 g)}$ bound on the order of $G$.
\end{itemize}

In this paper, we prove two main structural results that describe forbidden structures in $\Pi$-contractible subgraphs of $(G,\Pi)$ and that will be used repetitively to find a contradiction in the main proof:

\begin{itemize}
    \item There are at most $O(g)$ internally disjoint paths from one piece to another piece (Section \ref{sec:structural_tools}, Proposition \ref{isolated_paths}).
    \item There are at most $O(\log g)$ disjoint cycles that are either $\Pi$-contractible and nested in $\Pi$ or $\Pi$-noncontractible homotopic (Section \ref{sec:structural_tools}, Propositions \ref{good_square} and \ref{good_square_variant}). These two results are improvements on a result of Seymour (Proposition \ref{seymour_nested_cycles} in Section \ref{sec:structural_tools}, {\cite[(2.2)]{seymour}}) and are central to obtain the main result of this paper.
\end{itemize}

\vspace{0.5cm}

This paper is organized as follows. Section \ref{sec:definitions} contains the basic definitions and notations for graphs and surfaces, which are used throughout the paper. In Section \ref{sec:preliminary_results}, we introduce basic results for graphs on surfaces and preliminary results on the excluded minors for a surface. Section \ref{sec:structural_tools} contains the two main structural results mentioned above. In Section \ref{sec:main_proof}, we detail the proof of our main result. This section contains three subsections whose content has been explained above. Section \ref{sec:conclusion} explores the bottlenecks of our proof and provides suggestions to improve the main result of this paper further. Finally, we provide a quick proof for a tight lower bound on the order of the excluded minors for each surface in the appendix.

\section{Definitions and Notations} \label{sec:definitions}

\subsubsection*{Basics}

We consider simple graphs. Let $G$ be a graph.
A walk in $G$ is a finite sequence of vertices so that two consecutive vertices are adjacent. A path is a walk whose vertices are distinct, except potentially its two extremities. The interior of a path $P$ consists of $P$ from which we remove its two extremities. A circuit (or closed walk) is a walk with equal extremities. A cycle is a circuit whose vertices are all distinct except its two extremities.

The degree of a vertex $v$ of $G$ is the number of edges adjacent to $v$; we denote it by $d_G(v)$ (or $d(v)$ if clear in the context). The maximum degree of $G$ is the maximum degree of a vertex of $G$ and is denoted $\Delta(G)$.

A \textit{minor} $H$ of $G$ is a (multi)graph obtained from $G$ by successive operations of suppression of a vertex, suppression of an edge, and contraction of an edge. A \textit{proper minor} of $G$ is a minor of $G$ different from $G$.
We denote by $G - v$ (resp. $G-e$) the graph obtained from $G$ after the suppression of a vertex $v$ (resp. an edge $e$), and we denote $G - X$ the graph obtained from $G$ after the suppression of a set $X$ of vertices or edges. We denote by $G / e$ the multigraph obtained from $G$ after the contraction of an edge $e$. We denote $G / X$ the multigraph obtained from $G$ after the contraction of a set $X$ of edges (remark that the order in which the edges in $X$ are contracted is not important).

\subsubsection*{Trees and tree decomposition}

A \textit{tree} $T$ is a connected graph without any cycle. We can distinguish a vertex $v$ of $T$, called the \textit{root}, then $T$ is said to be \textit{rooted} in $v$. We define the height of a tree as the order of its longest path.
A \textit{spanning tree} $T$ of $G$ is a subgraph of $G$ that is a tree and contains all the vertices of $G$. 
A \textit{tree decomposition} of $G$ is a pair $(T, (V_t)_{t \in V(T)})$ with $T$ a tree and, for every $t \in V(T)$, $V_t \subseteq V(G)$ with the following properties:
\begin{itemize}
    \item $\bigcup_{t \in V(T)} V_t = V(G)$,
    \item for every $e = uv \in E(G)$, there exists $t \in V(T)$ so that $u,v \in V_t$,
    \item for $t,t',t'' \in V(T)$ so that $t'$ is on the path between $t$ and $t''$ in $T$, $V_t \cap V_{t''} \subseteq V_{t'}$.
\end{itemize}
The \textit{width} of a tree decomposition $(T, (V_t)_{t \in V(T)})$ of $G$ is $\max_{t \in V(T)} |V_t| -1$ and the \textit{treewidth} of $G$ is the minimal width of its tree decompositions.

We say that a tree decomposition $(T, (V_t)_{t \in V(T)})$ of $G$ (possibly with some additional properties) is \textit{minimal}, if $|V(T)|$ is chosen minimum (and the tree decomposition has the additional properties).

\subsubsection*{Connectivity}

A graph $G$ is \textit{connected} if, for every pair of vertices $u,v \in V(G)$, there is a path in $G$ between $u$ and $v$. Let $k \in \mathbb{N}^*$, $G$ is \textit{$k$-connected} if, for every set $V \subseteq V(G)$ of size $k-1$, $G - V$ is still connected. Let $k \geq 1$, a \textit{$k$-separator} of $G$ is a set $V \subseteq V(G)$ of $k$ vertices so that $G - V$ is not connected. If $G$ contains a $1$-separator $\{v\}$, $v$ is called a \textit{cutvertex} of $G$. Remark that, for $k \geq 1$, a graph contains no $k-1$-separator if and only if it is $k$-connected.
A \textit{separation} of $G$ is a pair $(A, B)$ of the subgraph of $G$ such that $A \cup B = G$ and $A \cap B$ contains no edge. Remark that $V(A \cap B)$ is a separator of $G$. For $k \in \mathbb{N}$, we say that $(A,B)$ is a \textit{$k$-separation} of $G$ if $(A,B)$ is a separation of $G$ and $|V(A \cap B)| = k$.
A \textit{($2$-connected) block} of $G$ is the subgraph of $G$ induced by an equivalence class of the following equivalence relation on $E(G)$: $e_1 \sim e_2$ if $e_1 = e_2$ or there is in $G$ a cycle that contains $e_1$ and $e_2$. Remark that $H$ is a block of $G$ if and only if $H$ is a maximal $2$-connected subgraph of $G$.

\subsubsection*{Bridges}

Let $H$ be a graph and $H_0$ be a subgraph of $H$. A \textit{bridge} $B$ of $H$ on $H_0$ is either an edge with both ends in $H_0$ (and its ends do not belong to the bridge) or a connected subgraph of $H - V(H_0)$ together with all the edges which have one end in this component and the other end in $H_0$ (the ends in $H_0$ does not belong to the bridge).
Remark that the bridges of $H$ on $H_0$ partition $E(H) - E(H_0)$.
We say that $w \in H_0$ is an \textit{attach} of a bridge $B$ on $H_0$ if an edge $e = vw \in E(B)$ exists.

\vspace{0.5cm}

To describe graphs on surfaces, we use almost identical definitions and notations (the ones that differ will be indicated) as in the book \textit{Graphs on Surfaces} from Mohar and Thomassen \cite{graphs_on_surfaces}. Thereafter, we give a condensed summary of the definitions and notation used in this paper.

\subsubsection*{Surface and disk}

We define a \textit{surface} as a connected compact Hausdorff topological space $S$ so that each point of $S$ has an open neighborhood homeomorphic to the open unit disk in $\mathbb{R}^2$. A \textit{simple closed curve} in a surface $S$ is a continuous 1-1 function $f : [0,1] \mapsto S$ with $f(0) = f(1)$. A \textit{disk} on $S$ is a simple closed curve that can be continuously deformed into a single point.

\subsubsection*{Embedding and facial walk}

Let $G$ be a graph and $S$ a surface, we define an \textit{embedding} $\Pi$ of $G$ in $S$ to be a pair $\Pi = (\pi, \lambda)$ where $\pi = \{ \pi_v | v \in V(G) \}$ associates to each vertex $v \in V(G)$ a cyclic permutation of the edges incident to $v$ and $\lambda : E(G) \mapsto \{-1,1\}$ associates to each edge $e \in E(G)$ a sign (either $-1$ or $1$) called its signature. 

A \textit{local change} of an embedding $\Pi = (\pi, \lambda)$ of $G$ changes the clockwise ordering to anticlockwise at some vertex $v \in V(G)$, i.e. $\pi_v$ is replaced by its inverse $\pi_v^{-1}$, and $\lambda(e)$ is replaced by $-\lambda(e)$ for every edge $e$ that is incident with $v$. Two embeddings of $G$ are \textit{equivalent} if one can be obtained from the other by a sequence of local changes.

The \textit{face traversal procedure} is the following: We start with a vertex $v \in V(G)$ and an edge $e = vw \in E(G)$. Let's traverse the edge from $v$ to $w$, if the signature of $e$ is $1$, then we continue the walk along the edge $e' = \pi_w(e)$, otherwise ($\lambda(e) = -1$), we change $\pi_u$ by $\pi_u^{-1}$ for every $u \in V(G)$ and we continue the walk along the edge $e' = \pi_w(e)$. And so forth. The walk is completed when the initial edge $e$ is encountered in the same direction (from $v$ to $w$) and with the same orientation ($\pi$ is the same as before we began the walk). A \textit{$\Pi$-facial walk} (or, if clear in the context, \textit{face}) is a closed walk in $G$ determined by the face traversal procedure. We define the \textit{size} of a face to be the number of edges that the walk contains. We define the \textit{maximum face degree} of $(G, \Pi)$ to be $\Delta_F(G, \Pi) = \max \{ |f|, f \text{ is a face in } (G, \Pi) \}$. 

Let $C$ be a cycle of $G$, we define the \textit{signature} of $C$ in $\Pi$ to be $\lambda(C) = \prod_{e \in C} \lambda(e)$. We say that $C$ is \textit{two-sided} if $\lambda(C) = 1$ and \textit{one-sided} otherwise. We say that an embedding $\Pi$ of $G$ is \textit{orientable} if every cycle $C$ of $G$ is two-sided, otherwise we say that it is \textit{non orientable}.

\subsubsection*{Genus and embeddability}

In this paper, the \textit{genus} of a surface always refers to the Euler genus, that is to say twice its orientable genus if the surface is orientable and its non orientable genus if the surface is non orientable. The sphere is the only surface of genus $0$, the projective plane is the only surface of genus $1$ and is a non orientable surface.
Let $F(G, \Pi)$ be the $\Pi$-facial walks of $(G, \Pi)$. We define the Euler characteristic of $\Pi$ to be $\chi(\Pi) = |V(G)| - |E(G)| + |F(G, \Pi)|$.
For an embedding $\Pi$ and a graph $G$, $g(\Pi) = 2 - \chi(\Pi)$ denotes the \textit{Euler genus} of $\Pi$, and $g(G)$ denotes the Euler genus of an embedding of $G$ with minimal Euler genus. The formula \[ \chi(\Pi) = 2 - g(\Pi)\] is called the \textit{Euler formula}.
We say that $G$ is \textit{embeddable} in a surface $S$, if there exists an embedding $\Pi$ of $G$ so that $g(\Pi) = g(S)$ and $\Pi$ is \textit{orientable} if and only if $S$ is orientable.
Let $G'$ be a minor of $G$; it is easy to prove that $g(G') \leq g(G)$. We say that $G$ is a \textit{(minimal) excluded minor} for a surface $S$ if $G$ is not embeddable in $S$ but every proper minor of $G$ is embeddable in $S$.

\subsubsection*{Separating cycles}

Let $C = v_0 e_1 v_1 e_2 ... v_{l-1}e_lv_0$ be a $\Pi$-twosided cycle of a $\Pi$-embedded graph $G$. Suppose that the signature of $C$ is positive in $\Pi$. We define the \textit{left graph} and the \textit{right graph} of $C$ as follows: for $1 \leq i \leq l$, if $e_{i+1} = \pi_{v_i}^{k_i}(e_i)$, then all the edges $\pi_{v_i}(e_i), \pi_{v_i}^2(e_i), ..., \pi_{v_i}^{k_i-1}(e_i)$ are said to be on the left side of $C$ and the left graph of $C$, denoted by $G_l(C, \Pi)$, is defined as the union of all bridges on $C$ that attach to $C$ by at least one edge on the left side of $C$. The right graph $G_r(C, \Pi)$ is defined analogously.
A cycle $C$ of a $\Pi$-embedded graph $G$ is \textit{$\Pi$-separating} if $C$ is two-sided and $G_l(C, \Pi)$ and $G_r(C, \Pi)$ have no edges in common. If $\Pi(G_l(C, \Pi))$ or $\Pi(G_r(C, \Pi))$ is an induced embedding of genus $0$, then we say that $C$ is \textit{$\Pi$-contractible}. Suppose without loss of generality that $\Pi(G_l(C, \Pi))$ is an induced embedding of genus $0$, we define $\text{int}(C, \Pi_H)$ (resp. $\text{ext}(C, \Pi_H)$) to be the bridges onto $C$ in $G_l(C, \Pi)$ (resp. $G_r(C, \Pi)$). Moreover, we define $\text{Int}(C, \Pi_H) = \text{int}(C, \Pi_H) \cup C$ and $\text{Ext}(C, \Pi_H) = \text{ext}(C, \Pi_H) \cup C$. If it is clear in the context, the mention of the embedding is removed, and we write $\text{Int}(C)$, $\text{int}(C)$, $\text{Ext}(C)$, $\text{ext}(C)$.

\subsubsection*{Planar graphs}

A \textit{planar graph} is a graph embeddable in the sphere (surface of genus $0$). Let $G$ be a planar graph. A planar embedding $\Pi$ of $G$ is an embedding of $G$ in the sphere with a distinguished face called the \textit{outer face}. Let $C$ be a cycle of $G$, then $C$ is two-sided and $\Pi$-separating. Let $G_l(C, \Pi)$ and $G_r(C, \Pi)$ be the left and right graph of $G$ and suppose without loss of generality that the outer face of $\Pi$ is in $G_r(C, \Pi)$. Then, we define $\text{Int}(C, \Pi)$, $\text{int}(C, \Pi)$, $\text{Ext}(C, \Pi)$, $\text{ext}(C, \Pi)$ as above, by making sure that the outer face in the exterior of $C$.

\subsubsection*{Cutting along a cycle}

Let $G$ be a $\Pi$-embedded graph. Let $C$ be a $\Pi$-separating cycle, then cutting along $C$ gives rise to two graphs $G_l(C, \Pi) \cup C$ and $G_r(C, \Pi) \cup C$ and their induced embedding $\Pi(G_l(C, \Pi))$ and $\Pi(G_r(C,\Pi))$. Let $C$ be a two-sided $\Pi$-nonseparating cycle. Let $\overline{G}$ be the graph obtained from $G$ by replacing $C$ with two copies of $C$ such that all the edges of $C$ on the left side of $C$ are incident one copy of $C$ and all the edges on the right side of $C$ are incident with the other copy of $C$. We say that $\overline{G}$ is the graph obtained by \textit{cutting along $C$}, and we call the induced embedding $\overline{\Pi}$. Let $C= v_0 e_1 v_1 e_2 ... v_{l-1}e_lv_0$ be a one-sided $\Pi$-nonseparating cycle. Let's first define edges on the left side of $G$ and edges on the right side of $G$ at each vertex of $C$ in a similar way as in the case of two-sided cycles: suppose first that the signature $\lambda$ of $\Pi$ satisfies $\lambda(e_i) = 1$ for $1 \leq i < l$ and $\lambda(e_l) = -1$. Then, we use the pairs of consecutive edges $e_i, e_{i+1}$ on $C$ ($1 \leq i < l$)  to define edges on the left side of $C$ incident with the vertex $v_i$. Then we construct $\overline{G}$ by replacing $C$ in $G$ by the cycle $\overline{C} = v_0 e_1 ... e_l \overline{v}_0 \overline{e}_1 ... \overline{e}_l v_0$. The edges on the left side of $C$ are adjacent to $v_0, ..., v_{l-1}$ and the edges on the right side of $C$ are incident to $\overline{v}_0, ..., \overline{v}_{l-1}$. We extend $\lambda$ by putting $\lambda(\overline{e}_0) = ... = \lambda(\overline{e}_{l-1}) = 1$ and $\lambda(\overline{e}_l) = -1$ and hence obtain an embedding $\overline{\Pi}$ of $\overline{G}$.
We say that $\overline{G}$ is the graph obtained by \textit{cutting along $C$}. 

\subsubsection*{Homotopic cycles}

Let $C$ and $C'$ be two-sided cycles of a $\Pi$-embedded graph $G$. Suppose that $C$ and $C'$ are either disjoint or share a path (that might be reduced to a vertex). We say that $C$ and $C'$ are \textit{$\Pi$-homotopic} if cutting along $C$ and $C'$ results in a graph which has a component $D$ which contains precisely one copy of $C$ and one copy of $C'$ and the induced genus of the embedding $\Pi(D)$ is $0$. We then write $D = \text{Int}(C \cup C', \Pi)$. %The definition of $\Pi$-homotopic in \cite{graphs_on_surfaces} extends to $\Pi$-contractible cycles, but we decided to restrict it here to $\Pi$-noncontractible cycles for practical reasons.

\subsubsection*{Cycles bounding a disk or cylinder}

Let $G$ be a graph $\Pi$-embedded in a surface $S$. Let $C$ be a $\Pi$-contractible cycle in $G$. Then, we say that $C$ is the \textit{boundary} of $\text{Int}(C, \Pi)$. Moreover, we say that $C$ \textit{bounds a disk} if $\text{int}(C, \Pi)$ is empty. Let $C$ and $C'$ be two cycles that are disjoint and $\Pi$-homotopic. We say that $C$ and $C'$ are the \textit{boundaries} of $\text{Int}(C \cup C', \Pi)$. Moreover, we say that $C$ and $C'$ \textit{bound an cylinder} if $\text{int}(C \cup C', \Pi)$ is empty.

\section{Preliminary results} \label{sec:preliminary_results}

\subsection{Basic results on graphs on surfaces}

\begin{defi}[Flipping]
    Let $H$ be a 2-connected planar graph with embedding $\Pi_H$ in the plane. Let $C$ be a cycle of $H$ such that only two vertices $v$ and $w$ of $C$ have incident edges in $\text{ext}(C, \Pi_H)$. Then, we define a \textit{flipping} of $H$ with respect to $C$ as a reembedding of $H$ such that the embedding in $\text{ext}(C, \Pi_H)$ is unchanged. The embedding of $H' = H \cap \text{Int}(C, \Pi_H)$ is changed so that the new embedding of $H'$ is equivalent to the original one, but the clockwise orientations of all the facial cycles are reversed.
\end{defi}

\begin{prop}[Whitney's Theorem {\cite[Theorem 2.6.8]{graphs_on_surfaces}}]

    \label{Whitney_planar_graph_flipping}
    Let $H$ be a 2-connected plane graph and $\Pi_H$ be an embedding of $H$ in the plane. Then, any embedding of $H$ in the plane can be obtained from $\Pi_H$ by a sequence of flippings.
\end{prop}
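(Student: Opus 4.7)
The plan is to proceed by induction on $|V(H)|$, reducing to the $3$-connected case via decomposition along $2$-separators of $H$.

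For the base case in which $H$ is $3$-connected, I would invoke the classical Whitney uniqueness theorem for $3$-connected planar graphs: any two planar embeddings of such a graph induce the same set of facial cycles, up to a global reflection. Since the planar embedding data as defined in the paper includes the outer face and the cyclic orders $\pi_v$, and since a $3$-connected graph contains no cycle $C$ satisfying the attachment hypothesis of a flipping (as this would force a $2$-separation), the two embeddings $\Pi_H$ and $\Pi'$ must coincide and no flippings are needed.

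For the inductive step, suppose $H$ has a $2$-separator $\{u,v\}$ with corresponding $2$-separation $(A,B)$ such that $V(A) \cap V(B) = \{u,v\}$ and both sides are nontrivial. Let $A^+ = A \cup \{uv\}$ and $B^+ = B \cup \{uv\}$, adding the virtual edge $uv$ if absent; both graphs are $2$-connected and strictly smaller than $H$. In any planar embedding $\Pi'$ of $H$, the $2$-separation property forces $A$ to be enclosed by a cycle $C_A$ through $u$ and $v$ satisfying the attachment hypothesis of a flipping (only $u,v$ are incident with edges of $\text{ext}(C_A, \Pi')$); the same holds in $\Pi_H$. I would first apply at most one flipping along $C_A$ so that the relative sidedness of $A$ with respect to $\{u,v\}$ matches in the two embeddings. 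The resulting embeddings then restrict to planar embeddings of $A^+$ and $B^+$, and applying the inductive hypothesis to each produces two sequences of flippings that reconcile the restrictions. Each such flipping lifts back to a flipping of $H$: if the witnessing cycle $C \subseteq A^+$ does not use the virtual edge $uv$, then $C$ is already a cycle of $H$; otherwise $C$ can be rerouted through a $u$-$v$ path along the outer facial boundary of $B^+$ in $\Pi'$, and the attachment condition for the rerouted cycle in $H$ still holds because $\{u,v\}$ separates $A$ from $B$.

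The main obstacle is precisely this lifting step. One must verify that cycles witnessing flippings inside $A^+$ continue to satisfy the attachment condition when translated back to $H$, especially when they traverse the virtual edge $uv$ and must be replaced by a path in $B^+$; this requires careful use of the $2$-separator property and a consistent choice of rerouting face. A secondary subtlety arises when $\{u,v\}$ separates $H$ into more than two connected pieces, which is resolved by iterating the argument one piece at a time, always processing a piece of minimum size first so that the induction strictly decreases $|V(H)|$ at each step.
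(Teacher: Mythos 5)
The paper does not prove this proposition: it is imported as a black box from Mohar and Thomassen's textbook (Theorem~2.6.8 of \cite{graphs_on_surfaces}), so there is no in-paper argument to compare against. I can only assess your sketch on its own terms.

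Your overall plan --- induct on $|V(H)|$ by splitting across a $2$-separator and falling back on the $3$-connected uniqueness theorem --- is the standard route and is sound in outline, but two of the steps contain genuine gaps rather than mere elisions. The first concerns the coupling between $A^+$ and $B^+$. Applying the inductive hypothesis to each separately can align the two embeddings of $A^+$ and the two embeddings of $B^+$, but it does not by itself guarantee that the virtual edge $uv$ ends up in the \emph{same} face of $B^+$ in both embeddings, which is what you need in order to substitute $A$ back in and recover matching embeddings of $H$. This must be arranged, for instance by running the inductive hypothesis on $B^+$ with the face of the virtual edge taken as the distinguished outer face, and that choice has to be justified as consistent across the two target embeddings. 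The same issue reappears, amplified, when $\{u,v\}$ splits $H$ into $r\ge 3$ pieces: the cyclic order of those pieces around $u$ and $v$ is genuine embedding data, not just the internal embedding of each piece, and your one-sentence remark about processing pieces ``one at a time, smallest first'' does not show that your peeling scheme realises every such reordering via flippings. (It does --- any permutation of a cyclic order can be achieved by reversals of contiguous blocks, each of which is a flipping --- but this needs to be stated and proved, not left implicit.)

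The second gap is the lifting of a flipping in $A^+$ to a flipping in $H$ when the witnessing cycle $C$ traverses the virtual edge $uv$. Rerouting $C$ through a $u$--$v$ path $Q$ in $B$ gives a cycle $C'$ of $H$, but the flipping with respect to $C'$ now also touches the vertices of $Q$. For the lift to have the intended effect on $A$ and leave $B$ untouched, you must verify that the interior vertices of $Q$ have no incident edges in $\text{int}(C',\Pi')$, and that at $u$ and $v$ the portion of the rotation that is reversed is exactly the portion coming from $A$. This holds precisely when $Q$ is the walk along the boundary of the face of $B^+$ in which the virtual edge lies, on the side facing $A$; a careless choice of $Q$ would instead trap part of $B$ inside $C'$. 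This is a checkable but nontrivial verification, and as written it is glossed over. Incidentally, your base-case phrase ``the planar embedding data as defined in the paper includes the outer face'' is in tension with the result itself: flippings fix the embedding of $\text{ext}(C)$, hence cannot move the outer face, so the theorem is really about embeddings in the sphere; for a $3$-connected graph the two candidate rotation systems differ by a global reflection, which is already absorbed by the paper's equivalence under local changes, so your conclusion that no flipping is needed is correct but for a slightly different reason than the one you state.
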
 

\begin{prop}[{\cite[Lemma 4.2.4]{graphs_on_surfaces}}]
    \label{4.2.4}

    Suppose that $C$ is a $\Pi_H$-nonseparating cycle of a $\Pi_H$-embedded graph $H$ in a surface. Let $\overline{H}$ be the graph obtained by cutting along $C$ and let $\overline{\Pi}_H$ be its resulting embedding. Then all $\Pi_H$-facial walks are $\overline{\Pi}_H$-facial walks in $\overline{H}$, where edges of $C$ are replaced by their copies in $\overline{H}$. If $C$ is $\Pi_H$-twosided, then $g(\overline{\Pi}_H) = g(\Pi_H)-2$ and the two copies of $C$ are the new $\Pi_H$-facial cycles. If $C$ is $\Pi_H$-onesided, then $g(\overline{\Pi}_H) = g(\Pi_H)-1$ and $\overline{C}$ is a facial cycle in $\overline{H}$.
\end{prop}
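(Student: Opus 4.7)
The plan is to analyze what happens to the face traversal procedure under the cutting operation, then apply the Euler formula to deduce the genus shift.

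First I would verify the claim about facial walks. The face traversal procedure is purely local: at a vertex $u$, when traversing an edge $e$ into $u$, the next edge is $\pi_u(e)$ (with a global sign flip if $\lambda(e) = -1$). The cutting operation only modifies the rotation at vertices of $C$ by splitting their incident edges into a ``left'' block and a ``right'' block, inserting the appropriate copy of $C$ (or the appropriate arc of $\overline{C}$) to close each block into a cyclic order on the new vertex $v_i$ or $\overline{v}_i$. Hence, for any facial walk $W$ of $\Pi_H$ not using an edge of $C$, the walk is literally unchanged in $\overline{\Pi}_H$. For a $\Pi_H$-facial walk using an edge $e_i$ of $C$, the edge $e_i$ is replaced by its copy in $\overline{H}$ that lies on the side of $C$ from which $W$ approaches $v_i$; by the construction of the split rotation, the next edge produced by the traversal procedure is exactly the same non-$C$ edge as before. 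Thus every $\Pi_H$-facial walk corresponds to a $\overline{\Pi}_H$-facial walk by replacing each edge of $C$ by the appropriate copy.

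Next I would identify the genuinely new facial walks produced by the cut. In the twosided case, the edges on the left side of $C$ at $v_i$ and the left copy of $C$ form a cyclic block at $v_i$, and by the face-traversal procedure starting along the left copy of $C$ one stays on the left copy at every step, producing the left copy of $C$ as a facial cycle; symmetrically for the right copy. These are two new facial cycles which, together with the old facial walks, account for all of $F(\overline{H}, \overline{\Pi}_H)$. In the onesided case, when traversing along $\overline{C}$ starting at an edge $e_i$, one sign flip occurs at $\overline{e}_l$ (the unique edge of $\overline{C}$ with negative signature), and after traversing the full length $2|E(C)|$ of $\overline{C}$ the procedure returns to the starting edge in the same direction with the same global orientation. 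Hence $\overline{C}$ is a single facial cycle, and again all remaining facial walks are in bijection with the $\Pi_H$-facial walks.

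Finally I would apply the Euler formula. In both cases, $|V(\overline{H})| = |V(H)| + |V(C)|$ and $|E(\overline{H})| = |E(H)| + |E(C)|$, and since $|V(C)| = |E(C)|$ these contributions cancel in the Euler characteristic. The number of facial walks increases by $2$ in the twosided case (the two new copies of $C$) and by $1$ in the onesided case (the cycle $\overline{C}$). Therefore
\[
\chi(\overline{\Pi}_H) - \chi(\Pi_H) = |F(\overline{H}, \overline{\Pi}_H)| - |F(H, \Pi_H)| \in \{1,2\},
\]
yielding $g(\overline{\Pi}_H) = g(\Pi_H) - 2$ in the twosided case and $g(\overline{\Pi}_H) = g(\Pi_H) - 1$ in the onesided case.

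The main obstacle I would expect is the bookkeeping in the onesided case: one must be careful with the choice of representative for $\Pi_H$ that concentrates all negative signatures on the single edge $e_l$ of $C$ (this uses that signatures can be redistributed by local changes without affecting embedding equivalence), and then verify that the face-traversal procedure on $\overline{C}$ indeed produces a closed walk of length $2|E(C)|$ rather than two walks of length $|E(C)|$. The twosided case and the bijection for the old faces are essentially bookkeeping on the split rotation.
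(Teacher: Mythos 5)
The paper does not prove this statement; it imports it as \cite[Lemma 4.2.4]{graphs_on_surfaces}. Your argument follows the intended textbook derivation: face traversal is local, so old facial walks persist under the cut with edges of $C$ replaced by the appropriate copies; the cut introduces the two copies of $C$ (twosided case) or $\overline{C}$ (onesided case) as new facial cycles; the added vertices and edges cancel in the Euler characteristic because $|V(C)| = |E(C)|$, so the face count alone drives $\chi$ up by $2$ or $1$ and hence $g$ down by $2$ or $1$. This is correct and is the expected route.

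There is, however, one factual slip in the onesided case, precisely at the spot you flagged as the main obstacle. You write that $\overline{e}_l$ is ``the unique edge of $\overline{C}$ with negative signature.'' Under the normalization the paper (and you) uses, the original edge $e_l$ already has $\lambda(e_l) = -1$, and the cutting construction additionally sets $\lambda(\overline{e}_l) = -1$, so $\overline{C}$ has \emph{two} negatively-signed edges. That is not a cosmetic point: a single sign flip over one full traversal of $\overline{C}$ would leave the global orientation reversed, so the face traversal would \emph{not} terminate after $2|E(C)|$ steps as you claim --- your stated count contradicts your stated conclusion. With the correct count of two flips, the net orientation change is zero, $\overline{C}$ is twosided (as every facial cycle must be), and the walk does close up after a single pass of length $2|E(C)|$. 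Once this count is corrected, the remainder of your argument --- the bijection on old faces, the new-face count, and the Euler computation --- stands.
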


\begin{prop}[{\cite[Lemma 4.2.5]{graphs_on_surfaces}}]
    \label{3_homotopic_cycles}
    
    Suppose that $C_1, C_2, C_3$ are pairwise disjoint cycles of a $\Pi_H$-embedded graph $H$ and that $C_1, C_2$ and $C_2, C_3$ are $\Pi_H$-homotopic (respectively). Then also $C_1$ and $C_3$ are $\Pi_H$-homotopic If one of the cycles is $\Pi$-noncontractible, then so are the other two and there is a pair $\{r,s\} \subset \{1,2,3\}$ such that for $1 \leq i < j \leq 3$

    \[ \text{Int}(C_i \cup C_j, \Pi_H) \subseteq \text{Int}(C_r \cup C_s, \Pi_H)\]
\end{prop}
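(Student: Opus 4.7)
The plan is to cut $H$ along $C_2$ and analyze how the two cylinders $\text{Int}(C_1 \cup C_2, \Pi_H)$ and $\text{Int}(C_2 \cup C_3, \Pi_H)$ sit relative to the two local sides of $C_2$ in the resulting cut embedding.

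Since $C_2$ is $\Pi_H$-homotopic to $C_1$, it is two-sided. I would first cut $H$ along $C_2$: if $C_2$ is $\Pi_H$-nonseparating, Proposition \ref{4.2.4} produces a graph $\overline{H}$ with induced embedding $\overline{\Pi}_H$ in which the two copies $C_2^L, C_2^R$ of $C_2$ are facial cycles; if $C_2$ is $\Pi_H$-separating, cutting simply produces the two sides $G_l(C_2, \Pi_H) \cup C_2$ and $G_r(C_2, \Pi_H) \cup C_2$. In either case, the cylinder $D_{12} := \text{Int}(C_1 \cup C_2, \Pi_H)$, being of induced genus $0$ and attached to $C_2$ from a single local side, appears after the cut as a subgraph with boundary $C_1$ and exactly one of the two copies of $C_2$; the same holds for $D_{23} := \text{Int}(C_2 \cup C_3, \Pi_H)$.

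I would then split into two cases according to whether $D_{12}$ and $D_{23}$ are attached to the same copy of $C_2$ in the cut graph or to different copies. If they are attached to different copies, they are disjoint in the cut graph, and regluing along $C_2$ merges them into a connected subgraph $D$ of $H$ with boundary $C_1, C_3$; since two annuli glued along a common boundary cycle form an annulus (a short Euler characteristic check confirms this), $D$ has induced genus $0$, so $C_1$ and $C_3$ are $\Pi_H$-homotopic with $\text{Int}(C_1 \cup C_3, \Pi_H) = D$, and the desired containments hold with $\{r,s\} = \{1,3\}$. If instead $D_{12}$ and $D_{23}$ are attached to the same copy of $C_2$, they both contain a full neighborhood of that copy and hence overlap; since $C_1$ and $C_3$ are disjoint, one cylinder must be contained in the other, say $D_{12} \subseteq D_{23}$. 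Then $C_1$ is a two-sided cycle lying inside the cylinder $D_{23}$, and cutting $D_{23}$ along $C_1$ splits it into $D_{12}$ and a second sub-cylinder of induced genus $0$ with boundary $C_1, C_3$, proving that $C_1$ and $C_3$ are $\Pi_H$-homotopic with containments given by $\{r,s\} = \{2,3\}$.

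For the noncontractibility preservation, suppose for contradiction that some $C_i$ is $\Pi_H$-noncontractible while a homotopic cycle $C_j$ is $\Pi_H$-contractible, so that $\text{Int}(C_j, \Pi_H)$ is a subgraph of induced genus $0$ bounded by $C_j$. Combining this disk-like subgraph with the cylinder $\text{Int}(C_i \cup C_j, \Pi_H)$ (either by inclusion, if $C_i \subseteq \text{Int}(C_j, \Pi_H)$, or by gluing along $C_j$ otherwise) yields a subgraph of induced genus $0$ bounded by $C_i$, contradicting the noncontractibility of $C_i$. Hence if one of $C_1, C_2, C_3$ is noncontractible, so are the other two. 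The main obstacle I anticipate is making rigorous the ``one cylinder is contained in the other'' step in the same-side case: one must verify that two sub-annuli sharing a boundary copy of $C_2$ in the cut embedding cannot have their other boundaries $C_1, C_3$ interleave (which would force $C_1 \cap C_3 \neq \emptyset$), forcing nesting. Formalizing this requires a careful bridge-based analysis of how $D_{12}$ and $D_{23}$ attach to the shared copy of $C_2$, leveraging the definition of the ``Int'' operation together with the disjointness of $C_1$ and $C_3$.
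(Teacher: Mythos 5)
The paper does not prove this proposition — it is cited from Mohar and Thomassen's book and no argument appears in the source — so there is nothing in the paper to compare against; your attempt has to stand on its own. Your overall strategy (cut along $C_2$, split into the cases where $D_{12} = \text{Int}(C_1 \cup C_2, \Pi_H)$ and $D_{23} = \text{Int}(C_2 \cup C_3, \Pi_H)$ attach to opposite copies of $C_2$ or to the same copy, then glue or nest) is the right shape, the Euler-characteristic gluing in the opposite-sides case is fine, and the noncontractibility-transfer argument is correct. But the same-side case contains a genuine gap that you sense but misdiagnose. You infer ``one cylinder must be contained in the other'' from the disjointness of $C_1$ and $C_3$, and you guess the only failure mode is an interleaving that would force $C_1 \cap C_3 \neq \emptyset$. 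That is not the failure mode. Take $H$ on a positive-genus surface, $C_2$ a contractible cycle bounding a disc, and $C_1, C_3$ two small disjoint contractible cycles placed side by side inside that disc, neither nested in the other. Then $D_{12}$ (the annulus between $C_1$ and $C_2$) contains the disc bounded by $C_3$, $D_{23}$ contains the disc bounded by $C_1$, both attach to the inside copy of $C_2$, and $C_1 \cap C_3 = \emptyset$ — yet neither annulus is contained in the other, and cutting along $C_1$ and $C_3$ yields no genus-$0$ component holding one copy of each. So the inference ``overlap plus disjoint far boundaries implies nesting'' is false as stated, and so is the unconditional first sentence of the proposition as transcribed here.

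What actually rescues the same-side case — and what Mohar and Thomassen's Lemma~4.2.5 assumes explicitly (the paper's transcription drops that hypothesis, which your blind attempt inherited) — is noncontractibility. If the cycles are noncontractible, then inside the genus-$0$ subgraph $D_{12} \cup D_{23}$ the cycles $C_1$ and $C_3$ are two disjoint noncontractible cycles of a cylinder with boundary containing $C_2$, and two such cycles of a cylinder are necessarily nested between its boundaries; this forces $D_{12} \subseteq D_{23}$ or $D_{23} \subseteq D_{12}$ and the rest of your argument goes through. Your proof needs to lean on this noncontractibility-in-a-cylinder fact rather than on disjointness of $C_1, C_3$ alone; as written the crucial nesting step is missing its real justification, and for contractible cycles the claimed nesting is simply false.
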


\begin{prop}[{\cite[Proposition 4.2.7]{graphs_on_surfaces}}]
    \label{homotopic_cycles}
    Let $H$ be a $\Pi_H$-embedded graph and $a$, $b$ vertices of $H$ (possibly $a = b$). If $P_0, ..., P_k$ are pairwise internally disjoint paths (or cycles) from $a$ to $b$ such that no two of them are $\Pi$-homotopic, then

    \[ k \leq
    \left\{
    \begin{array}{lr}
        g(\Pi_H) &  \text{ if } g(\Pi_H) \leq 1\\
        3 g(\Pi_H) - 3 & \text{ if } g(\Pi_H) \geq 2 \\ 
    \end{array}\right.\]
\end{prop}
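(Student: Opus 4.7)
The plan is to reduce the problem to a multigraph with few vertices and then invoke Euler's formula. Let $H' = P_0 \cup P_1 \cup \dots \cup P_k$ and form the multigraph $M$ by contracting every internal (degree-$2$) vertex along each $P_i$: if $a \neq b$ this produces $M$ with two vertices $a, b$ joined by $k+1$ parallel edges, while if $a = b$ it produces $M$ with a single vertex carrying $k+1$ loops. The embedding $\Pi_H$ induces an embedding $\Pi_M$ of $M$ in the same surface, and since contracting degree-$2$ vertices cannot raise the Euler genus, $g(\Pi_M) \leq g(\Pi_H)$.

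The non-homotopy hypothesis translates into a local constraint on faces of $\Pi_M$: no face is bounded by exactly two edges. Indeed, if some face $f$ were a disk bounded by two edges of $M$ corresponding to $P_i$ and $P_j$, then $P_i \cup P_j$ would bound a disk in the surface, which would make $P_i$ and $P_j$ $\Pi_H$-homotopic, contradicting the assumption. In the cycle case $a = b$, an analogous argument, combined with Proposition \ref{3_homotopic_cycles} applied to a hypothetical contractible cycle together with any other cycle through $a$, rules out single-loop (``monogon'') faces as well. When $a \neq b$, the multigraph $M$ is moreover bipartite, so every facial walk has even length, hence length at least $4$.

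The stated bounds now follow by combining Euler's formula $V(M) - E(M) + F(M) = 2 - g(\Pi_M)$ with the incidence identity $\sum_{f} |f| = 2E(M) = 2(k+1)$ and the facial length lower bound just derived. After routine arithmetic and splitting on $g(\Pi_H) \leq 1$ versus $g(\Pi_H) \geq 2$, one extracts the desired inequalities $k \leq g(\Pi_H)$ in the former case and $k \leq 3g(\Pi_H) - 3$ in the latter.

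The main technical obstacle is that the induced embedding of $M$ in $S$ is not automatically $2$-cellular, so Euler's formula cannot be applied to $\Pi_M$ as it stands. The natural remedy is to augment $M$ inside each non-disk face with auxiliary edges so that the resulting induced embedding becomes cellular of Euler genus at most $g(\Pi_H)$; this addition can only increase $F(M)$ and therefore only tightens the final inequality, while the no-digon (and no-monogon) property for the original edges of $M$ is preserved. A further subtlety specific to the cycle case $a = b$ is the presence of possibly one-sided loops, where the rotation and signature bookkeeping around the single vertex of $M$ must be handled with care, but the conclusion is the same in all cases.
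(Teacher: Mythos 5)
The paper does not actually prove this statement; it cites it directly from Mohar and Thomassen's book (Proposition 4.2.7 there). The closest thing to a proof in the paper is the argument for the variant Proposition~\ref{homotopic_cycles_variant1}, which (like the book's proof) contracts a \emph{spanning tree of the whole graph $H$} to obtain a one-vertex bouquet embedded \emph{cellularly} in the same surface, and then does the Euler count on that bouquet after discarding loops that lie in two facial walks. Your approach, by contrast, contracts only the subgraph $P_0\cup\dots\cup P_k$, and that is precisely where the argument develops a genuine gap.

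The gap is in the step ``no face of $\Pi_M$ is a digon.'' What the non-homotopy hypothesis actually gives you is weaker: no \emph{disk} face of $\Pi_M$ is a digon. A facial walk of $\Pi_M$ bounded by exactly two edges $e_i,e_j$ corresponds to a region of $S$ that may carry handles; in that case $P_i\cup P_j$ does not bound a disk in $S$, and there is no contradiction with $P_i,P_j$ being $\Pi_H$-nonhomotopic. (For $a\neq b$ every facial walk of $M$ has even length and many will indeed be $2$-gons in the subgraph rotation.) You flag exactly this obstacle in your last paragraph, but the proposed remedy does not repair it. Adding auxiliary edges to make the embedding cellular increases $E$ as well as $F$, and the new faces it creates are partly bounded by auxiliary edges, so they are not governed by the non-homotopy hypothesis; the claim that the augmentation ``can only increase $F(M)$ and therefore only tightens the final inequality'' is therefore unjustified, and as written the Euler count does not close.

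The cleanest way to rescue your route is not to augment $M$ at all, but to charge the genus deficit to the non-disk faces. Writing $F_d$, $F_n$ for the numbers of disk and non-disk facial walks of $\Pi_M$, one has $g(\Pi_H) \geq g(\Pi_M) + F_n$ (each non-disk face either carries genus or is one of several boundary components of a higher-genus piece, and either way contributes at least $1$), while Euler's formula for $\Pi_M$ gives $F_d + F_n = (k+1) - g(\Pi_M) + (V(M)-2)$. Combining this with ``every disk face has length $\geq 3$ (resp.\ $\geq 4$ when $a\neq b$)'' then recovers the stated bounds. Alternatively, follow the paper's route: contract a spanning tree of all of $H$ (not just of $\bigcup P_i$), which keeps the embedding cellular in $S$ by construction and sidesteps the whole issue.
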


\begin{lem}[\cite{graphs_on_surfaces}]
    \label{genus_additivity}
    
\end{lem}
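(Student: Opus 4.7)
Based on the label \texttt{genus\_additivity} and the citation to \cite{graphs_on_surfaces}, I interpret Lemma \ref{genus_additivity} as the classical statement that the Euler genus of a graph equals the sum of the Euler genera of its blocks, that is $g(G) = \sum_{i} g(B_i)$ where $B_1, \dots, B_k$ are the blocks of $G$. The plan is to induct on the number of blocks: singleton blocks and bridges contribute genus $0$, and the inductive step reduces to the case where $G$ is split by a single cut vertex $v$ into two subgraphs $G_1, G_2$ with $V(G_1) \cap V(G_2) = \{v\}$. It then suffices to prove $g(G) = g(G_1) + g(G_2)$, which I would establish by two inequalities.

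For the upper bound $g(G) \leq g(G_1) + g(G_2)$, the plan is a standard amalgamation. Take optimal embeddings $\Pi_i$ of $G_i$ and choose $\Pi_i$-faces $f_i$ incident to $v$. Form an embedding $\Pi$ of $G$ by splicing the rotation $\pi_v^{(2)}$ of $G_2$ at $v$ into the rotation $\pi_v^{(1)}$ at the cyclic slot corresponding to $f_1$, inheriting all signatures from the $\Pi_i$. A direct analysis of the face traversal procedure (cf. Proposition \ref{4.2.4}) shows that $f_1$ and $f_2$ merge into a single face of $\Pi$ while every other face of $\Pi_1$ and $\Pi_2$ is preserved; hence $\chi(\Pi) = \chi(\Pi_1) + \chi(\Pi_2) - 2$ and so $g(\Pi) = g(\Pi_1) + g(\Pi_2)$.

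For the reverse inequality $g(G) \geq g(G_1) + g(G_2)$, fix an optimal embedding $\Pi$ of $G$ and examine the rotation $\pi_v$. Since every facial walk passing through $v$ enters and exits along two edges that belong to the same $G_i$, the edges of $G_1$ and those of $G_2$ at $v$ partition $\pi_v$ into an alternating cyclic sequence of $r$ maximal runs of $G_1$-edges and $r$ maximal runs of $G_2$-edges. If $r = 1$, splitting $v$ into two copies (one carrying the $G_1$-run, the other the $G_2$-run, each inheriting signatures) yields embeddings $\Pi_i$ of $G_i$ with $\chi(\Pi_1) + \chi(\Pi_2) = \chi(\Pi) + 2$, hence $g(\Pi_1) + g(\Pi_2) = g(\Pi)$, closing the induction.

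The main obstacle is reducing the general case to $r = 1$. I would prove that a \emph{rotation swap} at $v$, which exchanges the positions of two adjacent runs of opposite types, can never increase the Euler genus. The key observation is that such a swap merely rematches face incidences at $v$: each face traversal enters and leaves through two edges lying in the same $G_i$, and a careful case analysis shows the swap either preserves the face count or merges two faces into one, so $\chi$ weakly decreases and $g$ weakly decreases. Iterating the swap $O(r)$ times produces an embedding of $G$ with $r = 1$ on a surface of Euler genus at most $g(\Pi)$, finishing the argument. The delicate bookkeeping of facial incidences under a swap is the technical core; I would follow the presentation in \cite{graphs_on_surfaces} of the Battle--Harary--Kodama--Youngs theorem (orientable case) and its Stahl--Beineke extension (nonorientable case), from which the Euler-genus version follows.
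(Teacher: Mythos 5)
The lemma you were asked to prove is, in the paper's source, literally empty: the lemma environment carries the label \texttt{genus\_additivity} and the citation to \cite{graphs_on_surfaces} but contains no statement, and the label is never referenced elsewhere in the document. It appears to be a vestigial artifact. The result you inferred from the label is exactly Proposition \ref{4.4.2}, stated immediately afterward, which the paper actually uses (e.g.\ in Lemma \ref{G_blocks_excluded_minors}) and cites from the book without proof. Your interpretation is therefore sensible, but there is no paper proof to compare against.

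Within your sketch, the step you yourself identify as the technical core contains a genuine error. You claim a rotation swap at $v$ ``either preserves the face count or merges two faces into one, so $\chi$ weakly decreases and $g$ weakly decreases.'' The signs are backwards: since $\chi = |V|-|E|+|F|$ and $g = 2 - \chi$, merging two faces into one lowers $|F|$ by one, lowers $\chi$ by one, and hence \emph{raises} $g$ by one. For the lower bound you need the opposite: a local move that cannot lower the face count and so cannot raise the genus. As written, your argument establishes the wrong inequality, and the reduction to $r=1$ does not follow. Relatedly, the supporting claim that ``each face traversal enters and leaves through two edges lying in the same $G_i$'' fails at the boundary between a $G_1$-run and a $G_2$-run of $\pi_v$; if it held, $r=1$ would already be automatic and no swaps would be needed. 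The Battle--Harary--Kodama--Youngs / Stahl--Beineke additivity theorem is not proven by a greedy sequence of swaps with a one-line monotonicity certificate --- that monotonicity, even stated in the correct direction, is the crux and requires a considerably more careful global reassembly of the faces around $v$. Your amalgamation argument for the upper bound $g(G) \leq g(G_1) + g(G_2)$ is correct; the $r=1$ splitting step also needs the two run-boundary visits of $v$ to lie on the same $\Pi$-face, an extra condition you should state explicitly since securing it is part of the work.
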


\begin{prop}[{\cite[Theorem 4.4.2]{graphs_on_surfaces}}]
    \label{4.4.2}
    Let $H$ be a connected graph and $H_1, ..., H_p$ ($p \geq 1$) be its 2-connected blocks. Then, \[ g(H) = g(H_1) + ... + g(H_p)\]
\end{prop}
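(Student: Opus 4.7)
The plan is to prove additivity of Euler genus over $2$-connected blocks by induction on the number of blocks $p$. The base case $p=1$ is trivial. For $p \geq 2$, I would use the block-cutvertex tree of $H$: this tree has at least two leaves, each of which is a block meeting the rest of $H$ in exactly one cutvertex. Pick such a leaf and relabel so that it is $H_p$, and let $v$ be the cutvertex of $H$ lying in $H_p$. Set $H' = H - (V(H_p) \setminus \{v\})$. Then $H'$ is connected, its $2$-connected blocks are exactly $H_1, \ldots, H_{p-1}$, and $H = H' \cup H_p$ with $V(H') \cap V(H_p) = \{v\}$ and $E(H') \cap E(H_p) = \emptyset$. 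It therefore suffices to establish the two-piece identity $g(H) = g(H') + g(H_p)$, after which the inductive hypothesis applied to $H'$ closes the argument.

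For the upper bound $g(H) \leq g(H') + g(H_p)$, I would construct an explicit embedding. Take genus-minimum embeddings $\Pi'$ of $H'$ and $\Pi_p$ of $H_p$. Choose a corner of $\Pi'$ at $v$ (a pair of edges consecutive in $\pi_v'$) and a corner of $\Pi_p$ at $v$, and define an embedding $\Pi$ of $H$ by splicing the full cyclic block $\pi_v^p$ into $\pi_v'$ at the chosen corner, keeping all edge signatures. Then $|V(\Pi)| = |V(\Pi')| + |V(\Pi_p)| - 1$ and $|E(\Pi)| = |E(\Pi')| + |E(\Pi_p)|$, and the face-tracing procedure shows that every $\Pi'$-face and every $\Pi_p$-face survives as a $\Pi$-face, except that the two faces containing the two chosen corners fuse into a single $\Pi$-face; hence $|F(\Pi)| = |F(\Pi')| + |F(\Pi_p)| - 1$. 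The Euler formula then yields $g(\Pi) = g(\Pi') + g(\Pi_p)$, so $g(H) \leq g(H') + g(H_p)$.

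For the lower bound $g(H) \geq g(H') + g(H_p)$, I would start from an embedding $\Pi$ of $H$ realizing $g(H)$. Restricting the rotations (to the appropriate edge-subsets at each vertex) and the signatures yields induced embeddings $\Pi'$ of $H'$ and $\Pi_p$ of $H_p$ satisfying $g(\Pi') \geq g(H')$ and $g(\Pi_p) \geq g(H_p)$, so it suffices to prove $g(\Pi) \geq g(\Pi') + g(\Pi_p)$. If at $v$ the $H_p$-edges form a single consecutive arc in $\pi_v$, then face-tracing delivers $|F(\Pi)| = |F(\Pi')| + |F(\Pi_p)| - 1$ exactly as in the upper-bound computation, and the Euler formula gives equality.

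The main obstacle is the interleaved case: if the $H_p$-edges at $v$ split into $k \geq 2$ maximal consecutive runs in $\pi_v$, I would reembed by collecting all $H_p$-edges into a single run without changing $\Pi'$ or $\Pi_p$, and use face-tracing to show that each of the $k-1$ "gaps" between runs forces an extra merging of a $\Pi'$-face with a $\Pi_p$-face in $\Pi$ that disappears after the rearrangement, so the new embedding has strictly larger $|F|$, hence strictly smaller genus. This contradicts the minimality of $\Pi$, so the consecutive case always applies and the lower bound follows. Combining the two inequalities, $g(H) = g(H') + g(H_p)$, and induction concludes the proof.
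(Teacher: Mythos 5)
The paper does not prove this statement; it cites it directly from Mohar and Thomassen's book (\cite[Theorem 4.4.2]{graphs_on_surfaces}), so there is no in-paper proof to compare against. I will therefore assess your proposal on its own terms.

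Your reduction to the two-piece identity $g(H)=g(H')+g(H_p)$ via the block-cutvertex tree, and your upper-bound argument by splicing rotation systems at a corner of $v$, are both correct and standard; the face-count bookkeeping $|F(\Pi)|=|F(\Pi')|+|F(\Pi_p)|-1$ in the consecutive case is right. The gap is in the lower bound, precisely in the interleaved case, which is where all the difficulty of this theorem actually lives. You need the inequality $|F(\Pi)|\leq |F(\Pi')|+|F(\Pi_p)|-1$ for an arbitrary restriction of a minimum-genus embedding, and you propose to get it by contradiction: if the $H_p$-edges split into $k\geq 2$ runs in $\pi_v$, ``each of the $k-1$ gaps forces an extra merging'' and so rearranging to one run strictly increases $|F|$. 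This claim is not established by face-tracing in any routine way, and as stated it is not even numerically correct: in the simplest example, two triangles sharing $v$ with $\pi_v$ fully interleaved ($k=2$), one gets $|F(\Pi)|=1$ while $|F(\Pi')|+|F(\Pi_p)|-1=3$, a drop of $2$ rather than $k-1=1$; the same pattern ($2(k-1)$ rather than $k-1$) repeats in larger examples. More importantly, an interleaved face traversal at $v$ can recombine the $\Pi'$-faces and $\Pi_p$-faces incident to the skip-corners in genuinely complicated ways (a single joint face of $\Pi$ may use many mixed corners), and there is no clean one-to-one correspondence between ``gaps'' and ``mergings.'' So the sentence you offer as the proof of the crucial step is really an assertion of the conclusion. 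What is needed there is an actual argument — for instance, a surgery/band-decomposition argument showing that a neighbourhood of $v$ in the surface of $\Pi$, with the $H'$-darts and $H_p$-darts interleaved around the boundary, cannot be homeomorphic to a disk, so that separating the two groups of darts can only drop the genus; or a careful combinatorial bookkeeping over the cyclic corner-type word at $v$. Also note that you do not actually need strict monotonicity for the theorem: showing that the consecutive rearrangement has genus $\leq g(\Pi)$ already lets you replace $\Pi$ by its rearrangement without loss of generality, which is both what you can realistically prove and all that the argument requires; insisting on a strict drop only raises the bar unnecessarily.
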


\begin{defi}[Almost disjoint cycles]
    Let $k \in \mathbb{N}$. Let $C_0,..., C_k$ be cycles of a graph $H$. We say that $C_0, ..., C_k$ are \textit{almost disjoint cycles} if each cycle $C_i$ ($0 \leq i \leq k$) has at most one vertex in common with the subgraph $\cup_{j \neq i} C_j$.

    See Figure \ref{fig:almost_disjoint_cycles} for an example of almost disjoint cycles.
\end{defi}

\begin{figure}[h!]
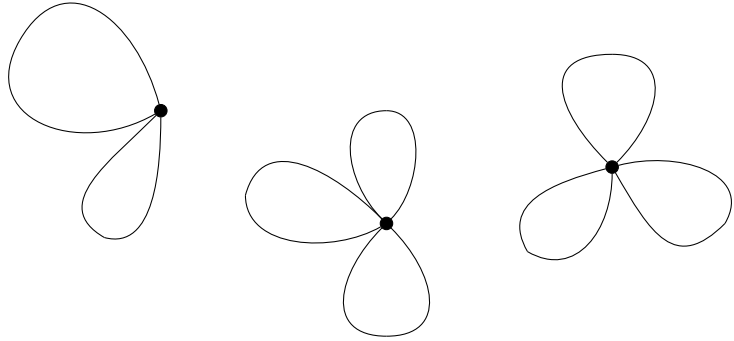

    \centering
    \tikzfig{images/almost_disjoint_cycles}
    \caption{Almost disjoint cycles. The almost disjoint cycles are depicted in solid black lines. The black dots are vertices shared by several almost disjoint cycles.}
    \label{fig:almost_disjoint_cycles}
\end{figure}

\begin{prop}[Variant of Proposition \ref{homotopic_cycles}]
    \label{homotopic_cycles_variant1}
    Let $H$ be a $\Pi_H$-embedded connected graph. If $C_1, ..., C_k$ are cycles of $H$ that are almost disjoint, $\Pi_H$-noncontractible and such that no two of them are $\Pi_H$-homotopic, then

    \[ k \leq
    \left\{
    \begin{array}{lr}
        g(\Pi_H) &  \text{ if } g(\Pi_H) \leq 1\\
        3 g(\Pi_H) - 3 & \text{ if } g(\Pi_H) \geq 2 \\ 
    \end{array}\right.\]
\end{prop}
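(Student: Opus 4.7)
My plan is to reduce the statement to Proposition \ref{homotopic_cycles} by contracting a carefully chosen subtree of $H$, transforming the cycles $C_1, \ldots, C_k$ into pairwise internally disjoint cycles through a common vertex, while preserving both the genus of the embedding and pairwise non-$\Pi$-homotopy.

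First, I construct a spanning tree $T_H$ of $H$ that contains a Hamilton path of each $C_i$. Since the $C_i$ are almost disjoint no two share an edge, so I can iteratively pick any edge $e_i \in C_i$ and delete it: each such edge still lies on the intact cycle $C_i$ at the moment of deletion and hence is not a bridge, keeping the graph connected. Let $T_H$ be any spanning tree of the resulting connected graph $H \setminus \{e_1, \ldots, e_k\}$; then $T_H \cap C_i = C_i \setminus \{e_i\}$ is a Hamilton path in $C_i$. Next, for each $i$, choose a representative $u_i \in V(C_i)$ as follows: if $V(C_i)$ meets $\bigcup_{j \ne i} V(C_j)$, let $u_i$ be this (unique, by the almost disjoint hypothesis) hub vertex; otherwise let $u_i$ be any vertex of $C_i$. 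The almost disjoint hypothesis forces $u_i = u_j$ whenever $C_i$ and $C_j$ share a vertex.

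Now let $S$ be the minimal subtree of $T_H$ containing all representatives $u_i$. Since $S$ and $T_H \cap C_i$ are both subtrees of the tree $T_H$, their intersection $S \cap C_i$ is a subtree of $T_H$, hence connected; it is a proper sub-path of the Hamilton path $T_H \cap C_i$ because $S$ is acyclic while $C_i$ is a cycle. Let $(H', \Pi')$ be obtained from $(H, \Pi_H)$ by contracting $S$ to a single vertex $u^*$. Since $S$ is a tree, contraction preserves Euler characteristic and hence $g(\Pi') = g(\Pi_H)$. In $(H', \Pi')$: each $C_i' := C_i / S$ is a simple cycle through $u^*$; the $C_i'$ are pairwise internally disjoint, because any vertex originally shared by two distinct $C_i, C_j$ is the hub $u_i = u_j \in V(S)$ and hence becomes $u^*$ after contraction; and pairwise non-$\Pi$-homotopy is preserved by the tree contraction. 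Applying Proposition \ref{homotopic_cycles} to $(H', \Pi')$ with $a = b = u^*$ and the cycles $C_1', \ldots, C_k'$ then yields the claimed bound.

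I expect the main obstacle to be verifying preservation of non-$\Pi$-homotopy under the tree contraction. I would argue this directly from the definition: if $C_i'$ and $C_j'$ were $\Pi'$-homotopic in $H'$, then the genus-0 component of $H' \setminus (C_i' \cup C_j')$ witnessing this would lift---by reinserting the portion of the tree $S$ lying inside it, which does not change genus since $S$ is a tree---to a genus-0 component of $H \setminus (C_i \cup C_j)$ bounded by $C_i$ and $C_j$, contradicting the hypothesis that $C_i$ and $C_j$ are not $\Pi_H$-homotopic.
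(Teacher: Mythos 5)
Your reduction plan is close in spirit to the paper's proof (both contract a tree and then count), but it has a genuine quantitative gap: applying Proposition~\ref{homotopic_cycles} to the $k$ cycles $C_1',\ldots,C_k'$ (reindexed as $P_0,\ldots,P_{k-1}$) only gives $k-1\le 3g(\Pi_H)-3$, i.e.\ $k\le 3g(\Pi_H)-2$, which is one more than the claimed bound. The same slack appears in the low-genus cases: for $g(\Pi_H)=1$ the reduction yields $k\le 2$ rather than $k\le 1$, and for $g(\Pi_H)=0$ it yields $k\le 1$ rather than $k\le 0$. The extra unit is not a bookkeeping artifact: Proposition~\ref{homotopic_cycles} does not require the cycles to be noncontractible, so one of its $k'+1$ classes may be the trivial class, whereas here all $k$ cycles are noncontractible. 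You would need to exhibit one additional cycle through $u^*$, internally disjoint from every $C_i'$ and not homotopic to any of them (e.g.\ a contractible one), in order to invoke the proposition with $k'+1=k+1$; you do not do this, and such a cycle need not exist in $H'$.

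The paper sidesteps this entirely by contracting a \emph{full} spanning tree $T$ of $H$ (chosen to contain $C_i-e_i$ for each $i$) to get a one-vertex multigraph, then removing extraneous loops, and then doing the Euler-formula count directly. The point is that the noncontractibility and pairwise nonhomotopy of $e_1,\ldots,e_k$ guarantee that no facial walk has length $1$ or $2$, so $2|E(H')|\ge 3f$, and Euler's formula then gives $k\le|E(H')|\le 3g(\Pi_H)-3$ on the nose. This is where the extra unit is saved; a black-box appeal to Proposition~\ref{homotopic_cycles}, which must also accommodate a possible contractible class, cannot recover it. If you want to keep your reduction, you should instead adapt the Euler-formula argument after your contraction rather than cite Proposition~\ref{homotopic_cycles}. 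Aside from this, your construction of the spanning tree $T_H$ and the subtree $S$, and the verification that the $C_i'$ become pairwise internally disjoint simple cycles through $u^*$, are correct, and your sketch that tree contraction preserves genus and nonhomotopy is sound in spirit (though the nonhomotopy-preservation step would need to be spelled out more carefully in a final write-up).
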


\begin{proof}
    If $g(\Pi_H) = 0$, then $H$ is embedded in the sphere, and there are indeed no $\Pi_H$-noncontractible cycles.

    If $g(\Pi_H) = 1$, then $H$ is embedded in the projective plane. If there is a $\Pi_H$-noncontractible cycle $C$, let's cut along it, getting a new embedding $\Pi_H'$. Then, by Proposition \ref{4.2.4}, $\Pi_H'$ is an embedding in the plane. Hence, every $\Pi_H$-noncontractible cycle almost disjoint from $C$ is $\Pi_H$-homotopic to $C$ and $k \leq 1$.

    Let's suppose $g(\Pi_H) \geq 2$. For $1 \leq i \leq k$, let $e_i$ be an edge of $C_i$. Let $T$ be a spanning tree of $H$ containing $C_i-e_i$ for $1 \leq i \leq k$ (such a spanning tree exists because $\cup_{1 \leq i \leq k} (C_i-e_i)$ is an acyclic subgraph of $H$). After contracting $E(T)$, the resulting multigraph $H / E(T)$ is embedded in the same surface with embedding $\Tilde{\Pi}_H$. It contains only one vertex, the loops $e_1, ..., e_k$, and some additional loops. Let $H', \Pi_H'$ be the graph and embedding obtained from $H / E(T)$ by removing every loop distinct from $e_1, ..., e_k$ that is contained in two $\Tilde{\Pi}_H$-facial walks. Then, $\Pi_H'$ is an embedding in the same surface.

    Let $W$ be a $\Pi_H'$-facial walk. If $W$ contains an edge $e$ distinct from $e_1, ..., e_k$, then $e$ occurs twice in $W$ (as otherwise it would have been removed previously), and hence the length of $W$ is at least $3$. If $W$ consists of loops from $\{e_1,...,e_k\}$ only, then $W$ is not a single loop $e_i$ since $C_i$ would then be $\Pi_H$-contractible. Similarly, if $W = e_i e_j$, then $C_i$ and $C_j$ are $\Pi_H$-homotopic. Therefore, the length of $W$ is at least $3$. It follows that $2 |E(H')| \geq 3f$ with $f$ the number of $\Pi_H'$-facial walks. Now, by Euler's formula:

    \begin{align*} 
    |V(H')| - |E(H')| + f & =  2 - g(\Pi_H) \\
    |E(H')| - f & = g(\Pi_H) - 1 \\
    3|E(H')| - 3f & = 3g(\Pi_H) - 3 \\
    \end{align*}

    Finally, as $2 |E(H')| \geq 3f$, it follows that
    \[ k \leq |E(H')| \leq  3|E(H')| - 3f = 3 g(\Pi_H) - 3\]

    This concludes the proof.
\end{proof}

\begin{defi}[Cycles on a spanning tree] \label{def:free_cycles}
     Let $H$ be a graph and $a$ a vertex of $H$. Let $k \geq 1$ and let $C_0, ..., C_k$ be subgraphs of $H$.
     We say that $C_0, ..., C_k$ are \textit{cycles on a spanning tree} rooted in $a$ if:
     \begin{itemize}
         \item there exists a spanning tree $T$ of $C_0 \cup ... \cup C_k$ rooted in $a$ such that $(C_0 \cup ... \cup C_k) - T$ consists of $k+1$ edges $\{e_0, ..., e_k\}$ and, for $0 \leq i \leq k$, $e_i$ belongs solely to $C_i$;
         \item let $C'_i$ be the unique cycle induced by $T$ and $e_i$, then $C_i$ is the subgraph consisting of $C'_i$ together with the unique path in $T$ from $a$ to $C'_i$ (if $C'_i$ contains $a$ then $C_i = C'_i$).
     \end{itemize}

     Figure \ref{fig:free_cycles} shows an example of cycles on a spanning tree.
\end{defi}

\begin{figure}[h!]
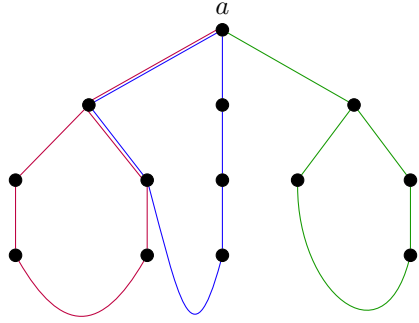

    \centering
    \ctikzfig{images/free_cycles}
    \caption{Cycles on a spanning tree rooted in $a$. The three colored subgraphs are cycles on a spanning tree rooted in $a$.}
    \label{fig:free_cycles}
\end{figure}

\begin{prop}[Variant of Proposition \ref{homotopic_cycles}]
    \label{homotopic_cycles_variant2}
    Let $H$ be a $\Pi_H$-embedded graph and $a$ a vertex of $H$. Let $C_0, ..., C_k$ be free cycles on $a$ that are $\Pi_H$-noncontractible and pairwise $\Pi_H$-nonhomotopic.
    Then,
    \[ k \leq
    \left\{
    \begin{array}{lr}
        g(\Pi_H) &  \text{ if } g(\Pi_H) \leq 1\\
        3 g(\Pi_H) - 3 & \text{ if } g(\Pi_H) \geq 2 \\ 
    \end{array}\right.\]
\end{prop}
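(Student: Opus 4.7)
The plan is to closely mimic the proof of Proposition \ref{homotopic_cycles_variant1}: contract a spanning tree so that the cycles become loops at a single vertex, then apply an Euler characteristic argument. Concretely, I would extend the spanning tree $T$ from Definition \ref{def:free_cycles} to a spanning tree $T_H$ of the connected component of $H$ containing $a$, and then contract $E(T_H)$ in this component. This yields a multigraph $M$ on one vertex whose loops include the distinguished edges $e_0, \ldots, e_k$. Since contracting tree edges does not alter the surface, the induced embedding $\widetilde{\Pi}$ lies in the same surface as $\Pi_H$, and each loop $e_i$ of $M$ inherits the homotopy type of $C'_i$ in $\Pi_H$ (because the path from $a$ to $C'_i$ that forms the ``tail'' of $C_i$ lies entirely in $T_H$ and is therefore collapsed).

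The cases $g(\Pi_H) \leq 1$ would be handled exactly as in Proposition \ref{homotopic_cycles_variant1}: when $g(\Pi_H) = 0$ there are no $\Pi_H$-noncontractible cycles, so the hypothesis is vacuous; when $g(\Pi_H) = 1$, cutting along $C'_0$ reduces the embedding to the plane by Proposition \ref{4.2.4}, and then any other $\Pi_H$-noncontractible cycle must be $\Pi_H$-homotopic to $C'_0$, forcing $k \leq 1$.

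For $g(\Pi_H) \geq 2$, I would then iteratively delete every loop of $M$ that is distinct from $e_0, \ldots, e_k$ and is contained in two distinct facial walks of the current embedding; this operation preserves both the embedding and the underlying surface. Let $M'$, $\Pi'$ denote the resulting multigraph and embedding. The crucial step is to verify that every $\Pi'$-facial walk has length at least three. A single-loop facial walk $e_i$ would force $e_i$ to bound a disk in the surface, so that $C'_i$ is $\Pi_H$-contractible, a contradiction. A facial walk of the form $e_i e_j$ with $i \neq j$ would force $e_i$ and $e_j$ to bound a cylinder, so that $C'_i$ and $C'_j$ are $\Pi_H$-homotopic, again a contradiction, and the case $W = e_i e_i$ similarly yields $\Pi_H$-contractibility of $C'_i$. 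If a facial walk contains a loop $f \notin \{e_0, \ldots, e_k\}$, then $f$ must appear twice by construction, and a parallel analysis shows the walk still has length at least three. Euler's formula applied to $(M',\Pi')$, combined with $2|E(M')| \geq 3|F(M',\Pi')|$, then gives $|E(M')| \leq 3g(\Pi_H) - 3$, and since $\{e_0, \ldots, e_k\} \subseteq E(M')$ we conclude $k + 1 \leq 3g(\Pi_H) - 3$, which is actually slightly stronger than the claimed bound.

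The main obstacle will be the faithful translation between the geometry of the contracted multigraph (single loops bounding disks, pairs of loops bounding cylinders) and the original properties of the cycles $C'_i$ in $(H, \Pi_H)$, namely $\Pi_H$-contractibility and pairwise $\Pi_H$-nonhomotopy. Although the tails of the free cycles lie entirely inside $T_H$ and so $C_i$ and $C'_i$ carry the same homotopy information, one must carefully verify that this correspondence survives both the extension of $T$ to $T_H$ and the iterative removal of extraneous loops, including the subtleties coming from one-sided cycles and from cycles $C'_i$ that may share vertices or edges via the tree $T$. Once this bookkeeping is settled, the remaining argument is essentially identical to the Euler characteristic computation carried out in Proposition \ref{homotopic_cycles_variant1}.
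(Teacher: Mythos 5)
Your proof is correct in its main lines but takes a genuinely different route from the paper. The paper simply contracts the spanning tree $T$ of $C_0 \cup \cdots \cup C_k$ from Definition \ref{def:free_cycles} (not a spanning tree of the whole component), notes that the resulting loops $e_0, \ldots, e_k$ at $a$ are pairwise internally disjoint and that each $e_i$ lies in the same $\Pi_H'$-homotopy class as $C_i$, and then invokes Proposition \ref{homotopic_cycles} directly as a black box. You instead extend $T$ to a spanning tree $T_H$ of the component of $a$, contract down to a one-vertex multigraph, prune redundant loops, and redo the Euler-characteristic count in the style of Proposition \ref{homotopic_cycles_variant1}. Your route is more self-contained, but it re-proves exactly the content that Proposition \ref{homotopic_cycles} already packages; the paper's approach is shorter and sidesteps the bookkeeping you rightly flag as delicate, including the fact that the proposition as stated does not assume $H$ connected, which your Euler-formula step implicitly requires you to handle by working in the component of $a$.

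One local error worth fixing in your case analysis: a facial walk $W = e_ie_i$ does \emph{not} force $C'_i$ to be $\Pi_H$-contractible. A loop traversed twice by a single face is one-sided, and one-sided cycles are noncontractible, so that case produces no contradiction from contractibility. It is excluded for a different reason: $W = e_ie_i$ forces $e_i$ to be the only loop of $M'$, so $g(\Pi') = 1$, contradicting $g(\Pi_H) \geq 2$ (deleting a loop that lies on two distinct faces merges them and hence preserves the Euler characteristic, so pruning does not change the genus). With that fix the facial-walk analysis closes, and your sharper conclusion $k+1 \leq 3g(\Pi_H)-3$ does appear to hold; the paper's weaker $k \leq 3g(\Pi_H)-3$ is just what is inherited from citing Proposition \ref{homotopic_cycles} verbatim.
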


\begin{proof}
    Let $T$ be a spanning tree of the free cycles $C_0, ..., C_k$ with the same properties as in Definition \ref{def:free_cycles} and let $a$ be its root.

    Let $H'$ be the multigraph obtained from $H$ by contracting $T$ into the vertex $a$ and let $\Pi_H'$ be the embedding of $H'$ induced by $\Pi_H$. Then, $e_0, ..., e_k$ are loops on $a$ in $H'$. Moreover, remark that, for every $0 \leq i \leq k$, $e_i$ in $\Pi_H'$ is in the same homotopy class as $C_i$ in $\Pi_H$. Hence, by Proposition \ref{homotopic_cycles}, 
    \[ k \leq
    \left\{
    \begin{array}{lr}
        g(\Pi_H) &  \text{ if } g(\Pi_H) \leq 1\\
        3 g(\Pi_H) - 3 & \text{ if } g(\Pi_H) \geq 2 \\ 
    \end{array}\right.\]

    \noindent This concludes the proof.
\end{proof}

\begin{lem}
    \label{2_connected_faces_cycles}
    Let $H$ be a $2$-connected $\Pi_H$-embedded graph. Let $C$ be a $\Pi_H$-contractible cycle. Then, every face in $\text{Int}(C, \Pi_H)$ is a cycle and, therefore, every edge in $\text{int}(C, \Pi_H)$ is contained in two distinct faces.
\end{lem}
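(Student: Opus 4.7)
The plan is to reduce the statement to a classical fact about $2$-connected plane graphs, namely that every face is bounded by a cycle and every edge lies on the boundary of two distinct faces. The bridge between the local $\Pi_H$-embedded picture and plane graphs is that $C$ being $\Pi_H$-contractible means that the induced embedding of $\text{Int}(C,\Pi_H)$ has genus $0$, so $\text{Int}(C,\Pi_H)$ may be regarded as a plane graph with outer face bounded by $C$.

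First I would show that $\text{Int}(C,\Pi_H)$ is itself $2$-connected. Suppose for contradiction that $w$ is a cutvertex of $\text{Int}(C,\Pi_H)$. Split into two cases. If $w \notin V(C)$, then $\text{Int}(C,\Pi_H)-w$ has a component not containing $C$; since every connection of $\text{int}(C,\Pi_H)$ to $\text{Ext}(C,\Pi_H)$ goes through $C$, this component would also be a component of $H-w$ disjoint from the rest of $H$, so $w$ would be a cutvertex of $H$, contradicting $2$-connectivity. If $w \in V(C)$, then at least one bridge $B$ of $H$ on $C$ contained in $\text{int}(C,\Pi_H)$ attaches to $C$ only at $w$; removing $w$ from $H$ would then separate $B-w$ from $H-B-\{w\}$, again contradicting the $2$-connectivity of $H$. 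Hence $\text{Int}(C,\Pi_H)$ is $2$-connected.

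Next, because $C$ is $\Pi_H$-contractible, the induced embedding of $\text{Int}(C,\Pi_H)$ has genus $0$ and can be viewed as a plane embedding whose outer face is bounded by the cycle $C$. I would then invoke the standard fact that in a $2$-connected plane graph the boundary walk of every face is a cycle (this follows from the fact that a facial walk fails to be a cycle exactly when it visits a cutvertex twice). Applied to $\text{Int}(C,\Pi_H)$, every face, including the outer face $C$, is bounded by a cycle. Each $\Pi_H$-facial walk of $H$ lying in $\text{Int}(C,\Pi_H)$ corresponds to an internal face of this plane embedding, so every such face is a cycle, which is the first claim.

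For the second claim, recall that in any connected plane graph an edge lies on the boundary of only one face precisely when it is a cut edge, and a $2$-connected graph with at least three vertices (which $\text{Int}(C,\Pi_H)$ has, since $|C|\ge 3$) has no cut edges. Hence every edge $e \in E(\text{Int}(C,\Pi_H))$ lies on the boundary of two distinct faces of the induced embedding. For $e \in E(\text{int}(C,\Pi_H))$, neither of these two faces is the outer face $C$, so both correspond to $\Pi_H$-faces of $H$, giving two distinct $\Pi_H$-faces of $H$ containing $e$. The main technical point — and the only real obstacle — is the first step, matching cutvertices of $\text{Int}(C,\Pi_H)$ with cutvertices of $H$; once this is in place, the rest is a direct application of standard planar-graph facts.
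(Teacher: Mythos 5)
Your proposal is correct and takes essentially the same approach as the paper: both proofs reduce to the fact that a non-cycle facial walk in the planar piece inside $C$ forces a cutvertex, contradicting $2$-connectivity, and then note that cyclic faces force every interior edge onto two distinct faces. You spell out the intermediate step (that $\text{Int}(C,\Pi_H)$ is itself $2$-connected) which the paper leaves implicit, but the underlying argument is the same.
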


\begin{proof}
    Remark that if a face $f$ of $(H, \Pi_H)$ in $\text{Int}(C, \Pi_H)$ is not a cycle, then $H$ must contain a cutvertex. However, this contradicts the fact that $H$ is $2$-connected. Hence, $f$ is a cycle.
    
    Moreover, as every face in $\text{Int}(C, \Pi_H)$ is a cycle, the two sides of an edge $e$ in $\text{int}(C, \Pi_H)$ cannot be in the same face. Therefore, every edge in $\text{int}(C, \Pi_H)$ is contained in two distinct faces.
\end{proof}

\begin{defi}[Same relative orientation]
    Let $H$ be a $\Pi_H$-embedded graph in a surface $S_H$, and let $H'$ be a subgraph of $H$ that is $\Pi_{H'}$-embedded in a surface $S_{H'}$. Let $C$ and $C'$ be two almost disjoint cycles in $H' \subseteq H$ that are $\Pi_H$-contractible, both embedded in some disk or cylinder $S_0$ on $S_H$, and $\Pi_{H'}$-noncontractible homotopic.

    We fix the clockwise orientation, in both $S_0$ and the embedding $\overline{\Pi}_{H'}$ in the sphere obtained from $\Pi_{H'}$ by cutting along $C$ and $C'$, to be the orientation of some traversal of $C$.

    We say that $C$ and $C'$ \textit{have the same relative orientation} in $\Pi_H$ and $\Pi_{H'}$ if the clockwise traversal of $C'$ in $S_0$ is the same as its clockwise traversal in the embedding in $\overline{\Pi}_{H'}$.
\end{defi}

\begin{lem}
    \label{cylinder_reembedding}
    Let $H$ be a $\Pi_H$-embedded graph in surface a $S_H$, and let $H'$ be a subgraph of $H$ that is $\Pi_{H'}$-embedded in a surface $S_{H'}$. Let $C$ and $C'$ be two almost disjoint cycles in $H'$ that are $\Pi_{H'}$-noncontractible homotopic. If
    \begin{itemize}
        \item $V(C \cup C') = V(A \cap B)$ for a separation $(A,B)$ of $H$,
        \item $\text{Int}(C \cup C', \Pi_{H'}) = A \cap H'$,
        \item $B \subseteq H$,
        \item $C \cup C' \cup A$ is a $\Pi_H$-contractible subgraph of $H$ embedded in some disk or cylinder on $S_H$, and
        \item $C$ and $C'$ have the same relative orientation in $\Pi_H$ and in $\Pi_{H'}$,
    \end{itemize}    
    then there exists an embedding $\Pi'_H$ of $H$ in $S_{H'}$ so that $\Pi_H(A)$ and $\Pi'_H(A)$ are equivalent.
\end{lem}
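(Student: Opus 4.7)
The plan is to reembed $H$ into $S_{H'}$ by substituting $A$ in place of $A \cap H'$ inside the cylinder region of $\Pi_{H'}$ bounded by $C \cup C'$. Intuitively, because $C$ and $C'$ are $\Pi_{H'}$-noncontractible homotopic, the region $\mathrm{Int}(C \cup C', \Pi_{H'})$ is a topological annulus on $S_{H'}$ whose boundary consists of $C$ and $C'$; and because $C \cup C' \cup A$ is $\Pi_H$-contractible and lies in a disk or cylinder region of $S_H$, the subgraph $A$ together with $C, C'$ is already combinatorially embedded on a topological disk/annulus with $C, C'$ playing the role of boundary cycles. The goal is to unplug $A \cap H'$ from $\Pi_{H'}$ and plug in the $\Pi_H$-embedding of $A$ in its place.

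First, I would construct the candidate embedding $\Pi'_H = (\pi', \lambda')$ explicitly. For each vertex $v \in V(H)$, the rotation $\pi'_v$ is defined as follows: if $v \in A \setminus V(C \cup C')$, take $\pi'_v$ to be the rotation of $v$ in $\Pi_H$; if $v \in B \setminus V(C \cup C')$, take it to be the rotation of $v$ in $\Pi_{H'}$. For $v \in V(C \cup C')$, the edges incident to $v$ split into an $A$-side arc and a $B$-side arc, separated by the (at most two) edges of $C \cup C'$ incident to $v$; define $\pi'_v$ by interleaving the $\Pi_H$-rotation on the $A$-side arc with the $\Pi_{H'}$-rotation on the $B$-side arc, respecting the cyclic position of the $C$- and $C'$-edges. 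The signature $\lambda'$ is inherited from $\Pi_H$ on edges of $A$ and from $\Pi_{H'}$ on edges of $B$; to reconcile the two signatures on edges of $C \cup C'$, one applies a preliminary sequence of local changes at vertices of $A \setminus V(C \cup C')$, which by definition does not affect the equivalence class of $\Pi_H(A)$.

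Next, I would verify that $\Pi'_H$ is a valid embedding and that its Euler genus equals $g(\Pi_{H'})$. The \emph{same relative orientation} hypothesis is precisely what makes the interleaving at each vertex of $V(C \cup C')$ glue the two local pictures consistently: it guarantees that cutting along $C \cup C'$ in $\Pi'_H$ returns, as one component, the $\Pi_H$-embedding of $A \cup C \cup C'$ (embedded in its disk or cylinder) and, as the other component, the $\Pi_{H'}$-embedding of $\mathrm{Ext}(C \cup C', \Pi_{H'}) \cup B \cup (H \setminus H')$. By Proposition \ref{4.2.4} applied to $\Pi_{H'}$ and to $\Pi'_H$ and by Euler-characteristic additivity under pasting disks/annuli, the genus contribution of $A$ in $\Pi'_H$ matches that of $A \cap H'$ in $\Pi_{H'}$, giving $g(\Pi'_H) = g(\Pi_{H'})$. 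Since on the $A$-side we used only the rotations of $\Pi_H$ together with sign changes achievable by local changes, $\Pi'_H(A)$ is equivalent to $\Pi_H(A)$, which is the conclusion sought.

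The main obstacle, I expect, will be making the interleaving and the facial-walk analysis at the seam $V(C \cup C')$ genuinely rigorous. One must carefully trace the $\Pi'_H$-facial walks that cross $C$ or $C'$ and verify that the same-relative-orientation hypothesis is exactly what prevents the pasting from accidentally introducing a crosscap or handle (which would raise the genus above $g(\Pi_{H'})$); equivalently, one must check that both the $A$-side and $B$-side local orientations of $C$ (and of $C'$) induce the same traversal direction after the identification. Once this pasting/compatibility lemma is secured, the remaining steps are routine bookkeeping with rotations, signatures, and the Euler formula.
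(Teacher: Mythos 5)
Your proposal is correct and follows essentially the same strategy as the paper's proof: both observe that removing $A$ from $\Pi_{H'}$ leaves the cylinder bounded by $C$ and $C'$ empty, and both invoke the same-relative-orientation hypothesis to guarantee that $A$ (with its $\Pi_H$-embedding, which already lives on a disk or cylinder with $C, C'$ as boundaries) can be pasted back into this annulus consistently. Your write-up is considerably more explicit than the paper's two-sentence argument — in particular the rotation/signature interleaving at the seam $V(C \cup C')$ and the Euler-characteristic check via Proposition \ref{4.2.4} — but the underlying idea and the role each hypothesis plays match.
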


\begin{proof}
    Remark that $\text{int}(C \cup C', \Pi_{H'}(H'-A))$ is empty. Then, as $C$ and $C'$ have the same relative orientation in $\Pi_H$ and in $\Pi_{H'}$, it is possible to embed $A$ inside the cylinder $\text{int}(C \cup C', \Pi_{H'}(H'-A))$ so that the embedding of $A$ obtained is equivalent to $\Pi_H(A)$. 
    
    Then, by combining $\Pi_{H'}(H'-A)$ and this embedding of $A$ inside $\text{int}(C \cup C', \Pi_{H'}(H'-A))$, we get an embedding of $H$ in $S_{H'}$ so that $\Pi_H(A)$ and $\Pi'_H(A)$ are equivalent.
\end{proof}

\subsection{Basic results on the structure of an excluded minor for a surface}

Let's define $G, S, S'$ for the rest of the paper as follows: Let $S, S'$ be surfaces with $S'$ of Euler genus $g$ and $S$ of Euler genus $g+1$ or $g+2$. Let $G$ be a minimal excluded minor for $S'$ and suppose that $G$ can be embedded in $S$ with embedding $\Pi$. Remark that $G$ is of Euler genus either $g+1$ or $g+2$: for any edge $e \in E(G)$, $G-e$ is embeddable in $S'$, and it is easy to show that any extension of an embedding $\Pi_e$ of $G-e$ in $S'$ to an embedding of $G$ is of Euler genus at most $g+2$. Therefore $g < g(G) \leq g+2$.

\subsubsection{Separators in \texorpdfstring{$G$}{G}}

We look at the connectivity of $G$. We show that $G$ does not have a $2$-separation between a $\Pi$-contractible subgraph $B$ and the rest of $G$, unless $B$ is reduced to an edge. Then, we prove that it is enough to consider an excluded minor $G$ for $S'$ that is $2$-connected, as the other cases can be reduced to this one.

\begin{lem}
    \label{2_separated_subgraph_is_edge}
    Let $(A,B)$ be a $2$-separation of $G$ and let $a,b \in V(G)$ so that $A \cap B = \{a,b\}$. Then, either $B$ is an edge or is not contained in a disk in $\Pi$. 
\end{lem}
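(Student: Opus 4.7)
I would argue by contradiction: assume that $B$ lies in some disk $D \subseteq S$ in the embedding $\Pi$ and that $B$ is more than just the edge $ab$. The goal is to exhibit an embedding of $G$ into $S'$, contradicting that $G$ is a minimal excluded minor for $S'$.

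First I would produce a proper minor of $G$ by ``collapsing'' $B$ down to the single edge $ab$. Set $G_1 := A \cup \{ab\}$ (adding the edge $ab$ if it is not already in $A$). If $ab \in E(B)$, obtain $G_1$ from $G$ by deleting every vertex and edge of $B$ distinct from $a$, $b$, and $ab$; otherwise, pick any path in $B$ from $a$ to $b$, contract all but one of its edges down to the single edge $ab$, and delete the remaining edges and vertices of $B$. Because $B$ is by assumption strictly larger than the edge $ab$ (and $G$ is simple), this operation removes at least one vertex or edge, so $G_1$ is a proper minor of $G$. Minimality of $G$ then yields an embedding $\Pi_1$ of $G_1$ into $S'$.

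Next I would build a planar embedding of $B \cup \{ab\}$ in a disk with $ab$ routed along the boundary. The restriction of $\Pi$ to $B$ is already a planar embedding since it lives inside $D$; let $f_0$ denote its face in $S$ containing the curve $\partial D$. Since $V(A) \cap V(B) = \{a, b\}$, any edge of $G$ connecting $V(B)$ to $V(A) \setminus V(B)$ is incident to $a$ or $b$. In a non-degenerate $2$-separation, both $a$ and $b$ have at least one such neighbor; and if all those neighbors lay inside $D$, then the whole graph $G$ would sit in the disk $D$, so $G$ would be planar and embeddable in $S'$, already contradicting the excluded-minor property. Hence some edge of $A \setminus B$ incident with $a$ (and likewise with $b$) exits $D$ through $\partial D$, and such an edge must leave $a$ (resp.\ $b$) into the face $f_0$ of $\Pi|_B$. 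Therefore $a$ and $b$ both lie on the boundary of $f_0$, and I may draw $ab$ through $f_0$ to obtain the desired embedding of $B \cup \{ab\}$ inside a closed disk with $ab$ on its boundary.

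Finally I would glue the two embeddings along $ab$. In $\Pi_1$, take a thin topological closed disk $\Delta \subseteq S'$ whose intersection with $G_1$ is exactly the edge $ab$ together with its endpoints; delete the interior of $\Delta$ and paste into the hole the planar embedding of $B \cup \{ab\}$ built in the previous step, identifying the boundary copies of $ab$. Because $\Delta$ is an actual disk in $S'$, this surgery does not change the underlying surface, so the result is an embedding of $G$ in $S'$, contradicting that $G$ is an excluded minor. The main delicate step is the second one, namely justifying that $a$ and $b$ really do share a common face of $\Pi|_B$; the key ingredients are that each of $a, b$ must send some edge outside $D$ (otherwise $G \subseteq D$ is already a contradiction), and that every such edge crossing $\partial D$ forces its endpoint in $\{a,b\}$ onto the boundary of the unique face of $B$ containing $\partial D$.
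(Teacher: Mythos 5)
Your overall strategy is the same as the paper's: form a proper minor of $G$ in which $B$ is collapsed to something trivial, invoke minimality to embed that minor in $S'$, and then re-embed $B$ inside a disk of that embedding to produce an embedding of $G$ in $S'$, a contradiction. The paper contracts $B-b$ to a single vertex $a'$ and then uses the face of $\Pi'(G'-a'b)$ containing both $a'$ and $b$; you instead replace $B$ by the single edge $ab$ and carve a thin disk around that edge. These two variants are essentially interchangeable, so at the level of the route taken the proposal matches the paper.

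The place where your write-up is not fully rigorous is the justification that $a$ and $b$ both lie on the boundary of the face $f_0$ of $\Pi|_B$ that contains $\partial D$. Your argument is: (i) both $a,b$ have a neighbor in $A\setminus B$; (ii) if all such neighbors lay inside $D$ then $G\subseteq D$, hence $G$ planar, contradiction; so (iii) some such neighbor is outside $D$, forcing $a,b$ onto $f_0$. Step (ii) does not hold as stated: $A\setminus B$ can have vertices outside $D$ without any of $a$'s or $b$'s immediate $A$-neighbors lying outside $D$ (paths of $A\setminus B$ may cross $\partial D$ well away from $a,b$). And even if (ii) held, the negation only yields that \emph{at least one} of $a,b$ has an outside neighbor, not both, whereas you need both of them on $f_0$. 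A correct argument here needs a connectivity step: if, say, $b$ lies only on internal faces of $\Pi|_B$, then the portion of $A\setminus B$ attached to $b$ is enclosed by $B$ and reachable only through $b$ (since $a\notin\partial f$ for that internal face $f$), making $\{b\}$ a cutvertex; and if both $a,b$ lie only on internal faces then $A$ is entirely inside $D$, so $G$ is planar. To be fair, the paper's own proof does not spell this step out either (it asserts directly that $B$ "can be embedded in this disk, as it was in $\Pi$"), so your attempt to justify it is welcome; it just needs the cutvertex/planarity case split rather than the neighbor-location argument you gave.

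One further small point: both your "pick any path in $B$ from $a$ to $b$" and the paper's "contract $B-b$" implicitly assume $B$ is connected with $a$ and $b$ in the same component. This is automatic when $G$ is $2$-connected (otherwise some component of $B$ would be cut off by a single vertex), but $G$ is only declared $2$-connected a little later in the paper; it is worth stating the assumption or handling the degenerate case.
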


\begin{proof}
    If $B$ is an edge then there is nothing to prove.
    
    Suppose, by contradiction, that $B$ is not an edge but is contained in a disk in $\Pi$. See Figure \ref{fig:2_separation} for an illustration.

    Then, let $G'$ be the graph obtained by contracting $B - b$ into a single vertex $a'$. % and then removing the multiple edges between $a'$ and $b$ if there are some
    As $G'$ is a proper minor of $G$, $G'$ can be embedded on surface $S'$ by embedding $\Pi'$.

    Let $e = a'b \in E(G')$. In $\Pi'(G'-e)$, there exists a disk with $a'$ and $b$ on its boundary ($e$ is embedded in $\Pi'$ inside of it). Then, it is possible to embed $B$ in this disk (because $B$ can be embedded in a disk, as it was in $\Pi$). But, then, this embedding of $B$ together with $\Pi'(G'-e)$ constitutes an embedding of $G$ onto the surface $S'$, a contradiction.
\end{proof}

\begin{figure}[h!]
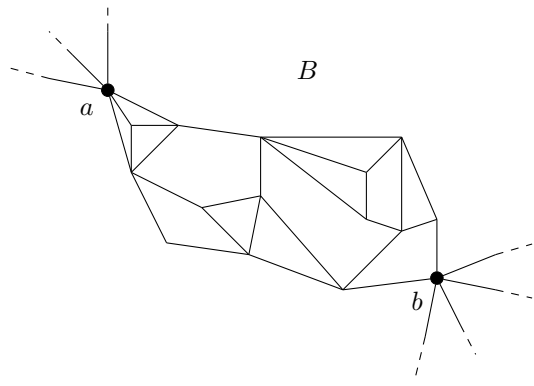

    \centering
    \tikzfig{images/2_separation}
    \caption{The subgraph $B$ is 2-separated from the rest of the graph by $\{a, b\}$ and is contained in a disk in $\Pi$.}
    \label{fig:2_separation}
\end{figure}

\begin{lem}
    \label{G_blocks_excluded_minors}
    Let $G_1, ..., G_p$ ($p \geq 1$) be the $2$-connected blocks of $G$. Then, for $1 \leq i \leq p$, $G_i$ is an excluded minor for some surface $S_i$.
\end{lem}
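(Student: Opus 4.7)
The strategy is to lift the ``excluded minor'' property from $G$ to each block $G_i$ by means of the additivity of the Euler genus over $2$-connected blocks (Proposition~\ref{4.4.2}).

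First I would apply Proposition~\ref{4.4.2} directly to $G$ to obtain $g(G)=\sum_{j=1}^{p}g(G_j)$. Since $G$ is not embeddable in $S'$ of Euler genus $g$, we have $g(G)\geq g+1$, and setting $k_i := g-\sum_{j\neq i}g(G_j)$ yields $g(G_i)\geq k_i+1$.

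Next I would fix an arbitrary proper minor $H_i$ of $G_i$ and let $H$ be the graph obtained from $G$ by performing the very same minor operations on the vertices and edges of $G_i$ viewed inside $G$, leaving the other blocks $G_j$ ($j\neq i$) unchanged. Then $H$ is a proper minor of $G$, so by minimality it embeds in $S'$, giving $g(H)\leq g$. Since the blocks of $H$ are precisely the blocks of $H_i$ together with $\{G_j:j\neq i\}$, applying Proposition~\ref{4.4.2} to each connected component of $H$ and summing gives
\[
g(H) \;=\; g(H_i)+\sum_{j\neq i}g(G_j) \;\leq\; g,
\]
whence $g(H_i)\leq k_i$.

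Combining the two bounds yields the Euler-genus gap $g(G_i)>k_i\geq g(H_i)$ for every proper minor $H_i$ of $G_i$, so $G_i$ is not embeddable in any surface of Euler genus at most $k_i$. To exhibit a concrete $S_i$, I would let $\mathcal{F}_i$ denote the family of surfaces on which every proper minor of $G_i$ simultaneously embeds; since every such $H_i$ is a subgraph of the corresponding $H$ that embeds in $S'$, the surface $S'$ itself lies in $\mathcal{F}_i$, so $\mathcal{F}_i$ is nonempty. I would then take $S_i\in\mathcal{F}_i$ of minimum Euler genus and argue that $g(S_i)\leq k_i<g(G_i)$ by observing that the embedding of any $G-e$ (for $e\in E(G_i)$) in $S'$ restricts, via the block decomposition used in Proposition~\ref{4.4.2}, to an embedding of $G_i-e$ in a sub-surface of Euler genus at most $k_i$, and analogously for vertex deletions and edge contractions; amalgamating these into a single surface yields a member of $\mathcal{F}_i$ of Euler genus at most $k_i$, so $G_i$ does not embed in $S_i$ by the Euler-genus gap, while every proper minor does by construction. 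The hardest part is making this amalgamation step rigorous, since it requires careful orientability bookkeeping so that the per-minor sub-surface embeddings combine into one common surface in $\mathcal{F}_i$ compatible with the paper's definition of embeddability (which demands matching orientability between graph and surface).
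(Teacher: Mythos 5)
Your core argument coincides with the paper's: both combine Proposition~\ref{4.4.2} (additivity of Euler genus over blocks) with the minimality of $G$ to deduce that every proper minor of $G_i$ has Euler genus strictly below $g(G_i)$. The paper is more economical: it considers only $G_i-e$ and $G_i/e$ for $e\in E(G_i)$, notes that $G-e$ (resp.\ $G/e$) retains $G_j$ ($j\neq i$) as blocks with $G_i$ replaced by $G_i-e$ (resp.\ $G_i/e$), and then simply invokes the standard sufficiency of this genus-drop criterion.

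Your version has two gaps. First, the assertion that ``the blocks of $H$ are precisely the blocks of $H_i$ together with $\{G_j:j\neq i\}$'' is false once the minor operations include deletion of a vertex $v$ that is a cut vertex of $G$: removing $v$ also modifies the neighbouring block $G_j$, so the identity $g(H)=g(H_i)+\sum_{j\neq i}g(G_j)$ degrades to an inequality pointing the wrong way for your conclusion $g(H_i)\leq k_i$. The paper avoids this by treating only edge deletions and contractions, which suffices because $g(G_i-v)\leq g(G_i-e)$ for any edge $e$ incident to $v$. Second, your closing paragraph trying to exhibit $S_i$ by ``amalgamating'' per-minor subsurfaces is not actually carried out, and the difficulty you flag is real: two proper minors of $G_i$ of Euler genus exactly $k_i$ may realise that genus on surfaces of opposite orientability, and it is not immediate that both embed in a single surface of Euler genus at most $k_i$. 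The paper makes no attempt at this construction; it stops once the genus drop is proved and relies on the accepted fact that a graph whose Euler genus strictly decreases under every edge deletion and contraction is an excluded minor for some surface. You should either invoke that fact directly after establishing the genus drop (restricted to edge operations), or be prepared to do the orientability bookkeeping in full.
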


\begin{proof}
    Let $1 \leq i \leq p$. It is sufficient to show that for any edge $e \in G_i$, $G_i - e $ and $G_i / e$ is of Euler genus strictly smaller than $g(G_i)$.

    Let $G' = G-e$ and $G_i' = G_i - e$ or $G' = G/e$ and $G_i' = G_i /e$. As $G'$ is a proper minor of $G$, its Euler genus is strictly smaller than $g(G)$. Moreover, $G_1, ..., G_{i-1}, G_{i+1}, ..., G_p$ are still blocks of $G'$ and $G_i'$ is either a block or contains several blocks of $G'$. By Proposition \ref{4.4.2}, 

    \begin{align*}
        g(G) &= g(G_1) + ... + g(G_p) \\
        & > g(G') = g(G_1) + ... + g(G_{i-1}) + g(G_i') + g(G_{i+1}) + ... + g(G_p)
    \end{align*}

    Hence, $g(G_i') < g(G_i)$. This concludes the proof.
\end{proof}

\begin{cor}
    \label{G_2_connected}
    Suppose that, for any graph $H$ that is an excluded minor for some surface $S_H$ and that is $2$-connected, $|V(H)| \leq N(g(S_H))$ with $N$ an increasing function.

    Then, $|V(G)| \leq (g+2) \times N(g)$.
\end{cor}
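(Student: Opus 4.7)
The plan is to reduce to the 2-connected case via the block decomposition of $G$. Let $G_1, \ldots, G_p$ denote the 2-connected blocks of $G$; by Lemma \ref{G_blocks_excluded_minors}, each $G_i$ is itself a minimal excluded minor for some surface $S_i$, and the hypothesis immediately yields $|V(G_i)| \leq N(g(S_i))$. Since every vertex of $G$ belongs to at least one block, we have $|V(G)| \leq \sum_{i=1}^p |V(G_i)|$, so it suffices to bound both $p$ and $\max_i g(S_i)$ in terms of $g$.

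For the bound on $p$, I would use Proposition \ref{4.4.2}, which gives $g(G) = \sum_{i=1}^p g(G_i)$. The inequality $g(G_i') < g(G_i)$ established in the proof of Lemma \ref{G_blocks_excluded_minors} forces $g(G_i) \geq 1$ for every $i$; combined with $g(G) \leq g+2$, this gives $p \leq g+2$. For the bound on $g(S_i)$, the key observation is that since $G_i$ is a minimal excluded minor for $S_i$ we have $g(S_i) \leq g(G_i) - 1$, so it suffices to bound $g(G_i)$. Here I would split into two cases. If $p = 1$, then $G = G_1$ is itself 2-connected and is exactly the minimal excluded minor for $S'$, so $g(S_1) = g$. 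If $p \geq 2$, then each block satisfies $g(G_i) \leq (g+2) - (p-1) \leq g+1$ because every other block contributes at least $1$ to the sum $\sum_j g(G_j)$, so $g(S_i) \leq g$. Either way $g(S_i) \leq g$ for all $i$, and by monotonicity of $N$, $|V(G_i)| \leq N(g)$.

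Putting everything together, $|V(G)| \leq \sum_{i=1}^p |V(G_i)| \leq p \cdot N(g) \leq (g+2) N(g)$, which is the desired bound. The main subtlety I expect is precisely the case split $p = 1$ versus $p \geq 2$: without separating these one only obtains $g(S_i) \leq g+1$, and hence the weaker bound $(g+2) N(g+1)$. The argument exploits the fact that when there are multiple blocks each must consume at least one unit of the genus budget $g+2$, so no individual block can attain genus $g+2$ unless it is the only block.
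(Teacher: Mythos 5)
Your proof is correct and follows essentially the same route as the paper: block decomposition, Lemma \ref{G_blocks_excluded_minors} to make each block an excluded minor, Proposition \ref{4.4.2} for genus additivity, the observation $g(G_i) \geq 1$ to bound the number of blocks by $g+2$, and the same $p=1$ versus $p \geq 2$ case split to get $g(S_i) \leq g$ rather than only $g+1$. The paper packages the $p\geq 2$ case as the chain $\sum_i N(g(G_i)-1) \leq p\,N\bigl(\sum_i(g(G_i)-1)\bigr) = p\,N(g(G)-p) \leq (g+2)N(g)$, whereas you bound each $g(G_i)$ individually by $g+1$ when $p\geq 2$; these are two phrasings of the same counting argument and yield the identical bound.
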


\begin{proof}
    If $G$ is $2$-connected, this is trivial.

    Suppose not: let $G_1, ..., G_p$ ($p \geq 2$) be the $2$-connected blocks of $G$. Then, by Lemma \ref{G_blocks_excluded_minors}, for $1 \leq i \leq p$, $G_i$ is an excluded minor for some surface $S_i$. % As $G_i$ is an excluded minor for $S_i$, it has genus at least $1$. 
    Remark then that, using Lemma \ref{4.4.2}, $p \leq g(G_1)+...+g(G_p) = g(G) \leq g+2$. Then, by Lemma \ref{4.4.2},
    \begin{align*}
        |V(G)| &\leq \sum_{i=1}^p |V(G_i)| \\
        & \leq \sum_{i=1}^p N(g(S_i)) \\
        & \leq \sum_{i=1}^p N(g(G_i)-1) \\
        & \leq p N\left(\sum_{i=1}^p (g(G_i)-1)\right) \\
        & \leq p N(g(G)-2) \\
        & \leq (g+2) N(g)
    \end{align*} 
\end{proof}

Corollary \ref{G_2_connected} shows that it is sufficient to prove the main result for excluded minors that are $2$-connected, up to multiplying by a factor linear in $g$. Therefore, we will consider hereafter that $G$ is $2$-connected.

\subsubsection{Structure of \texorpdfstring{$G-e$}{G-e}}

For $e \in E(G)$, we define $G_e = G - e$. As $G_e$ is a proper minor of $G$, it can be embedded in $S'$. We define $\Pi_e$ as an embedding of $G_e$ in $S'$. Let's prove the following result, which compares the embeddings $(G,\Pi)$ and $(G_e, \Pi_e)$.

\begin{lem}
    \label{C_e_non_contractible}
    Let $C$ be a $\Pi$-contractible cycle and let $e \in \text{int}(C,\Pi)$. Let $f,f'$ be the faces in $\Pi$ that contain $e$ and let $C_e = f \cup f' -e$. Then, $C_e$ is a contractible cycle in $(G,\Pi)$ such that $\text{Int}(C_e,\Pi) = C_e + e$. 
    Moreover, $C_e$ is $\Pi_e$-nonseparating (and thus $\Pi_e$-noncontractible).
\end{lem}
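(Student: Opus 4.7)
I would first verify that $C_e = f \cup f' - e$ is a well-defined simple $\Pi$-contractible cycle bounding the region $C_e + e$ in $\Pi$, and then prove the main claim that $C_e$ is $\Pi_e$-nonseparating by contradiction.

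Since $e \in \text{int}(C, \Pi)$ lies inside a $\Pi$-contractible cycle and $G$ is $2$-connected, Lemma~\ref{2_connected_faces_cycles} guarantees that $f$ and $f'$ are both simple face-cycles and that these are the two distinct faces containing $e$. The minimum degree of $G$ is at least $3$ (a minimal excluded minor cannot contain a degree-$2$ vertex, since such a vertex could be smoothed away to yield a smaller excluded minor), which combined with the cyclic rotations around each endpoint of $e$ shows that $f$ and $f'$ share no edge incident to $u$ or $w$ besides $e$ itself; a disk-topology argument inside $\text{Int}(C, \Pi)$ then excludes a second shared edge disjoint from $e$ (such an edge would force $\overline{f} \cup \overline{f'}$ to be a topological annulus rather than a disk, contradicting contractibility of the ambient region). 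Therefore $f - e$ and $f' - e$ are internally disjoint $uw$-paths whose union is the simple cycle $C_e$, and the closed region $\overline{f} \cup \overline{f'}$ is a disk bounded by $C_e$ containing only the edge $e$ in its interior, giving $\Pi$-contractibility of $C_e$ together with $\text{Int}(C_e, \Pi) = C_e + e$.

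For the nonseparating claim I argue by contradiction: assume $C_e$ is $\Pi_e$-separating, so in particular $\Pi_e$-twosided. Cutting along $C_e$ splits $(G_e, \Pi_e)$ into two connected pieces $(H_1, \Pi_{e,1})$ and $(H_2, \Pi_{e,2})$, each containing a copy of $C_e$ which becomes a facial cycle (by Proposition~\ref{4.2.4} combined with the standard Euler-characteristic computation for the separating case), with $g(\Pi_{e,1}) + g(\Pi_{e,2}) = g(\Pi_e) = g$. Both $u$ and $w$ lie on the newly created $C_e$-facial cycle in the $H_1$ piece, so one can reinsert $e$ inside this face, splitting it in two. Regluing the resulting embedding of $H_1 + e$ with $(H_2, \Pi_{e,2})$ along $C_e$ produces an embedding of $G$ on a surface of Euler genus $g(\Pi_{e,1}) + g(\Pi_{e,2}) = g$, that is, on $S'$, contradicting the assumption that $G$ is not embeddable in $S'$. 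Hence $C_e$ is $\Pi_e$-nonseparating; since any $\Pi_e$-contractible cycle is by definition $\Pi_e$-separating, $C_e$ is in particular $\Pi_e$-noncontractible.

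The main obstacle is the first part: verifying carefully that $f$ and $f'$ share only the edge $e$, so that $C_e$ is a genuine simple cycle. Shared edges incident to $u$ or $w$ are ruled out cleanly by the minimum-degree-$3$ argument, but excluding a second shared edge disjoint from $e$ requires a more delicate topological argument inside the disk $\text{Int}(C, \Pi)$. The regluing step in the nonseparating argument also deserves careful bookkeeping, to confirm that reinserting $e$ inside the cap face and then pasting along $C_e$ genuinely yields a valid embedding of $G$ on a surface of Euler genus exactly $g$, rather than on a surface of higher genus.
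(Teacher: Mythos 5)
Your proposal diverges from the paper's proof in both parts, and both divergences contain genuine gaps.

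\textbf{First part ($C_e$ is a cycle).} The paper handles this via Lemma~\ref{2_separated_subgraph_is_edge}: if $f\cap f'$ is disconnected, two non-consecutive shared vertices $u,v$ form a $2$-separator isolating a $\Pi$-contractible subgraph that is not a single edge, and if $f\cap f'$ is a path of length $\geq 2$ a similar $2$-separation arises. You instead invoke minimum degree $3$ (fine for shared edges at the endpoints of $e$) plus a claim that a shared edge disjoint from $e$ would make $\overline{f}\cup\overline{f'}$ an annulus, ``contradicting contractibility of the ambient region.'' That last step does not go through: an annulus sitting inside a disk is perfectly consistent with the ambient region being contractible, so the Euler-characteristic computation ($\chi=0$) by itself yields no contradiction. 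You would need to go on to extract a forbidden $2$-separation from the annulus (essentially recovering the paper's argument), or rule out the inner disk being nonempty, neither of which is done. You also do not address the case where $f$ and $f'$ share an additional \emph{vertex} but no additional edge, which still spoils simplicity of $C_e$ and which the paper's ``disconnected intersection'' branch covers directly.

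\textbf{Second part ($C_e$ is $\Pi_e$-nonseparating).} The paper's proof keeps $G_e$ connected and simply \emph{re-embeds} the bridges of $\mathcal{B}-B_C$ (all of which live in $\text{Int}(C,\Pi)$, hence are planar with the correct attachment orders) onto the same side of $C_e$ as $B_C$, thereby emptying the other side and making room for $e$; no cutting or regluing is involved. Your cut-and-reglue route is not sound: when you cut $(G_e,\Pi_e)$ along the separating cycle $C_e$ into $(H_1,\Pi_{e,1})$ and $(H_2,\Pi_{e,2})$, the $C_e$-bounded face of $H_1$ is precisely the disk that must be sacrificed in the connected sum to re-insert $H_2$ and recover an embedding with the two copies of $C_e$ identified. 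Once you place $e$ in that face, $C_e$ no longer bounds a face of $H_1+e$, and the regluing along $C_e$ that would put $H_2$ back is no longer available; you would instead have to fit $H_2$ into the two new subfaces $f_1,f_1'$, which fails whenever a bridge of $H_2$ attaches to both arcs of $C_e$. In other words, you are implicitly using ``$u,w$ lie on a common face of $H_1$ after cutting'' as if it implied ``$e$ can be added to $G_e$ without raising the genus,'' but those are not the same statement, since the $C_e$-face of $H_1$ is not a face of $G_e$. This is exactly the obstruction the paper's bridge-reembedding argument is designed to circumvent.
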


\begin{proof}
    As $G$ is $2$-connected and $f$ and $f'$ are faces in $\text{Int}(C, \Pi)$, by Lemma \ref{2_connected_faces_cycles}, $f$ and $f'$ are distinct faces of $(G, \Pi)$ and they are cycles.
    
    To show that $C_e$ is a cycle, it is enough to show that $f$ and $f'$ intersect only in $e$. If $f \cap f'$ is disconnected, there are two vertices $u$ and $v$ so that $u,v \in f \cap f'$ and $u$ and $v$ are not consecutive in $f$ nor $f'$. Then $\{u,v\}$ is a 2-separator in $G$ that separates a $\Pi$-contractible subgraph $H$ of $G$ which contains two paths from $u$ to $v$ (respectively subpaths of the facial walks $f$ and $f'$) from the rest of the graph. However, as $H$ is a $\Pi$-contractible subgraph and is not an edge, this contradicts Lemma \ref{2_separated_subgraph_is_edge}.
    
    We can, therefore, suppose that $f \cap f'$ is connected. Then suppose $f \cap f'$ is a path $P$ of length at least $2$. However, a 2-separator in $G$ separates $P$ from the rest of $G$. Lemma \ref{2_separated_subgraph_is_edge} leads to a contradiction. Finally, $f \cap f'$ is reduced to $e$, and $C_e$ is indeed a cycle.

    As $f$ and $f'$ are $\Pi$-contractible cycles and intersects on $e$, $C_e$ is $\Pi$-contractible and $\text{Int}(C_e,\Pi) = C_e + e$.

    \vspace{0.3cm}
    
    For the second part of the proof, suppose, by contradiction, that $C_e$ is $\Pi_e$-separating.
    
    Let $\mathcal{B}$ be the sets of the nontrivial bridges on $C_e$ in $G_e$. Let $B_C$ be the bridge in $\mathcal{B}$ that contains $C$. Remark that every bridge $B \in \mathcal{B}-B_C$ is so that $B \subseteq \text{Int}(C,\Pi)$. Without loss of generality, $B_C$ is on the left of $C_e$ in $\Pi_e$. Then, let's reembed $G_e$ so that $C_e \cup B_C$ is embedded as in $\Pi_e$ and $\mathcal{B}-B_C$ is embedded as in $\Pi$ on the left of $C_e$ (which is possible because, for each bridge $B \in \mathcal{B} - B_C$, there is a disc in $G_l(C_e, \Pi_e)$ with the attaches of $B$ in their order on $C_e$ on its boundary). This new embedding $\tilde{\Pi}$ of $G_e$ is of Euler genus at most $g$. Moreover, $G_r(C_e,\Tilde{\Pi})$ is empty.
    Then, by embedding $e$ in $G_r(C_e,\tilde{\Pi})$, we obtain an embedding of $G$ in surface $S'$. Contradiction.
\end{proof}

\subsubsection{Bounds for tree decompositions of \texorpdfstring{$G$}{G}} \label{previously_known_results}

Previous studies of the excluded minors for a surface lead to bounds regarding the tree decomposition of such excluded minors. Seymour \cite{seymour} proved a $O(g^3)$ bound on the treewidth of $G$ and on the maximum degree of the tree $T$ in a tree decomposition of $G$ (for a tree decomposition with particular properties). We improve both of these bounds to $O(g^2 \log g)$.

As stated in the introduction, the first and, until now, only known bound for the treewidth of $G$ is the following:

\begin{reptheo}{seymour_treewidth}[{\cite[(3.3)]{seymour}}]
    The treewidth of $G$ is bounded by a polynomial in $g$:
    \[ tw(G) \leq T_{S}(g)\]

    with $T_{S}(g) = 3(g+3)^2(3g+16)-3 = O(g^3)$.
\end{reptheo}

In this paper, we improve this bound from $O(g^3)$ to $O(g \log g)$:

\begin{reptheo}{treewidth}
    The treewidth of $G$ is bounded by the following function of $g$:
    \[ tw(G) \leq T(g)\]

    with $T(g) = 264(g+2)(m+1)-1 = O(g \log g)$, where $m = 2(\lfloor\log_{q}(3g+4)\rfloor + 2)$ and $q = \frac{1153}{1152}$.
\end{reptheo}

The proof of this result can be found at the end of Subsection \ref{subsec:contractible_nested_squares}.

\vspace{0.5cm}

Seymour proved a bound on the maximum degree of the tree in a tree decomposition of $G$ of width $<w$ parametrized by $g$ and $w$, for a tree decomposition with particular properties.

\begin{defi}[Linked tree decomposition]
    We say that a tree decomposition $(T,(V_t)_{t \in T})$ of $H$ is \textit{linked} if, for any $t_1,t_2 \in V(T)$ and any $k \geq 1$, either there are $k$ disjoint paths between $V_{t_1}$ and $V_{t_2}$, or there is $t \in V(T)$ on the path between $t_1$ and $t_2$ in $T$ such that $|V_t| < k$.
\end{defi}

A result from Thomas \cite{thomas} showed that, for any graph $H$ there is a linked tree decomposition of $H$ of width $tw(H)$. Therefore, the following result from Seymour leads to a $O(g^3)$ bound when combined with Theorem \ref{seymour_treewidth}.

\begin{theo}[{\cite[claim (5) in (4.1)]{seymour}}]
    \label{seymour_tree_degree}

    Let $(T,(V_t)_{t \in T})$ be a minimal linked tree decomposition of $G$ of width $<w$. Then, the maximum degree of $T$ is bounded by a polynomial in $g$ and $w$:

    \[\Delta(T) \leq \Delta_T(g,w)\]

    with $\Delta_T(g,w) = 2g + 2w$.
\end{theo}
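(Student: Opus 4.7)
The plan is to argue by contradiction: assume some $t \in V(T)$ has degree $d > 2g + 2w$, and let $t_1, \ldots, t_d$ be its neighbors with corresponding subtrees $T_i$ of $T - t$. Write $V_i = \bigcup_{s \in V(T_i)} V_s$, $G_i = G[V_i]$, and $X_i = V_t \cap V_i$, so that $X_i$ separates $V_i \setminus V_t$ from the rest of $G$ and $|X_i| \leq w$. Minimality of $T$ forces $V_i \setminus V_t \neq \emptyset$; 2-connectivity of $G$ (Corollary \ref{G_2_connected}) then forces $|X_i| \geq 2$. I would form a minor $H$ of $G$ by contracting, for each branch $i$, each connected component of $G[V_i \setminus V_t]$ to a single vertex $v_i$, producing $H$ whose vertex set consists of the ``core'' $V_t$ (at most $w$ vertices) together with at least $d$ branch vertices $v_i$, each of degree $\geq 2$ and adjacent only to $X_i$. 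Since $H$ is a minor of $G$, it embeds in $S$, so $g(H) \leq g + 2$.

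Next, I would classify each branch by whether the $\Pi$-embedded subgraph $G_i$ sits inside a $\Pi$-contractible disk together with $X_i$. For each \emph{noncontractible} branch $v_i$, I would exhibit a $\Pi$-noncontractible cycle $C_i$ supported in $G_i \cup X_i$ and passing through a common root $a \in V_t$ via a spanning tree of $H[V_t]$. Minimality of $G$ as an excluded minor should prevent two such cycles from being $\Pi$-homotopic: if $C_i$ and $C_j$ were homotopic, then $C_i \cup C_j$ bounds a cylinder in $S$, and Lemma \ref{cylinder_reembedding} would let me re-embed $G$ so that one of $G_i, G_j$ is absorbed into the cylinder of the other, producing after a small contraction either a proper minor of $G$ failing to embed in $S'$ or directly an embedding of $G$ in $S'$, contradicting in either case that $G$ is an excluded minor. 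Proposition \ref{homotopic_cycles_variant2} then bounds the noncontractible branches by $3(g+2)-3 = O(g)$. For the \emph{contractible} branches, Lemma \ref{2_separated_subgraph_is_edge} forbids $|X_i|=2$ (else $G_i$ would reduce to a single edge with $V_{t_i}\subseteq V_t$, contradicting minimality of $T$), so $|X_i|\geq 3$; a bipartite Euler-formula count on the subgraph of $H$ between $V_t$ and the contractible branch vertices (now of degree $\geq 3$) then limits them to $O(g + w)$.

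Together these estimates yield $d = O(g + w)$, contradicting $d > 2g + 2w$ once the constants are sharpened. The principal obstacle lies in the noncontractible case: producing the precise reembedding that forces either a smaller embedding of $G$ or a proper non-embeddable minor requires carefully combining the tree-decomposition separator $X_i$, the linkedness of $T$, the homotopy data of the cycles, and the minimality of $G$ as an excluded minor for $S'$. Matching the exact constant $2g + 2w$ (rather than the crude $O(g) + O(g+w)$ bound sketched above) would further demand a more delicate face-level counting around each vertex of $V_t$ for the contractible branches and a tighter handle-usage estimate in place of Proposition~\ref{homotopic_cycles_variant2} for the noncontractible ones.
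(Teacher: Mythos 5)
The paper does not actually reprove this theorem — it is quoted verbatim from Seymour's 1993 paper (claim (5) in (4.1) of \cite{seymour}) and used as a black box, so there is no internal proof for your sketch to be measured against. On its own merits, your overall strategy (branch off a high-degree tree node, separate the branches into ``contractible'' and ``noncontractible'' with respect to $\Pi$, bound the former by a bipartite Euler-formula count and the latter by a homotopy count) is the natural shape such an argument takes, and your Euler-formula count for the contractible branches is sound in outline. However, the noncontractible half has a genuine gap: your appeal to Lemma~\ref{cylinder_reembedding} does not match that lemma's hypotheses. That lemma reembeds a subgraph bounded by two cycles that are $\Pi_{H'}$-noncontractible but $\Pi_H$-contractible (the whole point is to recontract a cylinder into a disk in another embedding), whereas you are in the situation where $C_i$ and $C_j$ are both $\Pi$-noncontractible and $\Pi$-homotopic in the same embedding $\Pi$. ``Absorbing one branch into the cylinder of the other'' is not a consequence of that lemma, and it is far from clear that two noncontractible branches with homotopic cycles lead to any contradiction at all: the cylinder between $C_i$ and $C_j$ can contain arbitrary further structure (including pieces of other branches and of $V_t$), and your cycles are only guaranteed to share vertices in $V_t$, not to be almost disjoint or to fit the ``cycles on a spanning tree rooted in $a$'' framework of Proposition~\ref{homotopic_cycles_variant2} as cleanly as you suggest.

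A second, more structural concern: you never actually use the linkedness hypothesis, even though the theorem statement requires a \emph{linked} tree decomposition and you yourself flag at the end that linkedness ``requires carefully combining.'' Minimality of $|V(T)|$ alone gives you $V_i\setminus V_t\neq\emptyset$, but it does not give the disjoint-path structure that makes each branch topologically substantial, which is exactly what one needs to forbid many homotopic noncontractible branches without an ad hoc reembedding argument. Finally, as you acknowledge, even the optimistic version of your count ($3g+3$ noncontractible branches plus $2g+2w-4$ contractible ones) overshoots $2g+2w$, so the bound in the statement is not established. As it stands, the sketch identifies the right decomposition of the problem but leaves the noncontractible case unproved, cites Corollary~\ref{G_2_connected} for 2-connectivity (which is a size bound, not a connectivity statement — the 2-connectedness reduction is stated separately after that corollary), and does not yield the stated constant.
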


As a consequence of the improved bound for the treewidth of $G$, we also improve this bound from $O(g^3)$ to $O(g \log g)$:

\begin{cor}
    \label{seymour_tree_degree_corollary}
    Let $(T,(V_t)_{t \in T})$ be a minimal linked tree decomposition of $G$ of width $tw(G)$. Then, the degree of $T$ is bounded by a polynomial in $g$:

    \[ \Delta(T) \leq \Delta_T(g)\]

    with $\Delta_T(g) = \Delta_T(g,T(g)+1) = 2g + 2(T(g)+1) = O(g \log g)$.
\end{cor}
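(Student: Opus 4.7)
The plan is to combine the improved treewidth bound (Theorem \ref{treewidth}) with Seymour's bound on the tree degree (Theorem \ref{seymour_tree_degree}) in a direct substitution. There is essentially no new content here: this is a corollary meant to propagate the improvement in the treewidth bound into the tree-degree bound.

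First, I would observe that since $(T,(V_t)_{t \in T})$ is a linked tree decomposition of $G$ of width $tw(G)$, and since Theorem \ref{treewidth} gives $tw(G) \leq T(g)$, the width of $(T,(V_t)_{t \in T})$ is at most $T(g)$, which in particular is strictly less than $T(g)+1$. This puts us in a position to invoke Theorem \ref{seymour_tree_degree} with parameter $w = T(g)+1$.

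Then I would apply Theorem \ref{seymour_tree_degree} directly: it yields
\[ \Delta(T) \leq \Delta_T(g, T(g)+1) = 2g + 2(T(g)+1). \]
Substituting $T(g) = 264(g+2)(m+1)-1$ with $m = 2(\lfloor \log_{q}(3g+4) \rfloor + 2)$ and $q = \tfrac{1153}{1152}$ gives $T(g) = O(g \log g)$, and hence $\Delta(T) \leq 2g + 2(T(g)+1) = O(g \log g)$, as claimed.

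The only potential subtlety, which is not really an obstacle, is checking that a minimal linked tree decomposition of width $tw(G)$ exists so that the hypothesis of Theorem \ref{seymour_tree_degree} applies; this is guaranteed by the quoted result of Thomas \cite{thomas} stating that every graph admits a linked tree decomposition whose width equals its treewidth, combined with the fact that we may always take such a tree decomposition to be minimal (by contracting edges $tt'$ of $T$ whenever $V_t \subseteq V_{t'}$). With this verified, the corollary follows immediately from the two cited theorems, with no further work required.
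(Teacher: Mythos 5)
Your proposal is correct and is exactly the paper's proof: the paper also derives this corollary by directly combining Theorem \ref{treewidth} with Theorem \ref{seymour_tree_degree}. Your additional remark about existence of a minimal linked tree decomposition (via Thomas's theorem) is a reasonable elaboration of the same argument.
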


\begin{proof}
    This result is obtained by combining Theorems \ref{treewidth} and \ref{seymour_tree_degree}.
\end{proof}

Another bound for the maximum degree of the tree in a minimal tree decomposition of $G$, polynomial but higher, was also provided by Mohar {\cite[Theorem 4.5]{Mohar_2001}} {\cite[in the proof of (7.2.2)]{graphs_on_surfaces}}.

%\begin{theo}[{\cite[Theorem 4.5]{Mohar_2001}}, {\cite[in the proof of (7.2.2)]{graphs_on_surfaces}}]

%    \label{mohar_tree_degree}
%    Let $(T,(V_t)_{t \in T})$ be a tree decomposition of $G$ of width $<w$. Then, the degree of $T$ is bounded by a polynomial in $g$ and $w$:

%    \[\Delta(T) \leq \Delta_T(g,w)\]

%    with $\Delta_T(g,w) = 2g + 2w + \binom{w}{2} -4$
%\end{theo}

%\begin{cor}
%    Let $(T,(V_t)_{t \in T})$ be a tree decomposition of $G$ of width $tw(G)$. Then, the degree of $T$ is bounded by a polynomial in $g$:

%    \[ \Delta(T) \leq \Delta_T(g)\]

%    with $\Delta_T(g) = \Delta_T(g,T(g)+1) = 2g + 2(T(g)+1) + \frac{T(g)(T(g)+1)}{2} -4 = O(g^4\log^2 g)$
%\end{cor}

%\begin{proof}
%    This result is obtained by combining Theorems \ref{treewidth} and \ref{mohar_tree_degree}.
%\end{proof}

\section{Structural tools for the main proof} \label{sec:structural_tools}

In this section, we prove three structural results on $G$ (Propositions \ref{isolated_paths}, \ref{good_square} and \ref{good_square_variant}). These results describe forbidden structures in planar embeddings induced by subgraphs of $(G,\Pi)$ and will be used repetitively to find contradictions in the main proof. 

\subsection{Isolated paths}

The main result of this subsection is Proposition \ref{isolated_paths}, which shows that a subgraph of $G$, which is $\Pi$-contractible, contains at most $O(g)$ internally disjoint paths from a piece to another piece.

\begin{defi}[Degree of a piece]
    Recall that a \textit{piece} of $(G,\Pi)$ is either a vertex or a face of $(G,\Pi)$. Let $p$ be a piece of $(G, \Pi)$, we define $d_{(G,\Pi)}(p)$ (or $d(p)$ if it is clear in the context) to be $d_G(p)$ if $p$ is a vertex and $|p|$ if $p$ is a face. 
\end{defi}

\begin{defi}[Isolated paths]
    Let $p$ and $p'$ be two disjoint pieces of $(G,\Pi)$. We say that $G$ contains \textit{$m$ isolated paths from $p$ to $p'$ in $\Pi$} if:

    \begin{itemize}
        \item $G$ contains $m$ internally disjoint paths $P_0, ..., P_{m-1}$ from $p$ to $p'$.
        \item For $0 \leq i \leq m-1$, let $a_i$ and $a'_i$ be respectively the endpoints of $P_i$ on $p$ and $p'$. If $p$ (resp. $p'$) is a face of $(G, \Pi)$, then $a_0, ..., a_{m-1}$ (resp. $a'_0, ..., a'_{m-1}$) are pairwise disjoint and listed in this order on $p$ (resp. $p'$).
        %Then either $a_0 = ... = a_{m-1}$ or $a_0, ..., a_{m-1}$ are pairwise disjoint and ordered in this way on $p$. Similarly, either $a'_0 = ... = a'_{m-1}$ or $a'_0, ..., a'_{m-1}$ are pairwise disjoint and ordered in this way on $p'$.
        \item For $0 \leq i \leq m-1$, the cycle induced by $P_i \cup P_{i+1} \cup p \cup p'$ is $\Pi$-contractible.
        %\item Let $C^*$ be the contractible cycle induced by $P_0 \cup P_{m-1} \cup p \cup p'$, then $C^* \cap p$ (resp. $C^* \cap p'$) is either reduced to a vertex or there is no attach on this path in $\text{Ext}(C^*,\Pi)$.
        %Remark that, for $0 \leq i \leq m-1$, $\text{int}(C_i,\Pi) \cap \text{int}(C_j,\Pi) = \emptyset$.
    \end{itemize}

    \vspace{0.3cm}

    We say that the isolated paths are:
    \begin{itemize}
        \item \textit{disjoint} if $p$ and $p'$ are both faces.
        \item \textit{almost disjoint} if $p$ (resp. $p'$) is a vertex and $p'$ (resp. $p$) is a face.
        \item \textit{joint} if $p$ and $p'$ are both vertices.
    \end{itemize}

    An example of the three types of isolated paths can be found in Figure \ref{fig:isolated_paths}.
\end{defi}

\begin{figure}[h!]
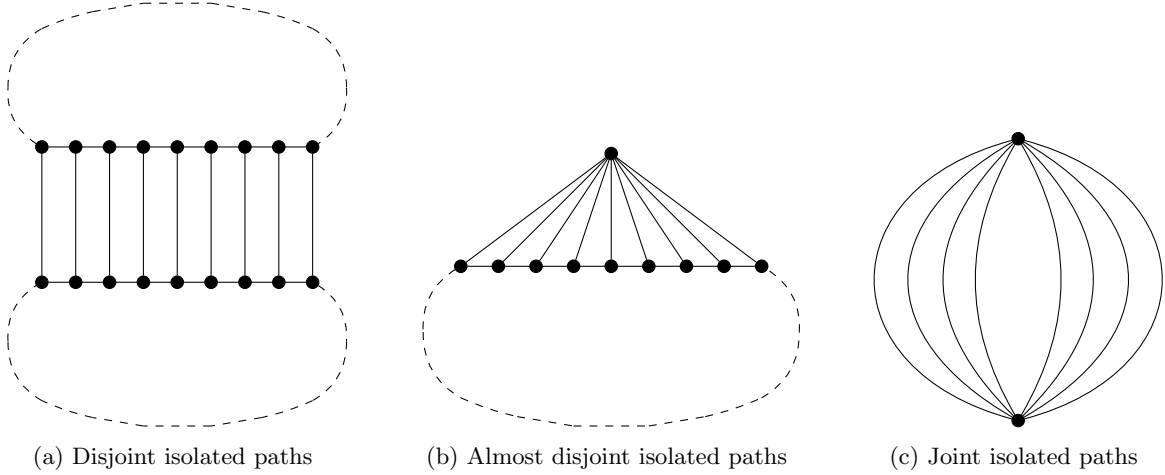

    \centering
    \begin{subfloat}[Disjoint isolated paths]{\tikzfig{images/isolated_paths_disjoint}}
    \end{subfloat}
    \hspace{0.5cm}
    \begin{subfloat}[Almost disjoint isolated paths]{\tikzfig{images/isolated_paths_semi_disjoint}}
    \end{subfloat}
    \begin{subfloat}[Joint isolated paths]{\tikzfig{images/isolated_paths_joint}}
    \end{subfloat} 

    \caption{Isolated paths. The solid lines indicate paths, whereas the dotted lines show the boundaries of the faces which the isolated paths use.}
    \label{fig:isolated_paths}
\end{figure}

\begin{prop}
    \label{isolated_paths}
    $G$ contains at most $4 \times (6g(\Pi)-5) \leq 4 \times (6g+7)$ isolated paths in $\Pi$ from a piece $p$ to a piece $p'$. 
\end{prop}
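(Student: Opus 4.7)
I would prove Proposition \ref{isolated_paths} by contradiction. Assume there exist $m > 4(6g(\Pi) - 5)$ isolated paths $P_0, \ldots, P_{m-1}$ from piece $p$ to piece $p'$ in $\Pi$. The plan is to construct from these paths a subfamily of more than $3g(\Pi) - 3$ cycles that are $\Pi$-noncontractible and pairwise $\Pi$-nonhomotopic, contradicting one of Propositions \ref{homotopic_cycles}, \ref{homotopic_cycles_variant1}, or \ref{homotopic_cycles_variant2}.

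I would split into three cases by the type of pieces. In the \emph{disjoint} case ($p, p'$ both faces), each pair $P_i, P_j$ with $i < j$ determines four candidate cycles, obtained by choosing one of the two subwalks of $p$ between $a_i$ and $a_j$ and one of the two subwalks of $p'$ between $a'_i$ and $a'_j$. The combination using the two ``short'' subwalks is already known to be $\Pi$-contractible by the isolated-paths hypothesis (at least for consecutive $i, j$); I would show that if all four such combinations were $\Pi$-contractible for enough pairs, then the subgraph $\bigcup P_i \cup p \cup p'$ would admit a planar reembedding incompatible with $g(\Pi)\geq 1$. Hence for most pairs at least one of the three non-trivial combinations yields a $\Pi$-noncontractible cycle. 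Pigeonholing over the at most four combination types gives at least $m/4 > 6g(\Pi) - 5$ paths whose extracted cycles share a common type, and the cyclic orderings of $a_0, \ldots, a_{m-1}$ on $p$ and $a'_0, \ldots, a'_{m-1}$ on $p'$ force these extracted cycles to be pairwise $\Pi$-nonhomotopic and almost disjoint (sharing only subwalks of $p$ or $p'$ at the boundaries between consecutive selected pairs). This contradicts Proposition \ref{homotopic_cycles_variant1}.

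The \emph{almost disjoint} case is analogous, with only two subwalk choices to pigeonhole over (the vertex side offering none), and the resulting bound fits within the factor of $4$. The \emph{joint} case (both $p$ and $p'$ vertices) is the trickiest, because consecutive $\Pi$-contractibility together with transitivity of path-homotopy forces all $P_i$ to be pairwise $\Pi$-homotopic, so $P_i \cup P_j$ is always $\Pi$-contractible and direct extraction of non-homotopic cycles of the form $P_i \cup P_j$ fails. I would handle this case by a local reduction: subdivide one edge of $G$ incident to each of $p$ and $p'$ (possibly reembedding using Lemma \ref{cylinder_reembedding}) to create a length-three facial walk at each of $p, p'$, reducing the problem to the disjoint case without altering $g(\Pi)$ and while preserving the isolated-paths structure up to a constant. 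The main obstacle will be this reduction in the joint case, which is where the homotopy-transitivity obstruction is strongest; the constant 4 in the bound arises precisely from the pigeonhole over the four combinations of facial subwalks used in the disjoint case.
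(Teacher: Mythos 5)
Your proposal has a fundamental gap: it never uses the minimality of $G$, and tries to derive a contradiction purely inside the embedding $\Pi$ of $G$ in $S$. That cannot work. The definition of isolated paths already forces every consecutive cycle $C_i = P_i \cup P_{i+1} \cup p \cup p'$ to be $\Pi$-contractible; gluing these disks along the shared paths $P_i$ shows that the whole structure $\bigcup_i P_i \cup p \cup p'$ lies inside a single disk in $\Pi$. Consequently \emph{every} cycle you can form from the paths together with subwalks of $p$ or $p'$ (all four combinations in your disjoint case) is itself $\Pi$-contractible, so your pigeonhole never produces a single $\Pi$-noncontractible cycle, let alone $> 3g(\Pi)-3$ pairwise nonhomotopic ones. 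The claim that a planar reembedding of the substructure would be ``incompatible with $g(\Pi) \geq 1$'' is also not correct: a planar subgraph sitting in a disk is perfectly compatible with the host embedding having positive genus, so there is no contradiction to be had there. And without minimality no bound of this type can hold at all — a planar graph embedded in any surface can easily have arbitrarily many isolated paths between two of its faces.

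The paper's proof is essential in using the embedding $\Pi_e$ of the proper minor $G_e = G - e$ on the \emph{smaller} surface $S'$. It splits the $C_i$ into two far-apart families $\mathcal{C}_1, \mathcal{C}_2$ with an edge $e$ strictly between them, and uses Proposition \ref{homotopic_cycles_variant1} (respectively \ref{homotopic_cycles} in the joint case) plus pigeonhole and the relative-orientation reembedding of Lemma \ref{cylinder_reembedding} to find a $\Pi_e$-contractible cycle in each of $\mathcal{C}_1$ and $\mathcal{C}_2$. The planar region between those two cycles (which is $\Pi$-contractible by hypothesis) is then reembedded in a disk of $\Pi_e$, yielding an embedding of $G$ in $S'$ and contradicting that $G$ is an excluded minor for $S'$. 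Your joint-case reduction via subdivision is also not sound: subdividing an incident edge does not change a vertex-piece into a face-piece, and would not carry the isolated-path structure over to the disjoint case.
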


\begin{proof}
    Suppose, by contradiction, that there exist in $\Pi$ at least $4 \times (6g+7)+1$ isolated paths from a piece $p$ to a piece $p'$.
    Let $P_0, ..., P_m$ with $m \geq 4 \times (6g+7)$ be these paths.

    For $0 \leq i \leq m-1$, let $C_i$ be the unique cycle induced by $P_i \cup P_{i+1} \cup p \cup p'$ that does not contain $p \cup p'$ in its interior.
    Let 
    \begin{align*}
        \mathcal{C}_1 & = \{ C_0, C_2, ..., C_{2 \times (6g+7) - 2}\} \\
        \mathcal{C}_2 & = \{C_{2 \times  (6g+7) + 1}, C_{2 \times  (6g+7) + 3}, ..., C_{4 \times (6g+7) - 1}\}
    \end{align*}
    
    Let $C_e$ be the unique cycle induced by $P_{2 \times (6g+7) - 1} \cup P_{2 \times (6g+7) +1} \cup p \cup p'$ in $\Pi$ that does not contain $p \cup p'$ in its interior. Then, the interior of $C_e$ is not empty as it contains $P_{2 \times (6g+7)}$. Let $e$ be the first edge of path $P_{2 \times (6g+7)}$.

    Now, let's modify the embedding $\Pi_e$ of $G_e$ into another embedding $\Pi_e'$ of $G_e$ on surface $S'$.
    Let's first show that both $\mathcal{C}_1$ and $\mathcal{C}_2$ contain a $\Pi_e$-contractible cycle:

    \begin{itemize}
        \item Suppose first that the isolated paths are disjoint or almost disjoint. 
        
        Remark then that $\mathcal{C}_1 \cup \mathcal{C}_2$ contains almost disjoint cycles of $G$ and $|\mathcal{C}_1| = |\mathcal{C}_2| = 6g+7$. By Proposition \ref{homotopic_cycles_variant1}, as $|\mathcal{C}_1| \geq 6g+7$ and $g(\Pi) \leq g+2$, either $\mathcal{C}_1$ contains a $\Pi_e$-contractible cycle (in which case, we are done) or, by pigeon hole principle, there are three cycles $\Tilde{C}_1, \Tilde{C}_2, \Tilde{C}_3 \in \mathcal{C}_1$ that are $\Pi_e$-noncontractible homotopic. Suppose that we are in the second case. There exist $0 \leq i < j < k \leq 6g+6$ such that $\Tilde{C}_1 = C_{2i}$, $\Tilde{C}_2 = C_{2j}$ and $\Tilde{C}_3 = C_{2k}$. Remark that at least two of the cycles $\Tilde{C}_1,  \Tilde{C}_2, \Tilde{C}_3$ have the same relative orientation in $\Pi$ and in $\Pi_e$, suppose without loss of generality that it is $\Tilde{C}_1$ and $\Tilde{C}_2$.
        
        By Lemma \ref{cylinder_reembedding}, it is then possible to reembed $\text{int}(C_{2i} \cup C_{2j}, \Pi_e)$ so that its new embedding is equivalent to $\Pi(\text{int}(C_{2i} \cup C_{2j}, \Pi_e))$. Moreover, it contains at least a cycle (for instance, $C_{2i+1}$) that becomes $\Pi_e$-contractible with the reembedding. The same reasoning applies to $\mathcal{C}_2$.

        \item Suppose now that the isolated paths are joint.
        Let 
        \begin{align*}
            \mathcal{P}_1 & = \{ P_0, P_2, ..., P_{2 \times (6g+7) - 2}\} \\
            \mathcal{P}_2 & = \{P_{2 \times  (6g+7) + 1}, P_{2 \times  (6g+7) + 3}, ..., P_{4 \times (6g+7)-1}\}
        \end{align*}
        
        Then, $\mathcal{P}_1 \cup \mathcal{P}_2$ contains internally disjoint paths from $p$ to $p'$ and $|\mathcal{P}_1| = |\mathcal{P}_2| = 6g+7$. By Proposition \ref{homotopic_cycles}, as $|\mathcal{P}_1| \geq 6g+7$ and $g(\Pi) \leq g+2$, there exist two paths $\Tilde{P}_1, \Tilde{P}_2 \in \mathcal{P}_1$ that are $\Pi_e$-homotopic. Then there exist $0 \leq i < j \leq 3g+3$ such that $\Tilde{P}_1 = P_{2i}$ and $\Tilde{P}_2 = P_{2j}$ are $\Pi_e$-homotopic. However, it is then possible to reembed $\text{int}(P_{2i}, P_{2j},\Pi_e)$ so that it is contained in a disk and it contains at least a cycle (for instance $C_{2i}$). The same reasoning applies to $\mathcal{P}_2$.
    \end{itemize}

    Finally, we know that there are two cycles $\Tilde{C}_1 = C_{2i} \in \mathcal{C}_1$ and $\Tilde{C}_2 = C_{2 \times (6g+7) + 1 + 2j} \in \mathcal{C}_2$ (for some $0 \leq i,j \leq 3g+3$) that are $\Pi_e$-contractible. Let $C$ be the cycle induced by $P_{2i+1} \cup P_{2 \times (6g+7) + 1 + 2j} \cup p \cup p'$ that does not contain $p \cup p'$ in its interior.

    First, let's reembed the bridges in $\mathcal{B}_1 = \text{int}(\Tilde{C}_1,\Pi)$ and $\mathcal{B}_2 = \text{int}(\Tilde{C}_2,\Pi)$ on the interior of the disks delimited by $\Tilde{C}_1$ and $\Tilde{C}_2$ in $\Pi_e$: the bridges respectively in $\text{ext}(\Tilde{C}_1,\Pi)$ and $\text{ext}(\Tilde{C}_2,\Pi)$ could not have been embedded in them, hence, in $\Pi_e(G_e - (\mathcal{B}_1 \cup \mathcal{B}_2))$, these disks are empty. It is then possible to embed $\mathcal{B}_1$ and $\mathcal{B}_2$ in their respective disks (as in $\Pi$, for example).

    \begin{figure}[h!]
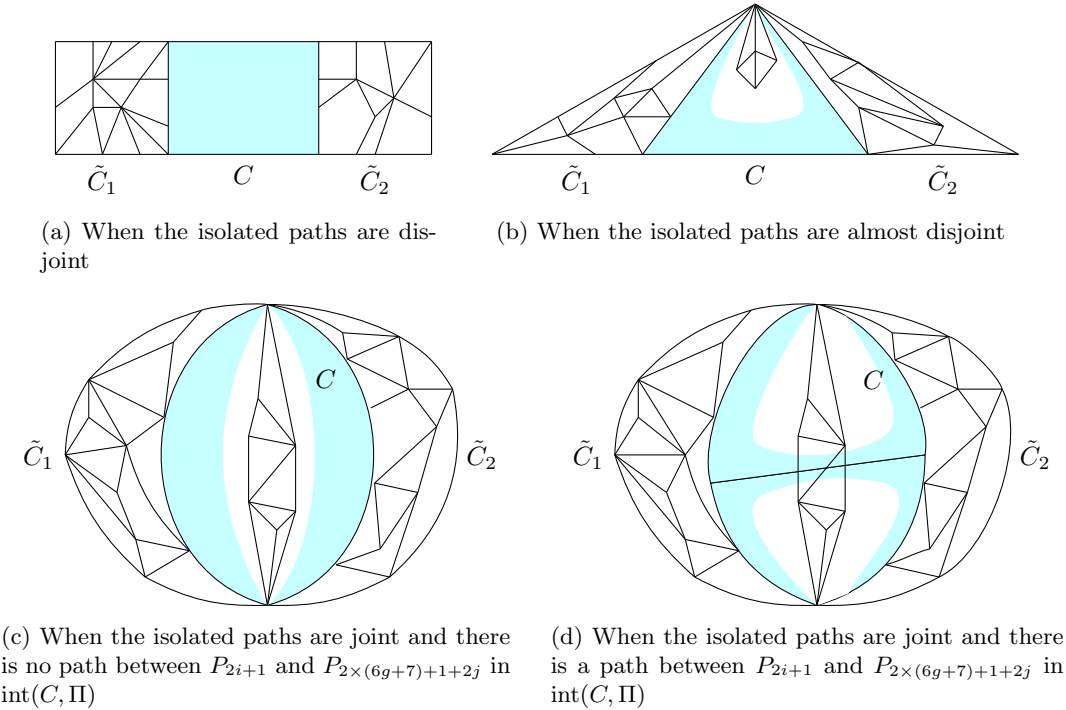

    \centering
    \begin{subfloat}[When the isolated paths are disjoint]{\tikzfig{images/isolated_paths_3}}
    \end{subfloat}
    \hspace{0.3cm}
    \begin{subfloat}[When the isolated paths are almost disjoint]{\tikzfig{images/isolated_paths_4}}
    \end{subfloat}
    \vspace{0.3cm}

    \begin{subfloat}[When the isolated paths are joint and there is no path between $P_{2i+1}$ and $P_{2 \times (6g+7) + 1 +2j}$ in $\text{int}(C, \Pi)$]{\tikzfig{images/isolated_paths_5}}
    \end{subfloat}
    \hspace{0.3cm}
    \begin{subfloat}[When the isolated paths are joint and there is a path between $P_{2i+1}$ and $P_{2 \times (6g+7) + 1 +2j}$ in $\text{int}(C, \Pi)$]{\tikzfig{images/isolated_paths_6}}
    \end{subfloat}
    \vspace{0.3cm}
    
    %\begin{subfloat}[When the isolated paths are joint and there is a path of negative signature between $P_{2i+1}$ and $P_{2 \times (6g+7) + 1 +2j}$ in $\text{int}(C, \Pi)$, and $P_a$ and $P_b$ are disjoint]{\tikzfig{images/isolated_paths_7}}
    %\end{subfloat}
    %\hspace{0.3cm}
    %\begin{subfloat}[When the isolated paths are joint and there is a path of negative signature between $P_{2i+1}$ and $P_{2 \times (6g+7) + 1 +2j}$ in $\text{int}(C, \Pi)$, and $P_a$ and $P_b$ are not disjoint]{\tikzfig{images/isolated_paths_8}}
    %\end{subfloat}

    \caption{Illustration for the second part of the proof of Proposition \ref{isolated_paths}. The disk in which $\text{int}(C, \Pi)$ can be reembedded is depicted in blue in every one of the cases.}
    \label{fig:isolated_paths_2}
\end{figure}

    Let's now try to reembed $\text{int}(C, \Pi)$ in a disk in $\Pi_e(G_e - \text{int}(C,\Pi))$, the different cases are depicted in Figure \ref{fig:isolated_paths_2}.
    
    \begin{itemize}
        \item Suppose first that the isolated paths are disjoint or almost disjoint. 
        
        Then, it is possible to find in embedding $\Pi_e(G_e - \text{int}(C,\Pi))$ a disk with $C$ on its boundary. As $\text{int}(C,\Pi)$ is contained in a disk in $S$, then we can create a new embedding $\Pi_e'$ on surface $S'$ with $G_e - \text{int}(C,\Pi)$ embedded as in $\Pi_e$ and $\text{int}(C,\Pi)$ embedded in this disk as in $\Pi$, a contradiction.
        
        \item Suppose now that the isolated paths are joint.

        First, modify the embedding $\Pi_e$ so that the edges on $P_{2i+1} \cup P_{2 \times (6g+7) + 1 +2j} \cup p \cup p'$ all have positive signature, which is possible because $\Tilde{C}_1$ and $\Tilde{C}_2$ are $\Pi_e$-contractible.
        
        Now, if there is no path from the interior of $P_{2i+1}$ to the interior of $P_{2 \times (6g+7) + 1 +2j}$ in $\text{int}(C, \Pi)$ then the bridges on $C$ in $\text{Int}(C,\Pi)$ can be partitioned into two sets $\mathcal{B}_1$ and $\mathcal{B}_2$ having attaches respectively only onto $P_{2i+1}$ and only onto $P_{2 \times (6g+7) + 1 +2j}$. Let $H_1 = \cup_{B \in \mathcal{B}_1} B$ and $H_2 = \cup_{B \in \mathcal{B}_2} B$. It is possible to find in embedding $\Pi_e(G_e - \text{int}(C,\Pi))$ disjoint disks with respectively $P_{2i+1}$ and $P_{2 \times (6g+7) + 1 +2j}$ on their boundaries. Then, we can create a new embedding $\Pi_e'$ on surface $S'$ with $G_e - \text{int}(C,\Pi)$ embedded as in $\Pi_e$ and $H_1$ and $H_2$ embedded in these disks as in $\Pi$, a contradiction.
        
        Suppose that there is a path $\Tilde{P}$ from the interior of $P_{2i+1}$ to the interior of $P_{2 \times (6g+7) + 1 +2j}$ in $\text{int}(C, \Pi)$. If $\Tilde{P}$ has a negative signature, modify the embedding $\Pi_e$ so that all the edges on $H = \Tilde{P} \cup P_{2i+1} \cup (P_{2 \times (6g+7) + 1 +2j} - \{p,p'\})$ have a positive signature, which is possible because $H$ contains no cycle.
        
        Then, the tree $T = P_{2i+1} \cup \Tilde{P} \cup (P_{2 \times (6g+7) + 1 +2j} - \{p,p'\})$ is contained in $\Pi_e$ in a disk that has $C$ on its boundary and that is empty in $\Pi_e(G_e - \text{int}(C,\Pi))$. Then, we obtain a contradiction in the same way as above.
    \end{itemize} 
\end{proof}

\subsection{Nested squares} \label{subsec:nested_squares}

The main results of this subsection are Propositions \ref{good_square} and \ref{good_square_variant}. They are improvements over a result of Seymour \cite[(2.2)]{seymour} and are central to obtain the main result of this paper.

\begin{prop}[{\cite[(2.2)]{seymour}}]
    \label{seymour_nested_cycles}
    Let $e \in V(G)$ and suppose that there are $g+2$ disjoint cycles of $G-e$, all bounding disks in $S'$ which include $e$. If $G - e$ can be drawn in $S$, then $G$ can be drawn in $S$ as well.
\end{prop}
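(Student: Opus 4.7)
The plan is to combine two embeddings of $G-e$ into an embedding of $G$ on $S$. Write $\Pi_e$ for the given $S'$-embedding of $G-e$ witnessing the hypothesis, and label the $g+2$ disjoint cycles $C_1,\ldots,C_{g+2}$ so that their $\Pi_e$-disks satisfy $D_1\supset D_2\supset\cdots\supset D_{g+2}$, with both endpoints of $e$ lying in $D_{g+2}$. Let $A_i$ denote the subgraph of $G-e$ that $\Pi_e$ places inside $\overline{D_i}$; the $A_i$ are nested planar pieces, and inside $A_{g+2}$ there is a face of $\Pi_e$ incident to both endpoints of $e$ into which the edge $e$ could be drawn without crossings. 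Write $\Pi$ for the given $S$-embedding of $G-e$.

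The first and decisive step would be to single out, among $C_1,\ldots,C_{g+2}$, a cycle $C_k$ that is $\Pi$-contractible in $S$. I would approach this by genus accounting on $\Pi$. Proposition \ref{homotopic_cycles_variant1} bounds the number of pairwise $\Pi$-non-homotopic, almost disjoint, $\Pi$-noncontractible cycles by $3g(S)-3\leq 3g+3$, while Proposition \ref{3_homotopic_cycles} shows that any triple of pairwise $\Pi$-homotopic disjoint cycles lies in a nested cylindrical configuration. I would use these together with Proposition \ref{4.2.4} to peel pairs of $\Pi$-homotopic cycles off the list: each peel produces a cylinder in $\Pi$ whose interior can be replaced, via Lemma \ref{cylinder_reembedding}, by the planar patch coming from $\Pi_e$, strictly reducing the genus carried by the annular region. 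Starting from $g+2$ nested cycles and a genus budget of at most $g+2$, this peeling must eventually expose at least one $\Pi$-contractible $C_k$.

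The second step would be a local reembedding transplant. The cycle $C_k$ bounds a $\Pi$-disk $D^{\Pi}$ in $S$. Using Lemma \ref{cylinder_reembedding} in its disk specialization (the two homotopic boundaries collapsed to the single cycle $C_k$), together with a flipping of $A_k$ along $C_k$ via Proposition \ref{Whitney_planar_graph_flipping} whenever the cyclic edge orders at the vertices of $C_k$ disagree between $\Pi_e$ and $\Pi$, I would replace whatever subgraph of $G-e$ currently occupies $D^{\Pi}$ by the planar subgraph $A_k$ embedded exactly as in $\Pi_e$. The required separation hypothesis is immediate since $V(C_k)$ separates $A_k$ from the rest of $G-e$ in both embeddings. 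After the transplant, $D^{\Pi}$ contains a verbatim copy of $\Pi_e$ restricted to $A_k$, and in particular a face incident to both endpoints of $e$. Drawing $e$ inside that face yields the desired embedding of $G$ in $S$.

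The main obstacle I expect is step one: the genus accounting has to be delicate enough to force a genuinely contractible cycle rather than merely a homotopic pair, and one must verify that the iterated cylinder reductions never damage the nested structure of the remaining cycles or accidentally disconnect $G-e$. A secondary difficulty is the orientation matching along $C_k$ during the final transplant; this is handled uniformly by at most one Whitney flip of the planar piece $A_k$, using its planarity to guarantee that such a flip is always available inside the target disk $D^{\Pi}$.
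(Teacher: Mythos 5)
Your high-level strategy — isolate a cycle $C_k$ from the nested family that is $\Pi$-contractible in the $S$-embedding, transplant the planar piece $A_k$ inside the $\Pi$-disk it bounds using $\Pi_e$, then draw $e$ — is a natural first idea, but the decisive first step has a genuine gap, one you partially acknowledge yourself. The genus accounting you propose does not produce a $\Pi$-contractible $C_k$. Proposition \ref{homotopic_cycles_variant1} caps the number of pairwise $\Pi$-non-homotopic $\Pi$-noncontractible members of the family at $3g(S)-3\le 3g+3$, but the family has only $g+2\le 3g+3$ cycles, so all of them could be $\Pi$-noncontractible and pairwise non-homotopic and the pigeonhole never fires. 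The "peeling" of a $\Pi$-homotopic pair $(C_j,C_k)$ is not a genus-decreasing operation either: the cylinder $\text{Int}(C_j\cup C_k,\Pi)$ already has Euler genus $0$, so replacing its contents by the $\Pi_e$-annulus changes nothing on a genus count, and there is no monovariant forcing termination at a contractible cycle. Lemma \ref{cylinder_reembedding} is also being quoted in the wrong direction: in the paper it plants a $\Pi_H$-contractible piece from the larger-genus embedding into a cylinder of the smaller-genus embedding, whereas you want to push a planar $\Pi_e$-patch into $\Pi$.

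Even granting a $\Pi$-contractible $C_k$, the transplant step skips the case where the $\Pi$-disk bounded by $C_k$ contains $B_k$ rather than $A_k$ (both can occur), and the claim that a single Whitney flip reconciles the two rotation systems along $C_k$ is unsupported: Proposition \ref{Whitney_planar_graph_flipping} gives equivalence only up to a \emph{sequence} of flips along $2$-cuts, and $A_k$ is not assumed $3$-connected, so the two cyclic orders at $C_k$-vertices need not agree up to a single reflection. Note also that the paper itself contains no proof of this proposition — it is a verbatim citation of Seymour's $(2.2)$ — so there is no internal argument to compare against. The crux appears to call for something weaker than a $\Pi$-contractible $C_k$ (for instance, an index at which the nested subembedding $\Pi(A_k\cup C_k)$ has Euler genus zero) together with an actual decreasing invariant across the $g+2$ nested separations $(A_i,B_i)$; the proposal does not yet supply either.
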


We improve this bound from linear to logarithmic in $g$ and extend this result not only to cycles that are $\Pi$-contractible but also to $\Pi$-noncontractible homotopic ones.

The proofs of Lemmas \ref{bad_square}, \ref{good_square}, \ref{bad_square_variant} and \ref{good_square_variant} bear similarity with the proofs of the following results from \cite{Mohar_1995} and \cite{Seymour_Thomas}. Proposition \ref{good_square} is related to Proposition \ref{KS_result}, which is a result that was obtained in \cite{KS19}.

%\begin{prop}[{\cite{Seymour_Thomas}}]
%    Let $H$ be a $3$-connected graph and let $\Pi_H$ be an embedding of $H$ of face-width $k$.

%    \begin{enumerate}
%        \item If $k \geq 100 \log g(\Pi_H) / \log \log g(\Pi_H)$ (or $k \geq 100$ if $g(\Pi_H) \leq 2$), then any embedding $\Pi_H'$ of $H$ satisfies $g(\Pi_H') \geq g(\Pi_H)$, and if $g(\Pi_H') = g(\Pi_H)$, then $\Pi_H$ and $\Pi_H'$ are equivalent (in particular, they are embeddings in the same surface).
%        \item Let $\Pi_H'$ be an embedding of $H$. If $fw(G, \Pi_H') \geq 3$ and $k \geq \min\{400, \log g(\Pi_H)\}$, then either $\Pi_H$ and $\Pi_H'$ are equivalent, or $g(\Pi_H') \geq g(\Pi_H) + 10^{-6}k^2$.
%    \end{enumerate}
%\end{prop}

%\begin{prop}[{\cite{Mohar_1995}}]
%    Let $H$ be a $2$-connected graph and let $\Pi_H$ and $\Pi_H'$ be embeddings of $H$ of face-width $k$ and $k'$, respectively. Let $\kappa = \min\{k,k'\}$. Suppose that $k \geq 4$, $k' \geq 7$ and
%    \[ \left\lfloor \frac{k'-3}{2} \right\rfloor ^{\lfloor \kappa/2 \rfloor -2} > g(\Pi_H')\]

%    Then, $\Pi_H$ and $\Pi_H'$ are embeddings in the same surface, and $\Pi_H'$ is Whitney equivalent to $\Pi_H$.
%\end{prop}

\vspace{0.5cm}

Let $C$ be a $\Pi$-contractible cycle of $G$. Then, let $\mathcal{F}(C, \Pi)$ be the set of all faces inside of $C$ in $\Pi$. Moreover, for a subgraph $G_0$ of $G$ that is $\Pi$-contractible, let $\mathcal{F}(G_0, \Pi) = \bigcup_{C \subseteq G_0} \mathcal{F}(C, \Pi)$.

\subsubsection{Contractible nested squares} \label{subsec:contractible_nested_squares}

\begin{defi}[Nested cycles]
    Let $H$ be a graph with embedding $\Pi_H$. Let $C$ and $C'$ be two $\Pi_H$-contractible cycles of $H$ such that $C' \subseteq \text{Int}(C, \Pi_H)$. We say that $C'$ is \textit{$\Pi_H$-nested} (or \textit{nested} if it is clear in the context) in $C$.
    
    Let $k \geq 1$ and let $C_0, ..., C_k$ be $\Pi_H$-contractible cycles of $H$.
    We say that $C_0, ..., C_k$ are \textit{$\Pi_H$-nested} if for $0 \leq i \leq k-1$, $C_i$ is $\Pi_H$-nested in $C_{i+1}$.

    For instance, in Figure \ref{fig:contractible_square}, the cycles $C, C', C''$ are nested in the reversed order in all three figures.
\end{defi}

For two $\Pi$-contractible nested cycles $C$ and $C'$, we denote by $\text{Int}(C \cup C', \Pi)$ the subgraph of $G$ inside the cylinder bounded by $C$ and $C'$ in $\Pi$, together with $C$ and $C'$.

\begin{defi}[Well nested]
    Let $H$ be a graph with embedding $\Pi_H$. Let $C$ and $C'$ be two $\Pi_H$-contractible cycles of $H$ such that $C'$ is $\Pi_H$-nested in $C$. We say that $C'$ is \textit{$\Pi_H$-well-nested} (or \textit{well nested}, if it is clear in the context) in $C$ if one of the following condition holds:
    \begin{itemize}
        \item $C$ and $C'$ are disjoint (\textit{fully well nested}), or 
        \item $C$ and $C'$ intersect at a vertex $v$ (\textit{well nested pinched on $v$}), or 
        \item $C$ and $C'$ both intersect the same face $f$ of $(G, \Pi)$ and if $C$ intersects $f$ in a subpath $P$ (of length at least $3$) then $C'$ intersect $f$ in a subpath that is in the interior of $P$ (\textit{well nested pinched on $f$}).
    \end{itemize}

    In both of the last two cases, we say that $C$ and $C'$ are \textit{well nested pinched on a piece $p$}.

    Let $k \geq 1$ and let $C_0, ..., C_k$ be $\Pi_H$-contractible nested cycles of $H$. We say that $C_0, ..., C_k$ are \textit{$\Pi_H$-well-nested} if:
    \begin{itemize}
        \item for every $0 \leq i \leq k-1$, $C_i$ is $\Pi_H$-fully-well-nested in $C_{i+1}$, or 
        \item for every $0 \leq i \leq k-1$, $C_i$ is $\Pi_H$-well-nested in $C_{i+1}$ pinched on a piece.
        %\item or, for every $0 \leq i \leq k-1$, $C_i$ is $\Pi_H$-well-nested in $C_{i+1}$ pinched on a vertex;
        %\item or, for every $0 \leq i \leq k-1$, $C_i$ is $\Pi_H$-well-nested in $C_{i+1}$ pinched on a path (of length at least $1$).
    \end{itemize}

    The three figures of Figure \ref{fig:contractible_square} show cycles $C, C', C''$ that are respectively fully-well-nested, well-nested pinched on the vertex, and well-nested pinched on a face, in the reversed order.
\end{defi}

\begin{defi}[Closest]
    Let $H$ be a graph with embedding $\Pi_H$. Let $C$ and $C'$ be two $\Pi_H$-contractible cycles with $C'$ nested in $C$ of $H$. Let $\mathcal{P}$ be a property on the cycles of $(H,\Pi_H)$. 
    
    Then, we say that $C'$ is the cycle \textit{closest} to $C$ that has property $\mathcal{P}$ if, for every cycle $C''$ that is $\Pi_H$-contractible nested in $C$ and has property $\mathcal{P}$, we have $C' \subseteq \text{Int}(C \cup C'', \Pi_H)$.
\end{defi}

\begin{defi}[Cycles in this order]
    Let $H$ be a graph with embedding $\Pi_H$. 
    
    Let $k \geq 1$ and let $C_0, ..., C_k$ be $\Pi_H$-noncontractible homotopic cycles of $H$.
    We say that $C_0, ..., C_k$ are \textit{in this order} in $\Pi_H$ if for $0 \leq i \leq k-1$ and $0 \leq j \leq k$, $C_j \cap \text{int}(C_i \cup C_{i+1}, \Pi_H) = \emptyset$.

    For instance, in Figure \ref{fig:at_most_2_homotopic_cycles}(b), the cycles $C_1, C_2, C_3$ are homotopic in this order.
\end{defi}

\begin{defi}[Contractible square]
    \label{def:contractible_square}
    Let $C, C', C''$ be $\Pi$-contractible cycles in $G$, well nested in the reverse order. Let $\mathcal{B}(C)$ be the set of faces in $\mathcal{F}(C \cup C', \Pi)$ that touch $C$ (we call this set the boundary of $C$). 
    
    We say that $(C, C', C'')$ is a \textit{contractible square} with respect to $C$ if $C'$ is the cycle closest to $C$ so that $\mathcal{B}(C) \subseteq \text{Int}(C \cup C',\Pi)$.

    We say that a contractible square $(C, C', C'')$ is \textit{full} (resp. \textit{pinched on $p$}) if $C, C', C''$ are $\Pi$-fully-well-nested (resp. $\Pi$-well-nested pinched on $p$).

    The three figures of Figure \ref{fig:contractible_square} depict contractible squares $(C, C', C'')$ that are respectively full, pinched on a vertex, and pinched on a face.
\end{defi}

\begin{figure}[h!]
    \centering
    \begin{subfloat}[Full contractible square]{\tikzfig{images/contractible_square}}
    \end{subfloat}
    \hspace{0.5cm}
    \begin{subfloat}[Contractible square pinched on a vertex]{\tikzfig{images/contractible_square_pinched_on_vertex}}
    \end{subfloat}
    
    \vspace{0.5cm}
    \begin{subfloat}[Contractible square pinched on a face]{\tikzfig{images/contractible_square_pinched_on_face}}
    \end{subfloat}
    
    \caption{Contractible squares. The solid lines indicate paths, whereas the dotted line shows the face's boundary on which the contractible square in Figure (c) is pinched. The zone represented in blue is $\mathcal{B}(C)$ (defined, e.g., in Definition \ref{def:contractible_square}), and the zone represented in pink is $\mathcal{I}(C)$ (defined in Definition \ref{def:good_bad_contractible_square}).}
    \label{fig:contractible_square}
\end{figure}

%\Sarah{Define $\Pi$-faces (faces of $\Pi$) and extend the definition of almost disjoint to faces (a set of faces so that there is at most one intersection point between one face and every other faces of the set)}

\begin{defi}[Good/bad contractible square]
    \label{def:good_bad_contractible_square}
    Let $(C, C', C'')$ be a square with respect to $C$, let $\mathcal{I}(C)$ be the set of $\Pi$-faces in $\text{Int}(C'', \Pi)$ and let $\mathcal{I}_{\rm N}(C)$ be a maximal size subset of $\Pi$-faces of $\mathcal{I}(C)$ that are almost-disjoint and that are not $\Pi_e$-faces. Let $\mathcal{B}(C)$ be the set of $\Pi$-faces in $\text{Int}(C \cup C', \Pi)$ that touch $C$.
    We say that $(C, C', C'')$ is a \textit{bad square} if there exists an edge $e \in \text{int}(C'', \Pi)$ so that $\mathcal{I}_{\rm N}(C)$ contains more than $4 \times (6 |\mathcal{B}_{\rm N}(C)| - 3)$ $\Pi$-faces with $\mathcal{B}_{\rm N}(C)$ being the set of $\Pi$-faces in $\mathcal{B}(C)$ that are not $\Pi_e$-faces. 
    
    Otherwise, we say that this square is \textit{good}.
\end{defi}

\begin{defi}[Edge-sharing components]
    Let $H$ be a nonplanar graph and let $\Pi_H$ an embedding of $H$ in a surface. Let $\mathcal{F}$ be a set of faces of $(H, \Pi_H)$. We say that two subgraphs of $H$ are edge-sharing if they share at least one edge. Let the edge-sharing components of $\mathcal{F}$ be its edge-sharing equivalence classes. 

    Let $\mathcal{C}$ be an edge-sharing component of $\mathcal{F}$. Suppose it induces a $\Pi_H$-contractible subgraph $H'$ of $H$. Then, remark that, by Proposition \ref{Whitney_planar_graph_flipping}, the $\Pi_H$-embedding of $H'$ is the only planar embedding of $H'$ up to equivalence. In particular, if $H'$ is $\Pi_H'$-contractible for some other embedding $\Pi_H'$ of $H$ in a surface, the boundary of $H'$ in $\Pi_H'$ is the same as in $\Pi_H$.
\end{defi}

\begin{lem}
    \label{bad_square}
    $G$ contains no bad contractible square.
\end{lem}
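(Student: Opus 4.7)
The plan is to proceed by contradiction. Assume that $(C, C', C'')$ is a bad contractible square with witnessing edge $e \in \text{int}(C'', \Pi)$, so that $|\mathcal{I}_N(C)| > 4 \times (6|\mathcal{B}_N(C)| - 3)$. I would then consider the embedding $\Pi_e$ of $G_e = G - e$ in $S'$, aiming to use the disparity between the large number of non-$\Pi_e$ faces in $\mathcal{I}_N(C)$ and the small number in $\mathcal{B}_N(C)$ to construct enough structure in $(G, \Pi)$ to contradict Proposition \ref{isolated_paths}.

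The first step is to analyze what it means for a $\Pi$-face $f$ inside $C$ to fail to be a $\Pi_e$-face. By Lemma \ref{2_connected_faces_cycles}, every face in $\text{Int}(C, \Pi)$ is a cycle of $G$, and by Whitney's theorem (Proposition \ref{Whitney_planar_graph_flipping}) the planar structure of the edge-sharing component of $\text{Int}(C, \Pi)$ is uniquely determined by its abstract graph up to flippings; hence $f$ being non-$\Pi_e$ forces either $e$ to lie on $f$ (at most two such faces) or some edge incident with $f$ to be reattached through a bridge of $G_e$ whose attachments leave the interior of $C$. Each non-$\Pi_e$ face $f \in \mathcal{I}_N(C)$ therefore produces a certificate in $\Pi_e$ of such a reroute, which I would convert into a path of $G$ that leaves $\text{Int}(C'', \Pi)$ and, by the minimality of $C'$ as the closest cycle to $C$ capturing all of $\mathcal{B}(C)$, must terminate either at a face of $\mathcal{B}_N(C)$ or on $C$ itself.

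Grouping the resulting paths by their terminal piece on the boundary side gives at most $|\mathcal{B}_N(C)| + 1$ target pieces. Applying the pigeonhole principle to the $|\mathcal{I}_N(C)| > 4 \times (6|\mathcal{B}_N(C)| - 3)$ interior faces then yields, for some pair of pieces $(p, p')$, strictly more than $4 \times (6g(\Pi) - 5)$ isolated paths between them in $(G, \Pi)$, which contradicts Proposition \ref{isolated_paths}. The pieces $p'$ on the boundary side play the role of a local surrogate for $g(\Pi)$ in the isolated-paths bound, which is why the numerology matches.

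The main obstacle I expect will be verifying that the paths so obtained are genuinely \emph{isolated} in the strict sense of the definition: that the paths are internally disjoint, that consecutive paths bound a $\Pi$-contractible cycle through $p \cup p'$, and that the cyclic ordering of endpoints on a face-piece is respected. This will require a careful bridge analysis of $G_e$ on the nested cycles $C, C', C''$ and a uniform treatment of the three flavors of contractible square (full, vertex-pinched, and face-pinched). Lemma \ref{cylinder_reembedding} should be the central tool for turning $\Pi_e$-homotopies between interior and boundary structures into reembeddings that align almost-disjoint faces, thereby producing the isolated-path system cleanly and closing the contradiction.
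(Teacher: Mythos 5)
Your plan substitutes Proposition~\ref{isolated_paths} for the genus--accounting argument the paper actually uses, and as sketched it has two gaps that I do not think can be repaired within this framework.

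First, the pigeonhole numerics do not close. You have $|\mathcal{I}_{\rm N}(C)| > 4\times(6|\mathcal{B}_{\rm N}(C)| - 3)$ interior faces distributed over at most $|\mathcal{B}_{\rm N}(C)| + 1$ boundary targets, which yields on the order of $24$ paths per target regardless of how large $|\mathcal{B}_{\rm N}(C)|$ is; but to invoke Proposition~\ref{isolated_paths} you need more than $4\times(6g(\Pi)-5)$ of them, and this already fails at $g = 0$ (where the threshold is $28$) and only gets worse as $g$ grows. There is no ``local surrogate'' phenomenon making $|\mathcal{B}_{\rm N}(C)|$ play the role of $g(\Pi)$; the threshold in Proposition~\ref{isolated_paths} is genuinely a function of the global genus, while the bad-square inequality relates two local quantities.

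Second, and more structurally, the paths you propose to extract do not form an isolated-path system in the sense of the definition. Each non-$\Pi_e$ face $f \in \mathcal{I}_{\rm N}(C)$ is a \emph{distinct} starting piece, so grouping by terminal piece does not produce many internally-disjoint paths from a single piece $p$ to a single piece $p'$; it produces (at most) one path per $(f,p')$ pair. The definition also demands the consecutive cyclic ordering and the $\Pi$-contractibility of the $P_i \cup P_{i+1} \cup p \cup p'$ cycles, none of which are supplied by the rerouting certificates. These would be serious obstacles even if the counting worked.

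The paper's argument goes in a different direction. It first handles $|\mathcal{B}_{\rm N}(C)| = 0$ directly by reembedding. Then, choosing $\Pi_e$ to minimize $|\mathcal{I}_{\rm N}(C)|$ and using Lemma~\ref{cylinder_reembedding}, it shows that at most $4$ almost-disjoint interior $\Pi$-faces can be $\Pi_e$-noncontractible homotopic; combined with Proposition~\ref{homotopic_cycles_variant1} this forces $g(\Pi_e(\mathcal{I}(C))) \geq 2|\mathcal{B}_{\rm N}(C)|$. Removing $\text{int}(C',\Pi)$ from $G_e$ therefore leaves a graph of genus at most $g - 2|\mathcal{B}_{\rm N}(C)|$, and one recovers an embedding of $G$ in genus $g$ by gluing at most $|\mathcal{B}_{\rm N}(C)|$ (possibly twisted) handles along $C'$ to restore a cylinder in which $\text{int}(C',\Pi)$ can be placed planarly --- contradicting the hypothesis that $G$ does not embed in $S'$. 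The mechanism is a genus budget (high interior genus liberates handles to re-attach the boundary components), not a path-counting contradiction. If you want to pursue the isolated-paths route you would need a way to concentrate the paths at a single source piece and to relate $|\mathcal{B}_{\rm N}(C)|$ to $g(\Pi)$, neither of which the bad-square hypothesis supplies.
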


\begin{proof}
    Suppose by contradiction that $G$ contains a bad contractible square $(C, C', C'')$. Let $e \in \text{int}(C'', \Pi)$. Let $\mathcal{I}(C)$ be the set of faces in $\text{Int}(C'', \Pi)$ in $\Pi$ and let $\mathcal{B}(C)$ be the set of faces in $\text{Int}(C \cup C', \Pi)$ that touch $C$. Let $\mathcal{B}_\text{Con}(C)$ and $\mathcal{B}_{\rm N}(C)$ be the set of the faces in $\mathcal{B}(C)$ that are respectively $\Pi_e$-contractible and $\Pi_e$-noncontractible. Moreover, let $\mathcal{I}_{\rm N}(C)$ be a maximal size subset of almost disjoint $\Pi$-faces from $\mathcal{I}(C)$ that are not $\Pi_e$-faces.
    We suppose that $|\mathcal{I}_{\rm N}(C)| \geq 4 \times (6 |\mathcal{B}_{\rm N}(C)| - 3)$.
    
    First, if there exists a bad contractible proper square with $|\mathcal{B}_{\rm N}(C)| = 0$, then the graph $B$ induced by $\mathcal{B}(C)$ is 2-connected and it is, by Proposition \ref{Whitney_planar_graph_flipping}, embedded in $\Pi_e$ in the same way as in $\Pi$. Then, let $\Tilde{G}$ be the graph obtained from $G_e$ by removing $H = \text{int}(C', \Pi)$. $\Tilde{G}$ is of Euler genus at most $g$ (because it is a subgraph of $G_e$). First, we reembed, in $\Pi_e(\Tilde{G})$, the bridges on $B$ in $\text{Int}(C \cup C', \Pi) - B$ which have attaches only on vertices of $B$ as in $\Pi$ (which is possible because $B$ is embedded as in $\Pi$). Then, in $\Pi_e(\Tilde{G})$, there is a disk whose boundary is $C'$. It is thus possible to create an embedding of $G$ in a surface of Euler genus $g$ by embedding $H$ planarly (as in $\Pi$ for example) in the designated disk in $\Pi_e(\Tilde{G})$, a contradiction.
    
    We now suppose that $|\mathcal{B}_{\rm N}(C)| > 0$.

    \begin{claim}
        \label{at_most_2_homotopic_cycles}
        Suppose that embedding $\Pi_e$ was chosen to minimize the size of $\mathcal{I}_{\rm N}(C)$. Let $F_1, ..., F_m \in \mathcal{I}_{\rm N}(C)$ be a maximal size set of almost disjoint $\Pi$-faces that are $\Pi_e$-noncontractible homotopic, then $m \leq 4$.
    \end{claim}

    \begin{proof_claim}
        See Figure \ref{fig:at_most_2_homotopic_cycles} for illustrations for the proof.
    
        Suppose that there are at least $5$ almost disjoint $\Pi$-faces that are $\Pi_e$-noncontractible homotopic. Let $C_1, C_2, C_3, C_4, C_5$ be these faces (that are cycles) in $\mathcal{I}_{\rm N}(C)$. Without loss of generality, we have that $C_1, C_2, C_3, C_4, C_5$ are $\Pi_e$-homotopic in this order. 
        Remark that there are at least two of the cycles $C_1, C_3, C_5$ that have the same relative orientation in $\Pi$ and $\Pi_e$, suppose without loss of generality that it is $C_1$ and $C_3$.
        Let's show that it is possible to modify $\Pi_e$ so that $C_2$ becomes contractible. 

        First, remark that:
        \begin{itemize}
            \item \underline{If the square is full:} no vertex of $C$ is in $\text{Int}(C_1 \cup C_3, \Pi_e)$, otherwise the whole subgraph $\text{Ext}(C,\Pi)$ would be in $\text{Int}(C_1 \cup C_3, \Pi_e)$ which is impossible because $\text{Ext}(C,\Pi)$ cannot not be $\Pi_e$-contractible. Therefore, $\text{Int}(C_1 \cup C_3, \Pi_e) \subseteq \text{Int}(C,\Pi)$.
            \item \underline{If the square is pinched on a piece $p$:} the same reason for $C - p$ shows that no vertex of $C - p$ is in $\text{Int}(C_1 \cup C_3, \Pi_e)$. If $p$ is a face, it implies that $\text{Int}(C_1 \cup C_3, \Pi_e) \subseteq \text{Int}(C,\Pi)$ as there is no edge in $\text{Ext}(C, \Pi)$ with an endpoint on $p$. If $p$ is a vertex, then a bridge from $\text{ext}(C, \Pi)$ in $\text{Int}(C_1 \cup C_3, \Pi_e)$, if it exists, would be attached solely to $v$, which contradicts the fact that $G$ is $2$-connected and therefore has no cutvertex.
        \end{itemize} 

        Therefore, $\text{Int}(C_1 \cup C_3, \Pi_e) \subseteq \text{Int}(C,\Pi)$ and $\Pi(\text{Int}(C_1 \cup C_3, \Pi_e))$ is an embedding in a disk in $S$ in which $C_1$, $C_2$ and $C_3$ are $\Pi$-facial walks. 

        Finally, by Lemma \ref{cylinder_reembedding}, we can modify the embedding $\Pi_e$ so that $C_2$ is now a $\Pi_e$-contractible cycle.

        This concludes the proof of the claim.
    \end{proof_claim}

    \begin{figure}[h!]
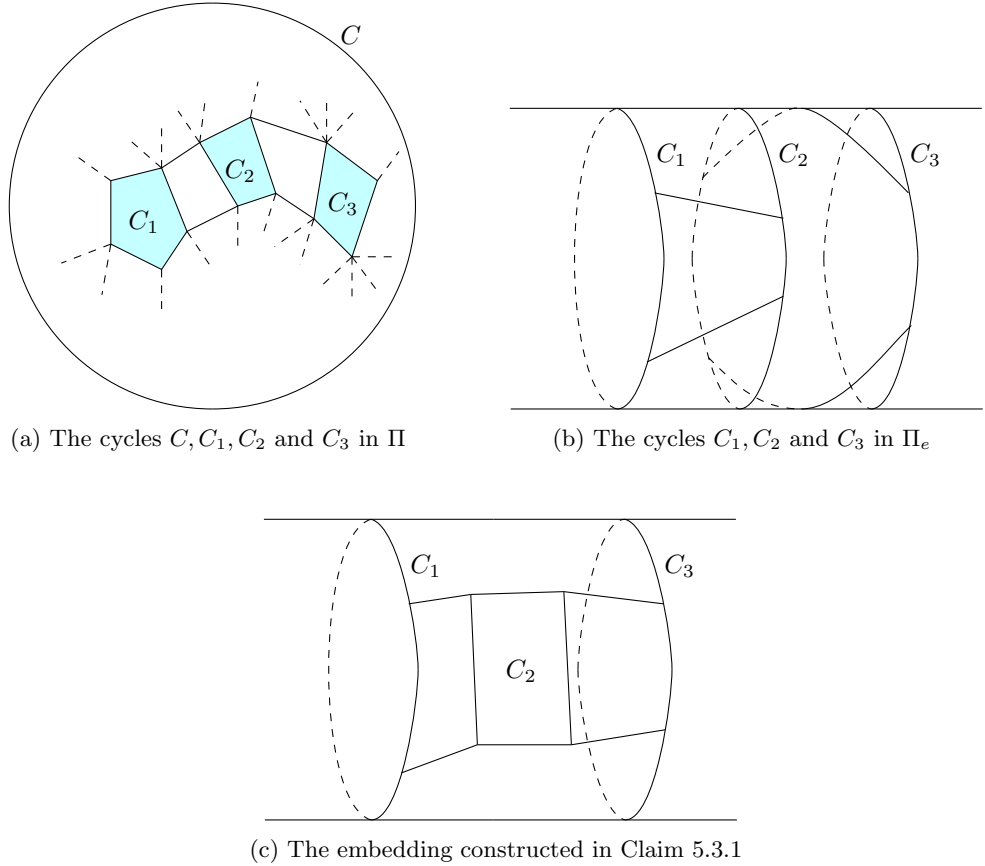

    \centering
    \begin{subfloat}[The cycles $C, C_1, C_2$ and $C_3$ in $\Pi$]{\tikzfig{images/contractible_nested_square_claim1_1}}
    \end{subfloat}
    \hspace{0.75cm}
    \begin{subfloat}[The cycles $C_1, C_2$ and $C_3$ in $\Pi_e$]{\tikzfig{images/contractible_nested_square_claim1_2}}
    \end{subfloat}
    \vspace{0.8cm}
    
    \begin{subfloat}[The embedding constructed in Claim \ref{at_most_2_homotopic_cycles}]{\tikzfig{images/contractible_nested_square_claim1_3}}
    \end{subfloat} 

    \caption{Illustration for the proof of Claim \ref{at_most_2_homotopic_cycles}.}
    \label{fig:at_most_2_homotopic_cycles}
    \end{figure}

    \vspace{0.3cm}
    Remove $H = \text{int}(C', \Pi)$ from $G_e$ to obtain a graph $\Tilde{G}$. %Let $C(H)$ be the cycle in $G_e$ so that $H = \text{Int}(C(H),\Pi)$.

    \begin{claim}
        The embedding of $\Tilde{G}$ induced by $\Pi_e$ is of Euler genus at most $g - 2|\mathcal{B}_{\rm N}(C)|$.
    \end{claim}

    \begin{proof_claim}
        Remark that $\mathcal{I}_{\rm N}(C) \subseteq \mathcal{I}(C)$ and therefore $\Pi_e(\mathcal{I}(C))$ contains $4 \times (6 |\mathcal{B}_{\rm N}(C)| - 3)$ almost disjoint $\Pi_e$-noncontractible cycles. Moreover, by Claim \ref{at_most_2_homotopic_cycles}, there are at least $6 |\mathcal{B}_{\rm N}(C)| - 3$ of them that are pairwise $\Pi_e$-nonhomotopic. Finally, by Proposition \ref{homotopic_cycles_variant1}, $\Pi_e(\mathcal{I}(C))$ is of Euler genus at least $2|\mathcal{B}_{\rm N}(C)|$.

        \begin{itemize}
            \item \underline{If the square is full:} Remark that $\Tilde{G}$ and $\mathcal{I}(C)$ are disjoint subgraphs of $G$. Hence, $g(\Pi_e(G)) \geq g(\Pi_e(\Tilde{G})) + g(\Pi_e(\mathcal{I}(C)))$.
            
            \item \underline{If the square is pinched on a vertex $v$:} Remark that $\Tilde{G}$ and $\mathcal{I}(C)$ intersect only in $v$. It is easy to verify that $g(\Pi_e(G)) \geq g(\Pi_e(\Tilde{G})) + g(\Pi_e(\mathcal{I}(C)))$.

            \item \underline{If the square is pinched on a subpath P of a face $f$:} Remark that $\Tilde{G}$ and $\mathcal{I}(C)$ intersect only with $P$ and, in each of $\Tilde{G}$, every vertex of $P$ is of degree $2$. It is easy to verify that $g(\Pi_e(G)) \geq g(\Pi_e(\Tilde{G})) + g(\Pi_e(\mathcal{I}(C)))$.
        \end{itemize}
        
        In every case, we get $g(\Pi_e(\Tilde{G})) \leq g(\Pi_e(G)) - g(\Pi_e(\mathcal{I}(C)))$. As $\Pi_e(\mathcal{I}(C))$ is of Euler genus at least $2|\mathcal{B}_{\rm N}(C)|$, we finally conclude that $\Pi_e(\Tilde{G})$ is of Euler genus at most $g - 2|\mathcal{B}_{\rm N}(C)|$. 
    \end{proof_claim}

    \vspace{0.3cm}

    Now, we will construct a new embedding of $G$ from the embedding $\Pi_e(\Tilde{G})$. See Figure \ref{fig:contractible_nested_square_end} for an illustration.

    \begin{figure}[h!]
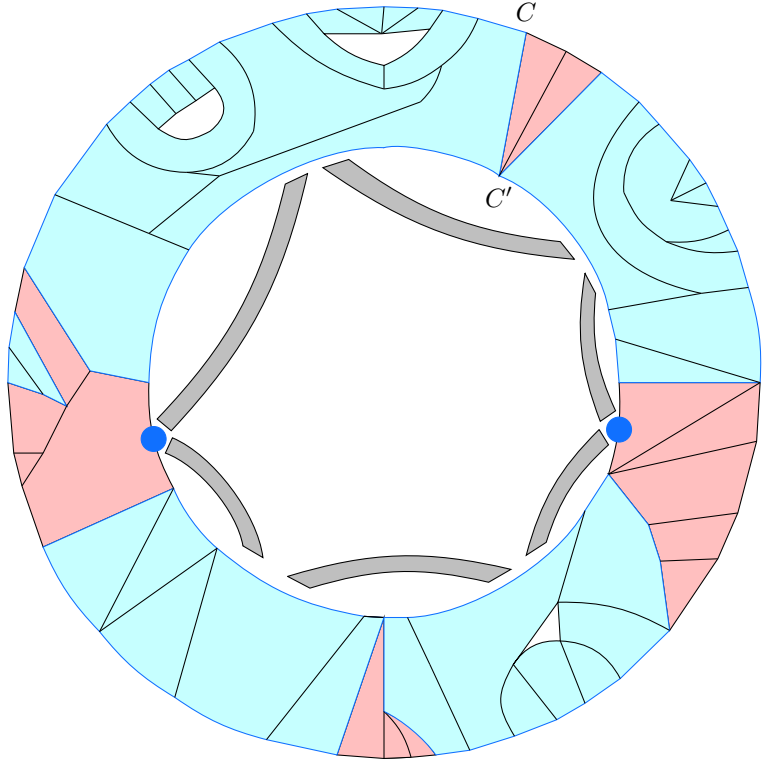

    \centering
    \tikzfig{images/contractible_nested_square_end}

    \caption{Illustration for the end of the proof of Lemma \ref{bad_square}. The vertices in $\tilde{V}(C')$ are depicted in blue. The edge-sharing components of $\mathcal{B}_{\rm Con}(C)$ are in light blue and have a blue outline. The bridges attached to only one edge-sharing component are in the white zones in the blue edge-sharing components. The faces from $\mathcal{B}(C)$ that are $\Pi_e$-noncontractible are depicted in pink. Finally, the added handles are depicted in grey. They are between consecutive edge-sharing components/vertices and can be handles or twisted handles.}
    \label{fig:contractible_nested_square_end}
    \end{figure}

    Let $B = \mathcal{B}_{\rm Con}(C) \cup C'$ and let $\tilde{V}(C') = B - \mathcal{B}_{\rm Con}(C)$. %We say that two cycles are strongly adjacent if they share at least one edge. Let the blocks of $\mathcal{B}_{\rm Con}(C)$ be its strongly adjacent equivalence classes. Remark that, by Proposition \ref{Whitney_planar_graph_flipping}, any block of $\mathcal{B}_{\rm Con}(C)$ is embedded in $\Pi_e$ as in $\Pi$, in particular its boundary in $\Pi_e$ is the same as in $\Pi$. 

    We reembed, in $\Pi_e(\Tilde{G})$, the bridges on the graph induced by $B$ that are attached to only one of its edge-sharing components as in $\Pi$ (which is possible because each edge-sharing component is embedded as in $\Pi$).
    
    The edge-sharing components of $\mathcal{B}_{\rm Con}(C)$ that touches $C'$ and the vertices in $\tilde{V}(C')$ can be ordered with respect to $C'$.
    Then, we add a handle or twisted handle between each consecutive edge-sharing component/vertex $x$ and $y$ of $B$ on $C'$ so that, for an edge-sharing component, the part of its boundary that contains endpoints of edges from $H$ is reachable from the handle and has the expected orientation. % handle et twisted handle not defined
    
    Remark that we add at most $|\mathcal{B}_{\rm N}(C)|$ handles and twisted handles. It creates a cylinder with $C'$ on one of its boundaries. Then, $H$ can be embedded planarly (for example, as in $\Pi$) in this cylinder. The graph $G$ can then be embedded in a surface of Euler genus $g$, a contradiction.
\end{proof}

\begin{lem}
    \label{almost_disjoint_faces_between_two_cycles}
    Let $C$ and $C'$ be two $\Pi$-contractible nested cycles or two $\Pi$-homotopic cycles of $G$. Let $\mathcal{F}$ be a subset of the faces in $\text{Int}(C \cup C', \Pi)$ that intersect both $C$ and $C'$.

    There exists a subset of faces from $\mathcal{F}$ of size at least $\frac{|\mathcal{F}|}{6}$ that are almost disjoint.

    %\Sarah{Verify if it is OK for the cycles to be only nested and not well-nested (resp. homotopic and not well-homotopic)}
\end{lem}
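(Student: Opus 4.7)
My plan is to exploit the cyclic arrangement of the faces of $\mathcal{F}$ around the cylinder (or annulus) $\text{Int}(C \cup C', \Pi)$. Since each face of $\mathcal{F}$ is a ``strip'' reaching from $C$ to $C'$, the set $\mathcal{F}$ inherits a natural cyclic order $F_1, \ldots, F_n$ from $\Pi$, obtained for instance by ordering the arcs $F \cap C$ along $C$. The first structural observation is that two distinct faces of $\mathcal{F}$ can share only vertices lying on $C \cup C'$: any interior edge of the cylinder belongs to a face and its \emph{cylinder-adjacent} neighbour, so two $\mathcal{F}$-faces at cyclic distance $\geq 2$ share no edges; any further sharing must occur at vertices on $C \cup C'$.

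The key claim I would prove next is a \emph{block property}: for every vertex $v \in V(C) \cup V(C')$ that lies on at least two faces of $\mathcal{F}$, the set $B(v) \subseteq \mathcal{F}$ of faces containing $v$ forms a contiguous block in the cyclic order. This is because, in $\Pi$, the cylinder-side faces incident to $v$ appear in a consecutive cyclic block between the two edges of $C$ (or $C'$) at $v$, and restricting to the subsequence $\mathcal{F}$ preserves that consecutivity. As a corollary, each face $F \in \mathcal{F}$ has at most four ``boundary vertices'' (the endpoints of the paths $F \cap C$ and $F \cap C'$), and every vertex it shares with another face of $\mathcal{F}$ must be one of these.

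I would then define an auxiliary conflict graph $H$ on $\mathcal{F}$ with $F \sim_H F'$ whenever $F \cap F'$ contains an edge or at least two vertices. The block property bounds the degree of $H$: edge-sharing happens only with the two cylinder-adjacent $\mathcal{F}$-faces of $F$, while a two-vertex conflict (without a shared edge) forces $F'$ to sit in the intersection of two blocks of $F$'s boundary vertices, and this intersection is itself a contiguous block; a case analysis on the at most four boundary vertices yields $\Delta(H) \leq 5$. A greedy independent set in $H$ thus has size at least $|\mathcal{F}|/(1 + \Delta(H)) \geq |\mathcal{F}|/6$.

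The main obstacle I anticipate is in passing from ``pairwise shares $\leq 1$ vertex'' (which an independent set in $H$ guarantees) to genuine \emph{almost-disjointness}, which is a set-wise condition requiring each chosen face to share at most one vertex with the \emph{union} of all the others. To bridge this gap I would refine the greedy selection so that, for each included face, at most one of its four boundary vertices is ``activated'' by another chosen face: when scanning $F_1, \ldots, F_n$ in cyclic order, I add $F_i$ only if it does not activate a second boundary vertex of any already-chosen face. Since the blocks at each boundary vertex are contiguous, this refinement deletes at most a bounded number of candidates per activation and preserves the $|\mathcal{F}|/6$ bound while enforcing the global almost-disjointness condition.
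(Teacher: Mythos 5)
Your approach is genuinely different from the paper's. The paper partitions $\mathcal{F}$ into \emph{edge-sharing components}, then partitions each component into \emph{units} of consecutive faces sharing a common path $P_0$ from $C$ to $C'$; it then observes that the unit--conflict graph is a subgraph of a Hamiltonian cycle with distance-two chords, hence $3$-colorable, and that taking every other face within a unit is almost disjoint, giving $3\times 2 = 6$. You instead build a conflict graph $H$ directly on $\mathcal{F}$ (with edges for an edge-share or a two-vertex share), bound its maximum degree, and take a greedy independent set. These are two different decompositions of the loss factor $6$.

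However, your proof has a real gap, precisely where you flag it: passing from pairwise almost-disjointness to set-wise almost-disjointness. An independent set in $H$ only guarantees that each pair of chosen faces shares at most one vertex. It does not prevent a chosen face $F$ from sharing one vertex $u$ with a chosen $F'$ and a different vertex $w$ with another chosen $F''$, which already violates the definition that $F$ meets $\bigcup_{j\ne i}F_j$ in at most one vertex. Your proposed repair, greedily skipping any $F_i$ that would activate a second boundary vertex of an already-chosen face, is a new algorithm whose loss factor you do not analyze: you assert that ``this refinement deletes at most a bounded number of candidates per activation and preserves the $|\mathcal{F}|/6$ bound,'' but no bound on the deleted candidates is proved, and there is no argument that the surviving set still has size at least $|\mathcal{F}|/6$. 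The factor $1/6$ was earned entirely by the degree bound for \emph{pairwise} conflicts; once set-wise constraints enter, that accounting no longer applies, and some other charging scheme must be exhibited. The paper avoids this issue structurally: after the $3$-coloring, chosen units are pairwise vertex-disjoint, so the ``union'' condition is automatically localized within each unit, where the every-other-face rule handles it.

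Secondarily, the degree bound $\Delta(H)\le 5$ is asserted by an unperformed ``case analysis on the at most four boundary vertices.'' This needs care: a face $F\in\mathcal{F}$ can share a boundary vertex on $C$ with a long fan of other faces of $\mathcal{F}$, and a vertex of $F\cap C$ that is interior to the arc $F\cap C$ can still be shared with another $\mathcal{F}$-face approaching it from a different angle in the rotation. You would need to show carefully that such configurations never produce more than five conflicting neighbors, and the ``block property'' you invoke is stated but not proved. As written, the argument is not yet a proof.
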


\begin{proof}%We say that two faces are strongly adjacent if they share at least one edge. Let the blocks $\mathcal{B}(\mathcal{F})$ be the partition of $\mathcal{F}$ into its strongly adjacent equivalence classes. Let $B, B' \in \mathcal{B}(\mathcal{F})$, if there is a vertex $v$ that all the faces in $B$ and $B'$ intersect, then merge the two blocks together.
    Let $\mathcal{B}(\mathcal{F})$ be the partition of $\mathcal{F}$ into its edge-sharing components.

    We will then partition each edge-sharing component $B \in \mathcal{B}(\mathcal{F})$ into units $\mathcal{U}(B)$.
    Let $B \in \mathcal{B}(\mathcal{F})$. Remark that the faces in $B$ are naturally (circularly) ordered inside $B$ so that two faces are consecutive in this order if they are edge-sharing. If the faces in $B$ are linearly ordered, take the first face $f_0$ in this order or any face $f_0$ of $B$ if the faces in $B$ are circularly ordered. Let $P_0$ be the path from $C$ to $C'$ in $f_0$ that is the first we go through when following the order of the faces of $B$ from $f_0$ (if the order is circular, then $P_0$ is chosen so that we go through $P_0$ and then the other path from $C$ to $C'$ in $f_0$ consecutively in our traversal). 
    We will define the units of $B$ from $f_0$ using the following method: let $f_1$ be the first face vertex-disjoint from $P_0$ in the order. Then, the first unit of $B$ contains $f_0$ and all the faces between $f_0$ (included) and $f_1$ (excluded) in the order. Iterate this method with $f_1$ to find the next unit and so on until every face of $B$ belongs to a unit of $B$. See Figure \ref{fig:square_lemma_between} for an illustration.

    \begin{figure}[h!]
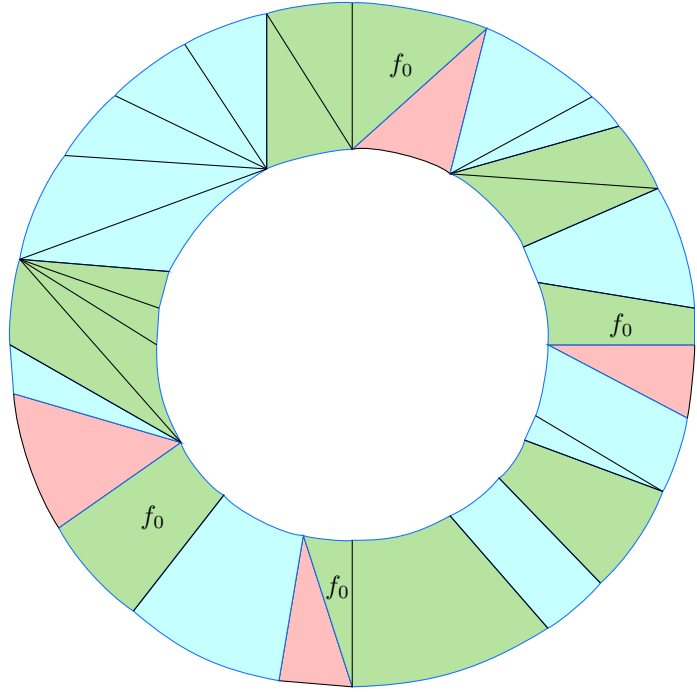

    \centering
    \tikzfig{images/square_lemma_between}

    \caption{Illustration for the edge-sharing components and units in Lemma \ref{almost_disjoint_faces_between_two_cycles}. The subgraphs in pink correspond to faces that are not in $\mathcal{F}$. The edge-sharing components in $\mathcal{B}(\mathcal{F})$ have a dark blue outline. The units of each edge-sharing component are in alternating blue and green. We also indicate the face $f_0$ selected for each edge-sharing component.}
    \label{fig:square_lemma_between}
    \end{figure}

    Remark that the units of an edge-sharing component are naturally ordered with respect to $C$ and $C'$ and that the faces from two units that are not (circularly) consecutive are vertex-disjoint.

    Let $\mathcal{U}(\mathcal{F}) = \bigcup_{B \in \mathcal{B}(\mathcal{F})} \mathcal{U}(B)$.
    Let $\Gamma_G$ be an auxiliary graph whose vertices are the units in $\mathcal{U}(\mathcal{F})$, and there is an edge between two units $U, U' \in \mathcal{U}(\mathcal{F})$ if they are not vertex-disjoint. By construction, $\Gamma_G$ is a subgraph of a Hamiltonian cycle with chords between vertices of distance two on the cycle pairwise nonintersecting. Moreover, this graph is 3-colorable (color the Hamiltonian cycle by alternating the three colors $c_1, c_2, c_3, c_1, c_2, c_3...$).

    Therefore, by taking a color class of $\Gamma_G$ which corresponds to a subset of $\mathcal{F}$ that has size at least $\frac{|\mathcal{F}|}{3}$ (this is possible because the color classes partition $\mathcal{F}$ into three sets), we get a subset of units that are vertex-disjoint and contain at least $\frac{|\mathcal{F}|}{3}$ of the faces in $\mathcal{F}$.

    Moreover, remark that the faces in a unit $U \in \mathcal{U}(\mathcal{F})$ are naturally ordered with respect to $C$ and $C'$ and that by taking every other face in $U$, we get a subset of almost disjoint faces of $U$ of size at least $\frac{|U|}{2}$.

    By combining the two observations, we conclude that there exists a subset of $\mathcal{F}$ of almost disjoint faces of size at least $\frac{|\mathcal{F}|}{6}$.
\end{proof}

\begin{prop}
    \label{good_square}
    Let $q = \frac{1153}{1152}$ and $m = 2(\lfloor\log_{q}(3g+4)\rfloor + 2)$. The graph $G$ contains at most $m$ cycles that are $\Pi$-well-nested. %with respect to disjoint cycles $C_0 = C_e, ..., C_{2m}$ all contractible in $\Pi$ 
\end{prop}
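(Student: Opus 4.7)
The plan is to proceed by contradiction. Suppose $G$ contains a chain of $m+1$ cycles $C_0, C_1, \ldots, C_m$ that are $\Pi$-well-nested, indexed so that $C_{i+1}$ is nested in $C_i$; thus $C_0$ is the outermost and $C_m$ the innermost. By the definition of $\Pi$-well-nested chains, either every consecutive pair is fully well-nested, or every consecutive pair is well-nested pinched on a piece. The factor of $2$ in the definition of $m$ absorbs these two alternatives, so it suffices to derive a contradiction from a chain of length at least $\lfloor \log_q(3g+4) \rfloor + 2$ of one uniform type, which I fix without loss of generality.

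Pick an edge $e$ in the interior of the innermost cycle $C_m$ and work with the embedding $\Pi_e$ of $G_e = G - e$ in the surface $S'$. For suitable indices $i$, I would extract from the chain a contractible square $(C_i, C_i', C_i'')$ in the sense of Definition \ref{def:contractible_square}, where $C_i'$ is chosen as the closest $\Pi$-contractible cycle to $C_i$ meeting the definitional requirement and $C_i''$ is taken sufficiently far inside the chain. By Lemma \ref{bad_square}, each such square is good, so at every applicable level
\[
|\mathcal{I}_{\rm N}(C_i)| \leq 24\,|\mathcal{B}_{\rm N}(C_i)| - 12.
\]

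Next, associate to each layer $i$ a quantity $b_i$ measuring the $\Pi_e$-noncontractible $\Pi$-structure available in an annulus around $C_i$; concretely, the maximum number of pairwise almost-disjoint, pairwise $\Pi_e$-nonhomotopic $\Pi$-faces in an appropriate region. Using Lemma \ref{almost_disjoint_faces_between_two_cycles} to pass from total face counts to almost-disjoint ones (factor $6$) and Claim \ref{at_most_2_homotopic_cycles} together with Proposition \ref{homotopic_cycles_variant1} to control homotopy classes (factor $4$ and genus bound $3g-3$), the good-square inequalities can be amortized across consecutive layers so as to yield a multiplicative relation $b_{i+1} \geq q \cdot b_i$, with $q = 1153/1152$ emerging from the bookkeeping. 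Iterating gives $q^{\lfloor m/2\rfloor - O(1)}$ as a lower bound on $b_m$. On the other hand, Proposition \ref{homotopic_cycles_variant1} applied in $S'$ bounds the total number of pairwise almost-disjoint, pairwise $\Pi_e$-nonhomotopic, $\Pi_e$-noncontractible cycles by $3g+4$, so $q^{\lfloor m/2\rfloor - O(1)} \leq 3g+4$ and hence $m \leq 2(\log_q(3g+4) + O(1))$, contradicting the choice of $m$.

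The main technical obstacle is the amortization step: although the good-square inequality individually produces only a growth factor of $24$, the argument must extract a per-layer factor as close to $1$ as $q = 1 + 1/1152$. This will likely require pairing consecutive layers, carefully exploiting the additive slack $-12$, and uniformly handling the fully well-nested and pinched-on-a-piece cases, together with the degenerate situations in which $|\mathcal{B}_{\rm N}|$ is very small and the argument of Lemma \ref{bad_square} already delivers a direct reembedding contradiction. Once the fine-grained growth rate is in place, the contradiction with the genus bound $3g+4$ is immediate.
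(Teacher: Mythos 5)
Your proposal follows essentially the same route as the paper's proof: pick an edge $e$ at the innermost level, work in $\Pi_e$, extract good contractible squares via Lemma \ref{bad_square}, combine the good-square inequality $|\mathcal{I}_{\rm N}| \leq 24|\mathcal{B}_{\rm N}|-12$ with the almost-disjoint-fraction bound coming from Lemma \ref{almost_disjoint_faces_between_two_cycles} (the paper packages this as Claim \ref{fraction_of_B_N_almost_disjoint} inside the proof), pair consecutive layers to obtain the per-step factor $q=1153/1152$ on $|\mathcal{I}_{\rm N}(C_{2i})|$, and contradict the $O(g)$ cap from Proposition \ref{homotopic_cycles_variant1}. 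The amortization step you flag as the remaining obstacle is exactly what the paper's Claim \ref{fraction_of_B_N_almost_disjoint} and the subsequent inequality chain carry out, so your outline is a correct sketch of the paper's argument.
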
 

\begin{proof}
    Let $m' \in \mathbb{N}$. Let $C_1, ..., C_{2m'}$ be $\Pi$-contractible cycles of $G$ that are $\Pi$-well-nested in this order.
    
    By Lemma \ref{bad_square}, each cycle $C_i$ ($4 \leq i \leq 2m'$) induces good contractible proper squares.
    Let $e \in E(C_1)$.
    For $2 \leq i \leq 2m'-2$, let $\mathcal{I}(C_{i+2})$ be the set of faces in $\text{Int}(C_i, \Pi)$ in $\Pi$ and let $\mathcal{B}(C_{i+2})$ be the set of faces in $\text{Int}(C_{i+2} \cup C_{i+1}, \Pi)$ that touch $C_{i+2}$. Let $\mathcal{B}_{\rm N}(C_{i+2})$ be the set of the faces in $\mathcal{B}(C_{i+2})$ that are $\Pi_e$-noncontractible. Moreover, let $\mathcal{I}_{\rm N}(C_{i+2})$ be a maximal size subset of almost disjoint cycles from $\mathcal{I}(C_{i+2})$ that are $\Pi_e$-noncontractible. Then, we define $\Tilde{C}_{i+1}$ to be the cycle closest to $C_{i+2}$ so that $\mathcal{B}(C_{i+2}) \subseteq \text{Int}(C_{i+2} \cup \Tilde{C}_{i+1}, \Pi)$. Finally, by Lemma \ref{bad_square}, $(C_{i+2}, \Tilde{C}_{i+1}, C_i)$ is a good contractible proper square with respect to $C_{i+2}$. 
    
    By Lemma \ref{C_e_non_contractible}, as $\text{int}(C_2, \Pi)$ contains $e$, it is $\Pi_e$-noncontractible.
    Hence, $\mathcal{I}_{\rm N}(C_4)$ is not empty.

    Let's first show that, for every $2 \leq i \leq 2m'$ there exists a subset of almost disjoint faces of $\mathcal{B}_{\rm N}(C_i)$ of size at least $\frac{| \mathcal{B}_{\rm N}(C_i)|}{24}$.

    \setcounter{claim}{0}

    \begin{claim}
        \label{fraction_of_B_N_almost_disjoint}
        For $2 \leq i \leq 2m'$, there exists a subset of almost disjoint faces of $\mathcal{B}_{\rm N}(C_i)$ of size at least $\frac{|\mathcal{B}_{\rm N}(C_i)|}{24}$.
    \end{claim}

    \begin{proof_claim}
        First, let's partition the faces in $\mathcal{B}(C_i)$ into sets $(\mathcal{F}_j)_{j \in \mathbb{N}^*}$. We define the sets $(\mathcal{F}_j)_{j \in \mathbb{N}^*}$ inductively as follows:

        For $j = 1$, $\mathcal{F}_1$ is the set of faces from $\mathcal{B}(C_i)$ that intersect $\Tilde{C}_{i-1}$. Moreover, we define $C_1'$ to be the cycle closest to $C_i$ so that $\mathcal{B}(C_i) - \mathcal{F}_1 \subseteq \text{Int}(C_i \cup C_1', \Pi)$.

        Suppose that for $j \geq 1$, $\mathcal{F}_j$ and $C'_j$ have been defined. Then, we define $\mathcal{F}_{j+1}$ as the set of faces from $\mathcal{B}(C_i) - \cup_{k=1}^j \mathcal{F}_k$ that intersect $C'_j$. Moreover, we define $C_{j+1}'$ to be the cycle closest to $C_i$ so that $\mathcal{B}(C_i) - \bigcup_{k=1}^{j+1} \mathcal{F}_k \subseteq \text{Int}(C_i \cup C_{j+1}', \Pi)$.

        Finally, $(\mathcal{F}_i)_{i \in \mathbb{N}^*}$ is defined. See Figure \ref{fig:good_square_the_F_i} for an illustration. As $\mathcal{B}(C_i)$ is finite, there exists $M \in \mathbb{N}^*$ such that $\mathcal{F}_j = \emptyset$ for all $j > M$.

		\begin{figure}[h!]
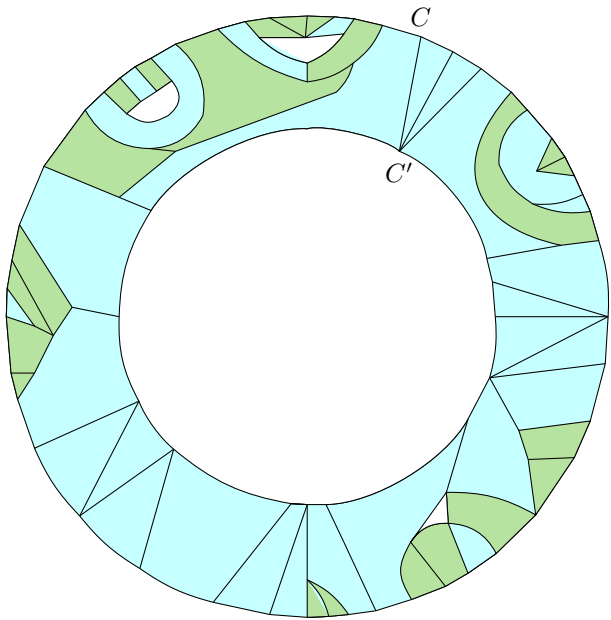

	    \centering
    	\tikzfig{images/good_square_the_F_i}

    	\caption{Illustration for the sets $(\mathcal{F}_i)_{i \in \mathbb{N}^*}$ in Claim \ref{fraction_of_B_N_almost_disjoint}. The subgraphs in white correspond to faces that are not in $\mathcal{B}(C)$ because they don't touch $C$. The sets $\bigcup_{j \geq 1} \mathcal{F}_{2j}$ and $\bigcup_{j \geq 0} \mathcal{F}_{2j+1}$ are depicted respectively in blue and green. Remark that, in this example, for $i > 5$, $\mathcal{F}_i$ is empty.}
    	\label{fig:good_square_the_F_i}
    	\end{figure}

        Remark that by construction, for $j \geq 1$, $\mathcal{F}_j$ and $\mathcal{F}_{j+2}$ contain vertex-disjoint faces. Moreover, remark that, for $j \geq 1$, the faces in $\mathcal{F}_j \cap \mathcal{B}_{\rm N}(C_i)$ touch the two cycles $C_i$ and $C_{j-1}'$ (or $\Tilde{C}_{i-1}$ if $j=1$). Hence, by Lemma \ref{almost_disjoint_faces_between_two_cycles}, there is a subset of faces from $\mathcal{F}_j \cap \mathcal{B}_{\rm N}(C_i)$ that are almost disjoint and this subset has size at least $\frac{|\mathcal{F}_j \cap \mathcal{B}_{\rm N}(C_i)|}{6} \leq \frac{|\mathcal{F}_j \cap \mathcal{B}_{\rm N}(C_i)|}{12}$.

        To conclude, by taking the biggest subset in the partition of $\mathcal{B}_{\rm N}(C_i)$ into $\bigcup_{j \geq 1} \mathcal{F}_{2j} \cap \mathcal{B}_{\rm N}(C_i)$ and $\bigcup_{j \geq 0} \mathcal{F}_{2j+1} \cap \mathcal{B}_{\rm N}(C_i)$ and then applying Lemma \ref{almost_disjoint_faces_between_two_cycles}, we get a subset of almost disjoint faces from $\mathcal{B}_{\rm N}(C_i)$ of size at least $\frac{|\mathcal{B}_{\rm N}(C_i)|}{24}$.
    \end{proof_claim}

    \vspace{0.3cm}

    As $(C_{2i}, \tilde{C}_{2i-1}, C_{2i-2})$ is a good contractible square, for $2 \leq i \leq m'-1$, $\mathcal{B}_{\rm N}(C_{2i})$ has size at least $\frac{|\mathcal{I}_{\rm N}(C_{2i})|+12}{24} \leq \frac{|\mathcal{I}_{\rm N}(C_{2i})|+12}{48}$. 
    Hence, by Claim \ref{fraction_of_B_N_almost_disjoint}, $\mathcal{I}_{\rm N}(C_{2i+2})$ has size at least 
    \begin{align*}
        \mathcal{I}_{\rm N}(C_{2i})| + \frac{|\mathcal{B}_{\rm N}(C_{2i})|}{24}  &\leq |\mathcal{I}_{\rm N}(C_{2i})| + \frac{|\mathcal{I}_{\rm N}(C_{2i})|+12}{1152} \\
        & = \frac{1153|\mathcal{I}_{\rm N}(C_{2i})|+12}{1152} \\
        & \geq \frac{1153}{1152} |\mathcal{I}_{\rm N}(C_{2i})| = q |\mathcal{I}_{\rm N}(C_{2i})| \\
    \end{align*}

    By recurrence, \[ |\mathcal{I}_{\rm N}(C_{2m'})| \geq q^{m'-2} |\mathcal{I}_{\rm N}(C_4)| \geq q^{m'-2}\]

    If $m'-2 \geq \log_{q}(3g+4)$, then $q^{m'-2} \geq 3g+4$. However, by Proposition \ref{homotopic_cycles_variant1}, as $g(\Pi) \leq g+2$, $|\mathcal{I}_{\rm N}(C_{2m'})| \leq 3g+3$. Then, this implies that $m' \leq \lfloor\log_{q}(3g+4)\rfloor + 2$. Finally, there are at most $m = 2m' = 2(\lfloor\log_{q}(3g+4)\rfloor + 2)$ disjoint cycles that are nested in $\Pi$.
\end{proof}

Proposition \ref{good_square} makes it possible to find a better bound on the treewidth of $G$ than the previously best known one from Seymour \cite{seymour}, stated in Theorem \ref{seymour_treewidth}, by combining it with the following result of Kawarabayashi and Sidiropoulos \cite{KS19}:

\begin{prop}
    \label{KS_result}
    Let $H$ be a graph embedded with embedding $\Pi_H$ in a surface of Euler genus $g_H$, and $tw(H) \geq 264g_H\ell$. Then $H$ contains $\ell$ $\Pi_H$-well-nested cycles.
\end{prop}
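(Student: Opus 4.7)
The plan is to proceed by induction on the Euler genus $g_H$, with a planar base case handled via the planar grid-minor theorem and an inductive step that cuts the embedding along a short $\Pi_H$-noncontractible cycle. The guiding intuition is that each cut reduces the Euler genus by at least one while costing only a polynomial-in-$g_H$ drop in treewidth, and that each unit of genus removed ``pays'' for a factor of $264 \ell$ in the hypothesis; the planar remainder is then dense enough to produce a grid-like structure whose concentric cycles give a well-nested family.

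For the planar base case, assume $H$ is embedded in the sphere with $tw(H) \geq 264 \ell$. The planar grid-minor theorem (Robertson--Seymour--Thomas, with constants sharpened in later work) provides a grid minor of side $\Omega(\ell)$. Pulling this minor back to a subdivision in $H$ and taking its $\ell$ innermost concentric cycles yields pairwise disjoint cycles that are contractible in the planar embedding $\Pi_H$ and fully-well-nested, so the chain is a $\Pi_H$-well-nested family of the required length.

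For the inductive step with $g_H \geq 1$, I would first invoke a standard short-noncontractible-cycle result (of the kind found in \cite{graphs_on_surfaces}) to extract a $\Pi_H$-noncontractible cycle $C$ of length bounded polynomially in $g_H$. Cutting along $C$ produces an embedded graph $(H', \Pi_{H'})$ of Euler genus $g_{H'} < g_H$. Because $H'$ differs from $H$ only by duplicating the vertices of $C$, one has $tw(H') \geq tw(H) - O(|V(C)|)$, and the precise constant $264$ is chosen so that the residual treewidth still satisfies $tw(H') \geq 264 g_{H'} \ell$. The inductive hypothesis then supplies $\ell$ $\Pi_{H'}$-well-nested cycles in $H'$.

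The main obstacle, and the step I expect to require the most care, is translating a $\Pi_{H'}$-contractible well-nested chain back to a $\Pi_H$-well-nested chain: a cycle encircling one of the two copies of $C$ in $H'$ corresponds, after undoing the cut, to a cycle homotopic to $C$ in $H$ and hence $\Pi_H$-noncontractible. To evade this, I would exploit the bramble (or tangle) of order $\Omega(tw(H))$ that high treewidth provides and locate the nested family inside a ``deep'' region of $H'$ disjoint from bounded-radius neighborhoods of both copies of $C$. The total vertex budget near the cut curves is $O(|V(C)| \cdot \ell)$, which is dominated by $tw(H)$, so such a deep region exists; inside it, $\Pi_{H'}$-contractibility coincides with $\Pi_H$-contractibility, and the well-nested chain descends faithfully, closing the induction and yielding the bound $tw(H) \geq 264 g_H \ell$ as stated.
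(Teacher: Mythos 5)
You should first note that the paper itself does not prove this proposition: it is imported verbatim from Kawarabayashi and Sidiropoulos \cite{KS19} and used as a black box to derive Theorem \ref{treewidth}, so there is no in-paper proof to compare against. Judged on its own, your induction-on-genus scheme has two genuine gaps. The first is the step that extracts a $\Pi_H$-noncontractible cycle $C$ of length bounded polynomially in $g_H$: no such ``standard short-noncontractible-cycle result'' exists, and the claim is false in general. The edge-width of an embedded graph is not a function of the genus alone -- the $\sqrt{n}\times\sqrt{n}$ toroidal grid has Euler genus $2$ but shortest noncontractible cycle of length $\Theta(\sqrt{n})$, which is of the same order as its treewidth. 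Hence $|V(C)|$ can be comparable to $tw(H)$, the estimate $tw(H') \geq tw(H) - O(|V(C)|) \geq 264\,g_{H'}\ell$ has no basis, and the induction does not close; this is precisely the point where the constant $264$ is supposed to ``pay'' for the cut, and nothing pays for it.

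The second gap is the descent step, which you yourself flag as delicate. The inductive hypothesis hands you $\ell$ $\Pi_{H'}$-well-nested cycles with no control whatsoever on where they sit relative to the two copies of $C$, and the bramble/``deep region'' fix is not an argument: a bounded-radius neighbourhood of a copy of $C$ can contain arbitrarily many vertices (indeed all of $H'$), so excluding it is not paid for by any ``vertex budget'' of size $O(|V(C)|\cdot \ell)$, and deleting such neighbourhoods is not justified by the treewidth hypothesis. Moreover, avoiding the copies of $C$ vertex-wise is not even the right condition: a cycle $D$ disjoint from both copies can be $\Pi_{H'}$-contractible while the disc it bounds contains one of the capping discs created by the cut, in which case $D$ is homotopic to $C$ (hence $\Pi_H$-noncontractible) after re-gluing; what must be controlled is the position of the interiors $\text{Int}(\cdot,\Pi_{H'})$ of the whole nested chain relative to the caps, and your argument does not do this. (A smaller repairable slip: a grid minor is not a subdivision, since grids have degree-$4$ vertices, though extracting disjoint nested contractible cycles from a grid model in the planar base case can be salvaged by a standard argument.) For reference, the actual route in \cite{KS19} goes through surface versions of the grid-minor/flat-wall machinery, where the linear-in-genus loss comes from bounding interactions with noncontractible curves rather than from cutting along a short noncontractible cycle.
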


\begin{reptheo}{treewidth}
    The treewidth of $G$ is bounded by the following function of $g$:
    \[ tw(G) \leq T(g)\]

    with $T(g) = 264(g+2)(m+1)-1 = O(g \log g)$, where $m = 2(\lfloor\log_{q}(3g+4)\rfloor + 2)$ and $q = \frac{1153}{1152}$.
\end{reptheo}

\begin{proof}
     This result is obtained by combining Propositions \ref{KS_result} and \ref{good_square}. 
\end{proof}

Moreover, we can remark that the following results by Thomassen \cite{Thomassen} can be improved by adapting the original proof and then by using Proposition \ref{good_square}, in the case that $G$ is 2-connected:

\begin{defi}[Hexagonal grid]
    We define $J_k$ and $C_{J_k}$ by induction on $k \geq 1$.
    Let $J_1$ be the cycle of length $6$ $C_6$, and let $C_{J_1}$ be this cycle. For each $k \geq 2$, let $J_k$ be the union of $J_{k-1}$ and all those $6$-cycles in the hexagonal grid tiling which intersect $J_{k-1}$ and let $C_{J_k}$ be the cycle that bounds $J_k$ in the hexagonal tiling.

    Figure \ref{fig:hexagonal_grid} represents the hexagonal grid $J_3$.
\end{defi}

\begin{figure}[h!]
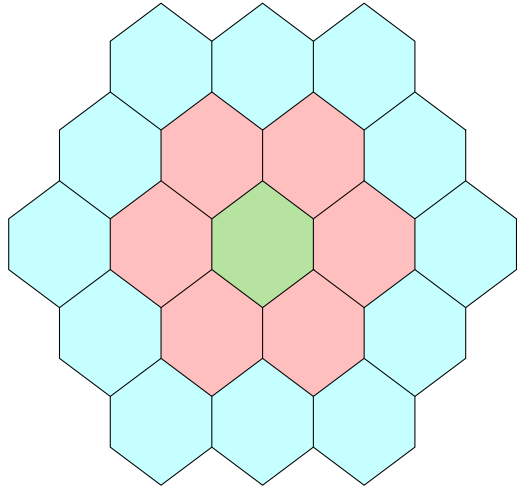

    \centering
    \tikzfig{images/hexagonal_grid}
    \caption{The hexagonal grid $J_3$. The colors show the construction of the grid by induction: $J_1$ is represented in green; then, by adding to $J_1$ the pink part, we obtain $J_2$; finally, by adding to $J_2$ the blue part, we get $J_3$.}
    \label{fig:hexagonal_grid}
\end{figure}

\begin{prop}[\cite{Thomassen}, {\cite[Theorem 7.3.3]{graphs_on_surfaces}}]
    \label{thomassen}
    Let $H$ be a $2$-edge-connected graph such that $g(H) = g$ and $g(H_e) < g$ for every edge $e$ in $H$. Then $H$ contains no subdivision of the grid $J_k$, $k = \lceil 800 g^{3/2} \rceil$.
\end{prop}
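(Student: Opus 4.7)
The plan is to argue by contradiction. Suppose $H$ contains a subdivision of $J_k$ with $k = \lceil 800 g^{3/2} \rceil$. The subdivision yields a nested family of branch cycles $C_{J_1} \subset C_{J_2} \subset \cdots \subset C_{J_k}$ (corresponding to the concentric hexagonal layers of $J_k$), each of which is, up to subdividing, a cycle of $H$. These $k$ cycles are pairwise vertex-disjoint when viewed as cycles of $H$ that bound the successive hexagonal annuli of the subdivision. Since $g(H) = g$, fix an embedding $\Pi$ of $H$ into a surface of Euler genus $g$; our goal is to extract from the $C_{J_i}$'s a long sub-sequence with controlled homotopy behaviour in $\Pi$.

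The first step is a homotopy-classification step. The cycles $C_{J_1}, \ldots, C_{J_k}$ are pairwise disjoint in $H$, so by Proposition \ref{homotopic_cycles_variant1} at most $3g-3$ (say, $O(g)$) of them can be pairwise non-homotopic and $\Pi$-noncontractible. By pigeonhole, after discarding $O(g)$ of the $C_{J_i}$'s, the remaining ones split into at most $O(g)$ homotopy classes. A further application of pigeonhole produces a sub-sequence of length $\Omega(k/g)$ of cycles that either are all $\Pi$-contractible or are all mutually $\Pi$-homotopic non-contractible; in the second case, consecutive pairs bound cylinders, and using Proposition \ref{3_homotopic_cycles} one can replace these cylinders by disks (after a re-embedding that cuts along a cycle of that homotopy class), reducing to the contractible setting at the cost of another $O(\sqrt g)$ factor. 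The outcome is a nested chain of $\Omega(k / g^{3/2})$ $\Pi$-contractible cycles $D_1 \subset D_2 \subset \cdots \subset D_\ell$ of $H$, bounding nested disks in $\Pi$, together with a common edge $e$ lying in $\mathrm{int}(D_1, \Pi)$.

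The second step is the criticality argument. By hypothesis $g(H - e) < g(H) = g$, so $H - e$ admits an embedding $\Pi'$ in a surface of Euler genus at most $g-1$. Each $D_i$ remains a cycle of $H-e$ and is therefore embedded in $\Pi'$ with some new homotopy type. If $\ell \geq g + 2$, Proposition \ref{seymour_nested_cycles} (applied to $H$ in the role of $G$, with $e$ the distinguished edge and the $D_i$'s as the $g+2$ disjoint disk-bounding cycles) yields an embedding of $H$ in a surface of the same Euler genus as $H - e$, that is, of genus strictly less than $g$. This contradicts $g(H) = g$. Hence $\ell \leq g+1$, which combined with $\ell = \Omega(k/g^{3/2})$ forces $k = O(g^{3/2})$, contradicting our choice of $k$ with the constant $800$ large enough to beat the implicit constants.

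The main obstacle, and the reason the bound is $g^{3/2}$ rather than linear in $g$, is the double pigeonhole in the homotopy step: one $\sqrt g$ factor is lost when reducing to cycles that share a homotopy class, and another when converting mutually homotopic noncontractible cycles into contractible ones via successive re-embeddings along cutting curves. Carrying out this re-embedding carefully, in a way that preserves the nested structure (so that Proposition \ref{seymour_nested_cycles} can be applied at the end), is the delicate part; this is exactly the place where, in the sequel, Proposition \ref{good_square} can be substituted to replace the crude $3g-3$ bound by an $O(\log g)$ bound, thereby eventually improving the theorem to Theorem \ref{thomassen_improvement} with $k = O(\sqrt{g}\log g)$.
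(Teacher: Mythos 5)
Your proposal departs entirely from the route the paper indicates: the paper explains that Proposition~\ref{thomassen} is proved via Lemma~\ref{thomassen_lemma1} (find a \emph{good} sub-grid, losing a $100\sqrt{g}$ factor) followed by Lemma~\ref{thomassen_lemma2} (the inner part of a good sub-grid is embedded planarly, losing $4g+12$), and only then does a Seymour-style nested-disk argument apply. You instead try to filter the cycles $C_{J_1},\dots,C_{J_k}$ directly by $\Pi$-homotopy class. That is a genuinely different approach, but it has a gap it cannot survive.

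The central problem is the unjustified claim that the filtered family is \emph{nested} in $\Pi$. Even if $\Omega(k/g)$ of the $C_{J_i}$ are $\Pi$-contractible, two disjoint $\Pi$-contractible cycles need not bound nested disks: the disks can be disjoint, with the annulus between the two cycles in $J_k$ routed through a handle. Contractibility is a property of a single cycle; nesting is a property of the whole configuration, and it is exactly what the ``goodness'' hypothesis in Lemma~\ref{thomassen_lemma2} is there to secure (planarity of the induced embedding gives nesting via Whitney's theorem; without it, nothing does). Filtering by homotopy class does not rearrange the surface into a nested configuration. For the same reason, the existence of a common edge $e$ in $\mathrm{int}(D_1,\Pi)$ is not established: in $\Pi$ the disk bounded by the innermost surviving cycle has no reason to contain the earlier $C_{J_j}$'s (which were filtered out), and may well be empty.

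There are also two secondary problems. First, ``a re-embedding that cuts along a cycle'' is not a re-embedding of $H$: cutting along a cycle, as defined in this paper, produces a different graph $\overline{H}$ on a lower-genus surface, so any conclusion about $\overline{H}$ does not feed back into a contradiction with $g(H)=g$. Second, the ``$O(\sqrt{g})$ factor'' that you charge for the noncontractible-to-contractible reduction is asserted but never derived; nothing in your argument produces a $\sqrt{g}$-type loss, and if it is meant to echo the $100\sqrt{g}$ in Lemma~\ref{thomassen_lemma1}, that factor comes from a combinatorial separator/grid-shrinking argument which is absent here. As it stands, the proposal replaces the two planarization lemmas (which do the actual topological work) with a pigeonhole on homotopy classes, and that is not enough.
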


\begin{repcor}{thomassen_cor}
    Let $H$ be a $2$-edge-connected graph such that $g(H) = g$ and $g(H_e) < g$ for every edge $e$ in $H$. Then $H$ contains no subdivision of the $4k \times 2k$ grid, $k = \lceil 800 g^{3/2} \rceil$.
\end{repcor}

\begin{proof}
    The result follows from the easily verifiable fact that the $4k \times 2k$ grid contains $J_k$ for all $k \geq 1$.
\end{proof}

Indeed, the proof of Proposition \ref{thomassen} relies on the two following lemmas:

Let $H$ be a subgraph of $G$ such that $H$ is a subdivision of a hexagonal grid. Let $C$ be the outer cycle of $H$. We say that $H$ is \textit{good} in $G$ if the union of $H$ and all the $H$-bridges which have an attach in $H - C$ is planar.

\begin{lem}[\cite{Thomassen}, {\cite[Proposition 7.3.1]{graphs_on_surfaces}}]
    \label{thomassen_lemma1}
    Let $H'$ be a subgraph of $G$ such that $H'$ is a subdivision of the hexagonal grid $J_n$. Let $k \geq 1$, if $n > 100k \sqrt{g}$, then $H'$ contains a subdivision $H$ of the hexagonal grid $J_k$ such that $H$ is good in $G$.
\end{lem}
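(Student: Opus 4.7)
The plan is to find many pairwise nested subdivisions of $J_k$ inside $H'$ and show that, unless at least one of them is already good, the accumulated non-planarity exceeds what the genus bound $g$ allows.

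First, I exploit the recursive structure $J_1\subset J_2\subset\cdots\subset J_n$ of the hexagonal grid. From the subdivision $H'$ of $J_n$ with $n>100k\sqrt{g}$, I extract a sequence of pairwise nested subdivisions $H_1\subsetneq H_2\subsetneq\cdots\subsetneq H_N$ of $J_k$ inside $H'$, where the outer cycles $C_1,\ldots,C_N$ are pairwise disjoint and $N$ is of order $n/k$. Concretely, $H_i$ can be chosen to be a subdivision of $J_k$ consisting of the hexagonal rings of $H'$ lying between $C_{J_{(i-1)k+1}}$ and $C_{J_{ik}}$ together with a central subgraph obtained by contracting the interior layers along a spanning tree, so that $H_i$ is topologically a subdivision of $J_k$ whose outer cycle surrounds $H_{i-1}$.

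Next, for each $i$, let $\widetilde H_i$ denote the union of $H_i$ with all $H_i$-bridges of $G$ that attach inside $V(H_i)\setminus V(C_i)$. If some $\widetilde H_i$ is planar, then $H_i$ is good in $G$ and the lemma holds with $H:=H_i$. Otherwise every $\widetilde H_i$ is non-planar. I then exhibit, for each $i$, a witness $W_i$: a minimal set of bridges of $H_i$ with attach inside $H_i-C_i$ such that already $H_i\cup W_i$ is non-planar. I use the nesting to localize each $W_i$ essentially to the annular shell between $C_i$ and $C_{i+1}$: indeed, any bridge of $H_i$ with an attach strictly inside $H_i$ either lies completely inside $H_{i+1}$ (in which case we replace it by its restriction to the shell, which must still witness non-planarity because $H_i$ is planar) or contains a sub-bridge that does.

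Finally, a genus additivity argument (Proposition \ref{4.4.2}) applied to the union $\bigcup_i(H_i\cup W_i)$, together with Euler's formula, converts each non-planar witness into a positive contribution to the Euler genus. The key quantitative input is that the bridges witnessing non-planarity must, by the hexagonal rigidity of each $H_i$, connect vertices that are far apart in $H_i$, and so carry paths that cross multiple nested rings $C_j$; this forces the genus contributions to stack quadratically in $N$ rather than linearly, giving $N^2\lesssim g$, hence $N\le c\sqrt{g}$, which contradicts $N\gtrsim n/k>100\sqrt{g}$. I expect this final step to be the main obstacle: achieving a quadratic-in-$N$ accumulation of genus (and thereby the $\sqrt{g}$ in the hypothesis, rather than a linear factor $g$) requires a careful bookkeeping of how the witness bridges traverse the nested hexagonal rings without their genus contributions being reused, in a manner reminiscent of the argument underlying Proposition \ref{homotopic_cycles_variant1}.
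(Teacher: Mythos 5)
The paper cites Lemma~\ref{thomassen_lemma1} from Thomassen and from Proposition~7.3.1 of Mohar and Thomassen without reproducing the proof, so there is no in-paper argument to compare against; the remarks below are measured against what is needed for the lemma to hold and against the cited source.

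Your nested construction of $H_1\subsetneq\cdots\subsetneq H_N$ does not produce valid objects. The ``central subgraph obtained by contracting the interior layers along a spanning tree'' is a minor of $G$, not a subgraph, so the resulting $H_i$ is not a subdivision of $J_k$ lying inside $G$, and the definition of ``good in $G$'' --- which refers to $H$-bridges of $G$ --- cannot be applied to it. Moreover, a subdivision of $J_k$ inside a hexagonal grid is a disk-shaped region, while the annulus of $H'$ between $C_{J_{(i-1)k+1}}$ and $C_{J_{ik}}$ is not; two subdivisions of the same grid $J_k$ sitting inside a common hexagonal grid with disjoint outer cycles cannot be properly nested as subgraphs, so the sequence you describe does not exist.

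Even setting this aside, the step you yourself flag as the main obstacle is a genuine gap, and it cannot be filled along the lines you propose. With nested $J_k$-copies you have only $N\sim n/k\sim\sqrt{g}$ candidates, so you would need genus to accumulate quadratically in $N$, and no mechanism for that is given; a naive count of non-planar witnesses yields only linear accumulation and would prove the lemma under the far weaker hypothesis $n\gtrsim kg$. The $\sqrt{g}$ in the statement actually reflects the two-dimensional area of $J_n$: since $J_n$ has $\Theta(n^2)$ hexagons while $J_k$ has $\Theta(k^2)$, one can scatter $\Theta\bigl((n/k)^2\bigr)>g$ pairwise vertex-disjoint hexagonal $J_k$-regions across $J_n$, each wrapped in a few buffer rings that localize its bad bridges. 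If none of these regions were good, one would obtain more than $g$ disjoint non-planar pieces and a contradiction with $g(G)\le g+2$ by ordinary block/genus additivity (Proposition~\ref{4.4.2}). That is a linear accumulation over a quadratic number of disjoint witnesses, not a quadratic accumulation over a linear number of nested ones.
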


\begin{lem}[\cite{Thomassen}, {\cite[Proposition 7.3.2]{graphs_on_surfaces}}]
    \label{thomassen_lemma2}
    Let $H$ be a subgraph of $G$ such that $H$ is a subdivision of the hexagonal grid $J_k$ and such that $H$ is good in $G$. If $k \geq 4g+14$ then the $\Pi$-genus of the subdivision of $J_{k-4g-12}$ in $J_k$ is zero. In other words, $\Pi$ induces a planar embedding of $J_{k-4g-12}$.
\end{lem}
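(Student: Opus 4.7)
The plan is to exploit the $k$ concentric nested cycles $C_{J_1} \subset C_{J_2} \subset \cdots \subset C_{J_k}$ of the hexagonal grid $H$ under the embedding $\Pi$, and to show that at most $4g+12$ of the outermost layers can be affected by the topology of the surface, so that all deeper layers form a planar region.

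I would first classify each $C_{J_i}$ as either $\Pi$-contractible or $\Pi$-noncontractible. Since these $k$ cycles are pairwise disjoint (and hence almost disjoint), Proposition \ref{homotopic_cycles_variant1} applied to the $\Pi$-noncontractible subfamily bounds the number of pairwise $\Pi$-nonhomotopic ones among them by $3g(\Pi)-3 \leq 3g+3$. Consequently, any sufficiently large subfamily of $\Pi$-noncontractible concentric cycles must contain many members in the same $\Pi$-homotopy class.

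The heart of the argument is an iterative genus reduction along the nested cycles. Whenever three nested $\Pi$-noncontractible cycles $C_{J_{i_1}} \subset C_{J_{i_2}} \subset C_{J_{i_3}}$ are pairwise $\Pi$-homotopic, Proposition \ref{3_homotopic_cycles} ensures that $\text{Int}(C_{J_{i_1}} \cup C_{J_{i_3}}, \Pi)$ embeds in a cylinder, so every hexagon of $J_k$ between $C_{J_{i_1}}$ and $C_{J_{i_3}}$ is planarly embedded. Cutting along $C_{J_{i_3}}$ via Proposition \ref{4.2.4} then reduces the Euler genus by at least one. Iterating, each handle of the underlying surface is neutralized at the cost of roughly four grid layers (two on each side of the cutting cycle, together with the companion cycles witnessing $\Pi$-homotopy). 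Since $g(\Pi) \leq g+2$, all handles are consumed after at most $4(g+2)$ layers, plus a constant buffer to absorb boundary effects, which together give the claimed $4g+12$ bound.

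Goodness of $H$ in $G$ is crucial here: it guarantees that the $H$-bridges with attaches in $H - C_{J_k}$ assemble with $H$ into a planar supergraph $H^+$, so these interior bridges can always be re-embedded inside whichever disks are carved out by the contractible cycles we end up with. Once the iteration terminates, the innermost subdivision of $J_{k-4g-12}$ together with its internal bridges lies in a single disk, so the embedding that $\Pi$ induces on it has genus $0$. The main obstacle is the per-handle accounting: one must certify that the cycles chosen for successive cuts are spaced far enough apart within the hex grid that each cut leaves the inner layers untouched and that the goodness of $H$ is preserved through the iteration. This delicate control, resting on the nested structure of the hex grid together with Propositions \ref{homotopic_cycles_variant1} and \ref{3_homotopic_cycles}, is where the factor $4$ per handle arises and is the principal technical difficulty.
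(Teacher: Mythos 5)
This lemma is not proved in the paper; it is quoted verbatim from Thomassen and from Mohar--Thomassen \cite[Proposition 7.3.2]{graphs_on_surfaces}, and the paper in fact bypasses it entirely, using Proposition \ref{good_square} in its place to derive the stronger Theorem \ref{thomassen_improvement}. So there is no in-paper proof to compare against, and your proposal must be judged on its own merits.

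On those merits, the outline is in the right spirit (concentric cycles, homotopy counting via Proposition \ref{homotopic_cycles_variant1}, genus reduction), but it has a gap that is more than a technicality and a misreading of where goodness enters. The conclusion of the lemma concerns the \emph{given} embedding $\Pi$: you must show that the rotation system $\Pi$ already induces a genus-$0$ embedding on the subdivision of $J_{k-4g-12}$. You instead argue that ``these interior bridges can always be \emph{re-embedded} inside whichever disks are carved out,'' which is not the claim; re-embedding is irrelevant here. This matters because the case you do not address is precisely the one that goodness is there to rule out: all of $C_{J_1},\dots,C_{J_k}$ may be $\Pi$-contractible, yet for some $i$ the disk bounded by $C_{J_i}$ lies on the grid-\emph{exterior} side, so that $J_{i-1}$ sits in the positive-genus component. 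Your contractible/noncontractible dichotomy plus iterated cutting says nothing about this, and it is exactly the place where the planarity of $H \cup (\text{interior bridges})$ has to be invoked to constrain how $\Pi$ can embed the inner annuli. Without that, the cutting loop never terminates with $J_{k-4g-12}$ in a genus-$0$ component.

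Beyond that, the per-handle accounting is asserted, not proved. ``Each handle is neutralized at the cost of roughly four grid layers'' needs a case analysis: the chosen cycle $C_{J_{i_3}}$ may be nonseparating (where Proposition \ref{4.2.4} applies and genus drops by $1$ or $2$) or separating-but-noncontractible (where Proposition \ref{4.2.4} does not apply as stated, and you instead argue that each side has genus $\geq 1$); and you must verify that the three homotopic cycles you pick actually bound the grid annulus between them, since $\Pi$-homotopy is a property of the surface, not of the grid nesting, and Proposition \ref{3_homotopic_cycles} only guarantees $\operatorname{Int}(C_r\cup C_s,\Pi)$ for \emph{some} pair $\{r,s\}$. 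You flag this as ``the principal technical difficulty,'' which is an honest assessment, but as written the proposal stops short of a proof at exactly that point.
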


By using Lemma \ref{thomassen_lemma1} and Proposition \ref{good_square} (instead of Lemma \ref{thomassen_lemma2}), we get a significant improvement on Proposition \ref{thomassen}:

\begin{reptheo}{thomassen_improvement}
    $G$ contains no subdivision of the grid $J_k$ and no subdivision of the $4k \times 2k$ grid, $k = 100\sqrt{g} \times m = O(\sqrt{g} \log g)$.
\end{reptheo}

\subsubsection{Homotopic nested squares} \label{homotopic_nested_squares}

\begin{defi}[Well-homotopic]
    Let $H$ be a graph with embedding $\Pi_H$. Let $C$ and $C'$ be two $\Pi_H$-noncontractible homotopic cycles of $H$. We say that $C$ is \textit{$\Pi_H$-well-homotopic} (or \textit{well homotopic}, if it is clear in the context) in $C'$ if one of the following condition holds:
    \begin{itemize}
        \item $C$ and $C'$ are disjoint (\textit{fully well homotopic}), or 
        \item $C$ and $C'$ intersect in a vertex $v$ (\textit{well homotopic pinched on $v$}), or 
        \item $C$ and $C'$ both intersect the same face $f$ of $(G, \Pi)$ and if $C$ intersect $f$ in a path $P$ (of length at least three) then $C'$ intersect $f$ in a subpath that is in the interior of $P$ (\textit{well homotopic pinched on $f$}).
    \end{itemize}

    In both of the last two cases, we say that $C$ and $C'$ are \textit{well homotopic pinched on a piece $p$}.

    Let $k \geq 1$ and let $C_0, ..., C_k$ be $\Pi_H$-noncontractible homotopic cycles of $H$ in this order in $\Pi_H$. We say that $C_0, ..., C_k$ are \textit{$\Pi_H$-well-homotopic} if:
    \begin{itemize}
        \item for every $0 \leq i \leq k-1$, $C_i$ is $\Pi_H$-fully-well-homotopic in $C_{i+1}$, or 
        \item for every $0 \leq i \leq k-1$, $C_i$ is $\Pi_H$-well-homotopic in $C_{i+1}$ pinched on a piece.
        %\item or, for every $0 \leq i \leq k-1$, $C_i$ is $\Pi_H$-well-homotopic in $C_{i+1}$ pinched on a path (of length at least $1$).
    \end{itemize}

    The Figure \ref{fig:homotopic_square} show cycles $C^1, C^{1'}, C^{1''}, C^{2''}, C^{2'}, C^2$ that are fully-well-homotopic in this order.
\end{defi}

We extend the definition of closest cycles to non-contractible homotopic cycles:

\begin{defi}[Closest]
    Let $H$ be a graph with embedding $\Pi_H$. Let $C$ and $C'$ be two $\Pi_H$-noncontractible homotopic cycles in this order of $H$. Let $\mathcal{P}$ be a property on the cycles of $(H,\Pi_H)$. 
    
    Then, we say that $C'$ is the cycle \textit{closest} to $C$ that has property $\mathcal{P}$ if, for every cycle $C''$ that is $\Pi_H$-homotopic to $C$ in the order $C, C''$ and has property $\mathcal{P}$, we have $C' \subseteq \text{Int}(C \cup C'', \Pi_H)$.
\end{defi}

Let $C^1$ and $C^2$ be two $\Pi$-well-homotopic cycles in $G$. Then, let $\mathcal{F}(C^1 \cup C^2, \Pi)$ be the set of all faces inside of $\text{Int}(C^1 \cup C^2, \Pi)$ in $\Pi$.

\begin{defi}[Homotopic square]
    \label{def:homotopic_square}
    Let $C^1, C^2, C^{1'}, C^{2'}, C^{1''}, C^{2''}$ be $\Pi$-noncontractible homotopic cycles in $G$, well-homotopic in the following order: $C^1, C^{1'}, C^{1''}, C^{2''}, C^{2'}, C^2$. Let $C^{12} = C^1 \cup C^2$. Let $\mathcal{B}(C^{12})$ be the union of the sets of faces in $\mathcal{F}(C^1 \cup C^{1'}, \Pi)$ that touch $C^1$ and of faces in $\mathcal{F}(C^2 \cup C^{2'}, \Pi)$ that touch $C^2$ (we call this set the boundary of $C$). 
    
    We say that $(C^1, C^2, C^{1'}, C^{2'}, C^{1''}, C^{2''})$ is a \textit{homotopic square} with respect to $C^1$ and $C^2$ if $C^{1'}$ and $C^{2'}$ are so that, for every other cycles $\Tilde{C}^1$ and $\Tilde{C}^2$ with $\mathcal{B}(C^{12}) \subseteq \text{Int}(C^1 \cup \Tilde{C}^1,\Pi) \cup \text{Int}(C^2 \cup \Tilde{C}^2,\Pi)$, we have $C^{1'} \subseteq \text{Int}(C^1 \cup \Tilde{C}^1,\Pi)$ and $C^{2'} \subseteq \text{Int}(C^2 \cup \Tilde{C}^2,\Pi)$. 

    We say that a homotopic square $(C^1, C^2, C^{1'}, C^{2'}, C^{1''}, C^{2''})$ is \textit{full} (resp. \textit{pinched on $p$}) if $C^1, C^2, C^{1'}, C^{2'},$ $C^{1''}, C^{2''}$ are $\Pi$-fully-well-homotopic (resp. $\Pi$-well-homotopic pinched on $p$) in this order in $\Pi$.

    Figure \ref{fig:homotopic_square} depicts the full homotopic square $(C^1, C^2, C^{1'}, C^{2'}, C^{1''}, C^{2''})$.
\end{defi}

\begin{figure}[h!]
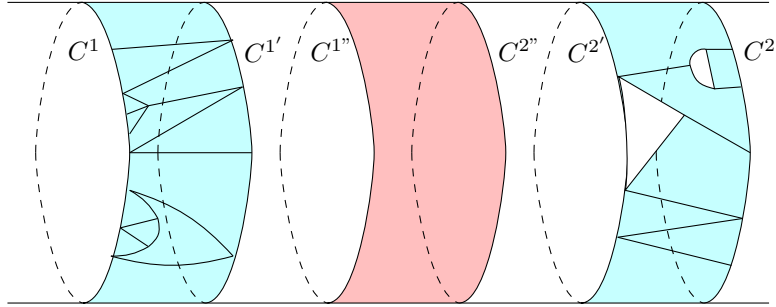

    \centering
    \tikzfig{images/homotopic_square}
    \caption{Homotopic square. The zone represented in blue is $\mathcal{B}(C^{12})$ (defined e.g. in Definition \ref{def:homotopic_square}) and the zone represented in pink is $\mathcal{I}(C^{12})$ (defined in Definition \ref{def:good_bad_homotopic_square}) where $C^{12} = C^1 \cup C^2$.}
    \label{fig:homotopic_square}
\end{figure}

\begin{defi}[Good/bad homotopic square]
    \label{def:good_bad_homotopic_square}
    Let $(C^1, C^2, C^{1'}, C^{2'}, C^{1''}, C^{2''})$ be a homotopic square with respect to $C^1$ and $C^2$. Let $C^{12} = C^1 \cup C^2$. Let $\mathcal{I}(C^{12})$ be the set of $\Pi$-faces in $\text{Int}(C^{1''} \cup C^{2''}, \Pi)$ and let $\mathcal{I}_{\rm N}(C^{12})$ be a maximal size subset of $\Pi$-faces from $\mathcal{I}(C^{12})$ that are almost-disjoint and that are not $\Pi_e$-faces. Let $\mathcal{B}(C^{12})$ be the union of the sets of faces in $\mathcal{F}(C^1 \cup C^{1'}, \Pi)$ that touch $C^1$ and of faces in $\mathcal{F}(C^2 \cup C^{2'}, \Pi)$ that touch $C^2$.
    We say that $(C^1, C^2, C^{1'}, C^{2'}, C^{1''}, C^{2''})$ is a \textit{bad square} if there exists an edge $e \in \text{int}(C^{1''} \cup C^{2''},\Pi)$ so that $\mathcal{I}_{\rm N}(C^{12})$ contains more than $48 |\mathcal{B}_{\rm N}(C^{12})|$ $\Pi$-faces with $\mathcal{B}_{\rm N}(C^{12})$ being the set of the faces in $\mathcal{B}(C^{12})$ that are not $\Pi_e$-faces. 
    
    Otherwise, we say that this square is \textit{good}.
\end{defi}

\begin{lem}
    \label{bad_square_variant}
    $G$ contains no bad homotopic square.
\end{lem}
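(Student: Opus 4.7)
The plan is to mirror the proof of Lemma \ref{bad_square}, treating the two cycles $C^1$ and $C^2$ symmetrically. Suppose for contradiction that $G$ contains a bad homotopic square $(C^1, C^2, C^{1'}, C^{2'}, C^{1''}, C^{2''})$ with $C^{12} = C^1 \cup C^2$, witnessed by an edge $e \in \text{int}(C^{1''} \cup C^{2''}, \Pi)$, so that $|\mathcal{I}_{\rm N}(C^{12})| > 48\,|\mathcal{B}_{\rm N}(C^{12})|$. Throughout, I choose the embedding $\Pi_e$ so as to minimise $|\mathcal{I}_{\rm N}(C^{12})|$.

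First I dispose of the degenerate case $|\mathcal{B}_{\rm N}(C^{12})| = 0$. Each edge-sharing component of the subgraph induced by $\mathcal{B}(C^{12})$ is then $\Pi_e$-contractible, so by Whitney's Theorem (Proposition \ref{Whitney_planar_graph_flipping}) it is $\Pi_e$-embedded identically (up to flipping) to its $\Pi$-embedding. Setting $H = \text{int}(C^{1'} \cup C^{2'}, \Pi)$ and $\tilde G = G_e - H$, and reembedding in $\Pi_e(\tilde G)$ those bridges of $\mathcal{B}(C^{12}) \cup C^{1'} \cup C^{2'}$ that attach to a single edge-sharing component as in $\Pi$, we uncover a cylinder in $\Pi_e(\tilde G)$ whose two boundary circles are $C^{1'}$ and $C^{2'}$. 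Since $H$ embeds planarly in $\Pi$, placing it inside this cylinder as in $\Pi$ yields an embedding of $G$ in $S'$, a contradiction.

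For $|\mathcal{B}_{\rm N}(C^{12})| \geq 1$, I prove an analogue of Claim \ref{at_most_2_homotopic_cycles}: any $\Pi_e$-homotopy class contains at most $4$ members of $\mathcal{I}_{\rm N}(C^{12})$. Indeed, five $\Pi_e$-homotopic members $F_1, \ldots, F_5$ ordered in the annulus contain three alternating cycles $F_1, F_3, F_5$, and two of them (say $F_1, F_3$) share their relative orientation in $\Pi$ and $\Pi_e$. A case analysis on the pinching type, using $2$-connectedness of $G$, gives $\text{Int}(F_1 \cup F_3, \Pi_e) \subseteq \text{Int}(C^1 \cup C^2, \Pi)$; then Lemma \ref{cylinder_reembedding} lets me reembed this cylinder as in $\Pi$, turning $F_2$ into a $\Pi_e$-contractible face and strictly decreasing $|\mathcal{I}_{\rm N}(C^{12})|$, contradicting minimality. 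Consequently $\mathcal{I}_{\rm N}(C^{12})$ realises more than $|\mathcal{I}_{\rm N}(C^{12})|/4 > 12\,|\mathcal{B}_{\rm N}(C^{12})|$ distinct $\Pi_e$-homotopy classes of almost disjoint $\Pi_e$-noncontractible cycles, and Proposition \ref{homotopic_cycles_variant1} forces $g(\Pi_e(\mathcal{I}(C^{12}))) \geq 2\,|\mathcal{B}_{\rm N}(C^{12})| + 2$.

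Let $\tilde G = G_e - H$. Because $\tilde G$ and $\mathcal{I}(C^{12})$ meet only on the pinch piece (or not at all), and the relevant vertices are of degree $2$ on each side, the three-case genus additivity argument of Lemma \ref{bad_square} yields $g(\Pi_e(\tilde G)) \leq g - 2\,|\mathcal{B}_{\rm N}(C^{12})|$. I then rebuild an embedding of $G$ on a surface of Euler genus $\leq g$ by first reembedding in $\Pi_e(\tilde G)$ those bridges of $\mathcal{B}_{\rm Con}(C^{12}) \cup C^{1'} \cup C^{2'}$ that attach to a single $\Pi_e$-contractible edge-sharing component as in $\Pi$, and then installing at most $|\mathcal{B}_{\rm N}(C^{12})|$ handles or twisted handles along the consecutive edge-sharing components and isolated vertices of $\tilde V(C^{1'})$ and of $\tilde V(C^{2'})$, choosing orientations so that the side of each component facing $H$ has the $\Pi$-orientation. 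The resulting surface then contains a cylinder with boundaries $C^{1'}$ and $C^{2'}$, in which the planar $H$ fits, producing an embedding of $G$ in a surface of Euler genus $\leq g$, a contradiction. The main obstacle is the orientation bookkeeping in this last step: the handles must be added simultaneously along both $C^{1'}$ and $C^{2'}$ so that the two boundary circles of the reconstructed cylinder are coherently oriented for $H$ to be glued in planarly, which requires a two-sided refinement of the orientation argument in Lemma \ref{bad_square}.
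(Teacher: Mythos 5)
Your structure tracks the paper's proof closely, but there is a genuine gap in the homotopy-class bound. You claim that each $\Pi_e$-homotopy class contains at most $4$ members of $\mathcal{I}_{\rm N}(C^{12})$, mirroring Claim \ref{at_most_2_homotopic_cycles} from the contractible case. That argument, however, relies on the faces in question being cycles, which is guaranteed inside a contractible disk by Lemma \ref{2_connected_faces_cycles} but is \emph{not} guaranteed inside the cylinder $\text{Int}(C^{1''}\cup C^{2''},\Pi)$: a face there can be a union of two cycles meeting in a vertex or an edge. When you take five homotopic faces $F_1,\ldots,F_5$, select $F_1,F_3$ of equal relative orientation, and reembed $\text{Int}(F_1\cup F_3,\Pi_e)$, you implicitly treat $F_1,F_3$ as cycles bounding a cylinder and assume $F_2$ falls inside it; if $F_2$ is a double cycle, one of its two cycles may lie outside that region and the reembedding does not make $F_2$ a $\Pi_e$-face. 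The paper's proof instead takes nine faces, passes to cycles $C_i\subseteq F_i$, finds two of $C_1,C_5,C_9$ with matching orientation, and argues that at least one of $F_2,F_3,F_4$ (specifically $F_3$, whose cycles are squeezed between $C_2$ and $C_4$ and hence all lie in the reembedded cylinder) becomes a $\Pi_e$-face; this yields $m\leq 8$, not $4$, and that factor of $2$ is exactly why the badness threshold in Definition \ref{def:good_bad_homotopic_square} is $48|\mathcal{B}_{\rm N}(C^{12})|$ rather than the contractible-case analogue.

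Separately, you flag but do not resolve the orientation bookkeeping when reconstructing the embedding: the cylinder with boundaries $C^{1'}$ and $C^{2'}$ must be assembled so that the two boundary circles are coherently oriented. The paper resolves this by adding one additional twisted handle (a path of positive signature connecting a component on the $C^{1'}$ side to one on the $C^{2'}$ side), raising the budget to $|\mathcal{B}_{\rm N}(C^{12})|+1$ handles; this is precisely what makes the sharper genus bound $g(\Pi_e(\tilde G))\leq g-(2|\mathcal{B}_{\rm N}(C^{12})|+1)$ necessary. With the correct $m\leq 8$ you would recover exactly the paper's constants and the budget closes, but as written your arithmetic (genus drop of $2|\mathcal{B}_{\rm N}|+2$ against a handle budget of $2|\mathcal{B}_{\rm N}|$) rests on the unjustified $m\leq 4$ and an unproven assumption that no extra handle is needed.
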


\begin{proof}
    Suppose by contradiction that $G$ contains a bad homotopic square $(C^1, C^2, C^{1'}, C^{2'}, C^{1''}, C^{2''})$. Let $C^{12} = C^1 \cup C^2$.  Let $e \in \mathcal{I}(C^{12})$. Let $\mathcal{I}(C^{12})$ be the set of faces in $\text{Int}(C^{1''} \cup C^{2''}, \Pi)$ and let $\mathcal{B}(C^{12})$ be the union of the sets of faces in $\mathcal{F}(C^1 \cup C^{1'}, \Pi)$ that touch $C^1$ and of faces in $\mathcal{F}(C^2 \cup C^{2'}, \Pi)$ that touch $C^2$. Let $\mathcal{B}_\text{Con}(C^{12})$ and $\mathcal{B}_{\rm N}(C^{12})$ be the set of the faces in $\mathcal{B}(C^{12})$ that are respectively $\Pi_e$-contractible and $\Pi_e$-noncontractible. Moreover, let $\mathcal{I}_{\rm N}(C^{12})$ be a maximal size subset of $\Pi$-faces of $\mathcal{I}(C^{12})$ that are almost disjoint and that are not $\Pi_e$-faces.
    We suppose $|\mathcal{I}_{\rm N}(C^{12})| \geq 48 |\mathcal{B}_{\rm N}(C^{12})|$.
    
    First, if there exists a bad homotopic proper square with $|\mathcal{B}_{\rm N}(C^{12})| = 0$, then as the connected components of the graph $B$ induced by $\mathcal{B}(C^{12})$ are 2-connected, by Proposition \ref{Whitney_planar_graph_flipping}, they are embedded in $\Pi_e$ in the same way as in $\Pi$. %Let $C^{1'}$ and $C^{2'}$ be the cycles that separate $\mathcal{B}(C^{12})$ and $\tilde{\mathcal{B}}(C^{12})$. 
    Then, let $\Tilde{G}$ be the graph obtained from $G_e$ by removing $H = \text{Int}(C^{1'} \cup C^{2'}, \Pi)$. $\Tilde{G}$ is of Euler genus at most $g$ (because it is a subgraph of $G_e$). First, we reembed, in $\Pi_e(\Tilde{G})$, the bridges on $B$ in $\left(\text{Int}(C^{1} \cup C^{1'}, \Pi) \cup \text{Int}(C^2 \cup C^{2'}, \Pi)\right) - B$ which have attaches only on vertices of $B$ as in $\Pi$ (which is possible because the connected components of $B$ are embedded as in $\Pi$). Then, in $\Pi_e(\Tilde{G})$, there is an empty surface whose boundary is $C^{1'} \cup C^{2'}$. It is thus possible to create an embedding of $G$ in a surface of Euler genus $g$ by embedding $H$ planarly (as in $\Pi$ for example) in the designated cylinder in $\Pi_e(\Tilde{G})$, a contradiction. 
    
    We now suppose $|\mathcal{B}_{\rm N}(C^{12})| > 0$.

    \setcounter{claim}{0}

    \begin{claim}
        \label{at_most_2_homotopic_cycles_variant}
        Suppose that embedding $\Pi_e$ was chosen so that it minimizes the size of $\mathcal{I}_{\rm N}(C^{12})$. Let $F_1, ..., F_m \in \mathcal{I}_{\rm N}(C^{12})$ be a maximal size set of almost disjoint $\Pi$-faces that are $\Pi_e$-noncontractible homotopic, then $m \leq 8$.
    \end{claim}

    \begin{proof_claim}
        Suppose that there are at least $9$ almost disjoint $\Pi$-faces that are $\Pi_e$-noncontractible homotopic. Let $F_1, ..., F_9$ be the $\Pi$-faces in $\mathcal{I}_{\rm N}(C^{12})$. Contrary to Claim \ref{at_most_2_homotopic_cycles}, these faces may not be cycles (but two cycles that intersect in a vertex or an edge). Remark moreover that two faces that are not cycles share no vertex, as otherwise there would be a cutvertex. Let $C_i$ be a cycle contained in the face $F_i$ ($1 \leq i \leq 5$) (there is at least one).
        
        Without loss of generality, we have that $C_1, ..., C_9$ are $\Pi_e$-homotopic in this order. There are at least two of the cycles $C_1, C_5, C_9$ that have the same relative orientation in $\Pi$ and $\Pi_e$, suppose without loss of generality that it is $C_1$ and $C_5$. Let's show that it is possible to modify $\Pi_e$ so that one of $F_2$, $F_3$ or $F_4$ becomes a $\Pi_e$-face. 

        First, remark that:
        \begin{itemize}
            \item \underline{If the square is full:} no vertex of $C^1$ (resp. $C^2$) is in $\text{Int}(C_1 \cup C_5, \Pi_e)$, as otherwise, the bridges of $\text{Ext}(C^{12},\Pi)$ attached to $C^1$ (resp. $C^2$) would be in $\text{Int}(C_1 \cup C_5, \Pi_e)$ which is impossible because the subgraph induced by these bridges cannot be $\Pi_e$-contractible (as otherwise $C^1$ (resp. $C^2$) would be $\Pi_e$-contractible, a contradiction).
            \item \underline{If the square is pinched in a piece $p$:} the same reasoning on $C^1 - p$ (resp. $C^2 - p$) shows that no vertex of $C-p$ is in $\text{Int}(C_1 \cup C_5, \Pi_e)$. If $p$ is a face, it implies that $\text{Int}(C_1 \cup C_5, \Pi_e) \subseteq \text{Int}(C^{12}, \Pi)$ as there is no edge in $\text{Ext}(C^{12}, \Pi)$ with an endpoint on $p$. If $p$ is a vertex, then a bridge from $\text{ext}(C^{12}, \Pi)$ in $\text{Int}(C_1 \cup C_5, \Pi_e)$, if it exists, would be attached solely to $v$, which contradicts the fact that $G$ is $2$-connected and therefore has no cutvertex.
        \end{itemize}

        Therefore, $\text{Int}(C_1 \cup C_5, \Pi_e) \subseteq \text{Int}(C^{12},\Pi)$ and $\Pi(\text{Int}(C_1 \cup C_5, \Pi_e))$ is an embedding in a disk in $\Pi$ in which $C_1$ and $C_5$ are $\Pi$-facial walks. 

        Let's show that one face $F$ of $F_2, F_3, F_4$ is contained in $\Pi(\text{Int}(C_1 \cup C_5, \Pi_e))$ and therefore $F$ is a $\Pi$-facial walk in this embedding. First, if one face $F$ of $F_2, F_3, F_4$ is a cycle, then by the hypothesis on the order of the cycles $C_i$ ($1 \leq i \leq 9$), this face $F$ is contained in $\Pi(\text{Int}(C_1 \cup C_5, \Pi_e))$. Now, as the faces are not cycles share no vertex, the cycles $C_2, C_3, C_4$ are disjoint. Therefore, the cycle $C_3$ is contained in $\text{int}(C_1 \cup C_5, \Pi_e)$ and, by connectivity, $\text{Int}(C_1 \cup C_5, \Pi_e)$ contains $F_3$. We indeed find that one of $F_2, F_3, F_4$ is contained in $\Pi(\text{Int}(C_1 \cup C_5, \Pi_e))$ and therefore is a $\Pi$-facial walk in this embedding.

        Finally, by Lemma \ref{cylinder_reembedding}, we can modify the embedding $\Pi_e$ so that one of $F_2, F_3, F_4$ is now a $\Pi_e$-face.

        This concludes the proof of the claim.
    \end{proof_claim}

    \vspace{0.3cm}

    Remove $H = \text{Int}(C^{1'} \cup C^{2'}, \Pi)$ from $G_e$ to obtain a graph $\Tilde{G}$.

    \begin{claim}
        The resulting embedding of $\Tilde{G}$ is of Euler genus at most $g - (2|\mathcal{B}_{\rm N}(C^{12})|+1)$.
    \end{claim}

    \begin{proof_claim}
        Remark that $\mathcal{I}_{\rm N}(C^{12}) \subseteq \mathcal{I}(C^{12})$ and therefore $\Pi_e(\mathcal{I}(C^{12}))$ contains at least $24 |\mathcal{B}_{\rm N}(C^{12})|$ almost disjoint $\Pi$-faces that are $\Pi_e$-noncontractible and therefore contains at least $48 |\mathcal{B}_{\rm N}(C^{12})|$ almost disjoint $\Pi_e$-noncontractible cycles. Moreover, by Claim \ref{at_most_2_homotopic_cycles_variant}, there are at least $6 |\mathcal{B}_{\rm N}(C^{12})|$ of them that are pairwise $\Pi_e$-nonhomotopic. Finally, by Proposition \ref{homotopic_cycles_variant1}, $\Pi_e(\mathcal{I}(C^{12}))$ is of Euler genus at least $2|\mathcal{B}_{\rm N}(C^{12})|+1$. 

        \begin{itemize}
            \item \underline{If the square is full:} Remark that $\Tilde{G}$ and $\mathcal{I}(C^{12})$ are disjoint subgraphs of $G$. Hence, $g(\Pi_e(G)) \geq g(\Pi_e(\Tilde{G})) + g(\Pi_e(\mathcal{I}(C^{12})))$.
            
            \item \underline{If the square is pinched on a vertex $v$:} Remark that $\Tilde{G}$ and $\mathcal{I}(C^{12})$ intersect only in $v$. It is easy to verify that $g(\Pi_e(G)) \geq g(\Pi_e(\Tilde{G})) + g(\Pi_e(\mathcal{I}(C^{12})))$.

            \item \underline{If the square is pinched on a subpath $P$ of a face:} Remark that $\Tilde{G}$ and $\mathcal{I}(C^{12})$ intersect only with $P$ and, in $\Tilde{G}$, every vertex of $P$ is of degree $2$. It is easy to verify that $g(\Pi_e(G)) \geq g(\Pi_e(\Tilde{G})) + g(\Pi_e(\mathcal{I}(C^{12})))$.
        \end{itemize}
        
        In every case, we get $g(\Pi_e(\Tilde{G})) \leq g(\Pi_e(G)) - g(\Pi_e(\mathcal{I}(C^{12})))$. As $\Pi_e(\mathcal{I}(C^{12}))$ is of Euler genus at least $2|\mathcal{B}_{\rm N}(C^{12})|$, we finally conclude that $\Pi_e(\Tilde{G})$ is of Euler genus at most $g - (2|\mathcal{B}_{\rm N}(C^{12})|+1)$. 
    \end{proof_claim}

    \vspace{0.3cm}

    Now, we will construct a new embedding of $G$ from the embedding $\Pi_e(\Tilde{G})$. 

    Let $C^{12'} = C^{1'} \cup C^{2'}$, let $B = \mathcal{B}_{\rm Con}(C^{12}) \cup C^{12'}$ and let $\tilde{V}(C^{12'}) = B - \mathcal{B}_{\rm Con}(C^{12})$.%We say that two cycles are strongly adjacent if they share at least one edge. Let the blocks of $\mathcal{B}_{\rm Con}(C^{12})$ be its strongly adjacent equivalence classes. Remark that, by Proposition \ref{Whitney_planar_graph_flipping}, any block of $\mathcal{B}_{\rm Con}(C^{12})$ is embedded in $\Pi_e$ as in $\Pi$, in particular its boundary in $\Pi_e$ is the same as in $\Pi$. 

    We reembed, in $\Pi_e(\Tilde{G})$, the bridges on the graph induced by $B$ that are attached to only one edge-sharing component as in $\Pi$ (which is possible because each edge-sharing component is embedded as in $\Pi$).

    Let $\mathcal{B}_{\rm Con}(C^1)$ (resp. $\mathcal{B}_{\rm Con}(C^2)$) be the faces in $\mathcal{B}_(C^{12})$ that are in $\text{Int}(C^1 \cup C^{1'}, \Pi)$ (resp. $\text{Int}(C^2 \cup C^{2'}, \Pi)$) and let $\tilde{V}(C^{1'}) = \tilde{V}(C^{12'}) \cap C_{1'}$, $\tilde{V}(C^{2'}) = \tilde{V}(C^{12'}) \cap C_{2'}$.
    
    The edge-sharing components of $\mathcal{B}_{\rm Con}(C^1)$ and the vertices of $\tilde{V}(C^{1'})$ (resp. $\mathcal{B}_{\rm Con}(C^2)$ and $\tilde{V}(C^{2'})$) can be ordered with respect to $C^{1'}$ (resp. $C^{2'}$). Then, we add a handle or twisted handle between each consecutive edge-sharing component/vertex $x$ and $y$ so that, for an edge-sharing component, the part of its boundary that contains endpoints of edges from $H$ is reachable from the handle and has the expected orientation.

    \begin{itemize}
        \item If the square is pinched on a vertex $v$, we ensure that the handles on $v$ or the edge-sharing components containing $v$ are well ordered.
        \item If the square is full or pinched on subpath $P$ of a face, let $Q$ be a path from one edge-sharing component of $\mathcal{B}_{\rm Con}(C^1)$ or a vertex of $\tilde{V}(C^{1'})$ to one edge-sharing component on $\mathcal{B}_{\rm Con}(C^2)$ or a vertex of $\tilde{V}(C^{2'})$. If this path has positive signature, we are done. Otherwise, we add a twisted handle to get a path of positive signature between the two cycles.
    \end{itemize}
    
    Remark that we add at most $|\mathcal{B}_{\rm N}(C^{12})|$ handles or twisted handles and $1$ twisted handle. Hence, the Euler genus goes up from $g(\Tilde{G}) = g - (2|\mathcal{B}_{\rm N}(C^{12})|+1)$ by at most $2|\mathcal{B}_{\rm N}(C^{12})|+1$. Moreover, we created an empty connected surface with $C^{1'} \cup C^{2'}$ on its boundary, embedded as in $\Pi$ in this surface. Then, $H$ can be embedded planarly (for example, as in $\Pi$) in this surface. $G$ can then be embedded in a surface of genus $g$, a contradiction.
\end{proof}

\begin{lem}
    \label{almost_disjoint_faces_between_two_cycles_variant}
    Let $C_1,C'_1,C'_2,C_2$ be $\Pi$-well-homotopic cycles of $G$ in this order. Let $\mathcal{F}_1$ (resp. $\mathcal{F}_2$) be a subset of the faces in $\text{Int}(C_1 \cup C'_1, \Pi)$ (resp. $\text{Int}(C_2 \cup C'_2, \Pi)$) that intersect both $C_1$ and $C'_1$ (resp. $C_2$ and $C_2'$).

    There exists a subset of faces from $\mathcal{F}_1 \cup \mathcal{F}_2$ of size at least $\frac{|\mathcal{F}_1| + |\mathcal{F}_2|}{12}$ that are almost disjoint.
\end{lem}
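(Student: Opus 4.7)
The plan is to reduce directly to Lemma \ref{almost_disjoint_faces_between_two_cycles}. Since $C_1,C'_1,C'_2,C_2$ are $\Pi$-well-homotopic in this order, both of the pairs $(C_1,C'_1)$ and $(C'_2,C_2)$ are pairs of $\Pi$-homotopic cycles (indeed, well-homotopic cycles are homotopic by definition, and the middle cycles of a chain of well-homotopic cycles are homotopic to the outermost ones by Proposition \ref{3_homotopic_cycles}). Hence Lemma \ref{almost_disjoint_faces_between_two_cycles} applies individually to each pair.

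First I would apply Lemma \ref{almost_disjoint_faces_between_two_cycles} to the pair $(C_1,C'_1)$ together with the family $\mathcal{F}_1$, yielding an almost-disjoint subfamily $\mathcal{F}^*_1\subseteq\mathcal{F}_1$ of size at least $|\mathcal{F}_1|/6$. In parallel I would apply it to $(C'_2,C_2)$ with $\mathcal{F}_2$, giving an almost-disjoint subfamily $\mathcal{F}^*_2\subseteq\mathcal{F}_2$ of size at least $|\mathcal{F}_2|/6$.

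The subtle point, and the only place where this variant is not immediate, is that the naive union $\mathcal{F}^*_1\cup\mathcal{F}^*_2$ need not be almost disjoint: when the homotopic square is pinched on a vertex $v$ or on a face $f$, faces of $\mathcal{F}_1$ and $\mathcal{F}_2$ incident with the pinch piece can share more than one vertex across the two regions, breaking the condition that each cycle shares at most one vertex with the union of the others. To sidestep this difficulty I simply take the better of the two subfamilies, namely $\mathcal{F}^*$ defined to be whichever of $\mathcal{F}^*_1,\mathcal{F}^*_2$ is larger. Then
\[
|\mathcal{F}^*|\;\geq\;\frac{\max(|\mathcal{F}_1|,|\mathcal{F}_2|)}{6}\;\geq\;\frac{|\mathcal{F}_1|+|\mathcal{F}_2|}{12},
\]
and $\mathcal{F}^*$ is a set of almost disjoint faces contained in $\mathcal{F}_1\cup\mathcal{F}_2$, which is the conclusion sought. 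The factor $\tfrac{1}{12}$ (versus the $\tfrac{1}{6}$ of Lemma \ref{almost_disjoint_faces_between_two_cycles}) is exactly the price paid for this pigeonhole step and explains the constant appearing in the statement.
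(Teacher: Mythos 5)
Your argument is correct and is genuinely simpler than the paper's. The paper proves the same statement via a case split on whether $C'_1 \cap C'_2 = \emptyset$ or they meet in a pinch piece $p$: in the first case it observes that $\mathcal{F}_1$ and $\mathcal{F}_2$ are vertex-disjoint so the union of the two Lemma~\ref{almost_disjoint_faces_between_two_cycles} outputs is already almost disjoint (yielding the stronger factor $1/6$), while in the pinched case it builds the combined auxiliary graph $\Gamma$ from both families' units, argues that $\Gamma$ is $6$-colorable because the only cross-interaction is through $p$, and then runs the coloring-plus-alternation argument to get $1/12$. You sidestep all of this: you correctly identify that the naive union $\mathcal{F}^*_1 \cup \mathcal{F}^*_2$ may fail to be almost disjoint in the pinched case, and instead simply keep whichever of $\mathcal{F}^*_1,\mathcal{F}^*_2$ is larger, paying a pigeonhole factor $2$ to land on exactly the claimed $1/12$. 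What the paper buys with the more intricate case-analysis is a better constant ($1/6$) in the fully disjoint case, but since the lemma is only ever invoked with the $1/12$ bound downstream (in Claim~\ref{fraction_of_B_N_almost_disjoint_variant}), your shorter argument establishes precisely what is used.
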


\begin{proof}

    \begin{itemize}
        \item \underline{If $C_1' \cap C_2' = \emptyset$:} Remark that $\mathcal{F}_1$ and $\mathcal{F}_2$ are disjoint, then, using Lemma \ref{almost_disjoint_faces_between_two_cycles} for both sets, we find that there exists a subset of faces from $\mathcal{F}_1 \cup \mathcal{F}_2$ of size at least $\frac{|\mathcal{F}_1| + |\mathcal{F}_2|}{6} \leq \frac{|\mathcal{F}_1| + |\mathcal{F}_2|}{12}$ that are almost disjoint.

        \item \underline{If $C_1' \cap C_2'$ intersect in a piece $p$:} Then faces from $\mathcal{F}_1$ and $\mathcal{F}_2$ can only intersect in $p$. Let $\mathcal{U}_1,\Gamma_1$ and $\mathcal{U}_2,\Gamma_2$ be the units and the auxiliary graphs obtained respectively from $\mathcal{F}_1$ and $\mathcal{F}_2$ by following the process in Lemma \ref{almost_disjoint_faces_between_two_cycles}. By Lemma \ref{almost_disjoint_faces_between_two_cycles}, $\Gamma_1$ and $\Gamma_2$ are $3$-colorable. Let $\Gamma$ be the auxiliary graph obtained from $\mathcal{F}_1 \cup \mathcal{F}_2$ following the process in Lemma \ref{almost_disjoint_faces_between_two_cycles}, then, as $\mathcal{U}_1$ and $\mathcal{U}_2$ intersect only in $p$, it is easy to see that $\Gamma$ is $6$-colorable.
        Finally, we conclude, by the same reasoning as Lemma \ref{almost_disjoint_faces_between_two_cycles}, that there exists a subset of faces from $\mathcal{F}_1 \cup \mathcal{F}_2$ of size at least $\frac{|\mathcal{F}_1 \cup \mathcal{F}_2|}{12} = \frac{|\mathcal{F}_1| + |\mathcal{F}_2|}{12}$ that are almost disjoint.
    \end{itemize}
\end{proof}

\begin{prop}
    \label{good_square_variant}
    Let $q = \frac{1153}{1152}$ and $m = 2(\lfloor\log_{q}(3g+4)\rfloor + 2)$. $G$ contains at most $2m$ $\Pi$-well-homotopic cycles. %with respect to disjoint cycles $C_0 = C_e, ..., C_{2m}$ all contractible in $\Pi$ 
\end{prop}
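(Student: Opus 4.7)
The plan is to mirror the proof of Proposition \ref{good_square} one-for-one, with nested contractible cycles replaced by pairs of opposite well-homotopic cycles (one on each side of the homotopic band), and with Lemma \ref{bad_square_variant} and Lemma \ref{almost_disjoint_faces_between_two_cycles_variant} used in place of their contractible counterparts Lemma \ref{bad_square} and Lemma \ref{almost_disjoint_faces_between_two_cycles}.

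Suppose for contradiction that $G$ contains $4m'$ $\Pi$-well-homotopic cycles listed in homotopic order as $D_1,\ldots,D_{4m'}$, and pair them up from the outside in by setting $(C^1_j,C^2_j)=(D_j,D_{4m'+1-j})$ for $1\leq j\leq 2m'$, so that level $j=1$ is outermost and $j=2m'$ is innermost; write $C^{12}_j=C^1_j\cup C^2_j$. Pick an edge $e\in\text{int}(C^{12}_{2m'},\Pi)$ and, for each $2\leq j\leq 2m'-1$, let $\tilde{C}^{1\prime}_{j+1}$ (resp.\ $\tilde{C}^{2\prime}_{j+1}$) be the cycle closest to $C^1_{j+2}$ (resp.\ $C^2_{j+2}$) such that $\mathcal{B}(C^{12}_{j+2})\subseteq\text{Int}(C^1_{j+2}\cup\tilde{C}^{1\prime}_{j+1},\Pi)\cup\text{Int}(C^2_{j+2}\cup\tilde{C}^{2\prime}_{j+1},\Pi)$. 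Then $(C^1_{j+2},C^2_{j+2},\tilde{C}^{1\prime}_{j+1},\tilde{C}^{2\prime}_{j+1},C^1_j,C^2_j)$ is a homotopic square with respect to $C^1_{j+2}$ and $C^2_{j+2}$, and by Lemma \ref{bad_square_variant} it is good, giving $|\mathcal{I}_{\rm N}(C^{12}_{j+2})|\leq 48\,|\mathcal{B}_{\rm N}(C^{12}_{j+2})|$.

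The heart of the argument is the homotopic analog of Claim \ref{fraction_of_B_N_almost_disjoint}: on each of the two cylinders $\text{Int}(C^1_{j+2}\cup\tilde{C}^{1\prime}_{j+1},\Pi)$ and $\text{Int}(C^2_{j+2}\cup\tilde{C}^{2\prime}_{j+1},\Pi)$, partition the boundary faces into nested layers by iterated closest-cycle shrinking and apply Lemma \ref{almost_disjoint_faces_between_two_cycles_variant} (with its $1/12$ factor) to the better of the even- and odd-layer classes on each side, extracting at least $|\mathcal{B}_{\rm N}(C^{12}_{j+2})|/24$ almost-disjoint faces in $\mathcal{B}_{\rm N}(C^{12}_{j+2})$ that are not $\Pi_e$-faces. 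These faces lie in $\mathcal{I}_{\rm N}(C^{12}_{j+4})\setminus\mathcal{I}_{\rm N}(C^{12}_{j+2})$, so combined with the good-square inequality we obtain the recurrence $|\mathcal{I}_{\rm N}(C^{12}_{j+2})|\geq q\,|\mathcal{I}_{\rm N}(C^{12}_j)|$ with the same $q=1153/1152$ as in Proposition \ref{good_square}. A base case analogous to the use of Lemma \ref{C_e_non_contractible}, applied to the innermost pair $C^{12}_{2m'}$ inside which $e$ was placed, makes $\mathcal{I}_{\rm N}(C^{12}_4)$ nonempty.

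Iterating the recurrence yields $|\mathcal{I}_{\rm N}(C^{12}_{2m'})|\geq q^{m'-2}$, and since this set consists of almost-disjoint $\Pi_e$-noncontractible cycles, Proposition \ref{homotopic_cycles_variant1} bounds it by $3g+3$, forcing $q^{m'-2}\leq 3g+3$ and hence $m'\leq\lfloor\log_q(3g+4)\rfloor+2=m/2$, so $4m'\leq 2m$, contradicting the assumption. The principal obstacle I expect is the homotopic version of Claim \ref{fraction_of_B_N_almost_disjoint} when the square is pinched on a common piece $p$: the layered partitions on the two sides are then coupled through $p$, and one must check that the coupled extraction still loses only a factor $1/24$ overall; the pinched-on-a-face case in particular relies on the length-three condition in the definition of well-homotopic to ensure the two sides only touch in the proper interior of the pinching path, so that the auxiliary graph built from the two sides remains properly colorable as in Lemma \ref{almost_disjoint_faces_between_two_cycles_variant}.
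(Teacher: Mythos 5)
Your plan mirrors the paper's proof of Proposition~\ref{good_square_variant} essentially exactly: pair the $4m'$ cycles from the extremes toward the middle, place $e$ in the central band, use Lemma~\ref{bad_square_variant} to make each level's square good, extract a $1/24$ fraction of almost-disjoint boundary faces per level via iterated closest-cycle peeling and Lemma~\ref{almost_disjoint_faces_between_two_cycles_variant}, run the $q$-geometric recurrence from the $\mathcal{I}_{\rm N}(\cdot)$ base case given by Lemma~\ref{C_e_non_contractible}, and cap the result with Proposition~\ref{homotopic_cycles_variant1}; the worry you flag about the pinched case is precisely what the paper's Claim~\ref{fraction_of_B_N_almost_disjoint_variant} handles. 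The only issue is a bookkeeping slip in your level labels: you declare $j=1$ outermost, yet in your square $(C^1_{j+2},C^2_{j+2},\tilde{C}^{1\prime}_{j+1},\tilde{C}^{2\prime}_{j+1},C^1_j,C^2_j)$ the pair $C^{12}_{j+2}$ is cast as the outer pair $(C^1,C^2)$ even though it sits at the more interior level, so as written the recurrence $|\mathcal{I}_{\rm N}(C^{12}_{j+2})|\geq q\,|\mathcal{I}_{\rm N}(C^{12}_j)|$ has the wrong monotonicity; flipping the convention (outer cycles of the level-$j$ square are $C^{12}_j$, inner are $C^{12}_{j+2}$) restores consistency and recovers the paper's argument verbatim.
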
 

\begin{proof}
    Let $m' \in \mathbb{N}$. Let $C_1, ..., C_{4m'}$ be $\Pi$-well-homotopic cycles of $G$. Suppose that $C_1, ..., C_{4m'}$ are in this order in $\Pi$.
    
    By Lemma \ref{bad_square_variant}, each pair of cycles $D_i=(C_{2m'-i+1}, C_{2m'+i})$ ($4 \leq i \leq 4m'$) induces a good homotopic proper square.
    Let $e \in int(C_{2m'}\cup C_{2m'+1})$.
    For $2 \leq i \leq 2m'$, let $\mathcal{I}(D_{i+2})$ be the set of faces in $\text{Int}(C_{2m'-i+1} \cup C_{2m'+i}, \Pi)$ and let $\mathcal{B}(D_{i+2})$ be the union of the sets of faces in $\text{Int}((C_{2m'-(i+2)+1} \cup C_{2m'-(i+1)+1}, \Pi)$ that touch $C_{2m'-(i+1)+1}$ and of faces in $\text{Int}((C_{2m'+(i+2)} \cup C_{2m'+(i+1)}, \Pi)$ that touch $C_{2m'+(i+2)}$. Let $\mathcal{B}_{\rm N}(D_{i+2})$ be the set of the faces in $\mathcal{B}(D_{i+2})$ that are $\Pi_e$-noncontractible. Moreover, let $\mathcal{I}_{\rm N}(D_{i+2})$ be a maximal size subset of almost disjoint cycles from $\mathcal{I}(D_{i+2})$ that are $\Pi_e$-noncontractible. Then, we define $\Tilde{C}_{2m'-(i+1)+1}$ and $\Tilde{C}_{2m'+(i+1)}$ to be the cycles closest respectively to $C_{2m'-(i+2)+1}$ and $C_{2m'+(i+2)}$ so that $\mathcal{B}(D_{i+2}) \subseteq \text{Int}(C_{2m'-(i+2)+1} \cup C_{2m'-(i+1)+1}, \Pi) \cup \text{Int}(C_{2m'+(i+2)} \cup C_{2m'+(i+1)}, \Pi)$. By Lemma \ref{bad_square_variant}, for $2 \leq i \leq 2m'-2$, \[(C_{2m'-(i+2)+1}, \Tilde{C}_{2m'-(i+1)+1}, C_{2m'-i+1}, C_{2m'+i}, \Tilde{C}_{2m'+(i+1)}, C_{2m'+(i+2)})\] is a good homotopic proper square with respect to $D_{i+2}$. 
    
    By Lemma \ref{C_e_non_contractible}, as $\text{int}(D_2, \Pi)$ contains $e$, it is $\Pi_e$-noncontractible.
    Hence, $\mathcal{I}_{\rm N}(D_4)$ is not empty.

    Let's first show that, for every $2 \leq i \leq 2m'$ there exists a subset of almost disjoint faces of $\mathcal{B}_{\rm N}(D_i)$ of size at least $\frac{| \mathcal{B}_{\rm N}(D_i)|}{24}$.

    \setcounter{claim}{0}

    \begin{claim}
        \label{fraction_of_B_N_almost_disjoint_variant}
        For $2 \leq i \leq 2m'$, there  exists a subset of almost disjoint faces of $\mathcal{B}_{\rm N}(D_i)$ of size at least $\frac{|\mathcal{B}_{\rm N}(D_i)|}{24}$.
    \end{claim}

    \begin{proof_claim}
        First, let's partition the faces in $\mathcal{B}(D_i)$ into sets $(\mathcal{F}_j)_{j \in \mathbb{N}^*}$. We will define the sets $(\mathcal{F}_j)_{j \in \mathbb{N}^*}$ inductively as follows:

        For $j = 1$, $\mathcal{F}_1$ is the set of faces from $\mathcal{B}(D_i)$ that intersect $\Tilde{C}_{2m'-(i-1)+1} \cup \Tilde{C}_{2m'+(i-1)}$. Moreover, we define $C_1'$ and $C_1''$ to be the cycles closest to respectively $C_{2m'-i+1}$ and $C_{2m'+i}$ so that $\mathcal{B}(D_i) - \mathcal{F}_1 \subseteq \text{Int}(C_{2m'-i+1} \cup C_1', \Pi) \cup \text{Int}(C_{2m'+i} \cup C_1'', \Pi)$.

        Suppose that for $j \geq 1$, $\mathcal{F}_j$, $C'_j$ and $C_j''$ have been defined. Then, we define $\mathcal{F}_{j+1}$ as the set of faces from $\mathcal{B}(D_i) - \bigcup_{k=1}^{j} \mathcal{F}_k$ that intersect $C'_j \cup C_j''$. Moreover, we define $C_{j+1}'$ and $C_{j+1}''$ to be the cycles closest to respectively $C_{2m'-i+1}$ and $C_{2m'+i}$ so that $\mathcal{B}(D_i) - \bigcup_{k=1}^{j+1} \mathcal{F}_k \subseteq \text{Int}(C_{2m'-i+1} \cup C_{j+1}', \Pi) \cup \text{Int}(C_{2m'+i} \cup C_{j+1}'', \Pi)$.

        Finally, $(\mathcal{F}_i)_{i \in \mathbb{N}^*}$ is defined.

        Remark that by construction, for $j \geq 1$, $\mathcal{F}_j$ and $\mathcal{F}_{j+2}$ contain vertex-disjoint faces. Moreover, remark that, for $j \geq 1$, the faces in $\mathcal{F}_j \cap \mathcal{B}_{\rm N}(D_i)$ touch either the two cycles $C_{2m'-i+1}$ and $C_{j-1}'$ (or $\Tilde{C}_{2m'-(i-1)+1}$ if $j=1$) or the two cycles $C_{2m'+i}$ and $C_{j-1}''$ (or $\Tilde{C}_{2m'+(i-1)}$ if $j=1$). Hence, by Lemma \ref{almost_disjoint_faces_between_two_cycles_variant}, there is a subset of faces from $\mathcal{F}_j \cap \mathcal{B}_{\rm N}(D_i)$ that are almost disjoint and this subset has size at least $\frac{|\mathcal{F}_j \cap \mathcal{B}_{\rm N}(D_i)|}{12}$.

        To conclude, by taking the biggest subset in the partition of $\mathcal{B}_{\rm N}(D_i)$ into $\bigcup_{j \geq 1} \mathcal{F}_{2j} \cap \mathcal{B}_{\rm N}(D_i)$ and $\bigcup_{j \geq 0} \mathcal{F}_{2j+1} \cap \mathcal{B}_{\rm N}(D_i)$ and then applying Lemma \ref{almost_disjoint_faces_between_two_cycles_variant}, we get a subset of faces from $\mathcal{B}_{\rm N}(D_i)$ of size at least $\frac{|\mathcal{B}_{\rm N}(D_i)|}{24}$.
    \end{proof_claim}

    \vspace{0.3cm}

    As the homotopic square with respect to $D_{2i}$ is good for $2 \leq i \leq m'-1$, $\mathcal{B}_{\rm N}(D_{2i})$ has size at least $\frac{|\mathcal{I}_{\rm N}(D_{2i})|}{48}$. Hence, by Claim \ref{fraction_of_B_N_almost_disjoint_variant}, $\mathcal{I}_{\rm N}(D_{2i+2})$ has size at least 
    \begin{align*}
        |\mathcal{I}_{\rm N}(D_{2i})| + \frac{|\mathcal{B}_{\rm N}(D_{2i})|}{24} & \leq |\mathcal{I}_{\rm N}(D_{2i})| + \frac{|\mathcal{I}_{\rm N}(D_{2i})|}{1152} \\
        & = \frac{1153}{1152} |\mathcal{I}_{\rm N}(D_{2i})| = q |\mathcal{I}_{\rm N}(D_{2i})| \\
    \end{align*}

    By recurrence, \[|\mathcal{I}_{\rm N}(D_{2m'})| \geq q^{m'-2} |\mathcal{I}_{\rm N}(D_4)| \geq q^{m'-2}\].

    If $m'-2 \geq \log_{q}(3g+4)$, $q^{m'-2} \geq 3g+4$. However, by Proposition \ref{homotopic_cycles_variant1}, as $g(\Pi) \leq g+2$, $|\mathcal{I}_{\rm N}(D_{2m'})| \leq 3g+3$. Then, this implies that $m' \leq \lfloor\log_{q}(3g+4)\rfloor + 2$. Finally, there are at most $2m = 4m' = 2\times (2\lfloor(\log_{q}(3g+4)\rfloor + 2))$ disjoint cycles that are nested in $\Pi$.
\end{proof}

A simple consequence of Proposition \ref{good_square_variant} is the following:

\begin{cor}
    \label{nb_non_contractible_cycles}
    $G$ contains at most $2m \times (3g+3)$ disjoint $\Pi$-noncontractible cycles.
\end{cor}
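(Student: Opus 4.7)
The plan is to combine Proposition \ref{good_square_variant} (bounding the length of a well-homotopic tower) with Proposition \ref{homotopic_cycles_variant1} (bounding the number of homotopy classes of $\Pi$-noncontractible almost-disjoint cycles), grouping a family of disjoint $\Pi$-noncontractible cycles by homotopy class and bounding each group separately.

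More concretely, let $\mathcal{C}$ be any collection of pairwise disjoint $\Pi$-noncontractible cycles of $G$. Since disjoint cycles are in particular almost disjoint, Proposition \ref{homotopic_cycles_variant1} applies: the number of pairwise $\Pi$-nonhomotopic cycles inside $\mathcal{C}$ is at most $3g(\Pi)-3 \leq 3(g+2)-3 = 3g+3$ (the cases $g(\Pi)\leq 1$ give an even smaller bound and are absorbed by this one). Hence the cycles of $\mathcal{C}$ split into at most $3g+3$ homotopy classes $\mathcal{C}_1,\dots,\mathcal{C}_r$ with $r \leq 3g+3$.

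Fix a class $\mathcal{C}_i$. Its cycles are pairwise homotopic and pairwise disjoint, which by the definition means that every consecutive pair in their natural cyclic ordering around the underlying cylinder/Möbius band is \emph{fully well homotopic}; thus when ordered as $C_1,\dots,C_{|\mathcal{C}_i|}$ in the order given by $\Pi$, they form a sequence of $\Pi$-well-homotopic cycles in the sense of Definition in Subsection \ref{homotopic_nested_squares}. Proposition \ref{good_square_variant} then gives $|\mathcal{C}_i| \leq 2m$.

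Summing over the at most $3g+3$ classes yields $|\mathcal{C}| \leq 2m \times (3g+3)$, which is the desired bound. The only subtle point worth spelling out is the verification that a maximal chain of pairwise disjoint homotopic cycles really fits the well-homotopic definition (the disjoint case of that definition); everything else is a direct application of the two propositions already proved. I do not anticipate any genuine obstacle here — this is a short bookkeeping argument, and no further structural input about $G$ is needed beyond what Propositions \ref{homotopic_cycles_variant1} and \ref{good_square_variant} already supply.
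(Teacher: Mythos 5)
Your proof is correct and follows essentially the same route as the paper's: bound the number of homotopy classes among disjoint $\Pi$-noncontractible cycles by $3g+3$ via Proposition \ref{homotopic_cycles_variant1}, bound each class by $2m$ via Proposition \ref{good_square_variant}, and multiply. The extra remark that a maximal disjoint homotopic family can be cyclically ordered so as to satisfy the well-homotopic definition is a point the paper takes for granted, and your noting it is a small improvement in exposition.
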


\begin{proof}
    By Proposition \ref{homotopic_cycles_variant1}, as $g(\Pi) \leq g+2$, there are at most $3g+3$ disjoint $\Pi$-noncontractible cycles that are not pairwise $\Pi$-homotopic in $G$. Moreover, by Proposition \ref{good_square_variant}, there are at most $2m$ disjoint $\Pi$-noncontractible homotopic cycles in $G$. By combining these two results, we get that $G$ contains at most $2m \times (3g+3)$ disjoint $\Pi$-noncontractible cycles.
\end{proof}

\section{Main proof} \label{sec:main_proof}

The main proof consists of three parts:
\begin{itemize}
    \item First, we prove that both the maximum degree of a vertex of $G$ and the maximum size of a face in $(G,\Pi)$ are bounded by $\Delta(g) = g^{O(\log^2 g)}$.
    \item Then, we show that the height of the tree $T$ in a minimal tree decomposition of $G$ is bounded by $g^{O(\log^3 g)}$.
    \item Finally, we assemble the already known results on $G$ (Theorems \ref{treewidth} and \ref{seymour_tree_degree}) and the two results above and use a switch to the pathwidth to get that $G$ has its order bounded by $S(g) = g^{O(\log^3 g)}$.
\end{itemize}

\subsection{Maximum degree of a vertex and maximum size of a face of \texorpdfstring{$(G,\Pi)$}{G,pi}} \label{subsec:max_degree}

We prove that the maximum degree of a vertex of $G$ and the maximum size of a face in $(G,\Pi)$ are bounded by $\Delta(g) = g^{O(\log^2 g)}$.
To prove this, we proceed by contradiction and reach a contradiction by proving that a high degree of a vertex (resp. big size of a face) leads to a large number of $\Pi$-contractible nested cycles, which contradicts Proposition \ref{good_square}. The proof makes repetitively use of Propositions \ref{homotopic_cycles_variant2}, \ref{isolated_paths}, \ref{good_square} and \ref{good_square_variant}.

\begin{defi}[Independent subgraphs]
    Let $H$ be a graph, let $k \geq 1$ and let $\mathcal{H} = \{H_0, ..., H_k\}$ be a set of subgraphs of $H$. We say that $\mathcal{H}$ is a set of \textit{independent subgraphs} of $H$ if it is possible to associate to each subgraph $H_i$ an edge $e_i$ so that $e_i \notin \cup_{j \neq i} H_j$.
\end{defi}

\begin{defi}[Fan (with an arch)]
    \label{fan}
    Let $p$ be a piece of $(G, \Pi)$. A \textit{fan} $H$ from $p$ in $G$ is a subgraph of $G$ such that:
    \begin{itemize}
        \item $H$ contain a path $P$ (\textit{horizontal path} of $H$), disjoint from $p$;
        \item There are $M$ paths $Q_1, ..., Q_M$ from $p$ to $P$ for some $M \geq 1$, such that, for every $1 \leq j \neq k \leq M$, if $p$ is a vertex, $Q_j \cap Q_k = p$ and, if $p$ is a face, $Q_j \cap Q_k = \emptyset$, $Q_1, ..., Q_M$ intersect $p$ in this order (\textit{vertical paths} of $H$);
        \item For every $1 \leq k \leq M$, $Q_k \cap p$ and $Q_k \cap P$ is a vertex;
        \item $\Pi(H)$ induces a planar embedding.
    \end{itemize}

    The size of $H$ is $M$.

    An illustration of a fan from a vertex and a face can be found in Figure \ref{fig:fan_vertex_face}.

    \begin{figure}[h!]
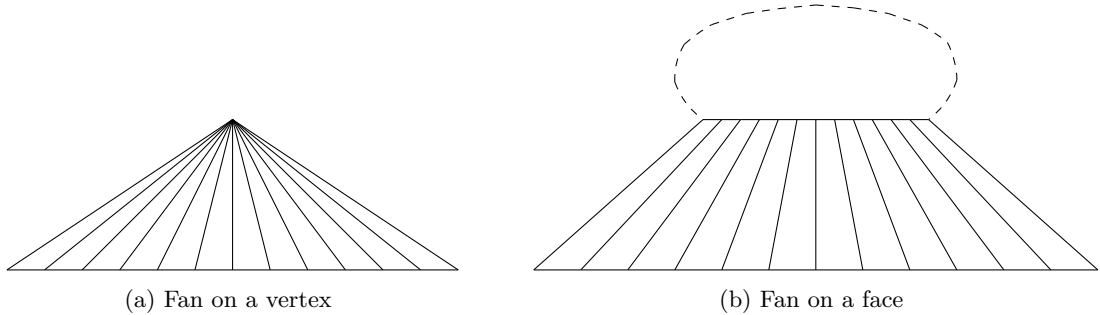

    \centering
    \begin{subfloat}[Fan on a vertex]{\tikzfig{images/fan_vertex}}
    \end{subfloat}
    \hspace{0.5cm}
    \begin{subfloat}[Fan on a face]{\tikzfig{images/fan_face}}
    \end{subfloat}

    \caption{Fans from a vertex and a face. The solid lines indicate paths, whereas the dotted line shows the boundary of the face from which the fan in Figure (b) is.}
    \label{fig:fan_vertex_face}
    \end{figure}

    \vspace{0.3cm}

    Let $p$ be a piece of $(G, \Pi)$. A \textit{fan with an arch} $H$ from $p$ in $G$ is a subgraph of $G$ such that:
    \begin{itemize}
        \item $H$ contains a fan $H'$ from $p$;
        \item Let $Q_1, ..., Q_M$ be the vertical paths of $H'$, $H$ contains a path $A$  (\textit{arch path} of $H$) whose endpoints are on $p$, which is disjoint from $H'$ except on its endpoints and so that, if $p$ is a face, $A$ does not intersect $Q_i$ for $1 \leq i \leq m$. Moreover, $H' \cup A$ is $\Pi$-contractible;
        \item There is at most one vertex $v$ in the interior of $A$ (\textit{intersection point} of $A$) such that there is no edge in $\text{Int}(H,\Pi)$ which has an endpoint in $\text{int}(A) - v$. 
    \end{itemize}

    An illustration of a fan with an arch from a vertex and a face can be found in Figure \ref{fig:fan_with_an_arch_vertex_face}.

    \begin{figure}[h!]
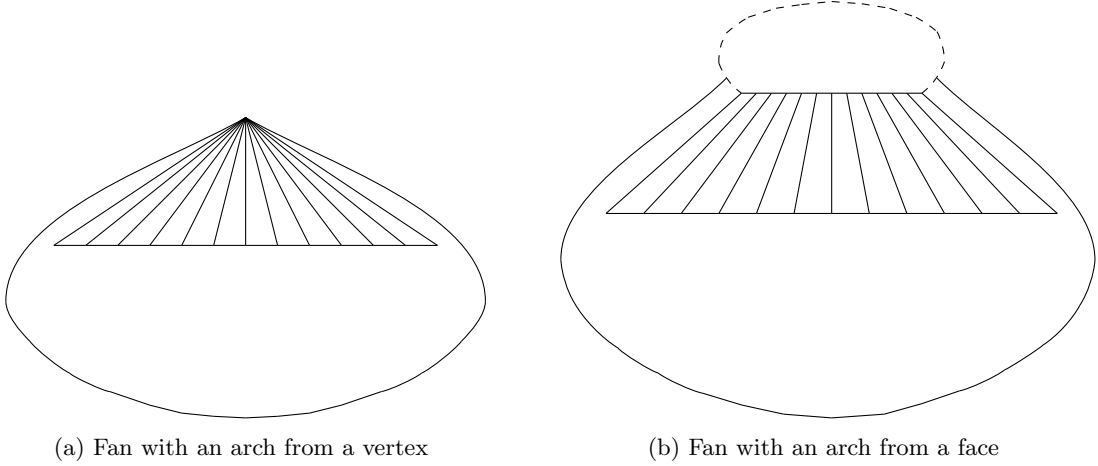

    \centering
    \begin{subfloat}[Fan with an arch from a vertex]{\tikzfig{images/fan_with_an_arch_vertex}}
    \end{subfloat} \hspace{0.5cm}
    \begin{subfloat}[Fan with an arch from a face]{\tikzfig{images/fan_with_an_arch_face}}
    \end{subfloat}

    \caption{Fans with an arch from a vertex and a face. The solid lines indicate paths, whereas the dotted line shows the boundary of the face from which the fan in Figure (b) is.}
    \label{fig:fan_with_an_arch_vertex_face}
    \end{figure}
\end{defi}

\begin{theo}
    \label{max_degree}
    Let $\tilde{g} = 4 (6g+7)$, $q = \frac{1153}{1152}$ and $m = 2(\lfloor\log_{q}(3g+4)\rfloor + 2)$.
    
    \[\Delta(G) \leq \Delta(g) \text{\quad and \quad} \Delta_F(G, \Pi) \leq \Delta(g)\]
    
    with $\Delta(g) =2m(\tilde{g}+1)^{4} \left(4 m(\tilde{g}+1)^{2}\right)^{m^2}$.
\end{theo}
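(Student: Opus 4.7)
The plan is to argue by contradiction: assume some piece $p$ of $(G,\Pi)$ satisfies $d(p) > \Delta(g)$, and construct from $p$ a family of $m+1$ pairwise $\Pi$-well-nested $\Pi$-contractible cycles of $G$ that all meet only in $p$. Proposition \ref{good_square} allows only $m$ such cycles, so this is a contradiction. The shape of $\Delta(g)$ suggests that the argument has two phases: an initial extraction of a $\Pi$-contractible fan-with-an-arch based at $p$, costing a factor of roughly $2m(\tilde g+1)^4$, and a main loop of $m$ levels, each producing one new nested cycle at the cost of a factor $(4m(\tilde g+1)^2)^{m}$.

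For the first phase I would use the edges (or boundary vertices) incident to $p$ together with their cyclic order around $p$ in $\Pi$. A short initial segment is attached to each of them. By Proposition \ref{homotopic_cycles_variant2} applied at $a=p$, at most $O(g)$ homotopy classes of free cycles on $p$ are available, and by Proposition \ref{isolated_paths} only $\tilde g$ internally disjoint paths between $p$ and any other fixed piece can coexist. Pigeonholing the initial segments by the sector they land in gives a large subset all lying inside one $\Pi$-contractible region. Inside that region I pick out a horizontal path hit by many of these segments, together with an arch path (obtained by one more application of Proposition \ref{isolated_paths}) that bounds the enclosed planar disk; this produces the initial $\Pi$-contractible fan-with-an-arch $F_0$ based at $p$ of size at least $\Delta(g)/\bigl(2m(\tilde g+1)^{4}\bigr)$, following Definition \ref{fan}.

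The main loop then builds the nested cycles one at a time. Suppose that we have already constructed a $\Pi$-contractible fan-with-an-arch $F_i$ of size $N_i$ based at $p$, enclosed by a $\Pi$-contractible cycle $C_i$ pinched on $p$ and well-nested inside the previous $C_0,\dots,C_{i-1}$. I would take two well-separated vertical paths of $F_i$, join them with a segment of the horizontal path and with the arch, and declare their union an inner cycle $C_{i+1}$, together with an inner fan-with-an-arch $F_{i+1}$ lying in the middle block of $F_i$'s vertical paths, strictly inside $C_i$ and touching $C_i$ only at $p$. The candidate $C_{i+1}$ can fail to be $\Pi$-contractible only if its interior supports $\Pi$-noncontractible homotopic cycles crossing the vertical paths, and the candidate $F_{i+1}$ can be too small only if many internally disjoint paths escape from the candidate region to outside. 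Proposition \ref{good_square_variant} bounds the first obstruction by $2m$ and Proposition \ref{isolated_paths} bounds the second by $\tilde g$, so at most $m$ attempts per level suffice, each discarding at most a $4m(\tilde g+1)^2$-fraction of the current vertical paths. This gives the per-level loss $(4m(\tilde g+1)^2)^m$; after $m$ levels we obtain the $m^2$ in the exponent of $\Delta(g)$, and the required $m+1$ well-nested cycles pinched on $p$.

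The main obstacle is to guarantee $\Pi$-contractibility of the newly built cycle $C_{i+1}$ and of the subgraph it encloses, starting only from the fact that $F_{i+1}$ itself induces a planar embedding. The delicate case is when a handle of $S$ sits inside the current region: locally the fan looks planar, yet the ambient surface may route nontrivial loops through it. Proposition \ref{good_square_variant}, combined with the reembedding tool of Lemma \ref{cylinder_reembedding}, is what rules this out, but applying it requires first identifying which vertical paths of $F_i$ belong to a common homotopy class of potential $\Pi$-noncontractible cycles in the strip; this identification is where the $4m(\tilde g+1)^2$ per-step loss is actually paid. Since the argument is symmetric in ``vertex'' and ``face'' (both instances of a piece $p$ with notion of degree $d(p)$), the same contradiction covers both $\Delta(G)\le \Delta(g)$ and $\Delta_F(G,\Pi)\le \Delta(g)$ at once.
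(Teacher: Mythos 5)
Your high-level picture (contradiction via finding $m+1$ well-nested cycles pinched on $p$, with a preprocessing phase to extract a clean fan and a main phase producing nested cycles) matches the paper, and you cite the right supporting lemmas. But the ``main loop'' as you describe it has a genuine gap.

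You say that at level $i$ you ``take two well-separated vertical paths of $F_i$, join them with a segment of the horizontal path and with the arch, and declare their union an inner cycle $C_{i+1}$, together with an inner fan-with-an-arch $F_{i+1}$ lying in the middle block of $F_i$'s vertical paths.'' This does not give a well-nested cycle. A cycle built from two vertical paths and a piece of the horizontal path closes up through $p$; it bounds a slab of the fan, but it is not nested inside $C_i$ in the sense required (the ``arch'' you attach is the arch path of $F_i$, whose endpoints are already on $p$, so there is no way to splice it consistently into a cycle through two interior vertical paths and the horizontal path). More fundamentally, the vertical paths and horizontal path of $F_i$ are part of the same planar drawing; sub-fans bounded by them are coplanar with $F_i$, not nested inside it. In the paper's proof, the nested cycles never come from vertical paths at all. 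They come from \emph{arches}: subwalks of $\Pi$-faces whose interiors escape the fan, which bound disks containing part of the horizontal path. The whole technical content of Lemma~\ref{new_fan} is the dichotomy on \emph{maximal arches}: if the fan has size $f(g,i)$ and every arch yields a $\Pi$-contractible cycle, then for some $1\le k\le i$ there are at least $f(g,i-k)$ maximal arches each of size at least $f(g,k-1)+2$. These large arches form the columns of a new fan $H''$ of size $f(g,i-k)$, to which the induction hypothesis applies, producing $i-k$ nested cycles whose innermost encloses a column containing some arch $A$; then $A$ itself gives one more nested cycle, and the fan-with-arch under $A$, of size $f(g,k-1)$, gives $k-1$ more. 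This branching recursion, together with the identity
\[
f(g,i-k)\,f(g,k)\ \le\ \frac{f(g,i)}{32\,m^{2}(\tilde g+1)^{4}},
\]
is what forces the choice $f(g,i)=\bigl(4\sqrt{2}\,m(\tilde g+1)^{2}\bigr)^{i^{2}}$ and hence the $m^{2}$ in the exponent of $\Delta(g)$.

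Your arithmetic $\bigl((4m(\tilde g+1)^2)^m\bigr)^m = (4m(\tilde g+1)^2)^{m^2}$ happens to match the final bound, but the ``$m$ attempts per level, $m$ levels'' accounting does not correspond to any valid construction: a shrinking loop that loses a fixed factor per level (with no branching) would only justify a bound of the form $c^{m}$. Without the maximal-arch dichotomy there is no mechanism forcing a new, strictly nested cycle to exist at each step, and without the $f(g,i-k)f(g,k)$ identity the exponent $m^{2}$ is not accounted for. You also omit the double induction between property~(1) for fans and property~(2) for fans-with-an-arch, which is what lets the recursion descend alternately into the big sub-fan $H''$ and into the fan-with-arch $H_A$; the two statements genuinely depend on each other.
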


\begin{proof}
    We first define several objects that will play an essential role throughout the proof.

    Let $H$ be a fan (without an arch) from $p$ in $G$. Let $P$ be its horizontal path and $Q_1, ..., Q_M$ be its vertical paths with $M \geq 2$.
    We define a \textit{column} of $H$ to be the subpath of $P$ between the two endpoints of two consecutive vertical paths from $p$: let $1 \leq k \leq M-1$, then the $k$-th column of $H$ is the subpath of $P$ between the endpoints $Q_k \cap P$ (included) and $Q_{k+1} \cap P$ (excluded).

    Then, we define $\mathcal{F}(H)$ to be the set of faces that touches the interior of the horizontal path $P$ of $H$ in $\text{Ext}(H,\Pi)$. Assume now that $H$ is a fan with an arch, let $A$ be its arch path. Let $C$ be the cycle induced by $A \cup p$ that does not contain $p$ in $\Pi$ and let $C'$ be the cycle bounding $P \cup \bigcup_{1 \leq i \leq M} Q_i$ in $\Pi$, we define $C_A$ to be the circuit that bounds $\text{Int}(C \cup C', \Pi)$. Then, we define $\mathcal{F}(H)$ to be the set of faces in $\text{Int}(C_A,\Pi)$ that touches the horizontal path $P$ of $H$ and does not intersect $A$. We also define $\mathcal{F}_{arch}(H)$ to be the set of faces in $\text{Int}(C_A, \Pi)$ that touches the horizontal path $P$ of $H$ and intersect $A$. See Figure \ref{fig:fan_faces} for illustrations.

    \begin{figure}[h!]
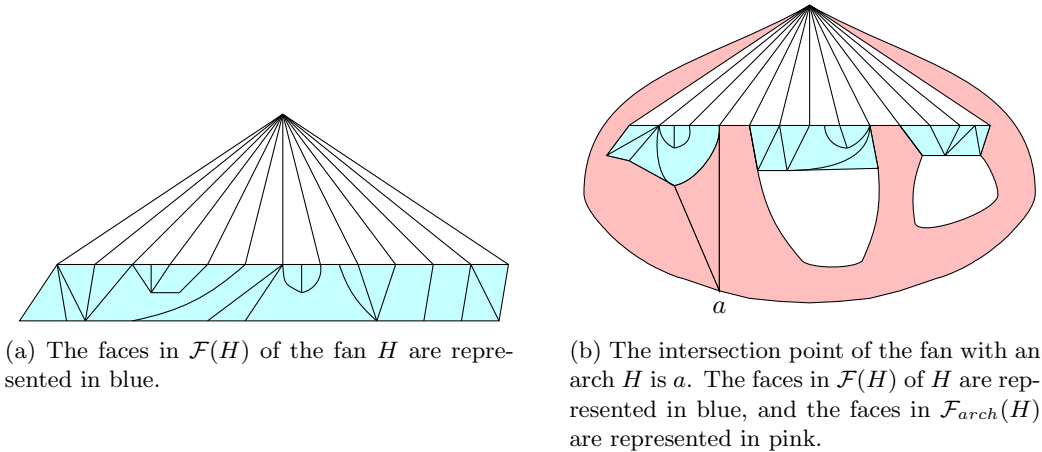

    \centering
    \begin{subfloat}[The faces in $\mathcal{F}(H)$ of the fan $H$ are represented in blue.]{\tikzfig{images/fan_faces}}
    \end{subfloat} \hspace{0.5cm}
    \begin{subfloat}[The intersection point of the fan with an arch $H$ is $a$. The faces in $\mathcal{F}(H)$ of $H$ are represented in blue, and the faces in $\mathcal{F}_{arch}(H)$ are represented in pink.]{\tikzfig{images/fan_with_an_arch_faces}}
    \end{subfloat}

    \caption{The set $\mathcal{F}(H)$ for a fan, and $\mathcal{F}(H)$ and $\mathcal{F}_{arch}(H)$ for a fan with an arch.}
    \label{fig:fan_faces}
    \end{figure}

    Let $H$ be a fan (with an arch) from $p$ with the horizontal path $P$. Suppose the signature on the edges of $H$ in $\Pi$ is positive. Let $C_H$ be the cycle of $H$ so that $H \subseteq \text{Int}(C_H,\Pi)$. Then, a face $f \in \mathcal{F}(H)$ is an \textit{arched face} if $f$ does not intersect $C_H$ in a single segment (vertex or path). Let $f \in \mathcal{F}(H)$ be a face; we define the \textit{arches} of $f$ to be the maximal sub walks of $f$ whose interior vertices are not in $C_H$. Remark that the arches of any face $f \in \mathcal{F}(H)$ are paths with their ends on $C_H$. For an arch $A$ of a face $f \in \mathcal{F}(H)$, we define $C_A$ to be the $\Pi$-contractible cycle induced by $A \cup C_H$ that does not contain $H$. Moreover, remark that if an arch $A$ of a face $f \in \mathcal{F}(H)$ has positive signature, then either $\text{int}(C_A, \Pi)$ or $\text{ext}(C_A, \Pi)$ contain no edge with an endpoint on the interior of $A$ (because $A$ is a subwalk of a face in $\Pi$). 
    We say that an arch $A$ of a face $f \in \mathcal{F}(H)$ is \textit{$\Pi$-noncontractible} if $A \cup H$ induces a $\Pi$-noncontractible cycle. See Figure \ref{fig:arches} for an illustration.

    Let $f, f' \in \mathcal{F}(H)$ be two faces intersecting in $G-H$. Remark that by minimality of $G$, $f$ and $f'$ intersect in a vertex or an edge. Let $v_1$ and $v_2$ be the first and last intersection points of $f$ and $f'$ in $G$ (potentially $v_1 = v_2$) and let $P_{1,f}, P_{2,f}, P_{1,f'}, P_{2,f'}$ be the subpaths of $f$ and $f'$ from $H$ to $v_1$ and $v_2$. Then, we define the \textit{arches} of $f$ and $f'$ to be $A_1 = P_{1,f} \cup P_{1,f'}$ and $A_2 = P_{2,f} \cup P_{2,f'}$. Remark that $A_1$ and $A_2$ are paths with both ends on $H$. Moreover, remark that if an arch $A$ of two faces $f,f' \in \mathcal{F}(H)$ has a positive signature, then either $\text{int}(C_A, \Pi)$ or $\text{ext}(C_A, \Pi)$ contain no edge with an endpoint on the interior of $A$ except on one sole vertex that we call the \textit{intersection point} of $A$. 
    We say that an arch $A$ of two faces $f,f' \in \mathcal{F}(H)$ is \textit{$\Pi$-noncontractible} if $A \cup H$ induces a $\Pi$-noncontractible cycle. See Figure \ref{fig:arches} for an illustration.

    We define $\mathcal{A}(H)$ to be the set of all arches induced by one or two faces from $\mathcal{F}(H)$ whose endpoints are not both on $p$.
    Suppose that every arch $A \in \mathcal{A}(H)$ induces together with $H$ a $\Pi$-contractible subgraph of $G$. As $G' = H \cup \mathcal{A}(H)$ is $\Pi$-contractible, we can suppose that the signature on every edge of $G'$ is positive. We say that $A$ is \textit{empty above} (resp. \textit{empty below}) if $\text{ext}(C_A, \Pi)$ (resp. $\text{int}(C_A, \Pi)$) contains no edge with an endpoint on the interior of $A$ (except maybe on one sole vertex). Remark that, in that case, every arch $A \in \mathcal{A}(H)$ is either empty above or empty below. See Figure \ref{fig:arches} for an illustration.

    Let $A \in \mathcal{A}(H)$ and let $P_A = C_A \cap P$. We define the \textit{size} of $A$ to be the number of columns that $P_A$ intersects. 
    
    \begin{figure}[h!]
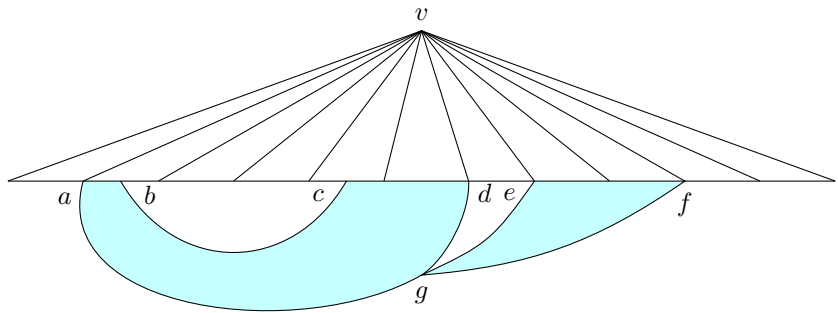

    \centering
    \tikzfig{images/arches}

    \caption{Illustration for arched faces and arches. The fan $H$ is represented, and two faces from $\mathcal{F}(H)$ are represented in blue. The face $F$ on the left is an arched face, but the face $F'$ on the right is not. The paths from $a$ to $d$ and from $b$ to $c$ are the two arches of $F$, and the path from $e$ to $f$ is the only arch of $F'$. The path from $d$ to $e$ through $g$ and the one from $a$ to $f$ through $g$ are both arches of $F$ and $F'$. In this representation, the arches from $a$ to $d$ and from $a$ to $f$ are empty below, and the arches from $b$ to $c$ and from $d$ to $e$ are empty above.}
    \label{fig:arches}
    \end{figure}
    
    Suppose that every arch $A \in \mathcal{A}(H)$ induces together with $H$ a $\Pi$-contractible subgraph of $G$. We define $\mathcal{A}_{max}(H)$ to be the set of arches in $\mathcal{A}(H)$ so that $A \in \mathcal{A}_{max}(H)$ if, for every $A' \in \mathcal{A}(H)$ with $A \neq A'$, $A \nsubseteq \text{Int}(C_A', \Pi)$. We call the arches in $\mathcal{A}_{max}(H)$ the \textit{maximal arches} of $H$. Remark that, by construction, every arch in $\mathcal{A}_{max}(H)$ is empty below. Remark, moreover, that they are naturally ordered along $P$. Let's modify $\mathcal{A}_{max}$ so that no maximal arches of size at most $2$ are adjacent by merging together some maximal arches. Then, remark that two maximal arches are consecutive in this order if and only if they intersect. See Figure \ref{fig:maximal_arches} for an illustration.

    \begin{figure}[h!]
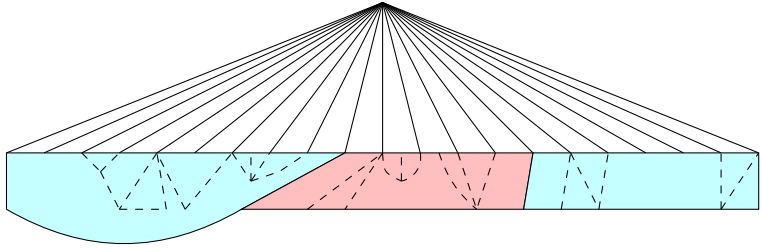

    \centering
    \tikzfig{images/maximal_arches}

    \caption{Illustration for maximal arches. The edges inside a maximal arch are represented as dotted. The fan $H$ represented has $3$ maximal arches, represented in alternating blue and pink.}
    \label{fig:maximal_arches}
    \end{figure}

    For the purpose of initialization, we define all the notions defined for a fan (with an arch) for a piece $p$: We define the \textit{horizontal path} and the \textit{vertical paths} to be the piece $p$ and therefore $\mathcal{F}(H)$ to be the set of faces that touches $p$. We define the notions of \textit{arched faces}, \textit{arch on a face}, \textit{arch on two faces}, etc., as for the general case. 
    We define a \textit{column} of $H$ to be an edge adjacent to $p$ if $p$ is a vertex and a vertex of $p$ if $p$ is a face.

    \setcounter{claim}{0}

    \begin{sublem}
        \label{face_no_multiple_columns}
         Let $p$ be a piece of $(G, \Pi)$ and let $H$ be a fan (with an arch) from $p$ in $(G,\Pi)$ or $H = p$. Let $P$ be the horizontal path of $H$. Let $f \in \mathcal{F}(H) \cup \mathcal{F}_{arch}(H)$ ($\mathcal{F}_{arch}(H) = \emptyset$ if $H$ is a fan without an arch or $H = p$) so that $f \cup H$ is contained either in a disk or in a cylinder in $\Pi$. Then, $f$ touches at most $\tilde{g}$ columns of $H$.
    \end{sublem}

    \begin{proof}
        If $H = p$, then a face $f$ so that $f \cup H$ is contained either in a disk or in a cylinder in $\Pi$ cannot intersect several columns of $p$, as otherwise, $p$ would either be a cutvertex of $G$ which is in contradiction with the fact that $G$ is $2$-connected or have a $2$-separator that separates a $\Pi$-contractible subgraph from the rest of the graph which is in contradiction with Lemma \ref{2_separated_subgraph_is_edge}.
        
        Suppose that $H$ is a fan (with an arch). Suppose that there is a face $f \in \mathcal{F}(H) \cup \mathcal{F}_{arch}(H)$ that touches at least $\tilde{g} +1$ columns of $H$. Then, $f$ together with $H$ forms $\tilde{g}+1$ isolated paths on $p$. However, this is a contradiction according to Proposition \ref{isolated_paths}.
    \end{proof}

    \begin{sublem}
        \label{layer_non_contractible_cycles}
        Let $H$ be either a piece $p$ or a fan from a piece $p$ in $(G,\Pi)$ of size at least $M$. Let $P$ be the horizontal path of $H$. Suppose every arch in $\mathcal{A}(H)$ intersects $H$ in $P \cup p$. Then, there exists at least $\frac{M}{4m(\tilde{g}+1)^4}$ consecutive columns that induce a fan $H'$ from $v$ so that the arches in $\mathcal{A}(H')$ are all $\Pi$-contractible. %Then, the $\Pi$-noncontractible arches in $\mathcal{A}(H)$ intersect at most $(3g-3) \times 8m(\tilde{g}^2+\tilde{g}+1) \leq 2m (\tilde{g}+1)^3$ columns.
    \end{sublem}

    \begin{proof}
        Let's show that there exists at most $(3g+3) \times 2 \times 4m(\tilde{g}^2+\tilde{g}+1)$ columns so that the $\Pi$-noncontractible arches of $H$ all touch one of these columns.
    
        First, remark that it is enough only to consider independent $\Pi$-noncontractible arches of $H$: indeed if $A$ is a $\Pi$-noncontractible arch of $H$ that is not independent of a set $\mathcal{A}$ of $\Pi$-noncontractible arches of $H$, then $A$ shares its first and last edge with arches in $\mathcal{A}$. Hence, $\mathcal{A}$ and $\mathcal{A} \cup \{A\}$ intersect the same columns. 
        
        Let $Q_1, ..., Q_M$ be the vertical paths of $H$ with $M \geq 2$.
    
        Let $\mathcal{A}$ be a maximal set of independent $\Pi$-noncontractible arches of $H$. For $A \in \mathcal{A}$, let $\mathcal{I}_A \subseteq \llbracket 1,M \rrbracket$ be the indexes of the columns of $H$ that $A$ intersects ($|\mathcal{I}_A| \leq 2$) and let $C_A$ be the $\Pi$-noncontractible cycle on $p$ that contains $A$ and does not contain the interior of $p$ if $p$ is a face induced by $A \cup P \cup p \cup \bigcup_{k \in \mathcal{I}} Q_k$. We then define the set $\mathcal{C} = \{ C_A | A \in \mathcal{A}\}$ of cycles on $p$ that correspond to the arches in $\mathcal{A}$.
        
        By Proposition \ref{homotopic_cycles_variant2}, as $g(\Pi) \leq g+2$, there are at most $3g + 3$ pairwise $\Pi$-noncontractible nonhomotopic cycles in $\mathcal{C}$. Thus, it remains to prove that, for a given class of homotopy in $\Pi$, there exists a set of at most $8m(\tilde{g}^2+\tilde{g}+1)$ columns so that all the arches on $p$ whose associated cycles all belong to this class of homotopy touch one of these columns.

        Let $\Tilde{\mathcal{A}}$ be a set of arches from $\mathcal{A}$ that are $\Pi$-noncontractible and whose associated cycles are $\Pi$-homotopic and let $\Tilde{\mathcal{C}}$ be the set of associated cycles. Let's show that there exists a set of at most $8m(\tilde{g}^2+\tilde{g}+1)$ columns so that every arch in $\Tilde{\mathcal{A}}$ touches one of these columns.

        First, we modify the subgraph $H \cup \bigcup_{A \in \Tilde{\mathcal{A}}} A$ so that it has a positive signature in $\Pi$. This is possible because, for each cycle $C \in \Tilde{\mathcal{C}}$, $C$ is two-sided and hence has a positive signature. 
        
        Let $A_1$ and $A_1'$ be the two arches from $\Tilde{\mathcal{A}}$ whose associated cycles $C_1$ and $C_1'$ in $\Tilde{\mathcal{C}}$ are so that, for every cycle $C \in \Tilde{\mathcal{C}}$, $C \subseteq \text{Int}(C_1 \cup C_1', \Pi)$. 
        Let $\Tilde{G} = \text{Int}(C_1 \cup C_1', \Pi)$. The embedding $\Pi(\Tilde{G})$ is an embedding in a cylinder bounded by $C_1$ and $C_1'$; hence it is a planar embedding. For $A \in \Tilde{\mathcal{A}}$, we say that $A$ is empty above (resp. empty below) if $A$ is empty above (resp. empty below) in the planar embedding $\Pi(\Tilde{G})$ with outer face $C_1$. Remark that every arch $A \in \Tilde{\mathcal{A}}$ is either empty above or empty below.
        
        We then partition $\Tilde{\mathcal{A}}$ into two sets $\overline{\mathcal{A}}$ and $\underline{\mathcal{A}}$ which correspond to the set of arches of $\Tilde{\mathcal{A}}$ which are empty respectively above and below. Let $\overline{\mathcal{C}}$ and $\underline{\mathcal{C}}$ be the sets of the associated cycles.

        Let's show that there exists a set of at most $4m(\tilde{g}^2+\tilde{g}+1)$ columns so that every arch in $\overline{A}$ touches one of these columns. The same result for $\underline{A}$ can be proved using the same method.

        Let's first find a maximal subset of internally disjoint arches from $\overline{\mathcal{A}}$. First, take one of the two arches from $\overline{\mathcal{A}}$ whose associated cycles $C_1$ and $C_1'$ in $\overline{\mathcal{C}}$ are so that, for every cycle $C \in \overline{\mathcal{C}}$, $C \subseteq \text{Int}(C_1 \cup C_1', \Pi)$. Let $A_1$ be such an arch. Then, select the arch $A_2$ from $\overline{\mathcal{A}}$ that is internally disjoint from $A_1$ and so the cycle associated with $A_2$ is the closest to the cycle associated with $A_1$. % C'est pas possible de faire ça dans n'importe quel cas, montrer que dans ce cas ça marche car si deux arches "cross" alors on a deux autres arches qui sont un mélange d'elles et qui les entourent
        Iterate this process to get a maximal set $\overline{\mathcal{A}}_0 = \{A_1, ..., A_p\}$ of pairwise internally disjoint arches (for some $p \geq 1$). Let $\overline{\mathcal{C}}_0 = \{C_1, ..., C_p\}$ be the set of associated cycles. Let $C_{p+1}$ be the cycle so that, for every cycle $C \in \overline{\mathcal{C}}$, $C \subseteq \text{Int}(C_1 \cup C_{p+1}, \Pi)$ and $A_{p+1}$ the associated arch. See Figure \ref{fig:construction_A_i} for an illustration.

        \begin{figure}[h!]
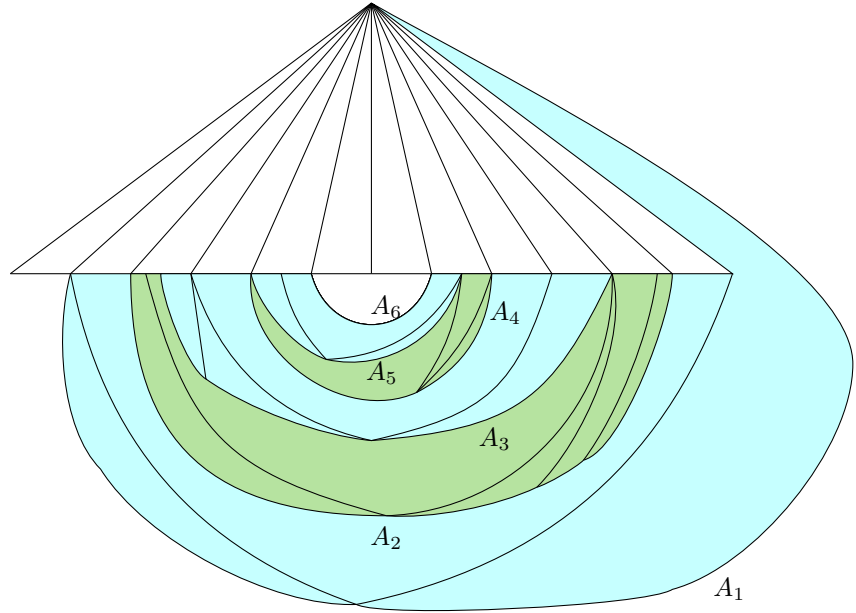

        \centering
        \tikzfig{images/construction_A_i}

        \caption{Illustration for the sets $\overline{\mathcal{A}}_0$ and $(\overline{\mathcal{A}}_i)_{1 \leq i \leq p}$. The internally disjoint arches in $\overline{\mathcal{A}}_0$ are in this example $A_1, A_2, A_3, A_4, A_5, A_6$. We have $p=6$ and $A_7 = A_6$. Moreover, the disks containing the arches in $\overline{\mathcal{A}}_i$ are depicted in alternating blue and green.}
        \label{fig:construction_A_i}
        \end{figure}

        Remark that if an arch of $\overline{\mathcal{A}}$ intersects $\text{int}(C_i \cup C_{i+1}, \Pi)$ ($1 \leq i \leq p$) then it is contained in $\text{Int}(C_i \cup C_{i+1}, \Pi)$.  % Expliquer pourquoi, à développer
        Let $\overline{\mathcal{A}}_i$ be the set of arches in $\overline{\mathcal{A}} - \overline{\mathcal{A}}_0$ that lie in $\text{Int}(C_i \cup C_{i+1}, \Pi)$ ($1 \leq i \leq p$). Then, the arches in $\overline{\mathcal{A}}$ can be partitioned into the sets $\overline{\mathcal{A}}_i$ for $0 \leq i \leq p$. See Figure \ref{fig:construction_A_i} for an illustration.

        \begin{subclaim}
            \label{columns_A_0}
            There exists a set of at most $4m$ columns of $H$ so that all arches in $\overline{\mathcal{A}}_0$ touch at least one of these columns.
        \end{subclaim}
        
        \begin{proof_claim}
            Let's create an auxiliary graph $\Gamma$ whose vertices are the columns of $H$ and $p$ intersected by the arches in $\overline{\mathcal{A}}_0$. There is an edge between two columns if an arch in $\overline{\mathcal{A}}_0$ touches both columns. Remark that, by Proposition \ref{good_square_variant}, there are at most $2m$ cycles from $\overline{\mathcal{C}}$ that correspond to a matching in $\Gamma$. Hence, a maximal matching of $\Gamma$ has a size at most $2m$. Moreover, the set of the vertices in a maximal matching of $\Gamma$ is a dominating set of $\Gamma$. Therefore, a set of at most $4m$ columns of $H$ exists so that all arches in $\overline{\mathcal{A}}_0$ touch at least one of these columns.
        \end{proof_claim}
        
        \vspace{0.3cm}
        
        %Expliquer ici qu'il n'y a donc que l'intérieur de au plus 2m $\mathcal{A}_i$ à considérer
        \begin{subclaim}
            \label{not_covered_A_i}
            Suppose that $p > 4m$. Let $\mathcal{B}$ be the set of at most $4m$ columns of $H$ obtained in Claim \ref{columns_A_0}. Then, there exists a set $\mathcal{I} \subseteq \llbracket 1,p \rrbracket$ of size at most $4m$ so that all arches in $\overline{\mathcal{A}}_0$ and in $\cup_{i \notin \mathcal{I}} \overline{\mathcal{A}}_i$ touch at least one of these columns.
        \end{subclaim}

        \begin{proof_claim}

        We will prove the property by induction on $p' = p-4m$.

        For $p' = 1$, we have $p = 4m+1$. By the pigeon hole principle, there are $2$ arches $A_i$ and $A_j$ of $\overline{\mathcal{A}}_0$ ($A \leq i \leq j \leq p$, potentially $i=j$) that touch the same column of $\mathcal{B}$.
            
        Assume $i \neq j$. Remark then that every arch in $\overline{\mathcal{A}}_k$ for $i \leq k < j$ touches this same column. Finally, by taking $\mathcal{I} = \llbracket 1 , p\rrbracket -\llbracket i,j\llbracket$ which has size at most $4m$, we indeed have that all arches in $\overline{\mathcal{A}}_0$ and in  $\cup_{i \notin \mathcal{I}} \overline{\mathcal{A}}_i$ touch at least one of the $4m$ columns $\mathcal{B}$.

        Assume $i = j$. Remark that every arch in $\overline{\mathcal{A}}_k$ for either $k \geq i$ or $k \leq i$ touches this same column. Finally, by taking $\mathcal{I} = \llbracket 1 , p\rrbracket -\llbracket i,p\rrbracket$ or $\mathcal{I} = \llbracket 1, p\rrbracket - \llbracket 1,i\rrbracket$ which has size at most $4m$, we indeed have that all arches in $\overline{\mathcal{A}}_0$ and in $\cup_{i \notin \mathcal{I}} \overline{\mathcal{A}}_i$ touch at least one of the $4m$ columns $\mathcal{B}$.

        Now, let $p' = p -4m \geq 2$ and suppose that the property is true for every $p'' < p'$.

        By the pigeon hole principle, there are arches $A_i$ and $A_j$ of $\overline{\mathcal{A}}_0$ ($1 \leq i \leq j \leq p$, potentially $i=j$) that touch the same column of $\mathcal{B}$. 
            
        Assume $i \neq j$. Remark then that every arch in $\overline{\mathcal{A}}_k$ for $i \leq k < j$ touches this same column. Let $\mathcal{I} = \llbracket 1 , p\rrbracket - \llbracket i,j\llbracket$ which has size at most $p-1$. We have that that all arches in $\overline{\mathcal{A}}_0$ and in $\cup_{i \notin \mathcal{I}} \overline{\mathcal{A}}_i$ touch at least one of the $4m$ columns $\mathcal{B}$.

        Assume $i = j$. Remark that every arch in $\overline{\mathcal{A}}_k$ for either $k \geq i$ or $k \leq i$ touches this same column. Let $\mathcal{I} = \llbracket 1 , p\rrbracket -\llbracket i,p\rrbracket$ or $\mathcal{I} = \llbracket 1, p\rrbracket - \llbracket1,i\rrbracket$ which has size at most $p-1$. We have that all arches in $\overline{\mathcal{A}}_0$ and in $\cup_{i \notin \mathcal{I}} \overline{\mathcal{A}}_i$ touch at least one of the $4m$ columns $\mathcal{B}$.

        In both cases, we proved that there is $\mathcal{I} \subseteq \llbracket 1,p\rrbracket$ of size at most $p-1$ so that all arches in $\overline{\mathcal{A}}_0$ and in $\cup_{i \notin \mathcal{I}} \overline{\mathcal{A}}_i$ touch at least one of the $4m$ columns $\mathcal{B}$. Let's apply induction on $\cup_{i \in \mathcal{I}} A_i$ and $\cup_{i \in \mathcal{I}} \overline{\mathcal{A}}_i$ with $p'' = |\mathcal{I}|-4m < p'$. By induction hypothesis, there is a set $\mathcal{I}' \subseteq \mathcal{I}$ of size at most $4m$ so that all arches in $\cup_{i \in \mathcal{I}} A_i$ and $\cup_{i \in \mathcal{I} - \mathcal{I}'} \overline{\mathcal{A}}_i$ touch at least one of the $4m$ columns $\mathcal{B}$.

        Finally, all arches in $\overline{\mathcal{A}}_0$ and in $\cup_{i \notin \mathcal{I}'} \overline{\mathcal{A}}_i$ touch at least one of the $4m$ columns $\mathcal{B}$.
        \end{proof_claim}
		
		\vspace{0.3cm}
		
        \begin{subclaim}
            \label{columns_A_i}
            For $1 \leq i \leq p$, suppose that $A_i$ intersect the column $c$ of $H$. Then, there exists a set of at most $\tilde{g}^2 + \tilde{g}$ columns of $H$ so that every arch in $\overline{\mathcal{A}}_i$ touches either $c$ or one of these columns.
        \end{subclaim}
		
        \begin{proof_claim}
        \begin{itemize}
        
            \item \underline{If $A_i$ has no intersection point:}

            Remark that, by construction, the arches in $\overline{\mathcal{A}}_i$ always intersect the interior of $A_i$ either in a vertex or in a subpath of $A_i$. Remark moreover that, by construction, as the arches in $\overline{\mathcal{A}}_i$ are empty above, their paths in $\text{int}(C_i \cup C_{i+1}, \Pi)$ are pairwise internally disjoint.

            For each arch $A \in \overline{\mathcal{A}}_i$, let $v_A$ be its intersection point with $A_i$ (if $A$ shares a subpath with $A_i$, the intersection point is the vertex on which $A$ and $A_i$ diverge), we call the partition of $A$ in the two subpaths of $A$ with $v_A$ as an endpoint its \textit{half-arches}.
            Let $\mathcal{B}_1$ and $\mathcal{B}_2$ be the partition of the half-arches in $\overline{\mathcal{A}}_i$ so that, for two half-arches $B, B' \in \mathcal{B}_1$ (resp. $\mathcal{B}_2$), the paths from $p$ to $A_i$ induced by $B$ and $B'$ are $\Pi$-homotopic. Remark that every arch of $\overline{\mathcal{A}}_i$ has one of its half-arch in $\mathcal{B}_1$ and the other in $\mathcal{B}_2$. Remark, moreover, that either $\mathcal{B}_1$ or $\mathcal{B}_2$ (potentially none and not both) contains half-arches that touch $p$. Suppose, therefore, without loss of generality, that $\mathcal{B}_1$ does not contain any half-arch that touches $p$.
            
            If a subset $\mathcal{B}$ of arches in $\mathcal{B}_1$ is so that every half-arch of $\mathcal{B}$ in $\text{int}(C_i \cup C_{i+1}, \Pi)$ intersect $A_i$ on the same vertex $v_i$, then $\mathcal{B}$ intersect at most $\tilde{g}$ columns, as otherwise these arches would create $\tilde{g}+1$ isolated paths on $p$ and $v_i$ and this would be in contradiction with Proposition \ref{isolated_paths}. Moreover, if a subset $\mathcal{B}$ of arches in  $\mathcal{B}_1$ is so that the half-arches of $\mathcal{B}$ in $\text{int}(C_i \cup C_{i+1}, \Pi)$ intersect $A_i$ on disjoint vertices, then $\mathcal{B}$ intersect at most $\tilde{g}$ columns, as otherwise these arches would create $\tilde{g}+1$ isolated paths on $p$ and $A_i$ and this would be in contradiction with Proposition \ref{isolated_paths}.
            
            Finally, in total, there exists a set of at most $\tilde{g}^2 \leq \tilde{g}^2 + \tilde{g}$ columns so that every arch in $\overline{\mathcal{A}}_i$ touches either $c$ or one of these columns.
            
            \item \underline{If $A_i$ has one intersection point:}

            Let $v_i$ be the intersection vertex of $A_i$ and let $A_i^1$ and $A_i^2$ be the partition of $A_i$ into two subpaths of $A_i$ with $v_i$ as an endpoint.
            
            Remark that, by construction, the arches in $\overline{\mathcal{A}}_i$ always intersect the interior of $A_i$ either in a vertex or in a subpath of $A_i^1$ or $A_i^2$. Remark moreover that, by construction, as the arches in $\overline{\mathcal{A}}_i$ are empty above, their paths in $\text{int}(C_i \cup C_{i+1}, \Pi)$ are pairwise internally disjoint.
            Let $\overline{\mathcal{A}}_i^0$, $\overline{\mathcal{A}}_i^1$ and $\overline{\mathcal{A}}_i^2$ be the sets of arches in $\overline{\mathcal{A}}_i$ that intersect the interior of $A_i$ in respectively a vertex, $A_i^1$ and $A_i^2$.
            
            Remark that two arches $A, A' \in \overline{\mathcal{A}}_i^0$ must intersect the interior of $A_i$ in the same vertex (to avoid crossings). Let $v_i'$ be the vertex of $A_i$ that arches in $\overline{\mathcal{A}}_i^0$ all intersect (if $\overline{\mathcal{A}}_i^0$ is not empty). %Moreover, remark that either $\overline{\mathcal{A}}_i^1$ or $\overline{\mathcal{A}}_i^2$ is empty (or both), as otherwise the arches from these two sets would cross (contradiction with the planarity of $\text{Int}(C_i \cup C_{i+1}, \Pi)$). Suppose, without loss of generality, that $\overline{\mathcal{A}}_i^2$ is empty.
            For each arch $A \in \overline{\mathcal{A}}_i^0$, let $v_A$ be its intersection point with $A_i$ (if $A$ shares a subpath with $A_i$, the intersection point is the vertex on which $A$ and $A_i$ diverge), we call the partition of $A$ in the two subpaths of $A$ with $v_A$ as an endpoint its \textit{half-arches}.
            Let $\mathcal{B}_1$ and $\mathcal{B}_2$ be the partition of the half-arches in $\overline{\mathcal{A}}_i^0$ so that, for two half-arches $B, B' \in \mathcal{B}_1$ (resp. $\mathcal{B}_2$), the paths from $p$ to $A_i$ induced by $B$ and $B'$ are $\Pi$-homotopic. Remark that every arch of $\overline{\mathcal{A}}_i^0$ has one of its half-arch in $\mathcal{B}_1$ and the other in $\mathcal{B}_2$. Remark, moreover, that either $\mathcal{B}_1$ or $\mathcal{B}_2$ (potentially none and not both) contains half-arches that touch $p$. Suppose, therefore, without loss of generality, that $\mathcal{B}_1$ does not contain any half-arch that touches $p$.
            Remark that $\mathcal{B}_1$ intersects at most $\tilde{g}$ columns, as otherwise these arches would create $\tilde{g}+1$ isolated paths on $p$ and $v_i'$ and this would be in contradiction with Proposition \ref{isolated_paths}. Therefore, there exists a set of at most $\tilde{g}$ columns so that every arch in $\overline{\mathcal{A}}_i^0$ touches one of these columns.

            Suppose, without loss of generality, that $A_i^2$ intersects $c$. The, the arches in $\overline{\mathcal{A}}_i^2$ all touch $c$. Therefore, it only remains to bound the number of columns that the arches in $\overline{\mathcal{A}}_i^1$ intersect. 

            %If $\overline{\mathcal{A}}_i^1$ contains half-arches that touch $p$, then we can ignore them because then the arches in $\overline{\mathcal{A}}_i^1$ touch $c$.
            If a subset $\mathcal{A}^1$ of arches in $\overline{\mathcal{A}}_i^1$ is so that the arches of $\mathcal{A}^1$ in $\text{int}(C_i \cup C_{i+1}, \Pi)$ intersect $A_i$ on the same vertex $v_i^1$, then $\mathcal{A}^1$ intersect at most $\tilde{g}$ columns of $H$, otherwise these arches would create $\tilde{g}+1$ isolated paths on $p$ and $v_i^1$ and this would be in contradiction with Proposition \ref{isolated_paths}. Moreover, if a subset $\mathcal{A}^1$ of arches in $\overline{\mathcal{A}}_i^1$ is so that the arches of $\mathcal{A}^1$ in $\text{int}(C_i \cup C_{i+1}, \Pi)$ intersect $A_i$ on disjoint vertices, then $\mathcal{A}^1$ intersect at most $\tilde{g}$ columns of $H$, as otherwise these arches would create $\tilde{g}+1$ isolated paths on $p$ and $A_i^1$ and this would be in contradiction with Proposition \ref{isolated_paths}.
            
            Finally, there exists a set of at most $\tilde{g}^2$ columns so that every arch in $\overline{\mathcal{A}}_i^1$ touches one of these columns. And in total, there exists a set of at most $\tilde{g}^2 + \tilde{g}$ columns so that every arch in $\overline{\mathcal{A}}_i$ touches either $c$ or one of these columns.
        \end{itemize}
        \end{proof_claim}
        
        \vspace{0.3cm}

        By Claim \ref{not_covered_A_i}, there exists a set $\mathcal{C}$ of at most $4m$ columns and a set $\mathcal{I} \subseteq \llbracket1,p\rrbracket$ of size at most $4m$ so that all arches in $\overline{\mathcal{A}}_0$ and in $\cup_{i \notin \mathcal{I}} \overline{\mathcal{A}}_i$ touch at least one of these columns.
        Moreover, let $i \in \mathcal{I}$ and $c_i$ be a column of $\mathcal{C}$ that $A_i$ touches. By Claim \ref{columns_A_i}, there is a set of at most $\tilde{g}^2 + \tilde{g}$ columns so that the arches in $\overline{\mathcal{A}}_i$ all touch either $c_i$ or these columns. Finally, the arches in $\overline{\mathcal{A}}$ touch at most $4m(\tilde{g}^2 + \tilde{g} + 1)$ columns.

        In total, there exists at most $(3g+3) \times 2 \times 4m(\tilde{g}^2+\tilde{g}+1)$ columns so that the $\Pi$-noncontractible arches of $H$ all touch one of these columns.

        \vspace{0.5cm}

        Now, let's show that there are at least $\frac{M}{4m(\tilde{g}+1)^4}$ consecutive columns of $H$ so that, in the induced fan $H'$, $\mathcal{A}(H')$ contains no $\Pi$-noncontractible arch.

        Let $\mathcal{A}_p(H)$ and $\mathcal{A}_P(H)$ be respectively the $\Pi$-noncontractible arches of $\mathcal{A}(H)$ that touch $p$ and the arches that touch solely $P$.
        Let's show that there are at most $(3g+3) \times 2 \times [4m(\tilde{g}^2+\tilde{g}+1) \times 2\tilde{g} + 4 \times (3g+3) \times \tilde{g}] \leq 4m (\tilde{g}+1)^4$ columns so that:
        \begin{enumerate}
            \item the faces $\mathcal{F}_p(H)$ in $\mathcal{F}(H)$ that have an arch in $\mathcal{A}_p(H)$ touch only these columns, and
            \item the faces $\mathcal{F}_P(H)$ in $\mathcal{F}(H)$ that have arches only in $\mathcal{A}_P(H)$ are so that each of their $\Pi$-noncontractible arches touch one of these columns. 
        \end{enumerate}
        
        This will guarantee that there are at least $\frac{M}{4m(\tilde{g}+1)^4}$ consecutive columns of $H$ so that, in the induced fan $H'$, $\mathcal{A}(H')$ contains no $\Pi$-noncontractible arch.

        We start with the at most $(3g+3) \times 2 \times 4m(\tilde{g}^2+\tilde{g}+1)$ columns $\mathcal{C}_0$ found previously. Hence, the second property is already guaranteed.

        Let $\mathcal{A}$ be a maximal subset of independent $\Pi$-noncontractible homotopic empty above arches from $\mathcal{A}_p(H)$. Let $\mathcal{C} \subseteq \mathcal{C}_0$ of size at most $4m (\tilde{g}^2+\tilde{g}+1)$ be the columns so that every arch in $\mathcal{A}$ intersect a column of $\mathcal{C}$. Let $S_0$ be the smallest cylinder in $S$ that contains all the cycles associated with $\mathcal{A}$.

        For $c \in \mathcal{C}$, let $\mathcal{A}_c \subseteq \mathcal{A}$ be the arches in $\mathcal{A}$ that intersect the column $c$, $(\mathcal{A}_c)_{c \in \mathcal{C}}$ is a partition of $\mathcal{A}$. Remark that for each column $c \in \mathcal{C}$, every face associated with the arches in $\mathcal{A}_c$ except two (that we will call $f_c^1$ and $f_c^2$) intersects only $c$. Hence, adding columns that intersect only these two faces is enough. Moreover, let $\mathcal{F}_\mathcal{C} = \{ f_c^1, f_c^2 | c \in \mathcal{C}\}$, remark that every face in $\mathcal{F}_\mathcal{C}$ expect two ($f_1^*$ and $f_2^*$) is contained in $S_0$. By Lemma \ref{face_no_multiple_columns}, the faces in $\mathcal{F}_\mathcal{C} - \{f_1^*, f_2^*\}$ intersect at most $\tilde{g}$ columns. Moreover, the faces $f_1^*$ and $f_2^*$ contain at most $2 \times (3g+3)$ $\Pi$-noncontractible arches on $H$ whose associated cycles are pairwise $\Pi$-nonhomotopic and, by Lemma \ref{face_no_multiple_columns}, their $\Pi$-homotopic arches intersect at most $\tilde{g}$ columns. Hence, in total, the faces $f_1^*$ and $f_2^*$ intersect at most $2 \times (3g+3) \times \tilde{g}$ columns.

        Finally, there are at most $(3g+3) \times 2 \times [4m(\tilde{g}^2+\tilde{g}+1) \times 2\tilde{g} + 4 \times (3g+3) \times \tilde{g}]$ columns $\mathcal{C}_1$ so that the faces in $\mathcal{F}_p(H)$ touch only these columns and the arches in $\mathcal{A}_P(H)$ touch at least one of these columns.

        \vspace{0.3cm}

        To conclude, remark then that every set of consecutive columns of $H$ that does not contain $\mathcal{C}_1$ induces a fan $H'$ so that $\mathcal{A}(H')$ does not contain a $\Pi$-noncontractible arch. Therefore, there is a set of at least $\frac{M}{4m (\tilde{g}+1)^4}$ consecutive columns that induce a fan $H'$ from $v$ so that the arches in $\mathcal{A}(H')$ are all $\Pi$-contractible.
    \end{proof} 

    \begin{sublem}
        \label{new_fan}
        Let $1 \leq i \leq m$ and let $H$ be a fan (with an arch) from $p$ in $(G, \Pi)$ of size at least $\frac{f(g,i)-2}{4m(\tilde{g}+1)^4}$. Suppose that every arch in $\mathcal{A}(H)$ induces, together with $H$, a $\Pi$-contractible subgraph of $G$. Then, there exists $1 \leq k \leq i$ so that there are at least $f(g,i-k)$ maximal arches from $\mathcal{A}_{max}(H)$ of size at least $f(g,k-1)+2$.
    \end{sublem}

    \begin{proof}
        Suppose, by contradiction, that, for every $1 \leq k \leq i$, there are less than $f(g,i-k)$ maximal arches from $\mathcal{A}_{max}(H)$ of size at least $f(g,k-1)+2$.

        Remark that $\mathcal{A}_{max}$ can be partitioned into $i+1$ sets of arches $\mathcal{A}_0, ..., \mathcal{A}_i$ so that, for $0 \leq k \leq i$, $A \in \mathcal{A}_{max}$ is in $\mathcal{A}_k$ if its size is in the range $f(g,k-1)+2$ to $f(g,k)+1$ (with $f(g,-1) = -1$). Remark, moreover, that, by hypothesis, $\mathcal{A}_i$ is empty.

        If $i=1$, then $\mathcal{A}_{max}$ only contains maximal arches of size at most $2$. However, this contradicts the definition of $\mathcal{A}_{max}$.

        Hereafter, suppose $i \geq 2$.
        
        Therefore, 
        
        \begin{align*}
            \sum_{A \in \mathcal{A}_{max}} |A| & = \sum_{k=0}^{i-1} \sum_{A \in \mathcal{A}_k} |A| \\
            & = \sum_{A \in \mathcal{A}_0} |A| + \sum_{k=
        1}^{i-1} \sum_{A \in \mathcal{A}_k} |A| \\
            & \leq 4\sum_{k=1}^{i-1} f(g,k) +\sum_{k=1}^{i-1} (f(g,i-k)-1) \times (f(g,k)+1) \\
            & < 4\sum_{k=1}^{i-1} f(g,k) +\sum_{k=1}^{i-1} f(g,i-k) \times f(g,k)
        \end{align*}

        As, for $1 \leq k \leq i-1$, 
        
        \begin{align*}
             f(g,i-k) \times f(g,k) &=  \left(4\sqrt{2}m (\tilde{g}+1)^{2}\right)^{(i-k)^2+k^2} \\
            & = \left(4\sqrt{2}m (\tilde{g}+1)^{2}\right)^{i^2+2k(k-i)} \\
            & \leq \left(4\sqrt{2}m (\tilde{g}+1)^{2}\right)^{i^2-2} = \frac{f(g,i)}{32m^2(\tilde{g}+1)^4}
        \end{align*}

        and

        \begin{align*}
            4\sum_{k=1}^{i-1} f(g,k) &\leq 4 \times (i-1) \times f(g,i-1) \\
            & \leq (i-1)\times 4 \times \left(4\sqrt{2}m (\tilde{g}+1)^{2}\right)^{i^2-2} = (i-1)\times \frac{4f(g,i)}{32m^2(\tilde{g}+1)^4}
        \end{align*}
        
        Then 

        \begin{align*}
            \sum_{A \in \mathcal{A}_{max}} |A| &< (i-1) \times \left(\frac{4f(g,i)}{32m^2(\tilde{g}+1)^4} + \frac{f(g,i)}{32m^2(\tilde{g}+1)^4} \right) \\
            & \leq (i-1) \frac{f(g,i)-2}{4m^2(\tilde{g}+1)^4} \\
            & < \frac{f(g,i)-2}{4m(\tilde{g}+1)^4}
        \end{align*}

        However, as the maximal arches of $H$ cover all the horizontal path of $H$, $\sum_{A \in \mathcal{A}_{max}} |A| \geq \frac{f(g,i)-2}{4m(\tilde{g}+1)^4}$, a contradiction.
    \end{proof}

    \begin{sublem}
        \label{touch_arch}
        Let $H$ be a fan with an arch from $p$ in $(G,\Pi)$ of size at least $M$. Then, there exists a subset of consecutive faces from $\mathcal{F}(H)$ that spans at least $\frac{M}{\tilde{g}^2}$ columns.
    \end{sublem}

    \begin{proof}
        Let $P_{arch}$ be the arch path of $H$. By definition of $\mathcal{F}(H)$, no face in $\mathcal{F}(H)$ touches $P_{arch}$, but the faces in $\mathcal{F}(H)$ may not be consecutive.

        First, by Lemma \ref{face_no_multiple_columns}, each face of $\mathcal{F}_{arch}(H)$ intersect at most $\tilde{g}$ columns.

        If $P_{arch}$ has no intersection point, then $\mathcal{F}_{arch}(H)$ contains only one face  and $\mathcal{F}_{arch}(H)$ intersect at most $\tilde{g}$ columns in total.
        
        Suppose $P_{arch}$ has $1$ intersection point. Suppose that the faces in $\mathcal{F}_{arch}(H)$ touch at least $\tilde{g}^2+1$ columns, then it contains at least $\tilde{g}+1$ faces that touch pairwise disjoint columns. However, 
        then there would be at least $\tilde{g}+1$ isolated paths on $p$ and the intersection point of $P_{arch}$ and we would find a contradiction with Proposition \ref{isolated_paths}. 
        
        Hence, in any case, $\mathcal{F}_{arch}(H)$ touches at most $\tilde{g}^2$ disjoint columns.

        Finally, it is possible to find a subset of at least $\frac{M}{\tilde{g}^2}$ consecutive columns that are touched by no face from $\mathcal{F}_{arch}(H)$. Therefore, consecutive faces from $\mathcal{F}(H)$ touch these consecutive columns.
    \end{proof}
    
    \begin{sublem}
    \label{big_degree_nested_cycles}
        If there exists a piece $p$ of $(G, \Pi) $ such that $d(p) \geq \Delta(g) = 4m(\tilde{g}+1)^{4} \left(4\sqrt{2} m(\tilde{g}+1)^{2}\right)^{m^2}$ then $G$ contains $m+1$ cycles that are $\Pi$-contractible well nested pinched on $p$.
    \end{sublem}

    \begin{proof}
        Let $p$ be a piece of $(G, \Pi)$ of degree at least $\Delta(g)$.

        Let $f(g,i) = \left(4\sqrt{2} m (\tilde{g}+1)^{2}\right)^{i^2}$ for $i \in \mathbb{N}$. Let's prove simultaneously the following properties by induction on $i$ with $0 \leq i \leq m$:
        \begin{enumerate}
            \item If there is a fan $H$ from $p$ of size $f(g,i)$ in $G$, then $G$ contains $i$ $\Pi$-contractible well nested cycles with a column of $H$ strictly contained in the innermost nested cycle.
            \item If there is a fan with an arch $H$ from $p$ of size $f(g,i)$ in $G$, then there are $i$ $\Pi$-contractible well nested cycles in $\text{int}(H, \Pi)$ with a column of $H$ strictly contained in the innermost nested cycle. 
        \end{enumerate}

        First, for $i=0$, both properties are trivial.

        Then, let $1 \leq i \leq m$ and suppose that both properties are valid for every $k < i$. Let's prove each property for $i$.

    \begin{enumerate}
        \item \underline{Suppose that there is a fan $H$ from $p$ of size $f(g,i)$ in $(G,\Pi)$.} 
        
        \underline{Let's prove property (1) for $i$.} 
        
        Let $P$ be the horizontal path of $H$.  

        Remove the first and last column of the fan $H$; therefore, no face in $\mathcal{F}(H)$ touches the first or last vertical path of $H$ (except potentially in $p$).

        By Lemma \ref{layer_non_contractible_cycles}, there exists at least $\frac{f(g,i)-2}{4m (\tilde{g}+1)^4}$ consecutive columns of $H$ (let's call the associated fan $H'$, $P'$ its horizontal path) so that the arches in $\mathcal{A}(H')$ does not induce, together with $H'$, $\Pi$-noncontractible cycles.
        Therefore, every arch in $\mathcal{A}(H')$ induces, together with $H'$, a $\Pi$-contractible subgraph of $G$.

        By Lemma \ref{new_fan}, there exists $1 \leq k \leq i$ so that there are at least $f(g,i-k)$ maximal arches from $\mathcal{A}_{max}(H')$ of size at least $f(g,k-1)+2$. Remark then that we can construct a fan $H''$ of size at least $f(g,i-k)$ so that each maximal arch of size at least $f(g,k-1)+2$ is contained in one of its columns. Remark also that, even though each maximal arch $A$ has size at least $f(g,k-1)+42$, the column of a fan $H''$ containing $A$ may not contain all the columns of $H'$ that $A$ touches. It, however, contains at least $f(g,k-1)$ of them.

        By induction hypothesis on the property (1) for $H''$, $G$ contains $i-k$ $\Pi$-contractible well-nested cycles with a column of $H''$ strictly contained in the innermost nested cycle. Furthermore, by construction, this column contains an arch $A$ from $\mathcal{A}_{max}(H')$ (hence empty below) of size at least $f(g,k-1)+2$. Let $H_A$ be the fan with an arch induced by $H'$ whose arch path is $A$ (if $A$ touches $p$, then we artificially add columns on the sides so that the fan structure touches the arch appropriately). The fan $H_A$ has a size of at least $f(g,k-1)$. Therefore, by induction hypothesis on the property (2) for $H_A$, $G$ contains $k-1$ $\Pi$-contractible well-nested cycles in $\text{int}(H_A, \Pi)$ with a column of $H_A$ (which corresponds to a column of $H$) strictly contained in the innermost nested cycle.

        Finally, $G$ contains $i$ $\Pi$-contractible well-nested cycles with a column of $H$ strictly contained in the innermost nested cycle: the $i-k$ nested cycles found in $H''$, the cycle associated to $A$ and the $k-1$ nested cycles found in $H_A$.

        \item \underline{Suppose that there is a fan with an arch $H$ from $p$ of size $f(g,i)$ in $(G,\Pi)$.} 
        
        \underline{Let's prove property (2) for $i$.} 
        
        Let $P$ be the horizontal path of $H$ and $P_{Arch}$ be the arch path of $H$.
        
        Remove the first and last column of the fan $H$; therefore, no face in $\mathcal{F}(H)$ touches the first or last vertical path of $H$ (except potentially in $p$).
        
        By Lemma \ref{touch_arch}, we can restrict $\mathcal{F}(H)$ to a subset of consecutive faces from $\mathcal{F}(H)$ that spans at least $\frac{f(g,i)-2}{\tilde{g}^2} \geq \frac{f(g,i)-2}{4m(\tilde{g}+1)^4}$ columns so that this subset of faces does not touch $P_{arch}$ (let's call the associated fan $H'$, $P'$ its horizontal path and $P_{arch}$ is still its arch path). 

        By Lemma \ref{new_fan}, there exists $1 \leq k \leq i$ so that there are at least $f(g,i-k)$ maximal arches from $\mathcal{A}_{max}(H')$ of size at least $f(g,k-1)+2$. Remark then that we can construct a fan with an arch $H''$, with the arch path that is a subpath of $P_{Arch}$, of size at least $f(g,i-k)$ so that each maximal arch of size at least $f(g,k-1)+2$ is contained in one of its columns. Remark that, even though each maximal arch $A$ has size at least $f(g,k-1)+2$, the column of a fan $H''$ containing $A$ may not contain all the columns of $H'$ that $A$ touches. It, however, contains $f(g,k-1)$ of them.

        By induction hypothesis on the property (2) for $H''$, $G$ contains $i-k$ $\Pi$-contractible well-nested cycles in $\text{int}(H'', \Pi)$ with a column of $H''$ strictly contained in the innermost nested cycle. Furthermore, by construction, this column contains an arch $A$ from $\mathcal{A}_{max}(H')$ (hence empty below) of size at least $f(g,k-1)+2$. Let $H_A$ be the fan with an arch induced by $H'$ whose arch path is $A$. It has a size of at least $f(g,k-1)$. Therefore, by induction hypothesis on the property (2) for $H_A$, $G$ contains $k-1$ $\Pi$-contractible well-nested cycles in $\text{int}(H_A, \Pi)$ with a column of $H_A$ (which corresponds to a column of $H$) strictly contained in the innermost nested cycle.

        Finally, $G$ contains $i$ $\Pi$-contractible well-nested cycles in $\text{int}(H, \Pi)$ with a column of $H$ strictly contained in the innermost nested cycle: the $i-k$ nested cycles found in $H''$, the cycle associated to $A$ and the $k-1$ nested cycles found in $H_A$.
    \end{enumerate}
   
    This concludes the induction.

    By Lemma \ref{layer_non_contractible_cycles}, %the $\Pi$-noncontractible arches on $v$ intersects at most $(3g-3) \times 2 \times m(\tilde{g}^2+g+2) \leq m (\tilde{g}+1)^3$ edges adjacent to $v$. Moreover, remark that every arch on $v$ must induce a $\Pi$-noncontractible cycle, as otherwise we find a contradiction with Lemma \ref{2_separated_subgraph_is_edge}. 
    there are at least $\frac{\Delta(g)}{4m (\tilde{g}+1)^4} = f(g,m)$ columns of $p$ that are consecutive in $\Pi$ so that the faces that touches these columns induce no $\Pi$-noncontractible cycles.

    Hence, these faces induce a $1$-layer structure $H$ from $v$ of size $f(g,m)$. 

    Let's apply the property (1) to $H$ with $i=m$. We obtain that $G$ contains $m$ $\Pi$-contractible well-nested cycles pinched on $p$ with a column of $H$ strictly contained in the innermost nested cycle in $(G,\Pi)$. Finally, by taking the cycle on the boundary of this column as the last nested cycle, $G$ contains $m+1$ $\Pi$-well-nested cycles pinched on $p$ in $(G, \Pi)$. 
\end{proof}
    
    Suppose either $\Delta(G) > \Delta(g)$ or $\Delta_F(G, \Pi) > \Delta(g)$. Then there exists a piece $p$ of $(G, \Pi)$ with $\Delta(p) > \Delta(g)$. 
    
    Then, by Lemma \ref{big_degree_nested_cycles}, $(G, \Pi)$ contains $m+1$ $\Pi$-contractible well nested cycles pinched on $p$. However, this contradicts Proposition \ref{good_square}. Finally, $d(p) \leq \Delta(g)$ and therefore $\Delta(G) \leq \Delta(g)$ and $\Delta_F(G, \Pi) \leq \Delta(g)$.
\end{proof}

\subsection{Bound on the height of the tree decomposition of \texorpdfstring{$G$}{G}} \label{height_tree_decomposition}

We show that the height of the tree $T$ in a minimal tree decomposition of $G$ of width $w$ is bounded by a function of order $O(w) \times g^{O(\log^3 g)}$. We proceed by first proving that if the longest path in $T$ is of length at least some bound $P(g) = O(w) \times g^{O(\log^3 g)}$, then the subgraph $G_0$ of $G$ induced by the bags along this path induces a nonplanar embedding $\Pi(G_0)$. Using this result, we quickly show that the height of $T$ is bounded by some other bound $P'(g) = O(w) \times g^{O(\log^3 g)}$.

%ICI
%\begin{defi}[Locally minimal tree decomposition]
%	\label{def:locally_minimal_tree_decomposition}
%    Let $H$ be a graph and let $(T, (V_t)_{t \in T})$ be a tree decomposition of $H$.
%    We say that $(T, (V_t)_{t \in T})$ is a \textit{locally minimal tree decomposition} of $H$ if, for every edge $t - t'$ in $T$, $V_t \nsubseteq V_{t'}$ and $V_{t'} \nsubseteq V_t$.
%\end{defi}

\begin{lem}
    \label{property_minimal_tree_decomposition}
    Let $H$ be a graph and let $(T, (V_t)_{t \in T})$ be a minimal tree decomposition of $H$. Let $P = t_1 - ... - t_m$ be a path in $T$ ($m \geq 1$). Then, for every $1 \leq i \leq m$, $V_{t_i} \nsubseteq \cup_{j < i} V_{t_j}$.
\end{lem}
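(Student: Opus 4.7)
The plan is to prove this by contradiction: assuming some index $i$ violates the conclusion, I will construct a strictly smaller tree decomposition of $H$, contradicting minimality of $(T,(V_t)_{t\in T})$. Suppose there exists $1 \leq i \leq m$ with $V_{t_i} \subseteq \bigcup_{j<i} V_{t_j}$. The main tool is the running intersection property of tree decompositions together with the observation that contracting an edge $uv$ of $T$ for which $V_u \subseteq V_v$ yields a valid tree decomposition of the same width but with one fewer node.

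I will handle the case $i \geq 2$ first, since it is the main content. Fix any $v \in V_{t_i}$; by assumption there is some $j < i$ with $v \in V_{t_j}$. Since $t_{i-1}$ lies on the $t_j$--$t_i$ path in $T$, the running intersection property forces $v \in V_{t_{i-1}}$. As $v$ was arbitrary this gives $V_{t_i} \subseteq V_{t_{i-1}}$. Now contract the edge $t_{i-1}t_i$ in $T$, assigning to the merged node the bag $V_{t_{i-1}} \cup V_{t_i} = V_{t_{i-1}}$. One checks routinely that the three axioms of a tree decomposition are preserved (the vertex cover and edge cover properties are trivial, and the connected-subtree property for each $x \in V(H)$ survives because we merged two adjacent bags). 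The width is unchanged, and $|V(T)|$ dropped by one, contradicting minimality.

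For the boundary case $i=1$ the hypothesis becomes $V_{t_1} = \emptyset$. If $m \geq 2$ then $t_1$ has a neighbor $t_2$ in $T$ and the same edge-contraction argument applies (since $V_{t_1} \subseteq V_{t_2}$ trivially). If $m=1$ and $t_1$ has any neighbor $t'$ in $T$, contract $t_1 t'$ similarly. The only remaining situation is $m=1$ and $V(T)=\{t_1\}$, in which case $V_{t_1} = \emptyset$ would force $V(H) = \emptyset$; this is the degenerate empty-graph case which we may exclude (or treat vacuously). I do not foresee a serious obstacle: the only thing to be careful about is the verification that edge-contraction of two nested bags preserves the connected-subtree condition, which I will state briefly rather than belabour.
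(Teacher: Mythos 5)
Your proof is correct and follows essentially the same route as the paper's. Both proofs hinge on two facts: minimality of the tree decomposition rules out $V_{t_i} \subseteq V_{t_{i-1}}$ (the paper asserts this directly, while you spell out the edge-contraction argument that justifies it), and the running intersection property propagates membership (or non-membership) along the path; you phrase this as a contradiction whereas the paper runs the same argument directly by picking a witness $v_i \in V_{t_i} \setminus V_{t_{i-1}}$, but the content is the same.
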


\begin{proof}
    The result for $i=1$ is trivial.
    
    Let $2 \leq i \leq m$. As $(T, (V_t)_{t \in T})$ is a minimal tree decomposition of $H$, $V_{t_i} \nsubseteq V_{t_{i-1}}$. Hence, there exists a vertex $v_i \in V_{t_i}$ that is not in $V_{t_{i-1}}$. Moreover, by property of the tree decomposition, $v_i \notin \cup_{j < i} V_{t_j}$.

    Finally, $V_{t_i} \nsubseteq \cup_{j < i} V_{t_j}$.
\end{proof}

\begin{defi}[Radius]
    Let $H$ be a graph with embedding $\Pi_H$ in a surface.
    Let $C$ be a $\Pi_H$-contractible cycle of $H$ and let $G_C = \text{Int}(C, \Pi_H)$. We define the \textit{radius} $rad(f)$ of a face $f$ in $\mathcal{F}(C, \Pi_H)$ inductively as follow:
    The faces that share a vertex with $V(C)$ have radius $1$. For $i \geq 2$, suppose we have already determined the faces of radius $1$ to $i$. Then, the faces of radius $i+1$ are those that are not of radius at most $i$ but share a vertex with the faces of radius $i$.
    We say that $G_C$ has \textit{radius $m$} if $\max\{ rad(f) | f \in \mathcal{F}(C, \Pi_H)\} = m$.

    An illustration can be found in Figure \ref{fig:radius_graph}.

    \vspace{0.3cm}

    Let $C, C'$ be $\Pi_H$-noncontractible homotopic cycles of $H$ and let $G_{C,C'} = \text{Int}(C \cup C', \Pi_H)$. We define the \textit{radius} $rad(f)$ of a face $f$ in $\mathcal{F}(C \cup C', \Pi_H)$ in $\Pi_H$ inductively as follow:
    The faces that share a vertex with $V(C \cup C')$ have radius $1$. For $i \geq 2$, suppose we have already determined the faces of radius $1$ to $i$. Then, the faces of radius $i+1$ are those that are not of radius at most $i$ but share a vertex with the faces of radius $i$.
    We say that $G_{C,C'}$ has \textit{radius $m$} if $\max\{ rad(f) | f \in \mathcal{F}(C \cup C', \Pi_H)\} = m$.
\end{defi}

\begin{figure}[h!]
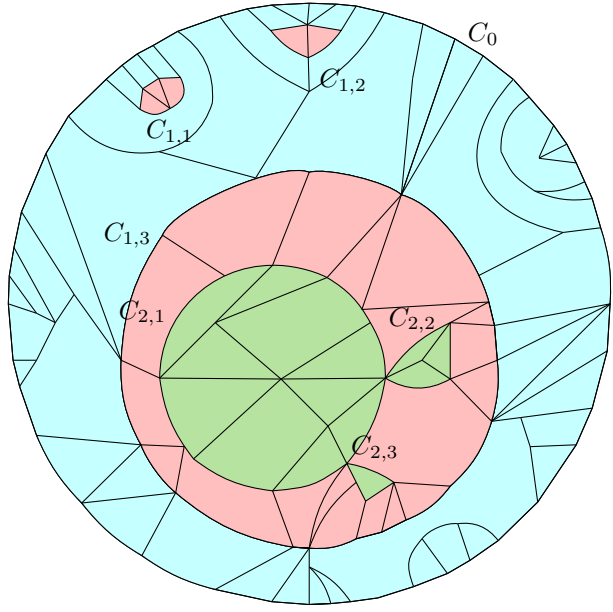

    \centering
    \tikzfig{images/radius_graph}
    \caption{Radius of a contractible graph. The three colors indicate the radius of the faces with respect to the outer cycle $C_0$: faces in blue, pink, and green are respectively of radius $1$, $2$, and $3$.}
    \label{fig:radius_graph}
\end{figure}

Let $C$ be a $\Pi$-contractible cycle. We define $\mathcal{B}(C)$ to be the faces in $\text{Int}(C,\Pi)$ that touch $C$. Let $(T, (V_t)_{t \in T})$ be a tree decomposition of a graph $H$. Let $P$ be a path in $T$ with endpoints $t_1$ and $t_2$. We define $\overline{P}$ to be the component of $T-\{t_1,t_2\}$ that contains the interior of $P$.%, together with $t_1$ and $t_2$.

\begin{prop}
    \label{no_long_planar_path}
	Let $(T,(V_t)_{t \in T})$ be a minimal tree decomposition of $G$ of width $w$.
    Let $P$ be a path from $t_1$ to $t_2$ of length $P(g,w)$ in $T$ with $P(g,w) = \frac{\Delta(g)(\Delta(g)^{2m}-1)}{\Delta(g)-1} \times 2w + w+2$. %$P(g,w) = \left( \frac{\Delta(g)(\Delta(g)^{2m}-1)}{\Delta(g)-1} + \frac{(\Delta(g)^{2m}-1)^2}{(\Delta(g)-1)(\Delta(g)^2-1)} \right)\times 2w + w+2$.
    Let $G_0 = \bigcup_{t \in \overline{P}} V_t - (V_{t_1} \cup V_{t_2})$. Then $\Pi(G_0)$ is not an embedding in a disk on $S$.
\end{prop}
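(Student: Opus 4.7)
The plan is to argue by contradiction: assume that $\Pi(G_0)$ embeds in a disk $D$ on $S$, and then build $m+1$ $\Pi$-contractible well-nested cycles inside $D$, contradicting Proposition~\ref{good_square}.

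I would first lower-bound $|V(G_0)|$ using Lemma~\ref{property_minimal_tree_decomposition}. Along the path $t_1=s_0,s_1,\ldots,s_{P(g,w)}=t_2$, each bag introduces a ``new'' vertex appearing in no earlier bag, giving $P(g,w)+1$ pairwise distinct vertices. None of the new vertices of bags $s_1,\ldots,s_{P(g,w)}$ lies in $V_{t_1}$, and by the consecutivity property of a tree decomposition at most $|V_{t_2}|\le w+1$ of them lie in $V_{t_2}$, so
\[
|V(G_0)| \;\geq\; P(g,w) - w - 2 \;=\; 2w \cdot \frac{\Delta(g)(\Delta(g)^{2m}-1)}{\Delta(g)-1}.
\]

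Next I would describe a small ``boundary'' of $G_0$ in the disk and perform a radial BFS. By the separator property of tree decompositions, every edge of $G$ leaving $V(G_0)$ exits through $V_{t_1}\cup V_{t_2}$, so the set $B \subseteq V(G_0)$ of vertices on the outer boundary of the disk embedding that are anchored to the exterior has size at most $2w$ (after using Theorem~\ref{max_degree} and the disk-embedding constraint to trade degree for width). Taking $L_0 = B$ and letting $L_k$ be the set of vertices of $G_0$ newly reached from $L_{k-1}$ through one incident face, Theorem~\ref{max_degree} gives $|L_k| \le \Delta(g)\,|L_{k-1}|$; the resulting geometric sum matches the lower bound above and forces some vertex $v^* \in V(G_0)$ to lie at radial depth at least $2m+1$ inside $D$.

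The existence of $v^*$ produces a nested sequence $W_0 \supset W_1 \supset \cdots \supset W_{2m}$ of $\Pi$-contractible closed walks in $D$ separating $v^*$ from $B$. Smoothing each $W_k$ into an enclosing cycle $C_k$ and retaining only the cycles of even index yields $m+1$ $\Pi$-contractible cycles $C_0 \supset C_2 \supset \cdots \supset C_{2m}$; the one-layer gap between consecutive retained cycles ensures they are either disjoint or share only a single vertex or a single face, which is precisely the ``fully-well-nested'' or ``well-nested pinched on a piece'' configuration of Subsection~\ref{subsec:contractible_nested_squares}. This produces $m+1$ $\Pi$-well-nested cycles inside $D$, contradicting Proposition~\ref{good_square}.

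The main obstacle I anticipate is the last step: converting the radial BFS layers into genuine well-nested cycles in the strict sense of the paper. If $\Pi(G_0)$ is not $2$-connected or has pendant faces, the radial boundaries may degenerate into complicated closed walks rather than cycles, and two consecutive retained layers might share non-trivial subpaths that violate the well-nested definition. Resolving this cleanly will likely require first passing to a $2$-connected reduction of $G_0$ (via Lemmas~\ref{2_separated_subgraph_is_edge} and~\ref{G_blocks_excluded_minors}) and a second application of Theorem~\ref{max_degree} to bound the overlap between two consecutive radial boundaries; the $+w+2$ slack built into $P(g,w)$ is precisely what should absorb the constant-factor losses incurred in these reductions.
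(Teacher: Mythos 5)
Your proposal follows the same strategy as the paper (assume a disk embedding, lower-bound $|V(G_0)|$ via Lemma~\ref{property_minimal_tree_decomposition}, then derive a contradiction from $\Delta(g)$-bounded degrees and face sizes together with Proposition~\ref{good_square}), but stated ``inside-out'': the paper asserts via Proposition~\ref{good_square} that the radius of $G_0' = \text{Int}(C_0,\Pi)$ is at most $m$ and then shows by a geometric sum that $|G_0'|$ must be small, whereas you lower-bound the radial depth and try to extract well-nested cycles from the deep layers. Logically these are equivalent, and you have correctly identified all the relevant tools.

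However, there is a quantitative gap that breaks the argument as written. First, the layer growth is $\Delta(g)^2$, not $\Delta(g)$: each vertex of $L_{k-1}$ lies on at most $\Delta(g)$ faces, each of size at most $\Delta(g)$, so $|L_k| \leq \Delta(g)^2 |L_{k-1}|$ --- this is exactly the content of the paper's Claim~1 in this proof. Second, and more seriously, the numerics of $P(g,w)$ are calibrated so that a geometric sum over only $m$ levels of $\Delta(g)^2$-growth already exhausts the lower bound $|V(G_0)| \geq P(g,w) - (w+1)$; they do not support radial depth $2m+1$. Your plan of keeping every other layer to guarantee well-nestedness would therefore need $P(g,w)$ to be roughly $\Delta(g)^{4m}$ instead of $\Delta(g)^{2m}$, a multiplicative inflation that the additive ``$+\,w+2$'' slack cannot absorb. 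The paper sidesteps this by using the $m+1$ \emph{consecutive} radius-layer boundary cycles directly, implicitly treating them as $\Pi$-well-nested in the sense of Definition~\ref{def:contractible_square}; your instinct that this requires justification (consecutive layer boundaries may share vertices) is sound, but the fix cannot be an every-other-layer skip under the given $P(g,w)$. One would instead have to show that, because adjacent faces differ in radius by at most one, a consecutive pair of layer boundaries can overlap only in the ``pinched on a piece'' way allowed by the definition.
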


\begin{proof}
    Suppose by contradiction that $\Pi(G_0)$ is an embedding in a disk on $S$.

    First, let's show that $|G_0| \geq P(g,w)$. As $(T, (V_t)_{t \in T})$ is a minimal tree decomposition of $G$, by Lemma \ref{property_minimal_tree_decomposition}, for every vertex $t \in \overline{P}$, there exists a vertex $v_t \in V_t$ such that $v_t \notin \cup_{P_t} V_{t'}$ with $P_t$ being the path from $t_1$ (included) to $t$ (excluded). Therefore, the set $U = \{v_t, t \in \overline{P}\}$ has size $|V(\overline{P})| = P(g,w)-1$. Moreover, observe that $U$ does not contain, by construction, any vertex from $V_{t_1}$ and contains at most $w$ vertices from $V_{t_2}$. Finally, as $U - V_{t_2} \subseteq G_0$, we get $|G_0| \geq |U| - w = P(g,w) - (w+1)$.

    Then, let's show a contradiction by proving $|G_0| < P(g,w) - (w+1)$. Let $G' = \bigcup_{t \notin \overline{P}} V_t$. Let $A = \{ v \in V(G_0) \cap V(G')\}$ be the vertices that lie in both $G_0$ and $G'$. We call the vertices in $A$ "attach" vertices of $G_0$. Remark that $|A| \leq 2 w$.

    By hypothesis, $G_0$ is contained in a disk in $\Pi$. %We will treat these two cases separately. If $G_0$ is contained in a disk in $\Pi$, 
    Let $C_0$ be the cycle bounding $G_0$ in $\Pi$. Remark that $G_0 \subseteq \text{Int}(C_0, \Pi)$ and let $G_0' = \text{Int}(C_0, \Pi)$. We will show that $|G_0'| < P(g,w)-(w+1)$, therefore we will have $|G_0| \leq |G_0'| < P(g,w) -(w+1)$.

    For a cycle $C$ in $G_0'$, let $E(C)^+$ be the set of edges adjacent to $C$ in $\text{Int}(C,\Pi)$. By Theorem \ref{max_degree}, every vertex of $C$ is adjacent to at most $\Delta(g)$ edges, therefore $|E(C)^+| \leq \Delta(g) \times |V(C)|$. Let $C$ be a cycle in $G_0'$ and let $\mathcal{C}(C)$ be the cycles so that $\mathcal{B}(C)$ contains exactly the faces inside $\text{Int}(C, \Pi) - \bigcup_{C' \in \mathcal{C}(C)} \text{Int}(C', \Pi)$.

    \setcounter{claim}{0}

    \begin{claim}
        \label{size_cycles}
         Let $C$ be a cycle in $G_0'$. \[\sum_{C' \in \mathcal{C}(C)} |V(C')| \leq \Delta(g)^2 \times |V(C)|\]
    \end{claim}

    \begin{proof_claim}
        Let $H_C$ be the graph containing all the bridges inside $\text{int}(C, \Pi) - \cup_{C' \in \mathcal{C}(C)} \text{Int}(C', \Pi)$. Let $A(C)$ be the set of vertices in $H_C$ that have attaches on some cycle in $\mathcal{C}(C)$. %in $\text{Ext}(C_{i+1},\Pi)$. 

        Let's first prove that $A(C)$ has order at most $\Delta(g) \times |V(C)|$. Remark that, by construction, there are fewer edges with an endpoint in $\cup_{C'\in \mathcal{C}(C)} C'$ in $H_C$ than to $C$ in $H_C$. Hence, we have $|A(C)| \leq |E(C)^+| \leq \Delta(g) \times |V(C)|$.
            
        Let $P$ be a path on a cycle $C'$ of $\mathcal{C}(C)$ between two consecutive attach vertices $a, a' \in A(C')$ in $H_C$. Then, a $\Pi$-facial walk contains $P$. Therefore, by Theorem \ref{max_degree}, $|P| \leq \Delta(g)$.

        Moreover, as $|A(C)| \leq \Delta(g) \times |V(C)|$, $\cup_{C' \in \mathcal{C}(C)} C'$ contains at most $\Delta(g) \times |V(C)|$ such paths. Finally, $\sum_{C' \in \mathcal{C}(C)} |V(C')| \leq \Delta(g)^2 \times |V(C)|$.
    \end{proof_claim}
	
	\vspace{0.3cm}

    By Proposition \ref{good_square}, the radius of $G_0'$ with embedding $\Pi(G_0')$ is at most $m$.
    Let's partition the faces of $(G'_0, \Pi(G_0'))$ into their radius class with the outer face boundary $C_0$: for $0 \leq i \leq m-1$, let $\mathcal{F}_i$ be the set of faces of $(G_0', \Pi(G_0'))$ of radius $i+1$. Let's define inductively a set of cycles $\mathcal{C}_i$ so that $\cup_{C \in \mathcal{C}_i} \mathcal{B}(C) = \mathcal{F}_i$ and $\sum_{C \in \mathcal{C}_i} |V(C)| \leq |V(C_0)| \times \Delta(g)^{2i}$.

    We define $\mathcal{C}_0 = \{C_0\}$. We have indeed that $\mathcal{B}(C_0) = \mathcal{F}_0$ and $|V(C_0)| = |V(C_0)| \times 1$.
    Now, suppose that for $0 \leq i \leq m-1$, $\mathcal{C}_i$ is defined and is so that $\cup_{C \in \mathcal{C}_i} \mathcal{B}(C) = \mathcal{F}_i$ and $\sum_{C \in \mathcal{C}_i} |V(C)| \leq |V(C_0)| \times \Delta(g)^{2i}$. Let's define $\mathcal{C}_{i+1}$. For $C \in \mathcal{C}_i$, let $\mathcal{C}(C)$ be the set of cycles so that $\mathcal{B}(C)$ contains exactly the faces inside $\text{Int}(C, \Pi) - \bigcup_{C' \in \mathcal{C}(C)} \text{Int}(C', \Pi)$. Then, by the Claim \ref{size_cycles}, $\sum_{C' \in \mathcal{C}(C)} |V(C')| \leq \Delta(g)^2 \times |V(C)|$. We define $\mathcal{C}_{i+1} = \cup_{C \in \mathcal{C}_i} \cup_{C' \in \mathcal{C}(C)} C'$. Then, $\sum_{C \in \mathcal{C}_{i+1}} |V(C)| = \sum_{C \in \mathcal{C}_i} \sum_{C' \in \mathcal{C}(C)} |V(C')| \leq \sum_{C \in \mathcal{C}_i} \Delta(g)^2 \times |V(C)| \leq |V(C_0)| \times \Delta(g)^{2i+2}$.
    Moreover, it is easy to verify that $\cup_{C \in C_{i+1}} \mathcal{B}(C)$ is exactly $\mathcal{F}_{i+1}$.

    In Figure \ref{fig:radius_graph}, the graph $G_0'$ represented has radius $3$, and the faces of radius $1$, $2$, and $3$ are respectively in blue, pink, and green. Moreover, we have $\mathcal{C}_0 = \{C_0\}$, $\mathcal{C}_1 = \{C_{1,1}, C_{1,2}, C_{1,3}\}$ and $\mathcal{C}_2 = \{C_{2,1}, C_{2,2}, C_{2,3}\}$.

    \vspace{0.3cm}

    Let $0 \leq i \leq m-2$.
    Remark that, by construction, the set $\mathcal{B}(C) \subseteq \mathcal{F}_{i}$ for $C \in \mathcal{C}_i$ induces a set of bridges $\mathcal{H}_C$ on $C$ and the cycles of $\mathcal{C}_{i+1}$. Each bridge in $\mathcal{H}_C$ is a tree whose leaves are on $C$ and whose roots are on some cycles in $\mathcal{C}_{i+1}$ (two distinct leaves or roots of these trees can lie on the same vertex of $C$).
    For $i = m-1$ and $C \in \mathcal{C}_{m-1}$, $\mathcal{B}(C)$ induces a set of bridges $\mathcal{H}_C$, and these bridges are trees whose leaves are on $C$ (two distinct leaves of these trees can lie on the same vertex of $C$).

    \begin{claim}
        \label{size_layers}
        For $0 \leq i \leq m-1$ and $C \in \mathcal{C}_i$, $C \cup \bigcup_{B \in \mathcal{H}_C} V(B)$ has size at most $(\Delta(g)+1)|V(C)|$.
    \end{claim}

    \begin{proof_claim}
        The trees in $\mathcal{H}_C$ have in total at most $|E(C)^+|$ leaves. 
        Moreover, by minimality of $G$, there is no induced path of length $2$ in any trees in $\mathcal{H}_C$. Therefore, these trees have more leaves than internal vertices.
        
        Finally, \[\sum_{B \in \mathcal{H}_C} |V(B)| \leq |E(C)^+| \text{\quad and \quad} |C| + \sum_{B \in  \mathcal{H}_C} |V(B)| \leq |E(C)^+| + |V(C)| \leq (\Delta(g)+1) \times |V(C)|\]
    \end{proof_claim}

	\vspace{0.3cm} 

    Remark that the set of vertices of $C_0$ on which edges of $\text{Ext}(C_0,\Pi)$ have an endpoint precisely in $A$. Let $P$ be a path on $C_0$ between two consecutive attach vertices $a, a' \in A$, then there is a $\Pi$-facial walk that contains $P$. Therefore, by Theorem \ref{max_degree}, $|P| \leq \Delta(g)$. Moreover, as $|A| \leq 2 w$, $C_0$ contains at most $2 w$ such paths. Finally, $C_0$ has size at most $2 w \times \Delta(g)$.

    By using Claim \ref{size_layers} and the inequality $\sum_{C \in \mathcal{C}_i} |V(C)| \leq |V(C_0)| \times \Delta(g)^{2i}$ for $0 \leq i \leq m-1$, we obtain that the order of $G'_0$ is bounded by

    \begin{align*}
        \sum_{i=0}^{m-1} \sum_{C \in \mathcal{C}_i} |V(C \cup \bigcup_{B \in \mathcal{H}_C} B)| &= \sum_{i=0}^{m-1} \sum_{C \in \mathcal{C}_i} (\Delta(g)+1)|V(C)| \\
        & = \sum_{i=0}^{m-1} (\Delta(g)+1)\Delta(g)^{2i} |V(C_0)| \\
        & \leq (\Delta(g)+1) \frac{\Delta(g)^{2m} - 1}{\Delta(g)^2-1}|V(C_0)| \\
        & \leq \frac{\Delta(g)^{2m}-1}{\Delta(g)-1}|V(C_0)| \\
        & \leq \frac{\Delta(g)(\Delta(g)^{2m}-1)}{\Delta(g)-1}\times 2w.
    \end{align*}

    Finally, $G_0'$ is of order at most $\frac{\Delta(g)(\Delta(g)^{2m}-1)}{\Delta(g)-1}\times 2w < P(g,w) - (w+1)$. %= O(g^{\log^2(g)})$.
\end{proof}

    %\vspace{0.3cm}
        
    %If $G_0$ is contained in a cylinder in $\Pi$, let $C_0$ and $C_0'$ be the two cycles bounding $G_0$ in $\Pi$. Remark that $G_0 \subseteq \text{Int}(C_0 \cup C_0', \Pi)$ and let $G_0' = \text{Int}(C_0 \cup C_0', \Pi)$. We will show that $|G_0'| < P(g,w) -w$. Therefore we have $|G_0| \leq |G_0'| < P(g,w)-w$.

    %By Proposition \ref{good_square_variant}, the radius of $G_0'$ in $\Pi(G_0')$ is at most $2m$.

    %We partition the faces into their radius class according to $\Pi(G_0')$ with the outer faces being $C_0$ and $C_0'$ and develop the same reasoning, except that cycles from $\mathcal{C}_i$ ($0 \leq i \leq m-1$) can be either $\Pi$-homotopic to $C_0$ and $C_0'$ or $\Pi$-contractible and it is then necessary to treat these two cases.
    
    %We find that $G_0'$ has order at most $\left( \frac{\Delta(g)(\Delta(g)^{2m}-1)}{\Delta(g)-1} + \frac{(\Delta(g)^{2m}-1)^2}{(\Delta(g)-1)(\Delta(g)^2-1)} \right)\times 2w < P(g,w)-(w+1)$. % O(g^{\log^2(g)})$.

\begin{theo}
    \label{no_long_planar_path_T_extended}
    Let $(T,(V_t)_{t \in T})$ be a minimal tree decomposition of $G$ of width $w$.
    $T$ contains no path of length more than $P'(g,w) = (2m (3g+3)+1) \times P(g,w) -1 = O(w) \times g^{O(\log^3 g)}$. 
\end{theo}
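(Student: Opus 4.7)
The plan is to turn a hypothetical long path in $T$ into many pairwise disjoint $\Pi$-noncontractible cycles of $G$, and then invoke Corollary~\ref{nb_non_contractible_cycles}, which caps the number of such cycles at $2m(3g+3)$. Notice that the bound $P'(g,w)$ is precisely $(2m(3g+3)+1)\,P(g,w)-1$, which is exactly the threshold one expects from a pigeonhole argument: $2m(3g+3)+1$ consecutive segments each contributing one noncontractible cycle gives one cycle too many.

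Concretely, I would suppose for contradiction that $T$ contains a path $P^{\star}=t_0t_1\cdots t_L$ of length $L\geq P'(g,w)+1=(2m(3g+3)+1)\,P(g,w)$. Setting $k:=2m(3g+3)+1$, I would cut $P^{\star}$ into $k$ consecutive subpaths $P^{(i)}$ ($1\leq i\leq k$), where $P^{(i)}$ runs from $a_i:=t_{(i-1)P(g,w)}$ to $b_i:=t_{iP(g,w)}$ and has length exactly $P(g,w)$. For each $i$, let $\overline{P^{(i)}}$ be the component of $T-\{a_i,b_i\}$ containing the interior of $P^{(i)}$, and set
\[
G_0^{(i)}\;:=\;\bigcup_{t\in\overline{P^{(i)}}}V_t\;-\;(V_{a_i}\cup V_{b_i}),
\]
so that Proposition~\ref{no_long_planar_path} applies to each $P^{(i)}$.

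The first key step is to verify that the subgraphs $G_0^{(1)},\dots,G_0^{(k)}$ are pairwise vertex-disjoint. If some vertex $v$ lay in both $G_0^{(i)}$ and $G_0^{(j)}$ with $i<j$, then the unique tree-path between any two bags witnessing $v$ in $\overline{P^{(i)}}$ and in $\overline{P^{(j)}}$ must exit $\overline{P^{(i)}}$ through the separator $b_i$; the interpolation property of tree decompositions would then force $v\in V_{b_i}$, contradicting $v\in G_0^{(i)}$. The second key step is to extract from each $G_0^{(i)}$ a $\Pi$-noncontractible cycle $C_i\subseteq G_0^{(i)}$. Proposition~\ref{no_long_planar_path} gives that $\Pi(G_0^{(i)})$ is not contained in any disk of $S$; picking a spanning forest $F_i$ of $G_0^{(i)}$, its regular neighborhood in $S$ is a disjoint union of disks, and if every fundamental cycle $F_i+e$ of $G_0^{(i)}$ were $\Pi$-contractible, these disk-neighborhoods could be successively amalgamated (each new fundamental cycle bounds a disk that can be attached to the current disk) into a single disk containing $G_0^{(i)}$, contradicting Proposition~\ref{no_long_planar_path}. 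Hence some fundamental cycle $C_i$ of $G_0^{(i)}$ is $\Pi$-noncontractible.

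Combining these two steps yields $k=2m(3g+3)+1$ pairwise vertex-disjoint $\Pi$-noncontractible cycles $C_1,\dots,C_k$ in $G$, directly contradicting Corollary~\ref{nb_non_contractible_cycles}. The main obstacle in this plan is the second step: turning the non-disk embedding hypothesis into an honest noncontractible cycle, especially when $G_0^{(i)}$ is disconnected and its components sit in distinct parts of $S$ with no single disk able to contain them all. The argument via spanning forests and successive amalgamation of disk-neighborhoods is what closes this gap, and is where the topological subtlety sits; the remainder of the argument is essentially bookkeeping on the tree decomposition.
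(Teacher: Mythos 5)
Your proposal follows essentially the same route as the paper: assume a too-long path in $T$, partition it into $2m(3g+3)+1$ consecutive blocks of length $P(g,w)$, apply Proposition~\ref{no_long_planar_path} to each block to get pairwise disjoint subgraphs $G_0^{(i)}$ that are not embedded in disks, extract a $\Pi$-noncontractible cycle from each, and contradict Corollary~\ref{nb_non_contractible_cycles}. The paper simply asserts the disjointness of the $G_i$'s and the existence of a noncontractible cycle where you supply fuller justifications; be aware, though, that your spanning-forest amalgamation as stated only merges disk neighborhoods across fundamental cycles within a single component, so the disconnected case still needs the additional (routine) step of shrinking the per-component disks and joining them by thickened arcs.
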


\begin{proof}
    By contradiction, suppose a path $P$ in $T$ exists so that $|P| > P'(g,w)$. Let $P = p_0 - ... - p_{k-1}$ with $k > P'(g,w)$ and $q = \left\lfloor \frac{k}{P(g,w)} \right\rfloor \geq 2m(3g+3)+1$. For $0 \leq i \leq q-1$, let $P_i = p_{i \times P(g,w)} - ... - p_{i \times P(g,w) + P(g,w)-1}$. Then $(P_i)_{0 \leq i \leq q-1}$ are pairwise disjoint subpaths of $P$. For $0 \leq i \leq q-1$, let $G_i = \cup_{p \in \overline{P_i}} V_p - (V_{p_{i \times P(g,w)}} \cup V_{p_{i \times P(g,w) + P(g,w)-1}})$. Then, $(G_i)_{0 \leq i \leq q-1}$ are pairwise disjoint subgraphs of $G$ and, for $0 \leq i \leq q-2$, $G_i$ and $G_{i+1}$ are separated in $G$ by $V_{p_{i \times P(g,w) + P(g,w)-1}}$.

    As $P_i$ is of length $P(g,w)$ ($0 \leq i \leq q-1$), by Proposition \ref{no_long_planar_path}, $G_i$ is not contained in a disk in $\Pi$. Hence, $G_i$ contains a $\Pi$-noncontractible cycle $C_i$. As $(G_i)_{0 \leq i \leq q-1}$ are pairwise disjoint subgraphs of $G$, the $\Pi$-noncontractible cycles $(C_i)_{0 \leq i \leq m-1}$ are pairwise disjoint. However, recall that $q  = \left\lfloor \frac{k}{P(g,w)} \right\rfloor \geq 2m (3g+3) + 1 > 2m(3g+3)$ which is, by Corollary \ref{nb_non_contractible_cycles}, a contradiction. 
\end{proof}

\subsection{The order of  \texorpdfstring{$G$}{G} is quasi-polynomial} \label{size_quasi_polynomial}

The following results conclude our paper by showing that $G$ is of order $g^{O(\log^3 g)}$. It proceeds in two steps: first, it is easy to derive from Theorem \ref{no_long_planar_path_T_extended} that $G$ is of order bounded by a quasi single exponential $2^{g^{O(\log^3 g)}}$ in $g$; then, we deduce from this result a bound on the pathwidth of $G$, and use it to prove the quasi-polynomial bound on the order of $G$. Moreover, recall that we restricted ourselves to an excluded minor $G$ that is $2$-connected using Corollary \ref{G_2_connected}. We therefore conclude the bound on the order of any excluded minor for the surface $S$ using this result.

\begin{cor}
    \label{G_has_simple_exponential_size}

    \[ |V(G)| \leq 2^{Q(g)}\]

    with $Q(g) = g^{O(\log^3 g)}$ a quasi-polynomial in $g$ so that $Q(g) \geq \log((T(g)+1) \times \Delta_T(g)^{P'(g,T(g))})$.
\end{cor}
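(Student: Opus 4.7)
The plan is to assemble the three tree-decomposition bounds that the paper has just established and then read off $|V(G)|$ by a direct counting argument. Concretely, I would first fix $(T,(V_t)_{t\in V(T)})$ to be a \emph{minimal linked} tree decomposition of $G$ of width $tw(G)$, which is guaranteed to exist by Thomas' theorem together with Theorem~\ref{treewidth}. This single choice simultaneously witnesses all three bounds: every bag satisfies $|V_t| \leq tw(G)+1 \leq T(g)+1$ by Theorem~\ref{treewidth}; the maximum degree of $T$ is at most $\Delta_T(g)$ by Corollary~\ref{seymour_tree_degree_corollary}; and since the decomposition is minimal, Theorem~\ref{no_long_planar_path_T_extended} applies and bounds the length of any path in $T$ by $P'(g,T(g))$.

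Next, I would root $T$ at one endpoint of a longest path. Since the depth of this rooted tree is bounded by the path-length bound $P'(g,T(g))$ and each vertex has at most $\Delta_T(g)$ children, a standard geometric-series estimate gives
\[ |V(T)| \;\leq\; \Delta_T(g)^{P'(g,T(g))+O(1)}. \]
Using $V(G) = \bigcup_{t\in V(T)} V_t$ and the bag-size bound, this immediately yields
\[ |V(G)| \;\leq\; (T(g)+1)\cdot\Delta_T(g)^{P'(g,T(g))+O(1)}, \]
which is, up to harmless constants, the product inside the logarithm in the statement.

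The final step is the routine asymptotic verification that the logarithm of this product is $g^{O(\log^3 g)}$: we have $T(g), \Delta_T(g) = O(g\log g)$ and $P'(g,T(g)) = g^{O(\log^3 g)}$, so the dominant term is $P'(g,T(g))\cdot\log\Delta_T(g) = g^{O(\log^3 g)}\cdot O(\log g) = g^{O(\log^3 g)}$. I do not expect any real obstacle: each step is either a direct invocation of one of the cited results or elementary counting. The only mild subtlety worth flagging is that the bounds on width, tree-degree and tree-height are witnessed by slightly different flavours of tree decomposition (minimal, minimal linked, and minimal, respectively), so one must make sure to work with a decomposition that qualifies for all three results at once — the minimal linked tree decomposition of width $tw(G)$ does exactly this.
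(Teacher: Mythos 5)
Your proposal is correct and follows essentially the same route as the paper: fix a minimal linked tree decomposition of width $tw(G)$, invoke Theorem~\ref{treewidth}, Corollary~\ref{seymour_tree_degree_corollary}, and Theorem~\ref{no_long_planar_path_T_extended} for bag size, tree degree, and tree height respectively, and conclude $|V(G)| \leq (T(g)+1)\cdot\Delta_T(g)^{P'(g,T(g))}$ by the obvious counting. The $+O(1)$ in your exponent and the remark about reconciling the three flavours of tree decomposition in a single object are minor added care; the paper uses exactly the same decomposition and bound.
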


\begin{proof}
    Let $(T,(V_t)_{t \in T})$ be a minimal linked tree decomposition of $G$ of width $tw(G)$ (Thomas \cite{thomas} proved that, for any graph $H$, there exists a linked tree decomposition of $H$ of optimal width).

    First, by Theorem \ref{no_long_planar_path_T_extended}, the length of the longest path in $T$ is bounded by $P'(g,tw(G))$ and, as $tw(G) \leq T(g)$, it is bounded by $P'(g, T(g))$.

    Then, let's show that $|V(G)| \leq 2^{Q(g)}$. By Theorem \ref{treewidth}, we know that $tw(G) \leq T(g)$. Hence, every bag in the tree decomposition of $G$ is of order at most $T(g)+1$. Moreover, by Corollary \ref{seymour_tree_degree_corollary}, we know that the maximum degree of $T$ is at most $\Delta_T(g)$.

    Hence,

    \begin{align*}
        |V(G)| & \leq (tw(G)+1) \times |V(T)| \\
        & \leq (tw(G)+1) \times (\Delta(T)^{P'(g,T(g))}) \\
        & \leq (T(g)+1) \times \Delta_T(g)^{P'(g,T(g))} \\
        & \leq 2^{Q(g)} 
    \end{align*}

%    \[ = O(g^3 (g^6)^{g^{\log^3 g}}) = O(g^{g^{\log^3 g}})\]
\end{proof}

\begin{cor}
    \label{pathwidth_polynomial_in_g}
     There exists a constant $A$ so that

    \[pw(G) \leq R(g)\]

    with $R(g) = A \times T(g) \times Q(g)$.
\end{cor}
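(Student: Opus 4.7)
The plan is to invoke the classical relationship between pathwidth and treewidth due to Bodlaender, Gilbert, Hafsteinsson, and Kloks (1995): for every graph $H$ on $n$ vertices there is an absolute constant $c > 0$ such that
\[ pw(H) \leq c \cdot tw(H) \cdot \log_2 n. \]
This gives the bound for free once the two ingredients are plugged in, so the role of this corollary is really to package the earlier results.

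I would carry out the proof in three short steps. First, quote the Bodlaender--Gilbert--Hafsteinsson--Kloks bound above with its absolute constant $c$. Second, substitute the treewidth estimate: by Theorem \ref{treewidth} we have $tw(G) \leq T(g)$. Third, substitute the order estimate: by Corollary \ref{G_has_simple_exponential_size} we have $|V(G)| \leq 2^{Q(g)}$ and hence $\log_2 |V(G)| \leq Q(g)$. Combining these,
\[ pw(G) \leq c \cdot tw(G) \cdot \log_2 |V(G)| \leq c \cdot T(g) \cdot Q(g), \]
which is exactly $R(g) = A \cdot T(g) \cdot Q(g)$ with $A = c$.

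There is no real obstacle here: the whole work has already been done in Theorem \ref{treewidth} and Corollary \ref{G_has_simple_exponential_size}. The only thing to be a bit careful about is that the cited pathwidth-versus-treewidth inequality applies to arbitrary graphs (in particular to $G$) and does not require any additional structural assumption, so its use is unconditional. The constant $A$ is therefore a fixed universal constant independent of $g$, and the product $T(g) \cdot Q(g)$ is itself a quasi-polynomial in $g$, consistent with the target bound $g^{O(\log^3 g)}$ announced elsewhere in the paper.
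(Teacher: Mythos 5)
Your proof is correct and follows exactly the same route as the paper: invoke the $pw(H) = O(tw(H)\log|V(H)|)$ bound, substitute $tw(G) \leq T(g)$ from Theorem~\ref{treewidth} and $\log|V(G)| \leq Q(g)$ from Corollary~\ref{G_has_simple_exponential_size}, and absorb the universal constant into $A$.
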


\begin{proof}
    The following inequality gives a relation between pathwidth and treewidth \cite{bodlaender}:

    \[ pw(G) = O(tw(G) \log(|V(G)|))\]

    Hence, there exists a constant $A$ so that $pw(G) \leq A tw(G) \log(|V(G)|)$. Moreover, by Corollary \ref{G_has_simple_exponential_size}, we know that $|V(G)| \leq 2^{Q(g)}$. Finally, we deduce that

    \[ pw(G) \leq A tw(G) Q(g).\]

    As $tw(G) \leq T(g)$ by Theorem \ref{treewidth}, $A tw(G) Q(g) \leq A \times T(g) \times Q(g) = R(g)$. 
%    Then we have indeed, $pw(G) \leq R(g)$.
\end{proof}

\begin{cor}
    \label{G_has_polynomial_size}
    There exists a constant $A'$ so that 

    \[ |V(G)| \leq S(g) \]

    with $S(g) = A' \times P'(g, R(g)) \times R(g)$.
\end{cor}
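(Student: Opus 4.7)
The plan is to bootstrap the single-exponential bound of Corollary \ref{G_has_simple_exponential_size} via the pathwidth bound of Corollary \ref{pathwidth_polynomial_in_g}, using the key observation that Theorem \ref{no_long_planar_path_T_extended} applies to path decompositions just as well as to tree decompositions, since a path decomposition is a special case of a tree decomposition whose underlying tree is a path.

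Concretely, I would start by invoking Corollary \ref{pathwidth_polynomial_in_g} to fix $pw(G) \leq R(g)$, and then take a minimal (in the sense of Definition: minimum $|V(T)|$) path decomposition $(P, (V_t)_{t \in V(P)})$ of $G$ of width $pw(G)$. Such a decomposition satisfies the conclusion of Lemma \ref{property_minimal_tree_decomposition}: for every $t_i$ on $P$, $V_{t_i} \nsubseteq \bigcup_{j<i} V_{t_j}$ (otherwise one could contract $t_i$ into its neighbor and obtain a strictly smaller path decomposition of the same width, contradicting minimality). Viewing this path decomposition as a tree decomposition of $G$ of width $pw(G) \leq R(g)$, Theorem \ref{no_long_planar_path_T_extended} yields that the length of the path $P$ itself (which is the longest path in the underlying tree) is at most $P'(g, pw(G)) \leq P'(g, R(g))$, where we use that $P'(g, \cdot)$ is monotone increasing in its second argument (both $P(g, w)$ and consequently $P'(g, w)$ grow linearly in $w$).

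Once the length of $P$ is controlled, the bound on $|V(G)|$ is a direct counting argument: every vertex of $G$ belongs to at least one bag $V_t$ by the covering property of tree decompositions, each bag has size at most $pw(G) + 1 \leq R(g) + 1$, and there are at most $P'(g, R(g))$ bags along $P$. Hence
\[
|V(G)| \leq (pw(G) + 1) \times |V(P)| \leq (R(g) + 1) \times P'(g, R(g)) \leq A' \times R(g) \times P'(g, R(g)) = S(g)
\]
for a suitable absolute constant $A'$ (e.g. $A' = 2$ to absorb the $+1$).

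There is no real obstacle here; the only subtle point is the legitimacy of step (2)—that Theorem \ref{no_long_planar_path_T_extended} may be applied verbatim to a minimal path decomposition. This is justified because the proof of that theorem (through Proposition \ref{no_long_planar_path} and Lemma \ref{property_minimal_tree_decomposition}) uses only the minimality property "no bag is contained in an adjacent bag," which holds for minimum-size path decompositions of fixed width. The heavy lifting has already been done in the previous subsections; this final corollary is simply the assembly step that replaces the treewidth-based counting (which gave a single-exponential bound in Corollary \ref{G_has_simple_exponential_size}) by a pathwidth-based counting, trading the $\Delta_T(g)^{P'(g,T(g))}$ factor for a linear factor in $P'(g, R(g))$ and thereby collapsing the exponential into the exponent of the quasi-polynomial $S(g) = g^{O(\log^3 g)}$.
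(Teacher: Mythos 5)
Your proof is correct and follows essentially the same route as the paper's: take a minimal path decomposition, apply Theorem \ref{no_long_planar_path_T_extended} to bound the path length by $P'(g,pw(G))$, invoke Corollary \ref{pathwidth_polynomial_in_g} to bound the width, and multiply. You additionally spell out why the theorem (stated for tree decompositions) applies to path decompositions, a point the paper leaves implicit.
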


\begin{proof}
    Let $(P, (V_p)_{p \in P})$ be a minimal path decomposition of $G$ of optimal width $pw(G)$.

    Then, by Theorem \ref{no_long_planar_path_T_extended}, the length of path $P$ is bounded by $P'(g,pw(G))$.

    Moreover, by Corollary \ref{pathwidth_polynomial_in_g}, $pw(G) \leq R(g)$. We have

    \[ |V(G)| \leq P'(g,pw(G)) \times (pw(G)+1) \leq P'(g, R(g)) \times (R(g)+1).\]

    Let $A'$ be a constant so that $P'(g, R(g)) \times (R(g)+1) \leq A' \times P'(g, R(g)) \times R(g) = S(g)$. Then, finally, $|V(G)| \leq S(g)$.
\end{proof}

To conclude the proof, let's recall Corollary \ref{G_2_connected}:

\begin{repcor}{G_2_connected}
    Suppose that, for any graph $H$ that is an excluded minor for some surface $S_H$ and that is $2$-connected, $|V(H)| \leq N(g(S_H))$ with $N$ an increasing function.

    Then, $|V(G)| \leq (g+2) \times N(g)$.
\end{repcor}

Therefore, by using Corollary \ref{G_2_connected}, we finally prove our main result:

\begin{reptheo}{main_result}
    Let $S$ be a given surface of the Euler genus $g$. Every excluded minor for $S$ is of order at most $U(g) = g^{O(\log^3 g)}$.
\end{reptheo}

\begin{proof}
    By combining Corollaries \ref{G_2_connected} and \ref{G_has_polynomial_size}, we get that an excluded minor $G$ for $S$ is of order at most 

    \[ |V(G)| \leq S(g) \times (g+2).\]

    Let $U(g) = S(g) \times (g+2)$ for $g \in \mathbb{N}$. We indeed have $U(g) = g^{O(\log^3 g)}$.
\end{proof}

\section{Conclusion} \label{sec:conclusion}

The question of knowing whether $G$ is bounded by a polynomial function or whether a quasi-polynomial bound is asymptotically the best possible bound remains open.

\begin{prob}
    Is there a polynomial bound on the order of $G$? Is a quasi-polynomial bound asymptotically the best possible one?
\end{prob}

Our proof contains two main bottlenecks:
\begin{itemize}
    \item the $O(\log g)$ bound for the number of $\Pi$-contractible well-nested cycles and $\Pi$-well-homotopic cycles, and
    \item the $g^{O(\log^2 g)}$ bound for the maximum degree of $G$ and for the maximum size of a face of $\Pi$.
\end{itemize} 

As said in Subsection \ref{subsec:nested_squares}, we believe that the $O(\log g)$ bound is asymptotically the best possible for the number of $\Pi$-contractible well-nested cycles and $\Pi$-well-homotopic cycles. An interesting question would be to know whether it is possible to bound the maximum degree of $G$ and the size of a face of $\Pi$ by a polynomial in $g$ instead of the quasi-polynomial bound $g^{O(\log^2 g)}$. 

\begin{prob}
    Is there a polynomial function $\Delta_P$ of $g$ such that 
    $\Delta(G) \leq \Delta_P(g)$ and $\Delta_F(G, \Pi) \leq \Delta_P(g)$?
\end{prob}

It is a necessary condition to get a polynomial bound on the order of $G$. However, this would not be sufficient, combined with our proof, to prove that the order of $G$ is bounded by a polynomial in $g$.

%\bibliographystyle{alpha}
%\bibliographystyle{plain}
%\bibliography{bibliography}

\section*{Appendix: Lower bound for the excluded minors for a surface}

A tight lower bound for the excluded minors for a surface can be quickly obtained by using the known results on the genus of cliques. The lower bound $L(g)$ for the excluded minors for a surface is therefore $L(g) = \left\lfloor \frac{7 + \sqrt{1+24g}}{2} \right \rfloor +1 = \Theta(\sqrt{g})$, as proved in Theorem \ref{lower_bound}. We can remark that this lower bound is closely related to Heawood number $H(g) = \left\lfloor \frac{7 + \sqrt{1+24g}}{2} \right \rfloor$, which is a tight upper bound on the chromatic number of graphs of genus at most $g$ \cite[Theorem 8.3.1]{graphs_on_surfaces}. Indeed, we have $L(g) = H(g) + 1$.

\begin{prop}[{\cite[Theorems 4.4.5 and 4.4.6]{graphs_on_surfaces}}]
    \label{genus_clique}
    For $k \geq 3$,
    \[g(K_k) = \left\lceil \frac{(k-3)(k-4)}{6}\right\rceil\]
\end{prop}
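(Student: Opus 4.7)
The plan is to split the statement into an easy lower bound and a deep upper bound. For the lower bound, I would apply Euler's formula directly. Suppose $\Pi$ is an embedding of $K_k$ in a surface of Euler genus $g$. Since $K_k$ is simple, every facial walk has length at least $3$, so $3 |F(K_k, \Pi)| \leq 2 |E(K_k)| = k(k-1)$. Substituting this into the Euler formula $|V(K_k)| - |E(K_k)| + |F(K_k, \Pi)| = 2 - g$ and rearranging yields $g \geq (k-3)(k-4)/6$. Since $g$ is a nonnegative integer, we conclude $g(K_k) \geq \lceil (k-3)(k-4)/6 \rceil$.

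The upper bound requires exhibiting, for every $k \geq 3$, an actual embedding of $K_k$ in a surface of Euler genus exactly $\lceil (k-3)(k-4)/6 \rceil$; this is the substance of the celebrated Ringel--Youngs theorem (together with Ringel's earlier work on non-orientable surfaces). When $k \equiv 0, 1, 3, 4 \pmod{6}$, the quantity $(k-3)(k-4)/6$ is an integer, and equality in the lower bound forces a \emph{triangular} embedding in which every face is a $3$-cycle. For these residues I would build such an embedding directly via an appropriate current or voltage graph on a small base structure, producing a rotation system whose face-tracing routine yields only triangles. When $k \equiv 2, 5 \pmod{6}$, a triangular embedding cannot exist for divisibility reasons, and the construction must accommodate one or two non-triangular faces of carefully chosen length so that the face count still attains the ceiling.

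The main obstacle is the upper-bound construction, which is the well-known hard half of the Ringel--Youngs theorem. Each residue modulo $6$ demands its own combinatorial recipe for a current or voltage assignment, and a handful of small exceptional values of $k$ must be settled individually; in particular $K_7$, whose best orientable embedding realises Euler genus $2$ while its best non-orientable embedding only achieves Euler genus $3$, so that the Euler genus formula holds via the orientable side. Once the correct rotation system is written down in each case, the verification is a finite, mechanical check using the face-tracing procedure; the historical difficulty, and the reason this result is invoked here rather than reproved, lies in finding the right rotation systems in the first place.
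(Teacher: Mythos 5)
Your proposal is correct and matches the paper's treatment: the paper does not prove this statement but simply cites it from Mohar--Thomassen (it is the Ringel--Youngs / Map Color Theorem), and your Euler-formula lower bound together with deferring the matching embeddings of $K_k$ to the Ringel--Youngs current-graph constructions (including the $K_7$ subtlety, where the Euler genus $2$ is attained only orientably) is exactly the standard route behind that citation.
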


\begin{lem}
    \label{lower_bound_calculus}
    Let $g$ be an integer and let $L(g) = \min \{ k \geq 3, g(K_k) > g\}$. Then,

    \[ L(g) = \left\lfloor \frac{7 + \sqrt{1+24g}}{2} \right \rfloor +1\]
\end{lem}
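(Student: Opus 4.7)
The plan is to directly translate the condition $g(K_k) > g$, via Proposition \ref{genus_clique}, into a simple quadratic inequality in $k$ and then to solve it.

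First, I would observe that by Proposition \ref{genus_clique}, for every $k \geq 3$,
\[ g(K_k) = \left\lceil \frac{(k-3)(k-4)}{6} \right\rceil. \]
Since $g \in \mathbb{Z}$, the inequality $\lceil x \rceil > g$ is equivalent to $x > g$; thus $g(K_k) > g$ is equivalent to $(k-3)(k-4) > 6g$. Next, note that the function $k \mapsto (k-3)(k-4)$ is nondecreasing for $k \geq 4$ and equal to $0$ for $k \in \{3,4\}$, so $L(g)$ is well-defined and satisfies $L(g) \geq 5$ for every $g \geq 0$ (which also ensures that the bound $k \geq 3$ required by Proposition \ref{genus_clique} is harmless).

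Second, I would solve the quadratic inequality $k^{2} - 7k + 12 - 6g > 0$. Its two real roots are
\[ k_{\pm} = \frac{7 \pm \sqrt{1 + 24g}}{2}, \]
and since $k_{-} \leq 4 \leq k_{+}$ whenever $g \geq 0$, the inequality $(k-3)(k-4) > 6g$ holds (for integers $k \geq 4$) exactly when $k > k_{+}$. Therefore $L(g)$ is the smallest integer strictly greater than $\frac{7 + \sqrt{1+24g}}{2}$.

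Finally, I would close the argument by a brief case distinction: if $\frac{7 + \sqrt{1+24g}}{2}$ is not an integer, the smallest integer strictly above it is $\lceil \frac{7+\sqrt{1+24g}}{2}\rceil = \lfloor \frac{7+\sqrt{1+24g}}{2}\rfloor + 1$; if it is an integer, the smallest integer strictly above is $\frac{7+\sqrt{1+24g}}{2} + 1 = \lfloor \frac{7+\sqrt{1+24g}}{2}\rfloor + 1$. In both cases we obtain
\[ L(g) = \left\lfloor \frac{7 + \sqrt{1+24g}}{2} \right\rfloor + 1, \]
as claimed. There is no real obstacle here: the only minor care required is the verification that the ceiling in Proposition \ref{genus_clique} does not affect the threshold (because $g$ is an integer) and that $k_{+} \geq 4$, so the monotonicity of $(k-3)(k-4)$ on $k \geq 4$ may be used without worrying about the $k=3$ boundary.
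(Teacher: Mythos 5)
Your proof is correct and follows essentially the same route as the paper, which simply says the result follows from solving the quadratic inequality $\lceil (k-3)(k-4)/6 \rceil > g$; you supply the (correct) details — discharging the ceiling because $g$ is an integer, locating the positive root $k_+ = (7+\sqrt{1+24g})/2$, and converting ``smallest integer strictly greater than $k_+$'' to $\lfloor k_+ \rfloor + 1$.
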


\begin{proof}
    The result is obtained by solving the following quadratic equation in $k$:

    \[ g(K_k) = \left\lceil \frac{(k-3)(k-4)}{6}\right\rceil > g\]
\end{proof}

\begin{reptheo}{lower_bound}
    Let $S$ be a surface of genus $g$. Let $L(g)$ be the function in Lemma \ref{lower_bound_calculus}. There exists an excluded minor for $S$ of order 

    \[ |V(G)| = L(g) \]

    and no excluded minor for $S$ is of order $< L(g)$.
\end{reptheo}

\begin{proof}
    By definition of $L(g)$, the clique $K_{L(g)}$ cannot be embedded in $S$. Therefore, $K_{L(g)}$ contains an excluded minor $G$ for $S$ as a minor with $|V(G)| \leq L(g)$.
    
    Moreover, by definition of $L(g)$, every clique $K_\ell$ with $\ell < L(g)$ can be embedded in $S$. And, as every graph of order $\ell < L(g)$ is a minor of $K_\ell$, no excluded minor for $S$ is of order $< L(g)$.
    
    Finally, we proved that there exists an excluded minor $G$ for $S$ with $|V(G)| = L(g)$ and no excluded minor for $S$ is of order $< L(g)$.
\end{proof}

\bibliographystyle{plain}
\bibliography{bibliography}

\end{document}